\renewcommand{\tocsection}[3]{%
  \indentlabel{\@ifnotempty{#2}{\bfseries\ignorespaces#1 #2\quad}}\bfseries#3}
\renewcommand{\tocsubsection}[3]{%
  \indentlabel{\@ifnotempty{#2}{\ignorespaces#1 #2\quad}}#3}
\newcommand\@dotsep{4.5}
\def\@tocline#1#2#3#4#5#6#7{\relax
  \ifnum #1>\c@tocdepth % then omit
  \else
    \par \addpenalty\@secpenalty\addvspace{#2}%
    \begingroup \hyphenpenalty\@M
    \@ifempty{#4}{%
      \@tempdima\csname r@tocindent\number#1\endcsname\relax
    }{%
      \@tempdima#4\relax
    }%
    \parindent\z@ \leftskip#3\relax \advance\leftskip\@tempdima\relax
    \rightskip\@pnumwidth plus1em \parfillskip-\@pnumwidth
    #5\leavevmode\hskip-\@tempdima{#6}\nobreak
    \leaders\hbox{$\m@th\mkern \@dotsep mu\hbox{.}\mkern \@dotsep mu$}\hfill
    \nobreak
    \hbox to\@pnumwidth{\@tocpagenum{\ifnum#1=1\bfseries\fi#7}}\par% <-- \bfseries for \section page
    \nobreak
    \endgroup
  \fi}
\renewcommand\csname r@tocindent0\endcsname{0pt}
\def\l@subsection{\@tocline{2}{0pt}{2.5pc}{5pc}{}}
\DeclareMathOperator{\lspan}{span}                          % linear span
\DeclareMathOperator{\supp}{supp}                           % support
\DeclareMathOperator{\Lip}{Lip}                             % Lipschitz functions
\DeclareMathOperator{\lip}{lip}                             % little Lipschitz functions
\newcommand{\NN}{\mathbb{N}}                                % natural numbers
\newcommand{\ZZ}{\mathbb{Z}}                                % integer numbers
\newcommand{\QQ}{\mathbb{Q}}                                % rational numbers
\newcommand{\RR}{\mathbb{R}}                                % real numbers
\newcommand{\abs}[1]{\left|{#1}\right|}                     % absolute value
\newcommand{\esupp}[1]{\mathcal{S}(#1)}                     % extended support in the bidual
\newcommand{\pare}[1]{\left({#1}\right)}                    % parentheses
\newcommand{\set}[1]{\left\{{#1}\right\}}                   % set by extension
\newcommand{\norm}[1]{\left\|{#1}\right\|}                  % norm
\newcommand{\dual}[1]{{#1}^\ast}                            % dual or adjoint
\newcommand{\ddual}[1]{{#1}^{\ast\ast}}                     % double dual
\newcommand{\duality}[1]{\left<{#1}\right>}                 % dual action
\newcommand{\cl}[1]{\overline{#1}}                          % closure
\newcommand{\wscl}[1]{\overline{#1}^{\dual{w}}}             % weak* closure
\newcommand{\wsconv}{\stackrel{\dual{w}}{\longrightarrow}}  % weak* convergence
\newcommand{\restrict}{\mathord{\upharpoonright}}           % restriction symbol
\newcommand{\lipfree}[1]{\mathcal{F}({#1})}                 % Lipschitz-free space
\newcommand{\lipnorm}[1]{\norm{#1}_L}                       % Lipschitz norm/constant
\newcommand{\meas}[1]{\mathcal{M}({#1})}                    % Radon measure space
\newcommand{\wt}[1]{\widetilde{#1}}                         % shorthand for the square de Leeuw domain
\newcommand{\bwt}[1]{\beta\wt{#1}}                          % shorthand for the full de Leeuw domain
\newcommand{\pp}{\mathfrak{p}}                              % projection map in de Leeuw domain
\newcommand{\rr}{\mathfrak{r}}                              % reflection map in de Leeuw domain
\newcommand{\opr}[1]{\mathcal{M}_{\mathrm{op}}(#1)}         % optimal positive representations in the de Leeuw domain
\newcommand{\norming}[1]{\mathcal{N}(#1)}                   % set of norming functions
\newcommand{\rcomp}[1]{#1^\mathcal{R}}                      % Lipschitz realcompactification
\newcommand{\ucomp}[1]{#1^\mathcal{U}}                      % uniform compactification
\newcommand{\ucl}[1]{\overline{#1}^{\mathcal{U}}}           % uniform closure
\newcommand{\rcl}[1]{\overline{#1}^{\mathcal{R}}}           % closure in Lipschitz realcompactification
\newcommand{\bwtf}{\mathsf{R}}                              % "finite points" in \bwt{M}
\newcommand{\bwtfp}{\mathsf{R}^+}                           % same but excluding the "diagonal"
\newcommand{\opt}{\mathsf{O}}                               % optimal points in \bwt{M}
\renewcommand{\geq}{\geqslant}
\renewcommand{\leq}{\leqslant}
\newcommand{\labeltext}[2]{%
  \@bsphack
  \csname phantomsection\endcsname % in case hyperref is used
  \def\@currentlabel{#1}{\label{#2}}%
  \@esphack
}
\theoremstyle{plain}
\newtheorem{theorem}{Theorem}[section]
\newtheorem{lemma}[theorem]{Lemma}
\newtheorem{corollary}[theorem]{Corollary}
\newtheorem{proposition}[theorem]{Proposition}
\newtheorem*{claim}{Claim}
\theoremstyle{definition}
\newtheorem*{definition*}{Definition}
\newtheorem{definition}[theorem]{Definition}
\newtheorem{example}[theorem]{Example}
\newtheorem{problem}[theorem]{Problem}
\theoremstyle{remark}
\newtheorem{remark}[theorem]{Remark}
\numberwithin{equation}{section}
\begin{document}

\title{De Leeuw representations of functionals on Lipschitz spaces}

\author[R. J. Aliaga]{Ram\'on J. Aliaga}
\address[R. J. Aliaga]{Instituto Universitario de Matem\'atica Pura y Aplicada,
Universitat Polit\`ecnica de Val\`encia,
Camino de Vera S/N,
46022 Valencia, Spain}
\email{raalva@upvnet.upv.es}

\author[E. Perneck\'a]{Eva Perneck\'a}
\address[E. Perneck\'a]{Faculty of Information Technology, Czech Technical University in Prague, Th\'akurova 9, 160 00, Prague 6, Czech Republic}
\email{perneeva@fit.cvut.cz}

\author[R. J. Smith]{Richard J. Smith}
\address[R. J. Smith]{School of Mathematics and Statistics, University College Dublin, Belfield, Dublin 4, Ireland}
\email{richard.smith@maths.ucd.ie}

\date{}

% ABSTRACT

\begin{abstract}
Let $\mathrm{Lip}_0(M)$ be the space of Lipschitz functions on a complete metric space $(M,d)$ that vanish at a point $0\in M$. We investigate its dual $\mathrm{Lip}_0(M)^*$ using the De Leeuw transform, which allows representing each functional on $\mathrm{Lip}_0(M)$ as a (non-unique) measure on $\beta\widetilde{M}$, where $\widetilde{M}$ is the space of pairs $(x,y)\in M\times M$, $x\neq y$. We distinguish a set of points of $\beta\widetilde{M}$ that are ``away from infinity'', which can be assigned coordinates belonging to the Lipschitz realcompactification $M^{\mathcal{R}}$ of $M$. We define a natural metric $\bar{d}$ on $M^{\mathcal{R}}$ extending $d$ and we show that optimal (i.e. positive and norm-minimal) De Leeuw representations of well-behaved functionals are characterised by $\bar{d}$-cyclical monotonicity of their support, extending known results for functionals in $\mathcal{F}(M)$, the predual of $\mathrm{Lip}_0(M)$. We also extend the Kantorovich-Rubinstein theorem to normal Hausdorff spaces, in particular to $M^{\mathcal{R}}$, and use this to characterise measure-induced and majorisable functionals in $\mathrm{Lip}_0(M)^*$ as those admitting optimal representations with additional finiteness properties. Finally, we use De Leeuw representations to define a natural L-projection of $\mathrm{Lip}_0(M)^*$ onto $\mathcal{F}(M)$ under some conditions on $M$. % 175 words
\end{abstract}

% KEYWORDS

\subjclass[2020]{Primary 46B20, 49Q22; Secondary 46E15, 54D35, 54E50.}
%46B20 Geometry and structure of normed linear spaces
%46E15 Banach spaces of continuous, differentiable or analytic functions
%49Q22 Optimal transportation
%54D35 Extensions of spaces (compactifications, supercompactifications, completions, etc.)
%54E50 Complete metric spaces

\keywords{
Lipschitz realcompactification,
Lipschitz-free space,
cyclical monotonicity,
optimal transport,
Kantorovich-Rubinstein duality,
L-projection,
Lipschitz function,
De Leeuw representation,
Lipschitz space.}

\maketitle

% MAIN DOCUMENT

\tableofcontents

\section{Introduction}\label{sect_introduction}

\subsection{Background and preliminaries}\label{subsect_notation_prelims}

We begin by recalling the definitions of Lipschitz and Lipschitz-free spaces. Throughout, $(M,d)$ will be a complete metric space with a fixed base point $0\in M$. Given a Lipschitz function $f$ on $M$ and taking values in another metric space, we denote by $\Lip(f)$ its (optimal) Lipschitz constant. When $f$ takes values in a normed space $(X,\norm{\cdot})$ we write
\[
\lipnorm{f} = \Lip(f) = \sup\set{\frac{\norm{f(x)-f(y)}}{d(x,y)} \,:\, x\neq y\in M}.
\]
We write $\Lip(M)$ for the space of all real-valued Lipschitz functions on $M$, and $\Lip_0(M)$ for the subspace consisting of all $f\in\Lip(M)$ such that $f(0)=0$ (hopefully this notation will not lead to confusion with $\Lip(f)$). The function $\norm{\cdot}_L$ is a seminorm on $\Lip(M)$ and a norm on $\Lip_0(M)$, and it is straightforward to check that $(\Lip_0(M),\lipnorm{\cdot})$ is complete. We shall call the Banach space $\Lip_0(M)$ and its dual space $\Lip_0(M)^*$ the \emph{Lipschitz space} and \emph{dual Lipschitz space} over $M$, respectively.

The Lipschitz space $\Lip_0(M)$ is a dual Banach space. For every $x\in M$, we define the evaluation functional $\delta(x)$ by $\duality{f,\delta(x)}=f(x)$, $f \in \Lip_0(M)$; then $\delta:M\to\dual{\Lip_0(M)}$ is an isometric embedding. The \emph{Lipschitz-free space} over $M$ is defined to be
\[
\lipfree{M} = \cl{\lspan}\,\delta(M) \subset \dual{\Lip_0(M)},
\]
and is a natural isometric predual of $\Lip_0(M)$. As well as this, $\lipfree{M}$ possesses a key universal property: any Lipschitz map $f$ from $M$ to a real Banach space $X$ satisfying $f(0)=0$ can be extended to a unique bounded linear operator $T:\lipfree{M}\to X$, in the sense that $f=T \circ \delta$ and $\norm{T}=\norm{f}_L$. While these spaces have been known since the 1950s \cite{AE56}, the papers \cite{GK,Kalton} of Godefroy and Kalton in the early 2000s demonstrated to a new generation of researchers in functional analysis their key role in the linear and non-linear theory of Banach spaces, and ever since then they have been a popular topic of study within the community. These spaces also share an intimate connection with optimal transport theory, as described for example in \cite[Section 1]{APS23}.

The most important elements of $\lipfree{M}$ are arguably the so-called \emph{(elementary) molecules}
\[
m_{xy} = \frac{\delta(x)-\delta(y)}{d(x,y)} \in S_{\lipfree{M}}
\]
for pairs of points $x\neq y\in M$. They represent the action of taking incremental quotients of functions $f\in\Lip_0(M)$. It is easy to check that the set of all elementary molecules on $M$ is symmetric and norming for $\Lip_0(M)$, and so its closed convex hull is the whole unit ball $B_{\lipfree{M}}$. 

It is possible to represent general elements of $\dual{\Lip_0(M)}$ (including of course all elements of $\lipfree{M}$) using a key construction of K. de Leeuw \cite{deLeeuw}. We consider the completely metrisable space
\[
\wt{M} = \set{(x,y)\in M\times M \,:\, x\neq y},
\]
together with its Stone-\v Cech compactification $\bwt{M}$ and the space $\meas{\bwt{M}}$ of signed Radon measures on $\bwt{M}$. The \emph{De Leeuw transform} is the linear isometric embedding $\Phi:\Lip_0(M)\to C(\bwt{M})$ given by
\[
(\Phi f)(x,y) = \frac{f(x)-f(y)}{d(x,y)}, \quad (x,y) \in \wt{M}
\]
and extending continuously to $\bwt{M}$.

The adjoint operator $\Phi^*:\meas{\bwt{M}}\to\dual{\Lip_0(M)}$, which we will also refer to as the De Leeuw transform, is a quotient map. Hence, given $\phi\in\dual{\Lip_0(M)}$ there exists $\mu\in\meas{\bwt{M}}$ such that $\dual{\Phi}\mu=\phi$, meaning that
$$
\duality{f,\phi} = \int_{\bwt{M}}(\Phi f)\,d\mu, \quad f\in\Lip_0(M).
$$
Any measure $\mu$ of this sort will be called a \emph{De Leeuw representation} of $\phi$. The easiest such examples are measures $\delta_{(x,y)}$, $(x,y)\in\wt{M}$, which represent molecules $m_{xy}$ as $(\Phi f)(x,y)=\duality{f,m_{xy}}$. Here, we denote by $\delta_\zeta$ the Dirac measure associated with $\zeta \in \bwt{M}$. The symbol $\delta$ was already used above to denote the evaluation map $\delta:M\to\Lip_0(M)^*$, but we trust that this second use will cause no confusion.

The fundamental theory of Lipschitz and Lipschitz-free spaces is presented in \cite{Weaver2}, wherein the De Leeuw transforms (and variants thereof) are used extensively (we point out that, in this book, Lipschitz-free spaces are called ``Arens-Eells spaces'' after their original inventors, and $\lipfree{M}$ is denoted \AE$(M)$). The present authors initiated a programme to study systematically the De Leeuw transforms and, as a consequence, glean information about the structure of $\lipfree{M}$ and $\Lip_0(M)^*$. The first paper resulting from this programme is \cite{APS23}; this paper is the second and is intended to be a self-contained sequel. In the paper, we extend many of the results in \cite{APS23} that apply to elements of $\lipfree{M}$ to those of $\Lip_0(M)^*$ or, in some cases, the subspace of well-behaved ``functionals that avoid infinity''. Readers of this paper will benefit from having prior knowledge of the contents of \cite{APS23}, but it is not essential; indeed, the key concepts and results from it will be repeated later in this subsection and at various points thereafter.

We observe that most of the contemporary work of functional analysts in this direction has focussed on $\lipfree{M}$, and that the structure of its bidual $\Lip_0(M)^*$ is still relatively poorly understood. In particular, to date, there is no general ``representation theorem'' for $\Lip_0(M)^*$, should one exist (though isometric representations of $\Lip_0(M)^*$ exist in some cases, such as when $M$ is a convex domain in finite-dimensional space \cite{CKK19} or a closed subset of an $\RR$-tree \cite{Godard_2010}). The structure of $\Lip_0(M)^*$ has been investigated in \cite{AP_measures}; there, the authors note that a better understanding of this space could conceivably lead to the resolution of several important open problems in the theory of $\lipfree{M}$, such as when this space is complemented or even $L$-embedded in its bidual. In the absence of a general representation theorem for $\Lip_0(M)^*$, the De Leeuw transform provides an important and useful substitute which we exploit throughout this paper.

We spend the rest of this subsection introducing notation and some key notions and findings from \cite{APS23}. Notation pertaining to Banach space theory, and Lipschitz and Lipschitz-free spaces in particular, is standard. We repeat here only concepts of direct relevance to the paper, and refer the reader to \cite{APS23} and references therein for everything else.

Many of our arguments and results will involve measures on topological spaces. All topological spaces in this paper are assumed to be Hausdorff. By a measure on a Hausdorff space $X$ we always mean a (signed) Borel, countably additive measure, and by a \emph{Radon measure} we mean one that is regular, tight, and finite (i.e. has finite total variation).
By $\meas{X}$ we denote the Banach space of all Radon measures on $X$ endowed with the total variation norm.
Recall that $\meas{X}=C_0(X)^*$ when $X$ is locally compact.
The \emph{support} $\supp(\mu)$ of a measure $\mu$ is defined as the (closed) set of all $x\in X$ such that $\abs{\mu}(U)>0$ for all open neighbourhoods $U$ of $x$.
If $\mu$ is Radon then it is concentrated on its support, and $\abs{\mu}(\supp(\mu))=\norm{\mu}$.
If $f:X\to Y$ is a continuous map between Hausdorff spaces and $\mu\in\meas{X}$, then the \emph{pushforward measure} $f_\sharp\mu\in\meas{Y}$ is given by $f_\sharp\mu(E)=\mu(f^{-1}(E))$ for Borel $E\subset Y$.
It has the property that $\int_Y g\,d(f_\sharp\mu)=\int_X (g\circ f)\,d\mu$ for every Borel function $g:Y\to\RR$ such that $g\circ f$ is $\mu$-integrable.
We refer to \cite{Bogachev} for any undefined measure-theoretical notion.

Since the De Leeuw transform $\dual{\Phi}$ is non-expansive, any De Leeuw representation $\mu\in\meas{\bwt{M}}$ of a functional $\phi\in\dual{\Lip_0(M)}$ satisfies $\norm{\mu}\geq\norm{\phi}$. By the Hahn-Banach theorem, every functional admits representations $\mu$ such that $\norm{\mu}=\norm{\phi}$, and it is not hard to see that they can be chosen to be positive \cite[Proposition 3]{Aliaga}. We call such measures \emph{optimal (De Leeuw) representations} and denote by
\[
\opr{\bwt{M}} = \set{\mu\in\meas{\bwt{M}} \,:\, \mu\geq 0 \text{ and } \norm{\mu}=\norm{\dual{\Phi}\mu}}
\]
the set of such optimal elements.

We will refer often to results from \cite{Aliaga, APS23} in this paper. For the reader's convenience, we finish this subsection by listing three of them. The first one gathers some elementary properties of the set of optimal representations.

\begin{proposition}[{\cite[Proposition 2.1]{APS23}}]
\label{pr:opr_facts}
\
\begin{enumerate}[label={\upshape{(\alph*)}}]
\item For any $\phi\in\dual{\Lip_0(M)}$ there is $\mu\in\opr{\bwt{M}}$ such that $\dual{\Phi}\mu=\phi$.
\item If $\mu\in\opr{\bwt{M}}$ then $c\cdot\mu\in\opr{\bwt{M}}$ for every $c\geq 0$.
\item If $\mu\in\opr{\bwt{M}}$ and $\lambda\in\meas{\bwt{M}}$ satisfies $0\leq\lambda\leq\mu$, then $\lambda\in\opr{\bwt{M}}$.
\item If $\mu\in\opr{\bwt{M}}$ and $E$ is a Borel subset of $\bwt{M}$ then $\mu\restrict_E\in\opr{\bwt{M}}$.
\end{enumerate}
\end{proposition}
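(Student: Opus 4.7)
The plan is to handle the four items separately, with (a) requiring real work and (b)--(d) following from short general-measure arguments.

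For part (a), the starting point is that $\Phi:\Lip_0(M)\to C(\bwt{M})$ is a linear isometric embedding and $\meas{\bwt{M}}=C(\bwt{M})^*$. Given $\phi\in\Lip_0(M)^*$, the Hahn--Banach theorem extends the bounded linear functional $\Phi f\mapsto\duality{f,\phi}$, defined on the subspace $\Phi(\Lip_0(M))\subset C(\bwt{M})$, to a functional of the same norm on $C(\bwt{M})$; by the Riesz representation theorem this gives $\mu\in\meas{\bwt{M}}$ with $\Phi^*\mu=\phi$ and $\norm{\mu}=\norm{\phi}$. The remaining task is to replace $\mu$ by a positive representation. For this I would exploit the symmetry $(x,y)\mapsto(y,x)$ on $\wt{M}$, whose Stone--\v Cech extension $\sigma:\bwt{M}\to\bwt{M}$ satisfies $\Phi f\circ\sigma=-\Phi f$ on $\wt{M}$ and hence everywhere by density. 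Consequently $\Phi^*(\sigma_\sharp\nu)=-\Phi^*\nu$ for any $\nu\in\meas{\bwt{M}}$. Setting $\nu=\tfrac12(\mu-\sigma_\sharp\mu)$ yields another representation of $\phi$ of norm at most $\norm{\phi}$, hence exactly $\norm{\phi}$, and moreover $\sigma_\sharp\nu=-\nu$. Uniqueness of the Jordan decomposition then forces $\sigma_\sharp\nu^+=\nu^-$, so $\norm{\nu^+}=\norm{\nu^-}=\tfrac12\norm{\nu}$ and $\Phi^*\nu^-=-\Phi^*\nu^+$, giving $\phi=2\Phi^*\nu^+$. Thus $2\nu^+\in\opr{\bwt{M}}$ represents $\phi$.

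Part (b) is immediate: positivity is preserved under multiplication by $c\geq 0$, and $\norm{c\mu}=c\norm{\mu}=c\norm{\Phi^*\mu}=\norm{\Phi^*(c\mu)}$.

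For (c), decompose $\mu=\lambda+(\mu-\lambda)$ with both summands positive. By the triangle inequality and optimality of $\mu$,
\[
\norm{\mu}=\norm{\Phi^*\mu}\leq\norm{\Phi^*\lambda}+\norm{\Phi^*(\mu-\lambda)}\leq\norm{\lambda}+\norm{\mu-\lambda}=\norm{\mu},
\]
where the last equality uses positivity. Equality throughout forces $\norm{\Phi^*\lambda}=\norm{\lambda}$ and $\norm{\Phi^*(\mu-\lambda)}=\norm{\mu-\lambda}$, so both belong to $\opr{\bwt{M}}$. Part (d) is the special case $\lambda=\mu\restrict_E$, since $0\leq\mu\restrict_E\leq\mu$.

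The only genuine obstacle is the positivity upgrade in (a); everything else is a direct consequence of the triangle inequality combined with the fact that for positive measures the total variation norm is additive under disjoint-in-sign decompositions. The symmetrisation trick needs the map $\sigma$ to extend to $\bwt{M}$, which is automatic from the Stone--\v Cech universal property applied to the continuous map $(x,y)\mapsto(y,x)$ from $\wt{M}$ to the compact Hausdorff space $\bwt{M}$.
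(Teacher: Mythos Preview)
Your proof is correct. The paper does not prove this proposition itself (it is quoted from \cite{APS23}), but the surrounding text and the proof of Lemma~\ref{lm:p1_invariant_bilip} indicate the intended argument for (a): Hahn--Banach followed by replacing $\mu$ with $\mu^+ + \rr_\sharp\mu^-$, where $\rr$ is the reflection you call $\sigma$. Your antisymmetrisation $\nu=\tfrac12(\mu-\sigma_\sharp\mu)$ is a small detour to the same destination: once you have $\mu$ with $\Phi^*\mu=\phi$ and $\norm{\mu}=\norm{\phi}$, the measure $\mu^+ + \sigma_\sharp\mu^-$ is already positive, satisfies $\Phi^*(\mu^+ + \sigma_\sharp\mu^-)=\Phi^*\mu^+-\Phi^*\mu^-=\phi$, and has norm $\norm{\mu^+}+\norm{\mu^-}=\norm{\mu}$, so the Jordan-decomposition uniqueness step can be skipped. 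Parts (b)--(d) are handled exactly as expected.
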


The second result shows that De Leeuw representations concentrated on $\wt{M}$ correspond to expressions of $\dual{\Phi}\mu$ as an integral of molecules in a literal sense.

\begin{proposition}[{\cite[Proposition 2.6]{APS23}}]
\label{pr:wt_bochner}
If $\mu\in\meas{\bwt{M}}$ then
$$
\dual{\Phi}(\mu\restrict_{\wt{M}}) = \int_{\wt{M}}m_{xy}\,d\mu(x,y)
$$
as a Bochner integral in $\lipfree{M}$.
\end{proposition}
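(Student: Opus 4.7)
The plan is to split the proof into two steps: first, verify that the Bochner integral on the right-hand side is well-defined as an element of $\lipfree{M}$; second, identify it with $\dual{\Phi}(\mu\restrict_{\wt{M}})$ by testing against an arbitrary functional $f \in \Lip_0(M)$.

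For the first step, I would show that the molecule map $m \colon \wt{M} \to \lipfree{M}$, $(x,y)\mapsto m_{xy}$, is continuous in the norm topology. This follows because $\delta\colon M\to\lipfree{M}$ is an isometric embedding, so $(x,y)\mapsto \delta(x)-\delta(y)$ is (jointly) continuous, while $d(x,y)$ is continuous and strictly positive on $\wt{M}$. The restriction $\mu\restrict_{\wt{M}}$ is a finite signed measure with $\norm{\mu\restrict_{\wt{M}}}\leq\norm{\mu}$. Since $\mu$ is Radon on $\bwt{M}$, its restriction to the Borel set $\wt{M}$ is inner regular by compact subsets $K\subset\wt{M}$; by continuity of $m$, each $m(K)$ is compact and hence separable in $\lipfree{M}$, so $m$ is essentially separable-valued. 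Combined with the uniform norm bound $\norm{m_{xy}}=1$ and finiteness of $\abs{\mu}(\wt{M})$, the Pettis measurability theorem yields Bochner integrability.

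For the second step, apply any $f \in \Lip_0(M)$ and compute
\begin{align*}
\duality{f,\,\textstyle\int_{\wt{M}} m_{xy}\,d\mu(x,y)}
&= \int_{\wt{M}}\duality{f,m_{xy}}\,d\mu(x,y) \\
&= \int_{\wt{M}} (\Phi f)(x,y)\,d\mu(x,y) \\
&= \int_{\bwt{M}} (\Phi f)\,d(\mu\restrict_{\wt{M}}) \\
&= \duality{f,\,\dual{\Phi}(\mu\restrict_{\wt{M}})}.
\end{align*}
The first equality is the defining property of the Bochner integral applied to the bounded linear functional $f\in\lipfree{M}^*$; the second is the definition of $m_{xy}$ together with that of $\Phi$; the third uses continuity of $\Phi f$ on all of $\bwt{M}$ and the fact that $\mu\restrict_{\wt{M}}$ is concentrated on $\wt{M}$; the last is the definition of the adjoint $\dual{\Phi}$. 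Since $\Lip_0(M)$ separates points of $\lipfree{M}$, the identity follows.

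The only real obstacle is step one, namely the strong measurability of $m$ in the absence of separability of $\lipfree{M}$; this is handled cleanly by continuity of $m$ combined with the tightness inherent in any Radon measure on $\bwt{M}$, which forces $\abs{\mu}\restrict_{\wt{M}}$ to be essentially concentrated on a $\sigma$-compact subset of $\wt{M}$ whose image under $m$ is separable.
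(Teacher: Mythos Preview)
Your argument is correct. Note, however, that the paper does not supply its own proof of this proposition: it is quoted verbatim from \cite[Proposition 2.6]{APS23} as background material, so there is nothing in the present paper to compare your proof against. Your two-step approach---continuity of the molecule map combined with tightness of the Radon measure to secure strong (Bochner) measurability, followed by a duality test against $f\in\Lip_0(M)$---is the standard route and is essentially the argument one finds in \cite{APS23}.
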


Motivated by this result, we call an element of $\lipfree{M}$ a \emph{convex integral of molecules} if it can be written as $\Phi^*\mu$ for some $\mu\in\opr{\bwt{M}}$ that is concentrated on $\wt{M}$ \cite[Definition 2.7]{APS23}, and a convex \emph{series} of molecules if $\mu$ is moreover discrete.

For the third result, let us fix some additional notation. Given $\phi\in \Lip_0(M)^*$, denote by
$$
\norming{\phi} = \set{f\in S_{\Lip_0(M)} \,:\, \duality{f,\phi}=\norm{\phi}}
$$
the set of norming functions for $\phi$. This set may be empty in general, but it never is when $\phi\in\lipfree{M}$. There is a close relationship between norming functions and optimal De Leeuw representations.

\begin{lemma}[{\cite[Lemma 2.3]{APS23}}]
\label{lm:norming_phi1}
Let $\mu\in \meas{\bwt{M}}$ be a positive De Leeuw representation of $\phi\in\Lip_0(M)^*$. Then
\[
\set{f\in S_{\Lip_0(M)} \,:\, \Phi f(\zeta)=1 \textup{ for all }\zeta\in \supp(\mu)} \subset \norming{\phi},
\]
and the two sets are equal if $\mu$ is optimal. If the set on the left-hand side is non-empty then $\mu$ is optimal.
\end{lemma}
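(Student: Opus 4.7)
The plan is to prove both the inclusion, the equality when $\mu$ is optimal, and the optimality implication together, using positivity of $\mu$, the isometric nature of $\Phi$, and the fact that $\mu$ is supported (in the measure-theoretic sense) on $\supp(\mu)$.

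First I would tackle the inclusion, which simultaneously yields the last statement. Suppose $f\in S_{\Lip_0(M)}$ satisfies $\Phi f(\zeta)=1$ for every $\zeta\in\supp(\mu)$. Since $\mu$ is a positive Radon measure concentrated on its support and $\mu$ represents $\phi$,
\[
\duality{f,\phi} = \int_{\bwt{M}}(\Phi f)\,d\mu = \int_{\supp(\mu)}(\Phi f)\,d\mu = \mu(\supp(\mu)) = \norm{\mu}.
\]
On the other hand, since $\dual{\Phi}$ is non-expansive we have $\norm{\mu}\geq\norm{\phi}$, and trivially $\duality{f,\phi}\leq\norm{\phi}$. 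Stringing these inequalities together forces $\norm{\mu}=\norm{\phi}=\duality{f,\phi}$, which simultaneously shows that $f\in\norming{\phi}$ and that $\mu$ is optimal. This proves the first inclusion and the third assertion in one stroke.

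Next, assuming $\mu$ is optimal, I would prove the reverse inclusion. Let $f\in\norming{\phi}$. Note that $\Phi f\in C(\bwt{M})$ with $\norm{\Phi f}_\infty=\lipnorm{f}=1$, so in particular $1-\Phi f\geq 0$ on $\bwt{M}$ since $\Phi f$ takes values in $[-1,1]$. Then
\[
\int_{\bwt{M}}(1-\Phi f)\,d\mu = \norm{\mu} - \duality{f,\phi} = \norm{\phi}-\norm{\phi} = 0.
\]
Since $1-\Phi f$ is continuous and non-negative and $\mu$ is a positive measure, this integral vanishing forces $\Phi f=1$ on $\supp(\mu)$: if $\Phi f(\zeta)<1$ at some $\zeta\in\supp(\mu)$, then by continuity there is an open neighbourhood $U$ of $\zeta$ on which $1-\Phi f\geq\varepsilon>0$ for some $\varepsilon$, and $\mu(U)>0$ by definition of the support, giving $\int(1-\Phi f)\,d\mu\geq\varepsilon\mu(U)>0$, a contradiction.

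The argument is essentially a three-line computation once one keeps in mind the two crucial facts: $\Phi$ is isometric so $\norm{\Phi f}_\infty=\lipnorm{f}$, and a positive Radon measure sees exactly its topological support. I do not anticipate any serious obstacle; the only point that requires a moment of care is the final step, where one must invoke the continuity of $\Phi f$ (and that $\supp(\mu)$ charges every open neighbourhood of each of its points) to upgrade the $\mu$-a.e.\ equality $\Phi f=1$ to a pointwise equality on $\supp(\mu)$.
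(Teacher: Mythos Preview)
Your proof is correct. Note that the paper does not actually prove this lemma here; it is merely quoted from \cite[Lemma 2.3]{APS23}, so there is no ``paper's own proof'' to compare against in this document. That said, your argument is the standard one and is essentially what one finds in the original reference: use positivity of $\mu$ and $\norm{\Phi f}_\infty\leq 1$ to sandwich $\duality{f,\phi}=\int\Phi f\,d\mu$ between $\norm{\phi}$ and $\norm{\mu}$, then for the reverse inclusion integrate $1-\Phi f\geq 0$ against $\mu$ and use continuity plus the definition of support to pass from $\mu$-a.e.\ to pointwise on $\supp(\mu)$.
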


A few more results from \cite{APS23} will be quoted in later sections.

\subsection{Plan of the paper}

In Section \ref{sect_top_and_metric_prelims}, we lay down the necessary topological and metric preliminaries required throughout the rest of the paper, including a coordinate system for $\bwt{M}$; the uniform compactification $\ucomp{M}$ and Lipschitz realcompactification $\rcomp{M}$ of $M$; a natural embedding of $\rcomp{M}$ in $\Lip_0(M)^*$; a natural metric extension of $d$ to $\rcomp{M}$; a useful result about extending Lipschitz functions; and a well-behaved class of elements of $\Lip_0(M)^*$ (including all elements of $\lipfree{M}$) called \emph{functionals that avoid infinity}. Then, in Section \ref{sect_points_in_bwtM}, we classify points $\zeta \in \bwt{M}$ according to the value of $\norm{\Phi^*\delta_\zeta}$ (namely $1$, $0$ or strictly between the two), and expose some of their properties.

Sections \ref{sect_optimal_rep_and_optimal_trans} and \ref{sect_relations_between_supports} are dedicated to generalising several previous results from \cite{APS23} and \cite{Aliaga}, respectively, that applied to elements of $\lipfree{M}$ and subsets of $M$, to elements of $\Lip_0(M)^*$ and subsets of $\ucomp{M}$. In Section \ref{sect_optimal_rep_and_optimal_trans}, we relate De Leeuw representations to optimal transport. Optimal representations of functionals that avoid infinity are characterised in terms of cyclical monotonicity. We also prove a general version of the classical Kantorovich-Rubinstein theorem that applies to normal Hausdorff spaces, in particular to $\rcomp{M}$, and use it to characterise measure-induced and majorisable functionals in $\Lip_0(M)^*$ in terms of their optimal representations. Section \ref{sect_relations_between_supports} concerns the relationship between the ``extended support'' of a functional in $\Lip_0(M)^*$ and the supports of its representing measures.

Finally, in Section \ref{sect_representations_of_F(M)}, we focus on the representation of elements of $\lipfree{M}$ and show that if $M$ is proper and purely 1-unrectifiable, then $\lipfree{M}$ is L-embedded in its bidual. This is a known result, but here we provide an explicit and natural example of an L-projection of $\lipfree{M}^{**}$ onto $\lipfree{M}$. We also provide an explicit representation of $\Lip_0(M)^*$ for compact and countable $M$.

\section{Topological and metric preliminaries}\label{sect_top_and_metric_prelims}

\subsection{\texorpdfstring{The ``coordinates'' of elements of $\bwt{M}$}{Coordinates}}

It will be necessary to consider ``coordinates'' of elements of $\bwt{M}$. The natural way to define them is to extend the usual coordinate projections from $\wt{M}$ onto $M$ continuously to $\bwt{M}$. Of course, we have to determine the appropriate (compact) codomain of these extensions. A standard choice is $\beta M$, and indeed this space was used for the coordinate system in \cite{APS23}. However, this compactification of $M$ has the drawback that Lipschitz functions on $M$ cannot separate points of $\beta M$ in general (see e.g. \cite[Example 2.5]{AP_measures}). For this reason, it is sometimes more appropriate to consider instead the so-called \emph{uniform (or Samuel) compactification} of $M$, which we denote by $\ucomp{M}$. This compactification is studied systematically in \cite{Woods}, and features in \cite[Chapter 7]{Weaver2} and extensively in \cite{AP_measures}. We will make use of the following characterisation.

\begin{proposition}[{\cite[Corollary 2.4]{Woods}}]\label{pr:woodsuniform}
There is a compactification $\ucomp{M}$ of $M$, unique up to homeomorphism, that satisfies
\begin{enumerate}[label={\upshape{(\roman*)}}]
 \item every bounded uniformly continuous function $f:M \to \RR$ can be extended uniquely and continuously to $\ucomp{f}:\ucomp{M} \to \RR$, and
 \item\label{ucomp_sep} given $A,B \subset M$, their closures in $\ucomp{M}$ are disjoint if and only if $d(A,B)>0$.
\end{enumerate}
\end{proposition}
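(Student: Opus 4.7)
The plan is to construct $\ucomp{M}$ as a Gelfand-type spectrum of the algebra of bounded uniformly continuous real-valued functions on $M$, or equivalently as the closure of the canonical diagonal embedding of $M$ into the product $[0,1]^{\Lambda}$, where $\Lambda$ denotes the set of all bounded uniformly continuous functions $f\colon M \to [0,1]$ and the map sends $x$ to $(f(x))_{f\in\Lambda}$. The first step is to verify that this map is a topological embedding with dense image; this rests on the fact that $\Lambda$ separates points from closed sets of $M$, a consequence of the uniform continuity of truncated distance functions $y \mapsto \min(d(y,A),1)$. Setting $\ucomp{M}$ equal to the closure of the image then yields a compact Hausdorff space into which $M$ embeds densely.

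Property (i) follows transparently from the construction: for $f\in\Lambda$, the extension $\ucomp{f}$ is the restriction to $\ucomp{M}$ of the $f$-th coordinate projection, with uniqueness forced by density; a general bounded case reduces to this via rescaling. For (ii), one direction is immediate: if $d(A,B) > 0$, then $g(x) = \min(d(x,A)/d(A,B), 1)$ lies in $\Lambda$, is zero on $A$ and one on $B$, so $\ucomp{g}$ separates $\overline{A}^{\ucomp{M}}$ from $\overline{B}^{\ucomp{M}}$. Conversely, if $d(A,B) = 0$, pick $(x_n)\subset A$ and $(y_n)\subset B$ with $d(x_n,y_n)\to 0$, pass to a subnet with $x_{n_\alpha}\to\zeta\in\ucomp{M}$, and note that uniform continuity of each coordinate $f\in\Lambda$ forces $f(y_{n_\alpha})\to\ucomp{f}(\zeta)$ as well; coordinate-wise convergence is product-topology convergence, so $y_{n_\alpha}\to\zeta$ too, placing $\zeta$ in both closures.

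For uniqueness, suppose $K$ is another compactification of $M$ satisfying (i) and (ii). By (i) and the standard correspondence between compactifications of $M$ and closed unital subalgebras of $C_b(M)$ that separate points from closed sets, there is a continuous surjection $\pi\colon K\to\ucomp{M}$ extending the identity on $M$. The goal is to show $\pi$ is injective. Supposing $\pi(\zeta)=\pi(\eta)$ with $\zeta\neq\eta$, regularity of the compact Hausdorff space $K$ gives open neighbourhoods $U$ of $\zeta$ and $V$ of $\eta$ with $\overline{U}\cap\overline{V}=\emptyset$. Setting $A=U\cap M$, $B=V\cap M$, density of $M$ in $K$ yields $\zeta\in\overline{A}^K$, $\eta\in\overline{B}^K$, and $\overline{A}^K\cap\overline{B}^K\subset\overline{U}\cap\overline{V}=\emptyset$; so (ii) applied in $K$ forces $d(A,B)>0$, whereupon (ii) applied in $\ucomp{M}$ demands $\overline{A}^{\ucomp{M}}\cap\overline{B}^{\ucomp{M}}=\emptyset$---contradicting $\pi(\zeta)\in\overline{A}^{\ucomp{M}}\cap\overline{B}^{\ucomp{M}}$, which follows from continuity of $\pi$ together with $\pi|_M=\mathrm{id}_M$.

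The main obstacle is this uniqueness step. One must resist the temptation to think that (i) alone characterises $\ucomp{M}$: the Stone--\v Cech compactification $\beta M$ satisfies (i) but generally fails (ii), since in $\beta M$ disjoint closed subsets of $M$ always have disjoint closures regardless of their metric distance. The role of (ii) is to encode the \emph{metric proximity} $A\,\delta\, B\iff d(A,B)=0$ on $M$, and by the Smirnov-type correspondence between compactifications of a Tychonoff space and its totally bounded proximities, this pins $\ucomp{M}$ down uniquely; the concrete argument above is the explicit manifestation of that general principle.
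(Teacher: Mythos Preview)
Your argument is correct. Note, however, that the paper does not actually prove this proposition: it is simply quoted from \cite[Corollary 2.4]{Woods} and used as a black box throughout. So there is no ``paper's own proof'' to compare against; you have supplied a complete, self-contained construction where the authors chose to cite the literature.

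Your approach---realising $\ucomp{M}$ as the closure of the diagonal embedding of $M$ into $[0,1]^\Lambda$, with $\Lambda$ the set of $[0,1]$-valued bounded uniformly continuous functions---is the standard one and works cleanly. The verification of (i) and (ii) is straightforward, and your uniqueness argument correctly identifies the essential point: property (i) alone only places $\ucomp{M}$ below $K$ in the lattice of compactifications (since $\beta M$ also satisfies (i)), and it is (ii) that forces the quotient map $\pi\colon K\to\ucomp{M}$ to be injective. The remark linking (ii) to the metric proximity and the Smirnov correspondence is apt and is indeed how Woods frames the result. One small point worth making explicit in the uniqueness step is that $C(\ucomp{M})\restrict_M$ equals $C_u^b(M)$ exactly (not merely contains it); this follows from Stone--Weierstrass applied to the coordinate functions, and is what guarantees the surjection $\pi$ exists in the first place.
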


In particular, disjoint closed subsets of $\ucomp{M}$ can be separated by (continuous extensions of) Lipschitz functions on $M$, which implies that such functions separate points of $\ucomp{M}$. For clarity, the closure of $A\subset\ucomp{M}$ in $\ucomp{M}$ will be denoted $\ucl{A}$.

When $M$ is \emph{uniformly discrete}, that is, when non-zero distances in $M$ have a positive lower bound, we have $\ucomp{M}=\beta M$ by \cite[Theorem 3.4]{Woods}.

With this in mind, we denote by $\pp_i:\bwt{M} \to \ucomp{M}$, $i=1,2$, the unique continuous extension of the first and second coordinate projection of elements of $\wt{M}$, respectively. Furthermore, we define the coordinate map $\pp:\bwt{M}\to \ucomp{M} \times \ucomp{M}$ by $\pp(\zeta)=(\pp_1(\zeta),\pp_2(\zeta))$. We note that if $\zeta \in \bwt{M}$ and $\pp(\zeta)=(x,y) \in \wt{M}$, then $\zeta = (x,y)$, but in general $\pp$ is not injective. It will help us to know the range of $\pp$.

\begin{proposition}\label{prop_p_range}
We have
\[
\pp(\bwt{M}) \;=\; (\ucomp{M} \times \ucomp{M})\setminus\set{(x,x)\,:\,x \in M \text{ is isolated}}.
\]
\end{proposition}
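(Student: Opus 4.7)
The plan is to prove the two inclusions separately, with the ``forbidden'' direction coming from local topology at isolated points and the ``allowed'' direction coming from nets in $\wt{M}$ combined with compactness of $\bwt{M}$.

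For the inclusion $\pp(\bwt{M})\subseteq(\ucomp{M}\times\ucomp{M})\setminus\set{(x,x)\,:\,x\in M \text{ isolated}}$, I will first observe that if $x\in M$ is isolated then $\set{x}$ is open in $\ucomp{M}$. This follows from Proposition \ref{pr:woodsuniform}\ref{ucomp_sep} applied to $A=\set{x}$ and $B=M\setminus\set{x}$ (which have positive distance), giving that $\ucl{M\setminus\set{x}}$ is a closed subset of $\ucomp{M}$ missing $x$, and density of $M$ in $\ucomp{M}$ forces its complement to equal $\set{x}$. Consequently $\set{(x,x)}$ is open in $\ucomp{M}\times\ucomp{M}$, so $\pp^{-1}(\set{(x,x)})$ is open in $\bwt{M}$. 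If it were nonempty, density of $\wt{M}$ in $\bwt{M}$ would force it to contain some $(a,b)\in\wt{M}$, but $\pp(a,b)=(a,b)=(x,x)$ would contradict $a\neq b$.

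For the reverse inclusion, given $(a,b)\in\ucomp{M}\times\ucomp{M}$ not of the forbidden form, I will produce a net $(x_\alpha,y_\alpha)\in\wt{M}$ with $x_\alpha\to a$ and $y_\alpha\to b$ in $\ucomp{M}$. Any cluster point $\zeta\in\bwt{M}$ of this net (which exists by compactness) will then satisfy $\pp(\zeta)=(a,b)$ by continuity of $\pp_1,\pp_2$. I split into three cases: (i) $a\neq b$, where disjoint open neighbourhoods in $\ucomp{M}$ together with density of $M$ furnish the net; (ii) $a=b\in M$ non-isolated, where I set $x_\alpha=a$ constant and pick $y_\alpha\in M\setminus\set{a}$ converging to $a$ in $M$ (hence in $\ucomp{M}$); (iii) $a=b\in\ucomp{M}\setminus M$, the subtle case, where I need every neighbourhood of $a$ in $\ucomp{M}$ to contain at least two points of $M$ to guarantee the existence of such a net.

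The main obstacle, therefore, is case (iii). I would handle it with a short Hausdorff-separation argument: if some neighbourhood $U$ of $a$ contained only a single point $x$ of $M$, then since $a\notin M$ we have $a\neq x$, and picking disjoint open sets separating $a$ from $x$ and intersecting with $U$ would yield a neighbourhood of $a$ meeting no point of $M$, contradicting density of $M$ in $\ucomp{M}$. Thus every neighbourhood of $a$ contains at least two distinct points of $M$, and filtering over the neighbourhood basis of $a$ produces the required net $(x_\alpha,y_\alpha)\in\wt{M}$ with both coordinates converging to $a$.
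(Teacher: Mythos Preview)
Your proof is correct and follows essentially the same approach as the paper. The only organisational difference is that the paper unifies your cases (ii) and (iii) into the single observation that an isolated point of $\ucomp{M}$ must belong to $M$ (by density), so if $(\xi,\eta)$ is not of the forbidden form then either $\xi\neq\eta$ or $\xi=\eta$ is an accumulation point of $\ucomp{M}$, and in both situations $(U\times V)\cap\wt{M}\neq\varnothing$ for all open $U\ni\xi$, $V\ni\eta$; the paper then intersects the $\bwt{M}$-closures of these sets rather than phrasing it via nets and cluster points, but this is equivalent to what you do.
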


\begin{proof}
Let $(\xi,\eta) \in \ucomp{M} \times \ucomp{M}$, with $(\xi,\eta) \neq (x,x)$ for all isolated $x \in M$. Then either $\xi \neq \eta$ or $\xi=\eta$ is an accumulation point of $\ucomp{M}$ (as an isolated point of $\ucomp{M}$ must belong to $M$). It follows that given open $U \ni \xi$ and $V \ni \eta$, we have $(U \times V) \cap \wt{M} \neq \varnothing$. Thus by compactness there exists $\zeta$ in the set
\[
\bigcap_{U,V} \cl{((U \times V) \cap \wt{M})}^{\bwt{M}},
\]
where the intersection is taken over all such pairs of open sets as above. Then $\pp(\zeta) = (\xi,\eta)$. Indeed, given open $U,V$ as above, let open $U' \ni \xi$, $V' \ni \eta$ satisfy $\cl{U'} \times \cl{V'} \subset U \times V$. Then $\zeta \in \cl{((U' \times V') \cap \wt{M})}^{\bwt{M}}$, so by continuity
\[
\pp(\zeta) \in \cl{U'} \times \cl{V'} \;\subset\; U \times V.
\]
Because $\ucomp{M}$ is Hausdorff, we must have $\pp(\zeta)=(\xi,\eta)$.

Conversely, suppose that $x \in M$ is isolated. Then $x$ is isolated in $\ucomp{M}$, whence $\pp^{-1}(x,x)$ is an open subset of $\bwt{M}$; by density of $\wt{M}$, we have $\pp^{-1}(x,x)=\varnothing$.
\end{proof}

\subsection{The Lipschitz realcompactification of \texorpdfstring{$M$}{M} and extended metric}\label{subsec_realcompact}

Regarding extensions of unbounded functions, we have that every Lipschitz function $f:M \to \RR$ can be extended uniquely and continuously to $\ucomp{f}:\ucomp{M} \to [-\infty,\infty]$ \cite[Proposition 2.9]{AP_measures}. To avoid problems with infinity, most of the time we will restrict our attention to the set
$$
\rcomp{M}:=\set{\xi \in \ucomp{M} \,:\, \ucomp{\rho}(\xi) < \infty} ,
$$
where $\rho:M\to \RR$ is given by
\begin{equation}\label{eq:rho}
\rho(x)=d(x,0) .
\end{equation}
Given the maximality of $\rho$ in $B_{\Lip_0(M)}$, it is clear that $\xi \in \ucomp{M}$ belongs to $\rcomp{M}$ if and only if $\abs{\ucomp{f}(\xi)}<\infty$ for all Lipschitz functions $f:M \to \RR$; equivalently, if $\xi$ is the limit of a bounded net in $M$. The set $\rcomp{M}$, which is dense and open in $\ucomp{M}$, is called the \emph{Lipschitz realcompactification} of $M$ in \cite{GarridoMerono}. Evidently, $\rcomp{M}=\ucomp{M}$ if and only if $M$ is bounded, and $\rcomp{M} = M$ if and only if $M$ is \emph{proper}, i.e. its closed balls are compact. Given a Lipschitz function $f:M \to \RR$, we set $\rcomp{f} = \ucomp{f}\restrict_{\rcomp{M}}$; since $M$ is dense in $\rcomp{M}$, this function (whose range is a subset of $\RR$) is the unique continuous extension of $f$ to $\rcomp{M}$. Similarly to above, we denote by $\rcl{A}$ the closure of a set $A \subset \rcomp{M}$ in $\rcomp{M}$.

Let $\tau$ denote the subspace topology on $\rcomp{M}$. We make some further observations which lead us to an extension result. Define $\delta: \rcomp{M} \to \Lip_0(M)^*$ by $\duality{f,\delta(\xi)} = \rcomp{f}(\xi)$, $f \in \Lip_0(M)$. This map naturally extends the standard isometric embedding $\delta$ of $M$ into $\lipfree{M}$ given in Section \ref{subsect_notation_prelims}.

\begin{proposition}\label{prop_realcompact_embedding}
The map $\delta$ is a $\tau$-$w^*$ homeomorphism onto $\delta(\rcomp{M})$, which is $w^*$-closed in $\Lip_0(M)^*$.
\end{proposition}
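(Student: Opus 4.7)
My plan is to verify, in sequence, that $\delta$ is well-defined and injective, that it is $\tau$-$w^*$ continuous, that its inverse on the image is $w^*$-$\tau$ continuous, and finally that $\delta(\rcomp{M})$ is $w^*$-closed. For well-definedness, the pointwise inequality $\abs{f} \le \Lip(f)\rho$ on $M$ (which comes from $f(0)=0$) extends by continuity to $\rcomp{M}$, so $\abs{\duality{f,\delta(\xi)}} \le \Lip(f)\rcomp{\rho}(\xi)$; moreover $\rho \in B_{\Lip_0(M)}$ realises the supremum, yielding $\norm{\delta(\xi)} = \rcomp{\rho}(\xi) < \infty$. Injectivity follows from the remark after Proposition~\ref{pr:woodsuniform} that (extensions of) Lipschitz functions on $M$ separate points of $\ucomp{M}$: given distinct $\xi,\eta \in \rcomp{M}$, a separating Lipschitz function, after subtracting its value at $0$, lies in $\Lip_0(M)$ and still separates. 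Continuity of $\delta$ itself is immediate because every coordinate map $\xi \mapsto \duality{f,\delta(\xi)} = \rcomp{f}(\xi)$ is continuous on $(\rcomp{M},\tau)$ by construction of $\rcomp{f}$.

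The $w^*$-$\tau$ continuity of the inverse is the main technical step. Suppose $\delta(\xi_\alpha) \to \delta(\xi)$ in $w^*$, so $\rcomp{f}(\xi_\alpha) \to \rcomp{f}(\xi)$ for every $f \in \Lip_0(M)$; I need $\xi_\alpha \to \xi$ in $\tau$, which, since $\tau$ is inherited from $\ucomp{M}$, amounts to $\ucomp{g}(\xi_\alpha) \to \ucomp{g}(\xi)$ for every bounded uniformly continuous $g$. The bridge is the classical Pasch-Hausdorff approximation: the inf-convolutions $g_n(x) = \inf_y\set{g(y)+n\,d(x,y)}$ are bounded and $n$-Lipschitz, and converge uniformly to $g$ on $M$. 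After subtracting $g_n(0)$ they lie in $\Lip_0(M)$, so the hypothesis gives $\ucomp{g_n}(\xi_\alpha) \to \ucomp{g_n}(\xi)$; a $3\varepsilon$-argument using the inherited bound $\abs{\ucomp{g}-\ucomp{g_n}} \le \norm{g-g_n}_\infty$ on $\ucomp{M}$ then completes the step.

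For $w^*$-closedness, the essential observation is that $\norm{\delta(\xi)} = \rcomp{\rho}(\xi) = \duality{\rho,\delta(\xi)}$. Consequently, if $\delta(\xi_\alpha) \to \phi$ in $w^*$, the scalars $\rcomp{\rho}(\xi_\alpha)$ converge in $\RR$ and in particular stay bounded. Compactness of $\ucomp{M}$ then yields a subnet $\xi_\beta \to \eta \in \ucomp{M}$; continuity of $\ucomp{\rho}$ forces $\ucomp{\rho}(\eta) < \infty$, so $\eta \in \rcomp{M}$. Since $\eta$ lies in the subspace, the same subnet also converges in $\tau$, and the continuity established above gives $\delta(\xi_\beta) \to \delta(\eta)$ in $w^*$; thus $\phi = \delta(\eta) \in \delta(\rcomp{M})$. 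The main obstacle throughout is bridging the topology on $\ucomp{M}$ (generated by \emph{all} bounded uniformly continuous functions) and the a priori weaker topology induced by Lipschitz test functions alone; the Pasch-Hausdorff approximation is the crux.
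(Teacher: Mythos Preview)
Your proof is correct, and the argument for $w^*$-closedness is essentially the same as the paper's. The difference lies in how you establish that $\delta^{-1}$ is continuous. The paper argues via compactness: the sets $K_n = (\rcomp{\rho})^{-1}([0,n])$ are $\tau$-compact, so $\delta\restrict_{K_n}$ is automatically a homeomorphism onto its image, and one then checks that $\delta$ is an open map by decomposing any open set into pieces lying in the $K_n$ and using that $\delta(U_n)$ is relatively $w^*$-open. This is slick and avoids any analysis of what the topology of $\ucomp{M}$ actually looks like. Your route instead proves the stronger (and interesting in its own right) fact that the initial topology on $\rcomp{M}$ induced by the extensions $\rcomp{f}$, $f\in\Lip_0(M)$, already coincides with $\tau$, by approximating an arbitrary bounded uniformly continuous $g$ uniformly by Lipschitz functions via Pasch--Hausdorff inf-convolution. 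Your approach is more hands-on and makes explicit why Lipschitz test functions suffice; the paper's approach is shorter and uses nothing beyond the continuous-bijection-from-compact-to-Hausdorff principle.
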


\begin{proof}
 It is evident that $\delta$ is continuous. It is injective because points of $\rcomp{M}$ are separated by Lipschitz functions. To see that it is an open map, we first define for $n \in \NN$ the subsets $U_n = (\rcomp{\rho})^{-1}([0,n))$ and $K_n = (\rcomp{\rho})^{-1}([0,n])$ of $\rcomp{M}$, and $V_n=\set{\phi \in \Lip_0(M)^* \,:\, \duality{\rho,\phi} < n}$, where $\rho$ is as in \eqref{eq:rho}. We observe that $U_n$ and $K_n$ are open and compact in $\rcomp{M}$, respectively, and that $\delta(U_n) = V_n \cap \delta(\rcomp{M})$ is $w^*$-open in $\delta(\rcomp{M})$. Given open $U \subset \rcomp{M}$, $\delta(U \cap U_n)$ is a $w^*$-open subset of $\delta(K_n)$ because $\delta\restrict_{K_n}$ is a homeomorphism onto $\delta(K_n)$ by compactness. As $\delta(U_n)$ is $w^*$-open in $\delta(\rcomp{M})$, it follows that $\delta(U \cap U_n)$ is also $w^*$-open in $\delta(\rcomp{M})$. As this holds for all $n \in \NN$, we conclude that the same holds for $\delta(U)$. This proves the first part.
 
 To see that $\delta(\rcomp{M})$ is $w^*$-closed in $\Lip_0(M)^*$, let $\phi \in \cl{\delta(\rcomp{M})}^{w^*}$ and choose $n>\duality{\rho,\phi}$. Then there exists a net $(\xi_i) \subset K_n$ such that $\delta(\xi_i) \to \phi$ in the $w^*$-topology; given a $\tau$-accumulation point $\xi \in K_n$ of $(\xi_i)$, it follows that $\phi = \delta(\xi) \in \delta(\rcomp{M})$ by continuity of $\delta$.
\end{proof}

Next, we consider a natural metric on $\rcomp{M}$. The above embedding allows us to easily define a natural metric extension $\bar{d}$ of $d$ to $\rcomp{M}$; given $\xi,\eta \in \rcomp{M}$, simply set
\begin{equation}
\bar{d}(\xi,\eta) = \norm{\delta(\xi)-\delta(\eta)}_{\Lip_0(M)^*} .
\end{equation}
Given the maximality of $\rho$ in $B_{\Lip_0(M)}$, we have $\bar{d}(\xi,0) = \|\delta(\xi)\| = \rcomp{\rho}(\xi)$ for all $\xi \in \rcomp{M}$. We collect some more observations about $\rcomp{M}$ in the next proposition.

\begin{proposition}\label{prop_rcomp_observations}~
\begin{enumerate}[label={\upshape{(\roman*)}}]
\item The metric $\bar{d}$ is $\tau$-lower semicontinuous and finer than $\tau$;
 \item\label{bar-d-balls_compact} the $\bar{d}$-closed balls $B_{\bar{d}}(\xi,r):= \set{\eta \in \rcomp{M}\,:\,\bar{d}(\xi,\eta) \leq r}$, $r>0$, are $\tau$-compact;
 \item\label{compact_bounded} $\tau$-compact subsets of $\rcomp{M}$ are $\bar{d}$-bounded;
 \item the metric space $(\rcomp{M},\bar{d})$ is complete;
 \item\label{rcomp_normal} $\rcomp{M}$ is normal;
 \item $M$ is $\bar{d}$-closed in $\rcomp{M}$.
\end{enumerate}
\end{proposition}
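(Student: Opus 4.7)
The unifying idea is that, by Proposition~\ref{prop_realcompact_embedding}, $\delta$ is a $\tau$-$w^*$ homeomorphism from $\rcomp{M}$ onto the $w^*$-closed subset $\delta(\rcomp{M})$ of $\Lip_0(M)^*$, while $\bar{d}$ is \emph{defined} as the pullback of the dual norm through $\delta$. This lets me transfer standard Banach space facts about the $w^*$-topology and the norm on $\Lip_0(M)^*$ to statements about $\tau$ and $\bar{d}$ on $\rcomp{M}$, which disposes of most of the items.

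For (i), the dual norm is $w^*$-lower semicontinuous and the map $(\xi,\eta) \mapsto \delta(\xi)-\delta(\eta)$ is jointly $\tau$-$w^*$ continuous, so $\bar{d}$ is $\tau$-lsc; that $\bar{d}$ is finer than $\tau$ follows because norm convergence implies $w^*$ convergence, which the inverse homeomorphism sends to $\tau$-convergence. For (ii), I would write
\[
\delta(B_{\bar{d}}(\xi,r)) \;=\; \delta(\rcomp{M}) \cap \set{\phi \in \Lip_0(M)^* \,:\, \norm{\phi-\delta(\xi)} \leq r}.
\]
The norm ball on the right is $w^*$-compact by Banach–Alaoglu, and $\delta(\rcomp{M})$ is $w^*$-closed, so the intersection is $w^*$-compact; the homeomorphism makes $B_{\bar{d}}(\xi,r)$ $\tau$-compact. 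Part (iii) is the dual statement: a $\tau$-compact $K\subset \rcomp{M}$ maps to a $w^*$-compact set, which is norm-bounded by uniform boundedness, hence $K$ is $\bar{d}$-bounded. For (iv), a $\bar{d}$-Cauchy sequence becomes norm-Cauchy in the Banach space $\Lip_0(M)^*$, converges in norm to some $\phi$, and the $w^*$-closedness of $\delta(\rcomp{M})$ yields $\phi=\delta(\xi)$ for some $\xi\in\rcomp{M}$, with $\bar{d}(\xi_n,\xi)=\norm{\delta(\xi_n)-\delta(\xi)}\to 0$.

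The part that requires a genuinely different argument, and which I expect to be the main (mild) obstacle, is (v): one cannot reach normality of $(\rcomp{M},\tau)$ from metrizability of $\bar{d}$ because $\bar{d}$ is in general \emph{strictly} finer than $\tau$. Instead I would argue topologically: $\rcomp{M}$ is open in the compact Hausdorff space $\ucomp{M}$, hence locally compact Hausdorff and in particular regular; and setting $K_n = (\ucomp{\rho})^{-1}([0,n])$ yields $\ucomp{M}$-closed (hence compact) sets contained in $\rcomp{M}$ with $\rcomp{M}=\bigcup_n K_n$, so $\rcomp{M}$ is $\sigma$-compact and therefore Lindelöf. A regular Lindelöf space is normal, giving (v). Finally, for (vi), because the standard embedding $\delta\restrict_M : M \to \lipfree{M}$ is an isometry, $\bar{d}$ restricts to $d$ on $M$; any $\bar{d}$-Cauchy sequence in $M$ is therefore $d$-Cauchy in the complete space $(M,d)$ and converges in $M$, so the limit in $\rcomp{M}$ of such a sequence must already lie in $M$ by Hausdorffness of $\tau$ together with (i).
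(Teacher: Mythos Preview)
Your proposal is correct and, for items (i)--(iv) and (vi), follows exactly the paper's approach: the paper simply states that these ``follow immediately from Proposition~\ref{prop_realcompact_embedding} and the corresponding properties of norm and weak$^*$ topologies in dual Banach spaces,'' and for (vi) that ``$M$ is complete and $d=\bar{d}$ on $M$.'' Your write-up just unpacks this.

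For (v) there is a minor but noteworthy difference. The paper lumps (v) in with (i)--(iv) as following from the embedding and general $w^*$/norm facts, which is somewhat opaque since $(X^*,w^*)$ is not normal in general. You correctly flag that normality cannot come from the $\bar{d}$-metric (it is strictly finer than $\tau$) and instead argue directly: $\rcomp{M}$ is open in the compact Hausdorff space $\ucomp{M}$, hence locally compact Hausdorff, hence regular; it is $\sigma$-compact via the sets $K_n=(\ucomp{\rho})^{-1}([0,n])$, hence Lindel\"of; and regular Lindel\"of spaces are normal. This is almost certainly the argument the authors have in mind (the $K_n$ and $U_n$ already appear in the proof of Proposition~\ref{prop_realcompact_embedding}), but your version is more self-contained and makes explicit why (v) is the one item that does not reduce to a routine transfer of Banach-space dualities.
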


Statements (i) to (v) follow immediately from Proposition \ref{prop_realcompact_embedding} and the corresponding properties of norm and weak$^*$ topologies in dual Banach spaces, and (vi) holds because $M$ is complete and $d=\bar{d}$ on $M$.

A further proposition yields additional information about the continuity properties of $\bar{d}$ and its associated distance maps, together with a quantitative version of Proposition \ref{prop_rcomp_observations} \ref{rcomp_normal}.

\begin{proposition}\label{pr:bar-d_cont}~
 \begin{enumerate}[label={\upshape{(\roman*)}}]
  \item\label{bar-d_cont_1} Given a $\tau$-closed non-empty set $A \subset \rcomp{M}$, the function $\bar{d}(\cdot,A)$ is $\tau$-lower semicontinuous.
  \item\label{bar-d_cont_2} Given a non-empty set $A \subset M$, $\bar{d}(\cdot,\rcomp{\cl{A}})=\rcomp{d}(\cdot,A)$, and is therefore $\tau$-continuous.
  \item\label{bar-d_cont_3} Given $\xi \in \rcomp{M}$, $\bar{d}(\cdot,\xi)$ is $\tau$-continuous if and only if $\xi \in M$, i.e.~$\bar{d}$ is separately $\tau$-continuous if and only if the fixed coordinate belongs to $M$.
  \item\label{bar-d_cont_4} Given $\tau$-closed non-empty sets $A,B \subset \rcomp{M}$, 
  \[
   \bar{d}(A,B) = \sup\set{d(U \cap M,V \cap M) \,:\, \text{$U \supset A$, $V \supset B$ are $\tau$-open subsets of $\rcomp{M}$}}.
  \]
 \end{enumerate}
\end{proposition}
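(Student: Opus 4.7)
I would tackle the four items in order, using Proposition \ref{prop_rcomp_observations} as the main input. Items (i)--(iii) are relatively modest, while (iv) carries the bulk of the technical weight.

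For (i), given $\xi_\alpha \to \xi$ in $\tau$, I aim to show $\bar{d}(\xi, A) \leq \liminf_\alpha \bar{d}(\xi_\alpha, A)$. Pass to a subnet realising the $\liminf$ and pick near-minimisers $\eta_\alpha \in A$ with $\bar{d}(\xi_\alpha, \eta_\alpha)$ close to $\bar{d}(\xi_\alpha, A)$. Since $\rcomp{\rho}$ is $\tau$-continuous, $\bar{d}(\xi_\alpha, 0) = \rcomp{\rho}(\xi_\alpha)$ is eventually bounded, and hence so is $\bar{d}(\eta_\alpha, 0)$; thus $(\eta_\alpha)$ eventually lies in some $\bar{d}$-ball, which is $\tau$-compact by Proposition \ref{prop_rcomp_observations}\ref{bar-d-balls_compact}. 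Extracting a $\tau$-convergent subnet $\eta_\alpha \to \eta$, closedness of $A$ gives $\eta \in A$, and joint $\tau$-lsc of $\bar{d}$ (Proposition \ref{prop_rcomp_observations}(i)) yields $\bar{d}(\xi, \eta) \leq \liminf_\alpha \bar{d}(\xi_\alpha, \eta_\alpha)$.

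For (ii), the $\leq$ direction follows by observing $\bar{d}(x, \rcomp{\cl{A}}) \leq d(x, A)$ for $x \in M$, then passing to $\xi \in \rcomp{M}$ via (i) together with the $\tau$-continuity of $\rcomp{d}(\cdot, A)$. The $\geq$ direction uses the test function $g(x) = d(x, A) - d(0, A) \in B_{\Lip_0(M)}$: since $g \equiv -d(0, A)$ on $A$ and $\rcomp{g}$ is the unique $\tau$-continuous extension, $\rcomp{g} \equiv -d(0, A)$ throughout $\rcomp{\cl{A}}$, so $\bar{d}(\xi, \eta) \geq \rcomp{g}(\xi) - \rcomp{g}(\eta) = \rcomp{d}(\xi, A)$ for every $\eta \in \rcomp{\cl{A}}$. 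Item (iii) follows: the ``if'' direction specialises (ii) to $A = \{\xi\} \subset M$, while the ``only if'' direction uses density of $M$ and $\bar{d}$-closedness of $M$ (Proposition \ref{prop_rcomp_observations}(vi)): for $\xi \notin M$ one has $\bar{d}(\xi, M) > 0$, so any $\tau$-net $(x_\alpha) \subset M$ with $x_\alpha \to \xi$ satisfies $\bar{d}(x_\alpha, \xi) \not\to 0$, which rules out $\tau$-continuity of $\bar{d}(\cdot, \xi)$.

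The main obstacle is (iv). The upper bound $\sup \leq \bar{d}(A, B)$ is an immediate corollary of (ii): for $\tau$-open $U \supset A$, density makes $U \cap M$ $\tau$-dense in $U$, so $\rcomp{\cl{U \cap M}} \supset U \supset A$, and similarly for $V, B$; applying (ii) to both arguments gives $d(U \cap M, V \cap M) = \bar{d}(\rcomp{\cl{U \cap M}}, \rcomp{\cl{V \cap M}}) \leq \bar{d}(A, B)$ by monotonicity of $\bar{d}$ in its set arguments. For the lower bound $\sup \geq \bar{d}(A,B)$, fix $r < s := \bar{d}(A, B)$; the plan is to produce a single $f \in B_{\Lip_0(M)}$ whose $\tau$-continuous extension $\rcomp{f}$ satisfies $\inf_A \rcomp{f} - \sup_B \rcomp{f} > r$. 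If such $f$ exists, then $U := \{\rcomp{f} > c_1\}$ and $V := \{\rcomp{f} < c_2\}$ for $c_1 > c_2$ with $c_1 - c_2 > r$ are $\tau$-open neighbourhoods of $A$ and $B$, and every pair $(x, y) \in (U \cap M) \times (V \cap M)$ satisfies $d(x, y) \geq f(x) - f(y) > r$, as required. Constructing such an $f$ is the delicate step: the natural McShane extension assigning $0$ on $A$ and $s$ on $B$ is only Lipschitz with respect to $\bar{d}$, and restricting it to $M$ before re-extending $\tau$-continuously need not preserve the prescribed boundary values on $A \setminus M$ or $B \setminus M$, since $\bar{d}$-continuity does not imply $\tau$-continuity. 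I expect to handle this by combining Proposition \ref{pr:woodsuniform}\ref{ucomp_sep} (which promotes $d$-separation in $M$ to $\tau$-separation of closures in $\ucomp{M}$) with a localisation to $\bar{d}$-bounded (hence $\tau$-compact) portions of $A, B$, where a Lipschitz function on $M$ built from $\bar{d}(\cdot, A)|_M$ capped at $s$ does the required separating.
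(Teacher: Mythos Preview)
Your treatments of (i)--(iii) and of the inequality $\sup\leq\bar{d}(A,B)$ in (iv) are correct and differ from the paper only cosmetically. For (i) the paper instead identifies the sublevel set $\{\bar{d}(\cdot,A)\leq r\}$ with $\delta^{-1}(\delta(A)+rB_{\Lip_0(M)^*})$ and uses $w^*$-closedness; your net-plus-compactness argument is an equally valid route.

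The gap is in the lower bound of (iv). You correctly identify the obstruction---a $\bar{d}$-Lipschitz McShane extension restricted to $M$ and then re-extended $\tau$-continuously need not recover its values on $A\setminus M$ or $B\setminus M$---but your proposed repair does not close it. The function $h=\min\bigl(\bar{d}(\cdot,A)\restrict_M,\,s\bigr)$ is indeed $1$-Lipschitz on $M$, but there is no reason for its $\tau$-continuous extension $\rcomp{h}$ to vanish on $A$: that would require $\bar{d}(\cdot,A)$ itself to be $\tau$-continuous, which is precisely what fails in general. Localising to $\tau$-compact pieces of $A$ does not help, since the defect is pointwise at each $\xi\in A\setminus M$. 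And Proposition~\ref{pr:woodsuniform}\ref{ucomp_sep} runs in the wrong direction: it converts $d$-separation of subsets \emph{of $M$} into $\tau$-separation of their closures, whereas here you are starting from $\bar{d}$-separation of subsets of $\rcomp{M}$.

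The paper sidesteps the construction of a separating Lipschitz function entirely. Given $\alpha<\bar{d}(A,B)$, choose $r\in(\alpha,\bar{d}(A,B))$. By (i), the set $U'=\{\xi:\bar{d}(\xi,B)>r\}$ is already $\tau$-open and contains $A$; normality (Proposition~\ref{prop_rcomp_observations}\ref{rcomp_normal}) then gives open $U$ with $A\subset U\subset\rcl{U}\subset U'$, so $\bar{d}(\rcl{U},B)\geq r>\alpha$. One more pass with the roles of $B$ and $\rcl{U}$ produces open $V\supset B$ with $\bar{d}(\rcl{U},\rcl{V})>\alpha$, and then monotonicity yields $d(U\cap M,V\cap M)\geq\bar{d}(\rcl{U},\rcl{V})>\alpha$. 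In short: part (i) already hands you the open neighbourhoods you need, so no Lipschitz-extension problem arises. (Your strategy \emph{could} be rescued via Proposition~\ref{pr:Mat_app}, applied to the locally constant function taking values $s$ on $A$ and $0$ on $B$; but that is more machinery than the two-line argument above.)
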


\begin{proof}
 To prove \ref{bar-d_cont_1}, first fix a $\tau$-closed non-empty set $A \subset \rcomp{M}$ and let $r > 0$. Define $C=\set{\xi \in \rcomp{M} \,:\, \bar{d}(\xi,A) \leq r}$. We will show that $C$ is $\tau$-closed, which will prove \ref{bar-d_cont_1}. The set $\delta(A)$ is $w^*$-closed in $\Lip_0(M)^*$ by Proposition \ref{prop_realcompact_embedding}, thus $\delta(A) + rB_{\Lip_0(M)^*}$ is also $w^*$-closed because $B_{\Lip_0(M)^*}$ is $w^*$-compact. We claim that $C=\delta^{-1}(\delta(A) + rB_{\Lip_0(M)^*})$. Inclusion $\supset$ is clear by the definition of $\bar{d}$. For the other inclusion, let $\xi \in C$. There exist $\eta_n \in A$, $n \in \NN$, such that $\lim_n \bar{d}(\xi,\eta_n) \leq r$. By Proposition \ref{prop_rcomp_observations} \ref{bar-d-balls_compact}, the $\eta_n$ have a $\tau$-accumulation point $\eta$ which, by lower semicontinuity of $\bar{d}$, satisfies $\bar{d}(\xi,\eta) \leq r$.
 As $A$ is $\tau$-closed we have $\eta \in A$, hence $\delta(\xi) \in \delta(A)+rB_{\Lip_0(M)^*}$.
 
 Part \ref{bar-d_cont_2} follows partially from \ref{bar-d_cont_1}. Let $A \subset M$ be non-empty. Consider the function $f \in B_{\Lip_0(M)}$, given by $f(x)=d(x,A)-d(0,A)$, $x \in M$, and its $\tau$-continuous extension $\rcomp{f}$. It is clear that $\rcomp{f}(\xi)+d(0,A)=\rcomp{d}(\xi,A)$ for $\xi \in \rcomp{M}$. Now we show that $\bar{d}(\xi,\rcl{A})=\rcomp{f}(\xi)+d(0,A)$ for all such $\xi$. Let $\xi \in \rcomp{M}$ and $\eta \in \rcl{A}$. Given a net $(x_i)$ of points in $A$ converging to $\eta$, we have $\rcomp{f}(\eta)=\lim_i f(x_i) = -d(0,A)$, hence
 \[
 \bar{d}(\xi,\eta) \geq \duality{f,\delta(\xi)-\delta(\eta)} = \rcomp{f}(\xi)-\rcomp{f}(\eta) = \rcomp{f}(\xi) + d(0,A).
 \]
 As this holds for all such $\eta$, $\bar{d}(\xi,\rcl{A}) \geq \rcomp{f}(\xi) + d(0,A)$. Now suppose that $(y_i)$ is a net of points in $M$ converging to $\xi$. Then by \ref{bar-d_cont_1} and the fact that $\bar{d}$ extends $d$,
 \[
  \bar{d}(\xi,\rcl{A})\leq \liminf_i \bar{d}(y_i,\rcl{A}) \leq \liminf_i d(y_i,A) = \rcomp{f}(\xi)+d(0,A).
 \]
 
 One implication in \ref{bar-d_cont_3} follows trivially from \ref{bar-d_cont_2}. Now suppose that $\bar{d}(\cdot,\xi)$ is $\tau$-continuous. Let $(x_i)$ be a net of points in $M$ converging to $\xi$. By $\tau$-continuity, $\lim_i\bar{d}(x_i,\xi) = 0$, meaning that $(x_i)$ is a $d$-Cauchy net in $M$; by completeness there exists $x \in M$ such that $\lim_i d(x_i,x)=0$, and we must conclude that $\xi=x \in M$.
 
 To prove \ref{bar-d_cont_4}, let $A,B \subset \rcomp{M}$ be non-empty and $\tau$-closed. Let $U \supset A$, $V \supset B$ be $\tau$-open and define $f \in B_{\Lip_0(M)}$ by $f(x)=d(x,V \cap M)-d(0,V \cap M)$. Given $\xi \in A$ and $\eta \in B$, let $(x_i,y_i)$ be a net in $M\times M$ that converges to $(\xi,\eta)\in\rcomp{M}\times\rcomp{M}$. Then $x_i \in U \cap M$ and $y_i \in V \cap M$ for all $i$ large enough, so
 \[
  \bar{d}(\xi,\eta) \geq \rcomp{f}(\xi)- \rcomp{f}(\eta) = \lim_i f(x_i) - f(y_i) = \lim_i d(x_i,V \cap M) \geq d(U \cap M,V \cap M),
 \]
 giving $\bar{d}(A,B) \geq d(U \cap M,V \cap M)$. Conversely, suppose that $\bar{d}(A,B) > \alpha$. Set $r=\frac{1}{2}(\bar{d}(A,B)+\alpha)$ and define $U'=\set{\xi \in \rcomp{M} \,:\, \bar{d}(\xi,B) > r}$, which includes $A$ by hypothesis and is $\tau$-open by \ref{bar-d_cont_1}. Using Proposition \ref{prop_rcomp_observations} \ref{rcomp_normal}, let $U$ be a $\tau$-open set satisfying $A \subset U \subset \rcl{U} \subset U'$. Then $\bar{d}(\rcl{U},B) \geq \bar{d}(U',B) \geq r > \alpha$. Repeating this process with $B$ and $\rcl{U}$ substituted for $A$ and $B$, respectively, yields a $\tau$-open set $V \supset B$ satisfying $d(U \cap M,V \cap M) \geq \bar{d}(\rcl{U},\rcl{V}) > \alpha$. 
\end{proof}

We remark that, as a consequence of Proposition \ref{pr:bar-d_cont} \ref{bar-d_cont_3}, $\bar{d}$ is not compatible with $\tau$ unless $\rcomp{M}=M$, i.e.~$M$ is proper. Finally, we state an extension result which follows from a theorem of Matou\v skov\'a \cite{Matouskova}.

\begin{proposition}\label{pr:Mat_app}
Let $A \subset \rcomp{M}$ be $\tau$-closed and let $\psi:A \to \RR$ be $\tau$-continuous, bounded and $1$-$\bar{d}$-Lipschitz. Then there exists a bounded 1-Lipschitz function $f:M \to \RR$ such that $\rcomp{f}\restrict_A = \psi$ and $\inf\psi\leq f\leq\sup\psi$.
\end{proposition}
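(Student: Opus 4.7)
The plan is to apply Matou\v{s}kov\'a's extension theorem to lift $\psi$ to a $\tau$-continuous, $1$-$\bar{d}$-Lipschitz function on all of $\rcomp{M}$, and then restrict to $M$. The structural results already established make this fit Matou\v{s}kov\'a's framework: $\bar{d}$ is $\tau$-lower semicontinuous, its closed balls are $\tau$-compact, and $\rcomp{M}$ is $\tau$-normal (Propositions \ref{prop_rcomp_observations} and \ref{pr:bar-d_cont}). Equivalently, via the $\tau$-$w^*$-homeomorphism $\delta$ of Proposition \ref{prop_realcompact_embedding}, the set $\delta(A)$ is $w^*$-closed in $\Lip_0(M)^*$ and $\psi\circ\delta^{-1}$ is a bounded, $w^*$-continuous, $1$-Lipschitz function there, which is precisely the dual Banach space setting of \cite{Matouskova}.

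Matou\v{s}kov\'a's theorem thus yields a $\tau$-continuous, $1$-$\bar{d}$-Lipschitz extension $g:\rcomp{M}\to\RR$ of $\psi$. A routine truncation by $t\mapsto\max(\inf\psi,\min(\sup\psi,t))$ preserves both $\tau$-continuity and the Lipschitz constant, so I may assume $\inf\psi\leq g\leq\sup\psi$. Setting $f=g\restrict_M$ then produces a bounded, $1$-$d$-Lipschitz function on $M$ (since $\bar{d}\restrict_M=d$) with $\inf\psi\leq f\leq\sup\psi$.

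It remains to check that $\rcomp{f}\restrict_A=\psi$. But $g$ is itself a $\tau$-continuous extension of $f$ from the $\tau$-dense subset $M$ to the Hausdorff space $\rcomp{M}$; by uniqueness of such extensions, $g=\rcomp{f}$, and in particular $\rcomp{f}\restrict_A = g\restrict_A = \psi$, as required.

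I expect the main obstacle to be the first step: pinpointing the version of Matou\v{s}kov\'a's theorem suited to this mixed metric-topological setting and checking its hypotheses (the translation through $\delta$ helps by placing us in a dual Banach space, where her results are natively stated). The remainder of the argument is driven by density of $M$ in $\rcomp{M}$ and uniqueness of continuous extensions, so it is essentially formal once the extension $g$ is in hand.
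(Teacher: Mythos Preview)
Your proposal is correct and follows essentially the same route as the paper: transport $\psi$ to the $w^*$-closed set $\delta(A)\subset\Lip_0(M)^*$ via Proposition~\ref{prop_realcompact_embedding}, apply Matou\v{s}kov\'a's extension theorem there, and pull back to $M$ using uniqueness of continuous extensions to $\rcomp{M}$. The only cosmetic difference is that the paper invokes \cite[Corollary 2.6]{Matouskova}, which already delivers the bounds $\inf\psi\leq g\leq\sup\psi$, so the truncation step you include is not needed.
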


\begin{proof}
By Proposition \ref{prop_realcompact_embedding}, the map $\psi \circ \delta^{-1}$ is a $w^*$-continuous, bounded and $1$-Lipschitz function on $\delta(A)$, which is $w^*$-closed in $\Lip_0(M)^*$. By \cite[Corollary 2.6]{Matouskova}, there exists a $w^*$-continuous, bounded and $1$-Lipschitz function $g$ on $\Lip_0(M)^*$ such that $g\restrict_{\delta(A)}=\psi \circ \delta^{-1}$ and $\inf\psi\leq g\leq\sup\psi$. The function $f = g \circ \delta\restrict_M$ is then $1$-Lipschitz on $M$, and by the uniqueness of continuous extensions of Lipschitz functions to $\rcomp{M}$ we have $\rcomp{f} = g \circ \delta$, giving $\rcomp{f}\restrict_A = g \circ \delta\restrict_A = \psi$. 
\end{proof}

\begin{remark}\label{rm:lipfree_rcomp}
Since $(\rcomp{M},\bar{d})$ is a complete metric space, we may consider its Lipschitz-free space $\lipfree{\rcomp{M},\bar{d}}$, which contains $\lipfree{M,d}$ as a subspace. Given a finitely supported element $\phi\in\lspan\,\delta(\rcomp{M})$, its norms in $\lipfree{\rcomp{M},\bar{d}}$ and in $\Lip_0(M)^*$ agree; this follows from applying Proposition \ref{pr:Mat_app} to the finite set $A=\supp(\phi)\cup\set{0}$, as it shows that any 1-$\bar{d}$-Lipschitz function on $A$ is the restriction of $\rcomp{f}$ for some $f\in B_{\Lip(M)}$. By density, we get the isometric identification
$$
\lipfree{\rcomp{M},\bar{d}} = \cl{\lspan}\,\delta(\rcomp{M}) \subset \Lip_0(M)^* .
$$
On the other hand, $\Lip_0(M,d)$ is isometric to the subspace $\Lip_0(\rcomp{M},\bar{d}) \cap C(\rcomp{M},\tau)$ via the extension map $f \mapsto \rcomp{f}$, which in turn is $1$-complemented in $\Lip_0(\rcomp{M},\bar{d})$ via restriction followed by extension: $f \mapsto \rcomp{(f\restrict_M)}$. It is easy to see that the extension map is not $w^*$-$w^*$-continuous in general, so it is not clear to us whether $\lipfree{M,d}$ is always a complemented subspace of $\lipfree{\rcomp{M},\bar{d}}$.
\end{remark}

At this point, we have two natural extensions of $d$: the metric $\bar{d}$ on $\rcomp{M}$, and the continuous extension of $d:\wt{M}\to\RR$ to $\bwt{M}$, which we will also denote by $d$ and can now take the value $\infty$. The next proposition establishes the relationship between the two extensions via the De Leeuw transform $\Phi^*$. Hereafter, we will fix the set
\begin{equation}\label{eqn_S}
\bwtf=\pp^{-1}(\rcomp{M}\times \rcomp{M}).
\end{equation}
This is an open subset of $\bwt{M}$ which includes $\wt{M}$, so it is also dense. It will serve as the arena for much of what we do in this paper. It is easily checked that $d(\zeta)<\infty$ for $\zeta\in\bwtf$. Indeed, given a net $(x_i,y_i)$ in $\wt{M}$ converging to such $\zeta$, we have
$$
d(\zeta) = \lim_i d(x_i,y_i) \leq \lim_i \rho(x_i)+\rho(y_i) = \rcomp{\rho}(\pp_1(\zeta))+\rcomp{\rho}(\pp_2(\zeta)) < \infty .
$$

\begin{proposition}\label{prop_de_Leeuw_relation}
Let $\zeta \in \bwtf$. Then
\begin{enumerate}[label={\upshape{(\roman*)}}]
 \item\label{delta_diff} $d(\zeta)\Phi^*\delta_\zeta = \delta(\pp_1(\zeta)) - \delta(\pp_2(\zeta))$;
 \item\label{phi_delta_norm} $d(\zeta)\|\Phi^*\delta_\zeta\| = \bar{d}(\pp(\zeta))$;
 \item\label{d_bar_less_than_d} $\bar{d}(\pp(\zeta)) \leq d(\zeta)$, and we have equality if and only if $\|\Phi^*\delta_\zeta\|=1$ or $d(\zeta)=0$.
\end{enumerate}
\end{proposition}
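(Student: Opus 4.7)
The plan is to establish (i) by a direct net-convergence argument, and then derive (ii) and (iii) as formal consequences.

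For (i), I would fix a net $(x_i,y_i)_i$ in $\wt{M}$ with $(x_i,y_i)\to\zeta$ in $\bwt{M}$; such a net exists by density. Continuity of the coordinate extension $\pp$ gives $x_i\to\pp_1(\zeta)$ and $y_i\to\pp_2(\zeta)$ in $\ucomp{M}$, and continuity of the extension of $d$ gives $d(x_i,y_i)\to d(\zeta)$. Crucially, $\zeta\in\bwtf$ forces $d(\zeta)<\infty$ (as already noted just before the proposition statement) and $\pp_j(\zeta)\in\rcomp{M}$, so for every $f\in\Lip_0(M)$ the extension $\ucomp{f}$ is finite at $\pp_1(\zeta)$ and $\pp_2(\zeta)$, namely equal to $\rcomp{f}(\pp_j(\zeta))$. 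Now evaluate: by continuity of $\Phi f$ on $\bwt{M}$,
\[
d(\zeta)\duality{f,\Phi^*\delta_\zeta}=d(\zeta)(\Phi f)(\zeta)=\lim_i d(x_i,y_i)\cdot\frac{f(x_i)-f(y_i)}{d(x_i,y_i)}=\lim_i\bigl(f(x_i)-f(y_i)\bigr),
\]
and on the other hand the last limit equals $\rcomp{f}(\pp_1(\zeta))-\rcomp{f}(\pp_2(\zeta))=\duality{f,\delta(\pp_1(\zeta))-\delta(\pp_2(\zeta))}$ by continuity of $\ucomp{f}$ on $\ucomp{M}$. Since $f\in\Lip_0(M)$ was arbitrary, (i) follows.

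Part (ii) is then immediate: take norms in (i) and invoke the definition $\bar{d}(\xi,\eta)=\norm{\delta(\xi)-\delta(\eta)}_{\Lip_0(M)^*}$ to conclude $d(\zeta)\norm{\Phi^*\delta_\zeta}=\bar{d}(\pp_1(\zeta),\pp_2(\zeta))=\bar{d}(\pp(\zeta))$.

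For (iii), note that $\Phi$ is an isometry and $\norm{\delta_\zeta}=1$, so $\norm{\Phi^*\delta_\zeta}\leq 1$; substituting into (ii) yields $\bar{d}(\pp(\zeta))\leq d(\zeta)$. If $d(\zeta)=0$, then (ii) forces $\bar{d}(\pp(\zeta))=0$ too, so equality holds trivially. If $d(\zeta)>0$, then dividing (ii) by $d(\zeta)$ shows that equality $\bar{d}(\pp(\zeta))=d(\zeta)$ is equivalent to $\norm{\Phi^*\delta_\zeta}=1$.

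The only delicate point is justifying the convergence $f(x_i)-f(y_i)\to\rcomp{f}(\pp_1(\zeta))-\rcomp{f}(\pp_2(\zeta))$ for possibly unbounded $f$; this is where the hypothesis $\zeta\in\bwtf$ is essential, since for $\xi\in\ucomp{M}\setminus\rcomp{M}$ the extension $\ucomp{f}(\xi)$ could be $\pm\infty$ and the subtraction above would be ill-defined. Everything else is a routine passage to the limit.
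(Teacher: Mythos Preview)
Your proof is correct and follows essentially the same approach as the paper: a net-convergence argument for (i) using continuity of $\pp$, $d$, and $\Phi f$ together with the finiteness guaranteed by $\zeta\in\bwtf$, with (ii) and (iii) then read off formally. The paper dispatches (ii) and (iii) in a single line (``follow trivially''), while you spell out the case split in (iii), but there is no substantive difference.
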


\begin{proof}
 To prove \ref{delta_diff}, let $(x_i,y_i) \subset \wt{M}$ be a net converging to $\zeta$ in $\bwt{M}$. Then by continuity of $\pp$ and the fact that $\pp(x,y)=(x,y)$ whenever $(x,y) \in \wt{M}$, we have $(x_i,y_i) \to \pp(\zeta)$ in $\rcomp{M} \times \rcomp{M}$. Given $f \in \Lip_0(M)$,
 \[
  d(x_i,y_i)\cdot\duality{f,\Phi^*\delta_{(x_i,y_i)}} = f(x_i) - f(y_i)
 \]
for all $i$, so taking limits of both sides and using the fact that $d(\zeta)<\infty$ yields
\[
 d(\zeta)\cdot\duality{f,\Phi^*\delta_\zeta} = \rcomp{f}(\pp_1(\zeta)) - \rcomp{f}(\pp_2(\zeta)) = \duality{f,\delta(\pp_1(\zeta)) - \delta(\pp_2(\zeta))},
\]
which gives \ref{delta_diff}. Then \ref{phi_delta_norm} and \ref{d_bar_less_than_d} follow trivially from \ref{delta_diff} and \ref{phi_delta_norm}, respectively.
\end{proof}

We conclude this subsection by introducing extended metric segments in $\rcomp{M}$. Recall that the \emph{metric segment} between $x,y\in M$ is the set of $p \in M$ that lie ``between'' $x$ and $y$, in the sense that the triangle inequality becomes an equality:
\[
[x,y] = \set{p\in M \,:\, d(x,p)+d(p,y)=d(x,y)}.
\]
We replicate this in $(\rcomp{M},\bar{d})$: given $\xi,\eta \in \rcomp{M}$, their \emph{extended metric segment} is the set
\[
\rcomp{[\xi,\eta]} \;=\; \set{\alpha \in \rcomp{M}\,:\, \bar{d}(\xi,\alpha) + \bar{d}(\alpha,\eta) = \bar{d}(\xi,\eta)}.
\]
Evidently this extends the notion of metric segments in $M$, with $[x,y]=\rcomp{[x,y]} \cap M$ whenever $x,y \in M$. The extended metric segments in $\rcomp{M}$ are $\tau$-compact. Indeed, given a net $(\alpha_i) \subset \rcomp{[\xi,\eta]}$ converging to $\alpha \in \rcomp{M}$, by the lower semicontinuity of $\bar{d}$ we have
\[
\bar{d}(\xi,\alpha) + \bar{d}(\alpha,\eta) \leq \liminf_i \bar{d}(\xi,\alpha_i) + \bar{d}(\alpha_i,\eta) = \bar{d}(\xi,\eta),
\]
so $\alpha \in \rcomp{[\xi,\eta]}$. Hence $\rcomp{[\xi,\eta]}$ is $\tau$-closed. That it is $\tau$-compact follows from Proposition \ref{prop_rcomp_observations} \ref{bar-d-balls_compact}.

\subsection{Functionals that avoid infinity}\label{functionals_avoid_infinity}

In this final subsection, we introduce a few special classes of elements of $\Lip_0(M)^*$ and see how they relate to the set $\bwtf$ defined in \eqref{eqn_S}. They arise from a natural decomposition of $\Lip_0(M)^*$ \cite[Section 3.1]{AP_measures}, which is related to the annular decomposition of $\lipfree{M}$ constructed by Kalton \cite[Section 4]{Kalton}. To define them, we need to introduce weighting operators and apply them to a sequence of Lipschitz maps.

Given $h\in\Lip(M)$, we can formally define a weighting operator $W_h$ on $\Lip_0(M)$ by
$$
W_h(f)=f\cdot h ,\quad f \in \Lip_0(M).
$$
If $h$ has bounded support, then this really defines a bounded operator $W_h:\Lip_0(M)\to\Lip_0(M)$ which is moreover $w^*$-$w^*$-continuous, hence $W_h^*$ maps $\lipfree{M}$ into $\lipfree{M}$ \cite[Lemma 2.3]{APPP_2020}. We will consider a canonical sequence of weighting functions $h$ with increasing supports. For $n\in\ZZ$, define $\daleth_n:M \to \RR$ by
\begin{equation}\label{eq:daleth}
\daleth_n(x) = \begin{cases}
1 &\text{if } \rho(x) \leq 2^n, \\
2-2^{-n}d(x,0) &\text{if } 2^n \leq \rho(x) \leq 2^{n+1}, \\
0 &\text{if } 2^{n+1} \leq \rho(x) .
\end{cases}
\end{equation}
We have $\norm{W_{\daleth_n}}\leq 3$ for all $n$ and, given $\phi\in\Lip_0(M)^*$, it is not hard to check that the sequence $W^*_{\daleth_n}(\phi)$ is always norm-convergent as $n\to +\infty$ (see \cite[Section 3.1]{AP_measures} where $W_{\daleth_n}$ are denoted by $\mathcal{T}_{H_n}$).

\begin{definition}[{\cite[Definition 3.1]{AP_measures}}]
Let $\phi \in \Lip_0(M)^*$. We say that $\phi$ \emph{avoids infinity} if
$$
\lim_{n\to +\infty} W^*_{\daleth_n}(\phi) = \phi ,
$$
and we say that $\phi$ is \emph{concentrated at infinity} if $W^*_{\daleth_n}(\phi)=0$ for all $n$.
\end{definition}

Let $\Lambda_0(M)$ and $\Lambda_\infty(M)$ denote the subspaces of functionals on $\Lip_0(M)$ that avoid infinity and are concentrated at infinity, respectively. We have $\Lambda_0(M)=\Lip_0(M)^*$ if and only if $M$ is bounded. By \cite[Corollary 3.4]{AP_measures} we have the decomposition
\begin{equation}\label{eq:decomp_bidual}
\Lip_0(M)^* = \Lambda_0(M) \oplus_1 \Lambda_\infty(M)
\end{equation}
where the L-projection onto $\Lambda_0(M)$ is given by $\phi\mapsto\lim_n W^*_{\daleth_n}(\phi)$. Recall that a projection $P:X\to Y\subset X$ is called an \emph{L-projection} if $\norm{x}=\norm{Px}+\norm{x-Px}$ for all $x\in X$.

While functionals that avoid infinity (which include $\lipfree{M}$) have particularly good behaviour, the definition of the concept and the description of the associated projection depend on technical details like the functions $\daleth_n$, which are by no means unique. Functionals concentrated at infinity are precisely those that vanish on all $f\in\Lip_0(M)$ with bounded support \cite[Proposition 3.2]{AP_measures}, but there is no similar characterisation for functionals that avoid infinity.

We will now obtain a much more satisfying characterisation of these concepts, based on De Leeuw representations and the set $\bwtf$. The key fact is provided by the next proposition.

\begin{proposition}\label{prop_proj_avoid_infty}
Let $\phi\in\Lip_0(M)^*$. If $\mu$ is a De Leeuw representation of $\phi$, then $\mu\restrict_\bwtf$ is a De Leeuw representation of $\lim_n W^*_{\daleth_n}(\phi)$.
\end{proposition}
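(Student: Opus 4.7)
The plan is to verify the claimed equality by testing against an arbitrary $f\in\Lip_0(M)$. Unwinding definitions,
\[
\langle f, W_{\daleth_n}^*(\phi)\rangle = \langle f\daleth_n, \phi\rangle = \int_{\bwt{M}} \Phi(f\daleth_n)\,d\mu,
\]
while $\langle f, \dual{\Phi}(\mu\restrict_\bwtf)\rangle = \int_{\bwt{M}} \mathbf{1}_\bwtf\cdot\Phi f\,d\mu$ (note that $\bwtf$ is open, hence Borel, since $\rcomp{M}=\set{\ucomp{\rho}<\infty}$ is open in $\ucomp{M}$). Because $\lipnorm{f\daleth_n}\leq\norm{W_{\daleth_n}}\lipnorm{f}\leq 3\lipnorm{f}$ gives the uniform sup-bound $\norm{\Phi(f\daleth_n)}_\infty\leq 3\lipnorm{f}$ and $\mu$ has finite variation, dominated convergence reduces the problem to the pointwise claim
\[
\Phi(f\daleth_n)(\zeta)\ \longrightarrow\ \mathbf{1}_\bwtf(\zeta)\cdot\Phi f(\zeta)\qquad\text{for every }\zeta\in\bwt{M}.
\]

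For the pointwise analysis, fix $\zeta$ and a net $(x_i,y_i)\subset\wt{M}$ converging to $\zeta$ in $\bwt{M}$. Continuity of $\pp_k$ and of the extension $\ucomp{\rho}:\ucomp{M}\to[0,\infty]$ gives $\rho(x_i)\to\ucomp{\rho}(\pp_1(\zeta))$ and likewise for $\rho(y_i)$. When $\zeta\in\bwtf$, both limits are finite; for any $n$ larger than $\log_2\max_k\rcomp{\rho}(\pp_k(\zeta))$, the set $\set{\ucomp{\rho}\circ\pp_1<2^n}\cap\set{\ucomp{\rho}\circ\pp_2<2^n}$ is an open neighbourhood of $\zeta$ on which $\daleth_n(x)=\daleth_n(y)=1$, so $\Phi(f\daleth_n)$ and $\Phi f$ coincide there, and thus at $\zeta$. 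When $\zeta\notin\bwtf$, I may assume $\ucomp{\rho}(\pp_1(\zeta))=\infty$. If also $\ucomp{\rho}(\pp_2(\zeta))=\infty$, then for every $n$ both $\rho(x_i),\rho(y_i)$ eventually exceed $2^{n+1}$, forcing $(f\daleth_n)(x_i)=(f\daleth_n)(y_i)=0$ and $\Phi(f\daleth_n)(\zeta)=0$. If instead $\pp_2(\zeta)\in\rcomp{M}$, then for $n>\log_2\rcomp{\rho}(\pp_2(\zeta))$ we eventually have $\daleth_n(x_i)=0$ and $\daleth_n(y_i)=1$, so $\Phi(f\daleth_n)(x_i,y_i)=-f(y_i)/d(x_i,y_i)$; since $f(y_i)\to\rcomp{f}(\pp_2(\zeta))\in\RR$ while $d(x_i,y_i)\geq\rho(x_i)-\rho(y_i)\to\infty$, this quotient, and hence $\Phi(f\daleth_n)(\zeta)$, vanishes.

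Dominated convergence now yields $\langle f, W_{\daleth_n}^*(\phi)\rangle\to\langle f, \dual{\Phi}(\mu\restrict_\bwtf)\rangle$. Since the sequence $W_{\daleth_n}^*(\phi)$ converges in norm to $\lim_n W_{\daleth_n}^*(\phi)$ (hence weak-$^*$ to the same limit), uniqueness of limits forces $\lim_n W_{\daleth_n}^*(\phi)=\dual{\Phi}(\mu\restrict_\bwtf)$, as required.

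The main obstacle is the mixed subcase in which exactly one coordinate $\pp_k(\zeta)$ leaves $\rcomp{M}$: the numerator $-f(y_i)$ does not vanish but is only bounded, so the argument relies on the reverse triangle inequality $d(x_i,y_i)\geq\rho(x_i)-\rho(y_i)\to\infty$ to kill the quotient. Everywhere else the $\daleth_n$ profile itself eventually collapses the numerator to zero.
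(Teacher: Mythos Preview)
Your proof is correct and follows essentially the same approach as the paper: reduce to pointwise convergence of $\Phi(f\daleth_n)$ and then invoke dominated convergence. The only cosmetic difference is that the paper isolates the case $\zeta\notin\bwtf$ into a separate lemma (Lemma~\ref{lm:phi_bounded_support}, using that $f\daleth_n$ has bounded support) and organises the case split there by whether $d(\zeta)=\infty$, whereas you inline the argument and split instead on how many coordinates leave $\rcomp{M}$; the two splits are equivalent since your mixed subcase is exactly where $d(\zeta)=\infty$ is forced.
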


Before proving the result, we state and prove a lemma that is modelled closely on \cite[Lemma 6]{Aliaga}.

\begin{lemma}\label{lm:phi_bounded_support}
If $f\in\Lip_0(M)$ has bounded support then $\Phi f$ vanishes outside of $\bwtf$.
\end{lemma}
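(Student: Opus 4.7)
The plan is to show directly that $(\Phi f)(\zeta) = 0$ for every $\zeta \in \bwt{M} \setminus \bwtf$. Fix $R > 0$ with $\supp(f) \subset B(0, R)$. Take any net $(x_i, y_i) \subset \wt{M}$ converging to $\zeta$ in $\bwt{M}$; then $(\Phi f)(\zeta) = \lim_i (f(x_i) - f(y_i))/d(x_i, y_i)$ by continuity of $\Phi f$. Because $\rho \in \Lip_0(M)$ extends continuously to $\ucomp{\rho}:\ucomp{M} \to [-\infty, \infty]$ (as recalled at the start of Section~\ref{subsec_realcompact}) and $\pp_1, \pp_2$ are continuous, $\rho(x_i) \to \ucomp{\rho}(\pp_1(\zeta))$ and $\rho(y_i) \to \ucomp{\rho}(\pp_2(\zeta))$ in $[0, \infty]$. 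Since $\zeta \notin \bwtf$, at least one of these limits equals $\infty$; by symmetry we may assume $\ucomp{\rho}(\pp_1(\zeta)) = \infty$, so $\rho(x_i) \to \infty$ and hence $f(x_i) = 0$ eventually.

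Now I split on whether $\ucomp{\rho}(\pp_2(\zeta))$ is finite. If $\ucomp{\rho}(\pp_2(\zeta)) = \infty$ also, then eventually $\rho(y_i) > R$ too, so $f(y_i) = 0$ as well, making $(\Phi f)(x_i, y_i) = 0$ eventually; hence $(\Phi f)(\zeta) = 0$. Otherwise $\ucomp{\rho}(\pp_2(\zeta)) = C < \infty$, so eventually $\rho(y_i) \leq C + 1$. Then $|f(y_i)| \leq \Lip(f)(C+1)$ is uniformly bounded, while the reverse triangle inequality gives $d(x_i, y_i) \geq \rho(x_i) - \rho(y_i) \to \infty$. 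Therefore
\[
|(\Phi f)(x_i, y_i)| \;=\; \frac{|f(y_i)|}{d(x_i, y_i)} \;\leq\; \frac{\Lip(f)(C+1)}{\rho(x_i) - C - 1} \;\longrightarrow\; 0,
\]
so again $(\Phi f)(\zeta) = 0$.

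There is no substantive obstacle here; the only point requiring some care is the use of the extended-real-valued continuous extension $\ucomp{\rho}:\ucomp{M} \to [-\infty, \infty]$ to make sense of the statement ``$\rho(x_i) \to \infty$'' as genuine convergence of a net along a compactification, rather than just an unboundedness claim. Everything else is a routine estimate exploiting the bounded support of $f$ against the divergent denominator $d(x_i, y_i)$.
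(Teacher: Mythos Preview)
Your proof is correct and follows essentially the same approach as the paper's: pick a net $(x_i,y_i)\to\zeta$, use bounded support to force $f$ to vanish at coordinates escaping to infinity, and use $d(x_i,y_i)\to\infty$ to kill the quotient otherwise. The only cosmetic difference is the case split: the paper splits on whether $d(\zeta)=\infty$ (then bounds $|\Phi f|\leq 2\norm{f}_\infty/d$) or $d(\zeta)<\infty$ (then shows both coordinates lie outside $\rcomp{M}$, so $f(x_i)=f(y_i)=0$ eventually), whereas you split on whether the second coordinate lies in $\rcomp{M}$; these two organisations cover the same ground.
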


\begin{proof}
Fix $\zeta\in\bwt{M}\setminus\bwtf$ and let $(x_i,y_i)$ be a net in $\wt{M}$ converging to $\zeta$. If $d(\zeta)=\infty$ then the bound $\abs{\Phi f(x_i,y_i)}\leq 2\norm{f}_\infty/d(x_i,y_i)$ implies $\Phi f(\zeta)=0$. Otherwise both coordinates of $\zeta$ are outside of $\rcomp{M}$. Indeed, $\ucomp{\rho}(\pp_2(\zeta)) \leq \ucomp{\rho}(\pp_1(\zeta))+d(\zeta)$, thus if $\pp_1(\zeta)$ belongs to $\rcomp{M}$ then so does $\pp_2(\zeta)$, giving $\zeta \in \bwtf$, which isn't the case; likewise if $\pp_2(\zeta) \in \rcomp{M}$. But then $f(x_i)=f(y_i)=0$ eventually and thus $\Phi f(\zeta)=0$.
\end{proof}

\begin{proof}[Proof of Proposition \ref{prop_proj_avoid_infty}]
The argument is a generalization of \cite[Proposition 7]{Aliaga}. Let $\phi$ and $\mu$ be as above, and fix $f\in\Lip_0(M)$. First, we claim that
$$
\lim_{n\to\infty} \Phi(W_{\daleth_n}(f))(\zeta) = \begin{cases}
\Phi f(\zeta) &\text{if $\zeta\in\bwtf$,} \\
0 &\text{if $\zeta\in\bwt{M}\setminus\bwtf$.}
\end{cases}
$$
The case $\zeta\notin\bwtf$ is obvious from Lemma \ref{lm:phi_bounded_support}. To see the other case, fix $\zeta\in\bwtf$. If $n$ is so large that $2^n > \max\set{\rcomp{\rho}(\pp_1(\zeta)),\rcomp{\rho}(\pp_2(\zeta))}$, then $\ucomp{\daleth_n}=1$ on neighborhoods of $\pp_1(\zeta)$ and $\pp_2(\zeta)$, therefore $\Phi(f\cdot\daleth_n)(x_i,y_i)=\Phi f(x_i,y_i)$ eventually, and taking limits with respect to $i$ yields $\Phi(f\cdot\daleth_n)(\zeta)=\Phi f(\zeta)$.

Because $\lipnorm{W_{\daleth_n}(f)}\leq 3\lipnorm{f}$ for all $n$, Lebesgue's dominated convergence theorem yields
$$
\lim_{n\to\infty} \duality{f,W^*_{\daleth_n}(\phi)} = \lim_{n\to\infty}\int_{\bwt{M}}\Phi(W_{\daleth_n}(f))\,d\mu = \int_{\bwt{M}}\lim_{n\to\infty}\Phi(W_{\daleth_n}(f))\,d\mu = \int_\bwtf \Phi f \,d\mu
$$
for every $f\in\Lip_0(M)$, that is $W^*_{\daleth_n}(\phi)\wsconv\Phi^*(\mu\restrict_\bwtf)$. The result now follows by recalling that $(W^*_{\daleth_n}(\phi))$ converges in norm.
\end{proof}

The characterisations we sought are now immediate.

\begin{corollary}\label{cor_avoid_infinity}
Let $\phi\in \Lip_0(M)^*$. Then the following are equivalent:
\begin{enumerate}[label={\upshape{(\roman*)}}]
\item $\phi$ avoids infinity,
\item every optimal De Leeuw representation of $\phi$ is concentrated on $\bwtf$,
\item $\phi$ has a De Leeuw representation that is concentrated on $\bwtf$.
\end{enumerate}
\end{corollary}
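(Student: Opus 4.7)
The plan is to deduce the equivalences directly from Proposition \ref{prop_proj_avoid_infty} together with the definition of avoiding infinity (namely $\phi = P\phi$, where $P\phi := \lim_n W^*_{\daleth_n}(\phi)$) and the existence of optimal representations from Proposition \ref{pr:opr_facts}(a). Everything hinges on the key observation that, by Proposition \ref{prop_proj_avoid_infty}, if $\mu$ represents $\phi$ then $\mu\restrict_{\bwtf}$ represents $P\phi$; so $\mu$ is concentrated on $\bwtf$ precisely when the same measure represents both $\phi$ and $P\phi$.

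For (iii) $\Rightarrow$ (i), suppose $\mu$ is a de Leeuw representation of $\phi$ concentrated on $\bwtf$. Then $\mu = \mu\restrict_{\bwtf}$, and this measure represents $\phi$ by hypothesis and $P\phi$ by Proposition \ref{prop_proj_avoid_infty}. Hence $\phi = P\phi$, i.e.\ $\phi$ avoids infinity. The implication (ii) $\Rightarrow$ (iii) is immediate from Proposition \ref{pr:opr_facts}(a), which guarantees the existence of an optimal representation of $\phi$, and by (ii) any such representation must be concentrated on $\bwtf$.

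The main implication to handle is (i) $\Rightarrow$ (ii). Let $\phi$ avoid infinity and let $\mu \in \opr{\bwt{M}}$ with $\Phi^*\mu = \phi$; by definition of $\opr{\bwt{M}}$ the measure $\mu$ is positive and satisfies $\norm{\mu} = \norm{\phi}$. By Proposition \ref{prop_proj_avoid_infty}, $\mu\restrict_{\bwtf}$ represents $P\phi = \phi$. Since $\Phi^*$ is non-expansive and $\mu$ is positive, this gives
\[
\norm{\phi} \;\leq\; \norm{\mu\restrict_{\bwtf}} \;=\; \mu(\bwtf) \;\leq\; \mu(\bwt{M}) \;=\; \norm{\mu} \;=\; \norm{\phi},
\]
so $\mu(\bwt{M}\setminus\bwtf) = 0$, which means $\mu$ is concentrated on $\bwtf$.

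There is no real obstacle here; the only subtlety is keeping track of the fact that optimality implicitly incorporates positivity (so that total variation equals total mass and restriction to $\bwtf$ does not increase the norm), and that the L-decomposition \eqref{eq:decomp_bidual} lets us restate ``avoiding infinity'' as the fixed-point condition $\phi = P\phi$, which is exactly what Proposition \ref{prop_proj_avoid_infty} is designed to detect at the level of representing measures.
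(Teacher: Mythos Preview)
Your proof is correct and is exactly the argument the paper has in mind: the paper's own proof is simply the one-line assertion that the corollary ``follows immediately from Proposition \ref{prop_proj_avoid_infty},'' and you have correctly unpacked that immediacy by combining the identity $\Phi^*(\mu\restrict_{\bwtf})=P\phi$ with the norm bookkeeping for optimal (hence positive) representations.
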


\begin{corollary}\label{cor_conc_infinity}
Let $\phi\in \Lip_0(M)^*$. Then the following are equivalent:
\begin{enumerate}[label={\upshape{(\roman*)}}]
\item $\phi$ is concentrated at infinity,
\item every optimal De Leeuw representation of $\phi$ is concentrated on $\bwt{M}\setminus\bwtf$,
\item $\phi$ has a De Leeuw representation that is concentrated on $\bwt{M}\setminus\bwtf$,
\item $\duality{f,\phi}=0$ for all $f\in\Lip_0(M)$ with bounded support.
\end{enumerate}
\end{corollary}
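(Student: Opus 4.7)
The plan is to mirror the proof of Corollary \ref{cor_avoid_infinity}, deducing everything from Proposition \ref{prop_proj_avoid_infty} together with the L-projection decomposition \eqref{eq:decomp_bidual}. The equivalence (i) $\Leftrightarrow$ (iv) is already stated in the text as \cite[Proposition 3.2]{AP_measures}, so I can just invoke it. The implication (ii) $\Rightarrow$ (iii) is trivial, because every functional admits at least one optimal representation by Proposition \ref{pr:opr_facts}(a). It therefore remains to close the loop with (iii) $\Rightarrow$ (i) and (i) $\Rightarrow$ (ii).

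For (iii) $\Rightarrow$ (i), I would take a representation $\mu$ of $\phi$ that is concentrated on $\bwt{M}\setminus\bwtf$, so that $\mu\restrict_\bwtf$ is the zero measure. By Proposition \ref{prop_proj_avoid_infty}, this zero measure is a de Leeuw representation of $\lim_n W^*_{\daleth_n}(\phi)$, forcing that limit to be $0$. Since $\phi\mapsto\lim_n W^*_{\daleth_n}(\phi)$ is the L-projection of $\Lip_0(M)^*$ onto $\Lambda_0(M)$ with kernel $\Lambda_\infty(M)$, this means $\phi\in\Lambda_\infty(M)$, i.e.\ $\phi$ is concentrated at infinity.

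For (i) $\Rightarrow$ (ii), I would pick an arbitrary $\mu\in\opr{\bwt{M}}$ with $\Phi^*\mu=\phi$. By Proposition \ref{pr:opr_facts}(d), applied to the Borel set $\bwtf$, the restriction $\mu\restrict_\bwtf$ is itself optimal. Proposition \ref{prop_proj_avoid_infty} then gives $\Phi^*(\mu\restrict_\bwtf)=\lim_n W^*_{\daleth_n}(\phi)=0$, since $\phi$ is concentrated at infinity. Optimality finally yields $\norm{\mu\restrict_\bwtf}=\norm{\Phi^*(\mu\restrict_\bwtf)}=0$, so $\mu\restrict_\bwtf=0$ and $\mu$ is concentrated on $\bwt{M}\setminus\bwtf$.

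The one place where a non-formal step is needed is the last one: upgrading the vanishing of the functional $\Phi^*(\mu\restrict_\bwtf)$ to the vanishing of the measure $\mu\restrict_\bwtf$. This is the essential use of optimality, via Proposition \ref{pr:opr_facts}(d); without it, one would only get that the $\Lambda_0(M)$-component of $\phi$ vanishes, which is a weaker conclusion than the measure being concentrated away from $\bwtf$. Everything else is formal bookkeeping on top of Proposition \ref{prop_proj_avoid_infty}.
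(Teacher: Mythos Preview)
Your proof is correct and essentially matches the paper's approach: both derive (i)$\Rightarrow$(ii) from Proposition~\ref{prop_proj_avoid_infty} combined with optimality of $\mu\restrict_\bwtf$, and both invoke \cite[Proposition~3.2]{AP_measures} for (i)$\Leftrightarrow$(iv). The only minor difference is in closing the loop: the paper proves (iii)$\Rightarrow$(iv) directly via Lemma~\ref{lm:phi_bounded_support} (if $\mu$ is concentrated outside $\bwtf$ and $f$ has bounded support, then $\Phi f$ vanishes on $\supp(\mu)$), whereas you prove (iii)$\Rightarrow$(i) via Proposition~\ref{prop_proj_avoid_infty}---but since that proposition's proof already rests on Lemma~\ref{lm:phi_bounded_support}, the two routes are equivalent.
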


\begin{proof}
Corollary \ref{cor_avoid_infinity} follows immediately from Proposition \ref{prop_proj_avoid_infty}. In Corollary \ref{cor_conc_infinity}, (i)$\Rightarrow$(ii)$\Rightarrow$(iii) are obtained similarly, implication (iii)$\Rightarrow$(iv) follows from Lemma \ref{lm:phi_bounded_support}, and the equivalence (i)$\Leftrightarrow$(iv) is \cite[Proposition 3.2(a)]{AP_measures}.
\end{proof}

\begin{remark}\label{remark:decomp_bidual}
As another consequence, we also obtain the decomposition \eqref{eq:decomp_bidual}, with the projection onto $\Lambda_0(M)$ given by $\Phi^*\mu \mapsto \Phi^*(\mu\restrict_\bwtf)$. Indeed, Proposition \ref{prop_proj_avoid_infty} ensures that this operator is well defined (and agrees with the one given in \cite{AP_measures}), and Corollaries \ref{cor_avoid_infinity} and \ref{cor_conc_infinity} ensure that it is a projection onto $\Lambda_0(M)$ with kernel $\Lambda_\infty(M)$. To see that it is an L-projection just notice that $\norm{\mu}=\norm{\mu\restrict_\bwtf}+\norm{\mu-\mu\restrict_\bwtf}$ for any $\mu\in\meas{\bwt{M}}$ and apply it to optimal representations. See Proposition \ref{pr:p1_l_embedded} below for a similarly defined L-projection onto $\lipfree{M}$ that is valid under certain hypotheses.
\end{remark}

We shall occasionally consider another concept that is dual to the avoidance of infinity. In this case, we employ the weighting functions $1-\daleth_{-n}$ instead of $\daleth_n$. These have increasing unbounded supports, but still define valid weighting operators as $W_{1-\daleth_n}=I-W_{\daleth_n}$.

\begin{definition}[{\cite[Definition 3.1]{AP_measures}}]
Let $\phi \in \Lip_0(M)^*$. We say that $\phi$ \emph{avoids $0$} if
$$
\lim_{n\to +\infty} W^*_{1-\daleth_{-n}}(\phi) = \phi .
$$
\end{definition}

Both concepts can be combined, and it is easy to see that $\phi$ avoids both $0$ and infinity if and only if $\lim_{n\to\infty} W^*_{\Pi_n}(\phi) = \phi$, where
\begin{equation}\label{eq:Pi}
\Pi_n = \daleth_n - \daleth_{-n} = \daleth_n \cdot (1 - \daleth_{-n})
\end{equation}
for $n\in\NN$. We also get a corresponding characterisation in terms of De Leeuw representations.

\begin{proposition}\label{pr:avoid_0}
Let $\phi\in\Lip_0(M)^*$. Then the following are equivalent:
\begin{enumerate}[label={\upshape{(\roman*)}}]
\item $\phi$ avoids 0,
\item every optimal De Leeuw representation $\mu$ of $\phi$ satisfies $\mu(\pp^{-1}(0,0))=0$,
\item $\phi$ has a De Leeuw representation $\mu$ such that $\abs{\mu}(\pp^{-1}(0,0))=0$.
\end{enumerate}
\end{proposition}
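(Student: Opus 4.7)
The plan is to mirror the proof of Corollary \ref{cor_avoid_infinity}, with the set $\pp^{-1}(0,0)$ playing the role of $\bwt{M}\setminus\bwtf$ and the weighting functions $1-\daleth_{-n}$ replacing $\daleth_n$. Specifically, I would first establish the following analog of Proposition \ref{prop_proj_avoid_infty}: \emph{if $\mu\in\meas{\bwt{M}}$ is a de Leeuw representation of $\phi$, then $\mu\restrict_{\bwt{M}\setminus\pp^{-1}(0,0)}$ is a de Leeuw representation of $\lim_n W^*_{1-\daleth_{-n}}(\phi)$}. Granted this, the three equivalences follow in exactly the same pattern as Corollary \ref{cor_avoid_infinity}: for (i)$\Rightarrow$(ii), optimality forces
\[
\norm{\phi}=\norm{\Phi^*(\mu\restrict_{\bwt{M}\setminus\pp^{-1}(0,0)})}\leq\mu(\bwt{M}\setminus\pp^{-1}(0,0))\leq\norm{\mu}=\norm{\phi},
\]
hence $\mu(\pp^{-1}(0,0))=0$; (ii)$\Rightarrow$(iii) uses the existence of optimal representations from Proposition \ref{pr:opr_facts}\,(a); and (iii)$\Rightarrow$(i) is immediate since $\mu\restrict_{\bwt{M}\setminus\pp^{-1}(0,0)}=\mu$ whenever $\abs{\mu}(\pp^{-1}(0,0))=0$.

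To prove the analog, set $f_n=f(1-\daleth_{-n})=f-W_{\daleth_{-n}}f$ for $f\in\Lip_0(M)$; since $\norm{W_{\daleth_{-n}}}\leq 3$, one obtains the uniform bound $\lipnorm{f_n}\leq 4\lipnorm{f}$, and consequently $\norm{\Phi f_n}_\infty\leq 4\lipnorm{f}$. The crux is the pointwise assertion
\[
\lim_{n\to\infty}\Phi f_n(\zeta)=\begin{cases}\Phi f(\zeta) & \text{if } \pp(\zeta)\neq(0,0), \\ 0 & \text{if } \pp(\zeta)=(0,0).\end{cases}
\]
Given this, dominated convergence applied to $\duality{f,W^*_{1-\daleth_{-n}}(\phi)}=\int_{\bwt{M}}\Phi f_n\,d\mu$ yields $W^*_{1-\daleth_{-n}}(\phi)\wsconv\Phi^*(\mu\restrict_{\bwt{M}\setminus\pp^{-1}(0,0)})$; combined with the norm convergence of the left-hand side (dual to the reasoning used for $\daleth_n$ and recorded in Subsection \ref{functionals_avoid_infinity}), the two limits coincide and the analog is proved.

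The pointwise claim is proved by a case analysis on $\pp(\zeta)\in\ucomp{M}\times\ucomp{M}$, carried out along a net $(x_i,y_i)\subset\wt{M}$ converging to $\zeta$, using that $\rho(x_i)\to\ucomp{\rho}(\pp_1(\zeta))\in[0,\infty]$ and similarly for $y_i$. If $\pp(\zeta)=(0,0)$, both $\rho(x_i),\rho(y_i)\to 0$, so $\daleth_{-n}(x_i)=\daleth_{-n}(y_i)=1$ eventually, forcing $f_n(x_i)=f_n(y_i)=0$ and hence $\Phi f_n(\zeta)=0$ for every $n$. If both coordinates of $\pp(\zeta)$ are nonzero (whether in $\rcomp{M}$ or at infinity), then for $n$ sufficiently large $\daleth_{-n}$ vanishes along both nets eventually, giving $\Phi f_n(\zeta)=\Phi f(\zeta)$. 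The delicate case, which I expect to be the main obstacle, is when exactly one coordinate of $\pp(\zeta)$ is $0$, say $\pp_2(\zeta)=0$ but $\pp_1(\zeta)\neq 0$: here $\daleth_{-n}(x_i)=0$ for $n$ large while $\daleth_{-n}(y_i)=1$ eventually, so the discrepancy reduces to $\Phi f(\zeta)-\Phi f_n(\zeta)=-\lim_i f(y_i)/d(x_i,y_i)$. I would handle this by observing that $\abs{f(y_i)}\leq\lipnorm{f}\cdot\rho(y_i)\to 0$ while $d(x_i,y_i)\geq\rho(x_i)-\rho(y_i)$ stays bounded below by something tending to $\ucomp{\rho}(\pp_1(\zeta))>0$, so the ratio vanishes and again $\Phi f_n(\zeta)=\Phi f(\zeta)$ for $n$ large, completing the case analysis.
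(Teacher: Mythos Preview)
Your proposal is correct and follows essentially the same approach as the paper: both establish the analog of Proposition \ref{prop_proj_avoid_infty} by proving the pointwise convergence of $\Phi(W_{1-\daleth_{-n}}f)$ and applying dominated convergence, then deduce the equivalences as in Corollary \ref{cor_avoid_infinity}. The paper only sketches this, whereas you supply the detailed case analysis for the pointwise claim---including the mixed case $\pp_1(\zeta)\neq 0$, $\pp_2(\zeta)=0$, which the paper leaves implicit.
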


\begin{proof}
We only sketch the proof, which is analogous to that of Proposition \ref{prop_proj_avoid_infty}, using functions $1-\daleth_{-n}$ in place of $\daleth_n$. For every $f\in\Lip_0(M)$ we have
$$
\lim_{n\to\infty} \Phi(W_{1-\daleth_{-n}}(f))(\zeta) = \begin{cases}
\Phi f(\zeta) &\text{if $\pp(\zeta)\neq (0,0)$,} \\
0 &\text{if $\pp(\zeta)=(0,0)$,}
\end{cases}
$$
and the dominated convergence theorem implies that
$$
\Phi^*(\mu\restrict_{\bwt{M}\setminus\pp^{-1}(0,0)})=\lim_{n\to\infty} W_{1-\daleth_{-n}}^*(\Phi^*\mu)
$$
for any $\mu\in\meas{\bwt{M}}$. The equivalence of (i)-(iii) now follows easily as in Corollary \ref{cor_avoid_infinity}.
\end{proof}

\section{Optimal points, vanishing points and those that live in between}\label{sect_points_in_bwtM}

\subsection{Optimal points}\label{subsect_optimal}

It turns out that, in practice, when focussing on optimal De Leeuw representations, we need only consider those $\zeta \in \bwt{M}$ for which $\|\Phi^*\delta_\zeta\|=1$. We shall call such elements \emph{optimal points}. Let $\delta:\bwt{M}\to\meas{\bwt{M}}$ denote the Dirac evaluation map (not to be confused with the mapping $\delta:\rcomp{M}\to\Lip_0(M)^*$ introduced in Section \ref{sect_top_and_metric_prelims}). This is a homeomorphic embedding into $\meas{\bwt{M}}$ with the $w^*$-topology. As the map $\Phi^* \circ \delta:\bwt{M}\to(B_{\Lip_0(M)^*},w^*)$ is continuous and the dual norm is $w^*$-lower semicontinuous, the sets
\[
F_n :=\set{\zeta \in \bwt{M}\,:\,\|\Phi^*\delta_\zeta\| \leq 1-2^{-n}}, \qquad n \in \NN
\]
are closed. Moreover, we have $F_n \cap \wt{M} = \varnothing$ for all $n$, whence the set
\[
\opt:=\bwt{M}\setminus \bigcup_{n=1}^\infty F_n = \set{\zeta \in \bwt{M}\,:\,\|\Phi^*\delta_\zeta\| = 1}, 
\]
of optimal points is a dense $G_\delta$ that includes $\wt{M}$. For the purposes of studying optimal representations, it means that we can disregard non-optimal points.

\begin{proposition}\label{prop_conc_on_O}
If $\mu \in \opr{\bwt{M}}$ then $\mu$ is concentrated on $\opt$. 
\end{proposition}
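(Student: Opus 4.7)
The plan is to argue by contradiction: if $\mu\in\opr{\bwt{M}}$ gave positive mass to some $F_n$, then we could split $\mu$ into two pieces and use the bound $\norm{\Phi^*\delta_\zeta}\leq 1-2^{-n}$ on the piece concentrated on $F_n$ to force a strict inequality $\norm{\Phi^*\mu}<\norm{\mu}$, contradicting optimality.

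More concretely, suppose $\mu(F_n)>0$ for some $n\in\NN$, and set $\mu_1 = \mu\restrict_{F_n}$ and $\mu_2 = \mu - \mu_1$; both are positive Radon measures. The key estimate comes from observing that for any $f\in\Lip_0(M)$ and $\zeta\in\bwt{M}$,
\[
\abs{\Phi f(\zeta)} = \abs{\duality{f,\dual{\Phi}\delta_\zeta}} \leq \lipnorm{f}\cdot\norm{\dual{\Phi}\delta_\zeta},
\]
so on the set $F_n$ we have $\abs{\Phi f}\leq (1-2^{-n})\lipnorm{f}$. Integrating this bound against $\mu_1$ yields
\[
\abs{\duality{f,\dual{\Phi}\mu_1}} = \abs{\int_{F_n}\Phi f\,d\mu_1}\leq (1-2^{-n})\lipnorm{f}\cdot\mu(F_n),
\]
so $\norm{\dual{\Phi}\mu_1}\leq (1-2^{-n})\norm{\mu_1}$, where $\norm{\mu_1}=\mu(F_n)$ since $\mu_1\geq 0$.

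Since $\dual{\Phi}$ is non-expansive, $\norm{\dual{\Phi}\mu_2}\leq\norm{\mu_2}=\norm{\mu}-\mu(F_n)$, and combining the two bounds gives
\[
\norm{\dual{\Phi}\mu} \leq \norm{\dual{\Phi}\mu_1}+\norm{\dual{\Phi}\mu_2} \leq \norm{\mu} - 2^{-n}\mu(F_n) < \norm{\mu},
\]
contradicting $\mu\in\opr{\bwt{M}}$. Hence $\mu(F_n)=0$ for every $n$, and since $\bwt{M}\setminus\opt=\bigcup_n F_n$ is a countable union, $\mu(\bwt{M}\setminus\opt)=0$.

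I expect no real obstacle here: the only thing to be careful about is that $F_n$ is Borel (it is closed by $w^*$-lower semicontinuity of the dual norm, as noted just before the proposition) so the restriction $\mu\restrict_{F_n}$ is a well-defined Radon measure, and that $\norm{\mu_1}$ coincides with $\mu(F_n)$ because $\mu\geq 0$. One could alternatively invoke Proposition \ref{pr:opr_facts}(d) to deduce that $\mu_1$ itself must be optimal, and then derive a direct contradiction from $\norm{\dual{\Phi}\mu_1}\leq (1-2^{-n})\norm{\mu_1}<\norm{\mu_1}$, but the splitting argument above is self-contained and avoids appealing to the hereditary property.
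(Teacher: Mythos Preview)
Your proof is correct and follows essentially the same approach as the paper: split $\mu$ along $F_n$, bound $\norm{\dual{\Phi}(\mu\restrict_{F_n})}\leq(1-2^{-n})\norm{\mu\restrict_{F_n}}$, and combine with the trivial bound on the complement to force $\norm{\dual{\Phi}\mu}<\norm{\mu}$. The only difference is cosmetic: the paper justifies the key bound by invoking the Krein-Milman theorem, whereas you obtain it more directly from the pointwise estimate $\abs{\Phi f(\zeta)}\leq(1-2^{-n})\lipnorm{f}$ on $F_n$.
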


\begin{proof}
Let $\mu \in \meas{\bwt{M}}$ satisfy $\|\mu\|>\|\mu\restrict_\opt\|$. Then there exists $n \in \NN$ such that $\|\mu\restrict_{F_n}\| > 0$. By the Krein-Milman Theorem, it follows that $\|\Phi^*\mu\restrict_{F_n}\| \leq (1-2^{-n})\|\mu\restrict_{F_n}\|$. Therefore
 \begin{align*}
  \|\Phi^*\mu\| \leq \|\Phi^*\mu\restrict_{F_n}\| + \|\Phi^*\mu\restrict_{\bwt{M}\setminus F_n}\| &\leq (1-2^{-n})\|\mu\restrict_{F_n}\| + \|\mu\restrict_{\bwt{M}\setminus F_n}\|\\
  &= \|\mu\| - 2^{-n}\|\mu\restrict_{F_n}\| < \|\mu\|.
 \end{align*}
 Consequently $\mu \notin \opr{\bwt{M}}$.
\end{proof}

\begin{remark}\label{rem_bar_d_d_equality}
Another consequence of Propositions \ref{prop_conc_on_O} and \ref{prop_de_Leeuw_relation} is that if $\mu \in \opr{\bwt{M}}$ is concentrated on $\bwtf$, then it is concentrated on $\opt \cap \bwtf$, on which we have $\bar{d} \circ \pp \equiv d$. Thus, when considering optimal representations of functionals avoiding infinity, we may assume that ``$d$ and $\bar{d}$ coincide''.
\end{remark}

As an aside, we deduce that if $\opt=\wt{M}$ then $M$ must be finite. Indeed, if $\opt=\wt{M}$ then with $\mu$ as above we have $\Phi^*\mu\in\lipfree{M}$ by Proposition \ref{pr:wt_bochner}, whence $\lipfree{M}$ is reflexive. This is true only if $M$ is finite \cite[Corollary 3.46]{Weaver2}.

Optimal points are plentiful in more subtle ways, besides the fact that they form a dense $G_\delta$. Not only do they contain $\wt{M}$, but we also have $\zeta \in \opt$ whenever $\zeta \in \bwtf$ and only one of the coordinates of $\zeta$ belongs to $M$. Indeed, suppose $\pp(\zeta)=(x,\xi)$ with $x\in M$, $\xi\in\rcomp{M}\setminus M$, and let $(x_i,y_i)$ be a net in $\wt{M}$ converging to $\zeta$. Then
$$
|d(x_i,y_i)-\bar{d}(x,\xi)| \leq |d(x_i,y_i)-d(x,y_i)| +|d(x,y_i)-\bar{d}(x,\xi)| \leq d(x,x_i)+|d(x,y_i)-\bar{d}(x,\xi)|
$$
converges to $0$ by Proposition \ref{pr:bar-d_cont} \ref{bar-d_cont_3}, hence $\zeta \in \opt$ by Proposition \ref{prop_de_Leeuw_relation} \ref{d_bar_less_than_d}.

While we cannot guarantee that all points in $\bwtf$ are optimal, we can at least find optimal points with any prescribed different coordinates. The situation when both coordinates are equal will be handled in Proposition \ref{pr:optimals_everywhere_2}. First we need a lemma.

\begin{lemma}
Fix $(\xi,\eta) \in \pp(\bwt{M})$. Then
 \begin{equation}\label{eqn:optimals_everywhere_1}
  d(\pp^{-1}(\xi,\eta)) \;=\; \bigcap_{U,V} \cl{d\pare{(U \times V) \cap \wt{M}}},
 \end{equation}
where $U \ni \xi$ and $V \ni \eta$ are open subsets of $\ucomp{M}$. In particular, we have
 \begin{equation}\label{eqn:optimals_everywhere_2}
 \min d(\pp^{-1}(\xi,\eta)) = \sup_{U,V} \inf d\pare{(U \times V) \cap \wt{M}}.
\end{equation}
\end{lemma}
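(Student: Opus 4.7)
The plan is to establish the set equality \eqref{eqn:optimals_everywhere_1} by two inclusions, then deduce \eqref{eqn:optimals_everywhere_2} by taking infima. The key ingredients are the continuity of the coordinate map $\pp$, the continuity of $d:\bwt{M}\to[0,\infty]$, density of $\wt{M}$ in $\bwt{M}$, and compactness of $\bwt{M}$.

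For the inclusion $\subseteq$, I fix $\zeta\in\pp^{-1}(\xi,\eta)$ and open sets $U\ni\xi$, $V\ni\eta$ in $\ucomp{M}$. Since $\pp$ is continuous, $\pp^{-1}(U\times V)$ is an open neighbourhood of $\zeta$ in $\bwt{M}$. Density of $\wt{M}$ produces a net $(x_i,y_i)\in\wt{M}$ converging to $\zeta$, and for $i$ large enough $(x_i,y_i)\in\pp^{-1}(U\times V)\cap\wt{M}=(U\times V)\cap\wt{M}$. Continuity of $d$ gives $d(x_i,y_i)\to d(\zeta)$, so $d(\zeta)\in\cl{d((U\times V)\cap\wt{M})}$.

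For the converse inclusion, fix $r$ in the right-hand intersection. If $r<\infty$, set
\[
A_{U,V,n} = \set{(x,y)\in(U\times V)\cap\wt{M} \,:\, \abs{d(x,y)-r}\leq 2^{-n}}
\]
for open neighbourhoods $U\ni\xi$, $V\ni\eta$ and $n\in\NN$; if $r=\infty$, replace the defining condition by $d(x,y)\geq n$. In either case, the hypothesis on $r$ ensures each $A_{U,V,n}$ is non-empty, and the family $\set{\cl{A_{U,V,n}}^{\bwt{M}}}$ of closed subsets of $\bwt{M}$ has the finite intersection property: given $(U_k,V_k,n_k)$ for $k=1,\dots,\ell$, the set $A_{U,V,n}$ corresponding to $U=\bigcap_k U_k$, $V=\bigcap_k V_k$, $n=\max_k n_k$ is non-empty and contained in their intersection. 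By compactness of $\bwt{M}$ there exists $\zeta$ in all these closures. Arguing as in the proof of Proposition \ref{prop_p_range}, $\zeta\in\cl{((U\times V)\cap\wt{M})}^{\bwt{M}}$ for every $U,V$ forces $\pp(\zeta)=(\xi,\eta)$. Continuity of $d$ on $\bwt{M}$ transfers the defining inequalities of $A_{U,V,n}$ to $\zeta$, so $\abs{d(\zeta)-r}\leq 2^{-n}$ for all $n$ (or $d(\zeta)\geq n$ for all $n$, if $r=\infty$), giving $d(\zeta)=r$.

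For \eqref{eqn:optimals_everywhere_2}, note that $\pp^{-1}(\xi,\eta)$ is compact and $d$ is continuous into $[0,\infty]$, so the minimum is attained. Taking infima in \eqref{eqn:optimals_everywhere_1} and using the fact that closures preserve infima in $[0,\infty]$ yields both inequalities: each $\inf d((U\times V)\cap\wt{M})\leq\min d(\pp^{-1}(\xi,\eta))$ since the minimum lies in every $\cl{d((U\times V)\cap\wt{M})}$, and conversely the monotone decreasing family of closures forces the minimum to equal the supremum of these infima. The only delicate point is the FIP argument in the $\supseteq$ direction, which is the main step but is routine once the sets $A_{U,V,n}$ are set up correctly in each of the finite and infinite cases.
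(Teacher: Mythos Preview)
Your proof is correct and follows essentially the same approach as the paper's: the $\subseteq$ inclusion via continuity of $\pp$ and $d$ together with density of $\wt{M}$, and the $\supseteq$ inclusion via the finite intersection property applied to the closures of the sets $A_{U,V,n}$, with the coordinate identification handled exactly as in Proposition~\ref{prop_p_range}. Your treatment of the $r=\infty$ case and the deduction of \eqref{eqn:optimals_everywhere_2} are slightly more explicit than the paper's (which dismisses both as similar or immediate), but the arguments are the same.
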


\begin{proof}
Let $t \in \bigcap_{U,V} \cl{d((U \times V) \cap \wt{M})}$. Suppose that $t<\infty$; the argument for $t=\infty$ is similar. Then given open $U \ni \xi$ and $V \ni \eta$, and $n \in \NN$, the set
 \[
  \set{(x,y) \in (U \times V) \cap \wt{M}\,:\,|d(x,y) - t| < 2^{-n}}
 \]
is non-empty. Therefore by compactness there exists 
\[
 \zeta \in \bigcap_{U,V,n} \cl{\set{(x,y) \in (U \times V) \cap \wt{M}\,:\,|d(x,y) - t| < 2^{-n}}}^{\bwt{M}} .
\]
As in the proof of Proposition \ref{prop_p_range} we have $\pp(\zeta)=(\xi,\eta)$. It is straightforward to check that $d(\zeta)=t$.

Conversely, let $\pp(\zeta)=(\xi,\eta)$ and assume $d(\zeta)<\infty$; again, the case $d(\zeta)=\infty$ admits a similar treatment. Let $U \ni \xi$ and $V \ni \eta$ be open in $\ucomp{M}$, and let $n \in \NN$. Take open $W \ni \zeta$ such that $|d(x,y) - d(\zeta)| < 2^{-n}$ whenever $(x,y) \in W \cap \wt{M}$. By density of $\wt{M}$, there exists $(x,y) \in W \cap \pp^{-1}(U \times V) \cap \wt{M}$. Hence $|d(x,y) - d(\zeta)| < 2^{-n}$ and $(x,y) \in (U \times V) \cap \wt{M}$. As we can find such a pair $(x,y)$ for each $n \in \NN$, we obtain
\[
d(\zeta) \in \cl{d\pare{(U \times V) \cap \wt{M}}}.
\]
Thus \eqref{eqn:optimals_everywhere_1} has been proved, and from this \eqref{eqn:optimals_everywhere_2} follows immediately.
\end{proof}

\begin{proposition}\label{pr:optimals_everywhere}
If $\xi, \eta \in \rcomp{M}$ are distinct then $\opt \cap \pp^{-1}(\xi,\eta)$ is compact and non-empty.
\end{proposition}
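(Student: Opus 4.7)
The plan is to translate both parts of the claim into statements about the continuous extension $d:\bwt{M}\to[0,\infty]$ and then feed them into the preceding lemma and Proposition \ref{pr:bar-d_cont}\ref{bar-d_cont_4}. First, for every $\zeta\in\pp^{-1}(\xi,\eta)$ one has $d(\zeta)\cdot\norm{\Phi^*\delta_\zeta}=\bar{d}(\xi,\eta)>0$ by Proposition \ref{prop_de_Leeuw_relation}\ref{phi_delta_norm}, so $d(\zeta)>0$ always, and by part \ref{d_bar_less_than_d} of the same proposition the condition $\zeta\in\opt$ is equivalent to $d(\zeta)=\bar{d}(\xi,\eta)$. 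This gives the identification
\[
\opt\cap\pp^{-1}(\xi,\eta) \;=\; \set{\zeta\in\pp^{-1}(\xi,\eta) \,:\, d(\zeta)=\bar{d}(\xi,\eta)},
\]
which is an intersection of two closed subsets of the compact space $\bwt{M}$ (namely the $\pp$-preimage of the closed singleton $\set{(\xi,\eta)}$ and the $d$-preimage of the closed singleton $\set{\bar{d}(\xi,\eta)}$), hence compact.

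For non-emptiness, the preceding lemma yields
\[
\min d(\pp^{-1}(\xi,\eta))=\sup_{U,V}\inf d((U\times V)\cap\wt{M})
\]
with $U\ni\xi$, $V\ni\eta$ ranging over open subsets of $\ucomp{M}$. The lower bound $\min d(\pp^{-1}(\xi,\eta))\geq\bar{d}(\xi,\eta)$ is immediate from Proposition \ref{prop_de_Leeuw_relation}\ref{d_bar_less_than_d}, and I would obtain the matching upper bound as follows. Since $\wt{M}\subset\rcomp{M}\times\rcomp{M}$, only the traces $U\cap\rcomp{M}$, $V\cap\rcomp{M}$ contribute, so one may restrict the outer supremum to $\tau$-open subsets of $\rcomp{M}$. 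Shrinking $U$ and $V$ only increases the inner infimum, so by the Hausdorff property of $\rcomp{M}$ together with $\xi\neq\eta$ one may further restrict to disjoint pairs without loss; for such $U,V$ the condition $x\neq y$ is automatic for $(x,y)\in (U\cap M)\times (V\cap M)$, whence the inner infimum reduces to $d(U\cap M,V\cap M)$. Applying Proposition \ref{pr:bar-d_cont}\ref{bar-d_cont_4} with $A=\set{\xi}$, $B=\set{\eta}$, and using the same disjointness reduction (which leaves the sup unchanged), yields that the supremum over disjoint pairs is exactly $\bar{d}(\xi,\eta)$, and hence $\min d(\pp^{-1}(\xi,\eta))=\bar{d}(\xi,\eta)$.

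Consequently some $\zeta_0\in\pp^{-1}(\xi,\eta)$ attains this minimum, and by the identification above it lies in $\opt$. The main piece of technical care is bookkeeping the two separate open-set suprema --- the one from the lemma involving open sets of $\ucomp{M}$, and the one from Proposition \ref{pr:bar-d_cont}\ref{bar-d_cont_4} involving $\tau$-open sets of $\rcomp{M}$ --- and matching them via the disjointness reduction, which is precisely where the hypothesis $\xi\neq\eta$ intervenes.
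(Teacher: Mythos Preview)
Your proof is correct and follows essentially the same approach as the paper: both identify $\opt\cap\pp^{-1}(\xi,\eta)$ with $d^{-1}(\bar{d}(\xi,\eta))\cap\pp^{-1}(\xi,\eta)$ via Proposition~\ref{prop_de_Leeuw_relation}, and both combine \eqref{eqn:optimals_everywhere_2} with Proposition~\ref{pr:bar-d_cont}\ref{bar-d_cont_4} to show $\min d(\pp^{-1}(\xi,\eta))=\bar{d}(\xi,\eta)$. The paper compresses the matching of the two suprema into a single sentence, whereas you carefully spell out the passage from open sets in $\ucomp{M}$ to $\tau$-open sets in $\rcomp{M}$ and the disjointness reduction needed to identify $\inf d((U\times V)\cap\wt{M})$ with $d(U\cap M,V\cap M)$ --- this extra care is warranted and your bookkeeping is sound.
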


\begin{proof}
By Proposition \ref{prop_de_Leeuw_relation} \ref{d_bar_less_than_d}, every $\zeta \in \pp^{-1}(\xi,\eta)$ satisfies $d(\zeta) \geq \bar{d}(\xi,\eta)>0$. Now define $t=\min d(\pp^{-1}(\xi,\eta))$. By \eqref{eqn:optimals_everywhere_2} and Proposition \ref{pr:bar-d_cont} \ref{bar-d_cont_4}, $t = \bar{d}(\xi,\eta)$. Consequently, again by Proposition \ref{prop_de_Leeuw_relation} \ref{d_bar_less_than_d}, $\opt \cap \pp^{-1}(\xi,\eta) = d^{-1}(t) \cap \pp^{-1}(\xi,\eta)$ is compact and non-empty.
\end{proof}

The next proposition is a quantitative enhancement of \cite[Lemma 4.2]{GRZ}. This proposition will allow us to find examples of metric spaces $M$ for which $\opt=\bwt{M}$ or not. It will also be used in the next subsection, where we will see more precisely how it relates to the aforementioned lemma. To state the result, first we define a modulus
\[
 \alpha(M) = \inf\set{\|\Phi^*\delta_\zeta\| \,:\, \zeta \in \bwt{M}} \in [0,1].
\]
This infimum is attained by compactness and the lower semicontinuity of the map $\zeta \mapsto \norm{\Phi^*\delta_\zeta}$. Clearly $\opt=\bwt{M}$ if and only if $\alpha(M)=1$. We are able to express $\alpha(M)$ in terms of the distortions of bi-Lipschitz embeddings of $M$ into $\ell_\infty^n$, $n \in\NN$, (should there be any) in the following sense. Since $M$ does not admit such bi-Lipschitz embeddings in general, not every point will be optimal for all $M$. We will consider non-optimal points in more depth later in the section.

\begin{proposition}\label{pr:biLip_embed}
 We have
 \[
  \alpha(M) = \sup_F \inf_{(x,y)\in\wt{M}} \max_{f\in F} \abs{\Phi f(x,y)}
 \]
 where $F$ runs over the non-empty finite subsets of $B_{\Lip_0(M)}$, and
\[
 \frac{1}{\alpha(M)} = \inf\set{ \Lip(h)\Lip(h^{-1}) \,:\, n \in \NN \text{ and } h:M \to \ell_\infty^n \text{ is a bi-Lipschitz embedding}},
\]
with the understanding that $\alpha(M)=0$ if and only if there are no such embeddings for any $n$.
\end{proposition}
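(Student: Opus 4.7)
The plan is to unpack $\alpha(M)$ via the identity $\norm{\Phi^*\delta_\zeta} = \sup\set{\abs{\Phi f(\zeta)} \,:\, f \in B_{\Lip_0(M)}}$ (valid because $\Phi^*\delta_\zeta$ is the functional $f \mapsto \Phi f(\zeta)$ on $\Lip_0(M)$), and then prove the two equalities in turn. I note that, by continuity of each $\Phi f$ and density of $\wt{M}$ in $\bwt{M}$, the infimum of the continuous function $\max_{f \in F}\abs{\Phi f(\cdot)}$ over $(x,y) \in \wt{M}$ matches that over $\zeta \in \bwt{M}$, so the two formulations are interchangeable.

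For the first equality, the inequality $\sup_F \inf_\zeta \max_F \abs{\Phi f(\zeta)} \leq \alpha(M)$ is immediate from the trivial bound $\max_{f \in F}\abs{\Phi f(\zeta)} \leq \norm{\Phi^*\delta_\zeta}$. For the reverse, I would use compactness: given $\gamma < \alpha(M)$, for each $\zeta \in \bwt{M}$ pick $f_\zeta \in B_{\Lip_0(M)}$ with $\abs{\Phi f_\zeta(\zeta)} > \gamma$; the sets $U_\zeta = \set{\eta \in \bwt{M} \,:\, \abs{\Phi f_\zeta(\eta)} > \gamma}$ are open by continuity of $\Phi f_\zeta$ and cover $\bwt{M}$. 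Extracting a finite subcover $U_{\zeta_1},\ldots,U_{\zeta_n}$ and setting $F = \set{f_{\zeta_1},\ldots,f_{\zeta_n}}$, one has $\inf_\zeta \max_{f \in F}\abs{\Phi f(\zeta)} \geq \gamma$, whence $\sup_F \inf_\zeta \max_F \geq \gamma$. Letting $\gamma \nearrow \alpha(M)$ completes this part.

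For the second equality, the key observation is a correspondence between finite families $F = \set{f_1,\ldots,f_n} \subset B_{\Lip_0(M)}$ and $1$-Lipschitz maps $h : M \to \ell_\infty^n$ with $h(0) = 0$, via $h(x) = (f_1(x),\ldots,f_n(x))$. Under this correspondence,
\[
\max_i \abs{\Phi f_i(x,y)} = \frac{\norm{h(x) - h(y)}_\infty}{d(x,y)},
\]
so the quantity $\inf_{(x,y)\in\wt{M}} \max_i \abs{\Phi f_i(x,y)}$ is either $0$ (if $h$ is not bi-Lipschitz) or equals $1/\Lip(h^{-1})$, and in the latter case the distortion of $h$ is $\Lip(h)\Lip(h^{-1}) \leq 1/\Lip(h^{-1}) \cdot \Lip(h)^{-1} \cdot \Lip(h) = 1 / \inf\max$. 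Conversely, given any bi-Lipschitz embedding $h : M \to \ell_\infty^n$ with distortion $D$, I would translate so $h(0) = 0$ and normalise by setting $f_i = h_i/\Lip(h)$; then each $f_i \in B_{\Lip_0(M)}$ and $\inf_{(x,y)}\max_i \abs{\Phi f_i(x,y)} \geq 1/D$. Taking suprema in both directions yields
\[
\alpha(M) \;=\; \sup_F \inf_{(x,y)\in\wt{M}} \max_{f \in F} \abs{\Phi f(x,y)} \;=\; \sup\set{1/D} \;=\; 1/\inf D,
\]
where the infimum runs over distortions of bi-Lipschitz embeddings of $M$ into some $\ell_\infty^n$, with the natural convention that $\alpha(M) = 0$ precisely when no such embedding exists (the ``only if'' direction following from the fact that any such $h$ produces an $F$ with positive $\inf\max$).

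The main technical step is the compactness argument in the first equality, which relies on the continuity of each $\Phi f$ on the compact space $\bwt{M}$; the second equality then reduces to bookkeeping with Lipschitz constants under the identification $F \leftrightarrow h_F$, with only a small amount of care needed to handle the edge case $\alpha(M)=0$ (no bi-Lipschitz embedding into any $\ell_\infty^n$).
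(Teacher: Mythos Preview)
Your proposal is correct and follows essentially the same approach as the paper: both parts rest on the same compactness-of-$\bwt{M}$ argument and the same correspondence $F \leftrightarrow h_F$ between finite subsets of $B_{\Lip_0(M)}$ and $1$-Lipschitz maps into $\ell_\infty^n$. The only cosmetic difference is that for the first equality you use an open-cover argument whereas the paper uses the dual finite-intersection-property formulation; also, your displayed line ``$\Lip(h)\Lip(h^{-1}) \leq 1/\Lip(h^{-1}) \cdot \Lip(h)^{-1} \cdot \Lip(h) = 1/\inf\max$'' is algebraically garbled---the intended bound is simply $\Lip(h)\Lip(h^{-1}) \leq \Lip(h^{-1}) = 1/\inf\max$ since $\Lip(h) \leq 1$.
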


\begin{proof}
 First we introduce some notation. Given a non-empty finite set $F \subset B_{\Lip_0(M)}$, define $\Psi_F \in B_{C(\bwt{M})}$ by 
 $\Psi_F(\zeta)= \max\set{|(\Phi f)(\zeta)| \,:\, f \in F}$, $\zeta \in \bwt{M}$.
 Then set $\alpha_F = \min \Psi_F$, which equals $\inf \Psi_F(\wt{M})$ by the continuity of $\Psi_F$ and the density of $\wt{M}$. Finally, set $\alpha=\sup_F \alpha_F$. Our first task is thus to prove that $\alpha(M)=\alpha$.
 
 To this end, let $\zeta \in \bwt{M}$ and $\varepsilon>0$, and pick non-empty finite $F \subset B_{\Lip_0(M)}$. Clearly $\norm{\Phi^*\delta_\zeta} \geq \Psi_F(\zeta)$, so by density and continuity there exists $(x,y) \in \wt{M}$ such that $\norm{\Phi^*\delta_\zeta} \geq \Psi_F(x,y)-\varepsilon \geq \alpha_F - \varepsilon$. As this holds for all such $F$, it follows that $\norm{\Phi^*\delta_\zeta} \geq \alpha - \varepsilon$. Consequently $\alpha(M) \geq \alpha$. Conversely, given non-empty finite $F \subset B_{\Lip_0(M)}$ and $n \in \NN$, the compact sets $\Psi_F^{-1}([-1,\alpha+2^{-n}])$ are non-empty, so by compactness there exists
\[
 \zeta \in \bigcap_{F,n} \Psi_F^{-1}([-1,\alpha+2^{-n}]) .
\]
We claim that $\norm{\Phi^*\delta_\zeta} \leq \alpha$. Given $n \in \NN$ and $f \in B_{\Lip_0(M)}$, $\abs{\duality{f,\Phi^*\delta_\zeta}} = \abs{\Phi f(\zeta)} = \Psi_{\set{f}}(\zeta) \leq \alpha + 2^{-n}$. Therefore $\norm{\Phi^*\delta_\zeta} \leq \alpha$, as required.

To establish the second equality, assume that $n \in \NN$ and $h:M \to \ell_\infty^n$ is a bi-Lipschitz embedding. If we define $k=h/\Lip(h)$, then evidently
\[
 \frac{1}{\Lip(h)\Lip(h^{-1})}d(x,y) \leq \norm{k(x)-k(y)}_\infty \leq d(x,y), \quad x,y \in M.
\]
Writing $k(x)=(k_1(x),\ldots,k_n(x))$, $x \in M$, we see that each $k_i$ belongs to $B_{\Lip_0(M)}$, and if we set $F=\set{k_1,\ldots,k_n}$, then $(x,y) \in \wt{M}$ implies
\[
 \Psi_F(x,y) = \frac{\norm{k(x)-k(y)}_\infty}{d(x,y)} \geq \frac{1}{\Lip(h)\Lip(h^{-1})},
\]
whence $\alpha(M)>0$ and $1/\alpha(M) \leq \Lip(h)\Lip(h^{-1})$. Conversely, suppose $\alpha(M)>0$ and let $\varepsilon \in (0,1)$. Set $\delta=\frac{1}{2}\alpha(M)^2\varepsilon$ and pick a non-empty finite set $F \subset B_{\Lip_0(M)}$ satisfying $\alpha_F \geq \alpha(M)-\delta > 0$ (recall that $\alpha(M) \leq 1$). Write $F=\set{h_1,\ldots,h_n}$ and define $h:M \to \ell_\infty^n$ by $h(x)=(h_1(x),\ldots,h_n(x))$, $x \in M$. Clearly $\Lip(h) \leq 1$, and for $(x,y) \in \wt{M}$
\[
 \frac{\norm{h(x)-h(y)}_\infty}{d(x,y)} = \Psi_F(x,y) \geq \alpha_F \geq \alpha(M)-\delta.
\]
Hence $h$ is bi-Lipschitz and $\Lip(h^{-1}) \leq 1/(\alpha(M)-\delta)$. Consequently,
\[
 \Lip(h)\Lip(h^{-1}) \leq \frac{1}{\alpha(M)-\delta} < \frac{1}{\alpha(M)} + \varepsilon. \qedhere
\]
\end{proof}

\begin{example}\label{ex:all_optimal}
 Let $M=X$ be a finite-dimensional Banach space. Then $\opt=\bwt{M}$. Indeed, given $\varepsilon > 0$, let $F=\set{f_1,\ldots,f_n} \subset S_{X^*}$ be a finite $\varepsilon$-net and define the linear embedding $h:M \to \ell_\infty^n$ by $h(x)=(f_1(x),\ldots,f_n(x))$; clearly $\Lip(h)\Lip(h^{-1}) \leq 1/(1-\varepsilon)$, so from Proposition \ref{pr:biLip_embed}, $\alpha(M) \geq 1-\varepsilon$. 
\end{example}

\subsection{Vanishing points}\label{subsec_vanishing}

On the other extreme we have those $\zeta \in \bwt{M}$ for which $\Phi^*\delta_\zeta = 0$. We will call these elements \emph{vanishing points}. These are extreme examples of the failure of the converse of Proposition \ref{pr:wt_bochner} (a less extreme example is given in \cite[Example 5]{Aliaga}). To see that they can exist, let $M$ be infinite, bounded and uniformly discrete and let $\xi \in \rcomp{M}\setminus M$. By Proposition \ref{prop_p_range} there exists $\zeta \in \bwt{M}$ such that $\pp(\zeta)=(\xi,\xi)$. As $M$ is uniformly discrete, we have $d(\zeta)>0$. Thus by Proposition \ref{prop_de_Leeuw_relation}, $\Phi^*\delta_\zeta=0$.

The question of whether vanishing points can exist relates to \cite[Theorem 3.43]{Weaver2}. In the statement of this theorem, Weaver makes the assumption that $\zeta\in\bwt{X}$ is not a vanishing point of $X$, and remarks on p.~114 that he doesn't know whether it is necessary to do so, i.e.~whether vanishing points can exist at all. From above, we see that this assumption is indeed necessary. We should point out that the existence of vanishing points had already been established essentially via \cite[Lemma 4.2]{GRZ} by Garc\'ia-Lirola and Rueda Zoca (who in turn credit Lancien for the argument).

Using Proposition \ref{pr:biLip_embed}, we can characterise precisely when vanishing points exist in the next result. Observe that the equivalence of statements \ref{vanishing_points_3} and \ref{vanishing_points_4} below is precisely what is established in \cite[Lemma 4.2]{GRZ}. 

\begin{proposition}[{cf. \cite[Lemma 4.2]{GRZ}}]\label{pr:vanishing_points} The following are equivalent.
\begin{enumerate}[label={\upshape{(\roman*)}}]
 \item\label{vanishing_points_1} There exists a vanishing point in $\bwt{M}$;
 \item\label{vanishing_points_2} $\alpha(M)=0$;
 \item\label{vanishing_points_3} $M$ cannot be bi-Lipschitz embedded into Euclidean space;
 \item\label{vanishing_points_4} $0 \in \cl{\set{m_{xy} \,:\, (x,y) \in \wt{M}}}^{(\lipfree{M},w)}$.
\end{enumerate}
\end{proposition}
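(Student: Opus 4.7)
My plan is to establish a short cycle of implications that essentially reduces everything to the two formulas from Proposition \ref{pr:biLip_embed}. Concretely, I will show \ref{vanishing_points_1}$\Leftrightarrow$\ref{vanishing_points_2}, then \ref{vanishing_points_2}$\Leftrightarrow$\ref{vanishing_points_3}, and finally \ref{vanishing_points_2}$\Leftrightarrow$\ref{vanishing_points_4}.

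For \ref{vanishing_points_1}$\Leftrightarrow$\ref{vanishing_points_2}, I will simply observe that the infimum defining $\alpha(M)$ is attained (since $\zeta\mapsto\norm{\Phi^*\delta_\zeta}$ is $w^*$-lower semicontinuous on the compact space $\bwt{M}$, as noted in Subsection \ref{subsect_optimal}). Hence $\alpha(M)=0$ holds precisely when some $\zeta\in\bwt{M}$ satisfies $\Phi^*\delta_\zeta=0$, i.e.\ is a vanishing point.

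The equivalence \ref{vanishing_points_2}$\Leftrightarrow$\ref{vanishing_points_3} will be read off from the second formula in Proposition \ref{pr:biLip_embed}, which says exactly that $\alpha(M)>0$ iff $M$ admits a bi-Lipschitz embedding into some $\ell_\infty^n$. To pass between $\ell_\infty^n$ and Euclidean space I will use the elementary fact that all $n$-dimensional normed spaces are mutually bi-Lipschitz equivalent (the identity between $\ell_\infty^n$ and $\ell_2^n$ has distortion at most $\sqrt{n}$ in either direction), so bi-Lipschitz embeddability of $M$ into some $\ell_\infty^n$ is the same as bi-Lipschitz embeddability into Euclidean space.

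The main point of the argument is \ref{vanishing_points_2}$\Leftrightarrow$\ref{vanishing_points_4}, which I will derive from the first formula in Proposition \ref{pr:biLip_embed}, namely
\[
\alpha(M)=\sup_F\inf_{(x,y)\in\wt{M}}\max_{f\in F}\abs{\Phi f(x,y)},
\]
with $F$ ranging over non-empty finite subsets of $B_{\Lip_0(M)}$. A basic weak neighbourhood of $0$ in $\lipfree{M}$ has the form $U_{F,\varepsilon}=\set{\phi \,:\, \max_{f\in F}\abs{\duality{f,\phi}}<\varepsilon}$ with $F$ finite in $B_{\Lip_0(M)}$ (positive-homogeneity lets us normalise), and for a molecule this reads $\max_{f\in F}\abs{\Phi f(x,y)}<\varepsilon$. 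Thus $0\in\cl{\set{m_{xy}}}^{(\lipfree{M},w)}$ iff every such $U_{F,\varepsilon}$ contains some molecule, i.e.\ iff $\inf_{(x,y)\in\wt{M}}\max_{f\in F}\abs{\Phi f(x,y)}=0$ for every finite $F\subset B_{\Lip_0(M)}$, which by the displayed formula is exactly $\alpha(M)=0$. I expect this matching between the combinatorial description of weak neighbourhoods of $0$ and the sup-inf expression for $\alpha(M)$ to be the only place that requires any care; however, it is essentially a bookkeeping argument once one has Proposition \ref{pr:biLip_embed} in hand, so no real obstacle is anticipated.
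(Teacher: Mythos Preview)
Your argument is correct. The treatment of \ref{vanishing_points_1}$\Leftrightarrow$\ref{vanishing_points_2}$\Leftrightarrow$\ref{vanishing_points_3} matches the paper's, which simply says these are immediate from the definitions and Proposition \ref{pr:biLip_embed}.

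For the link with \ref{vanishing_points_4} you take a genuinely different route. The paper connects \ref{vanishing_points_1} and \ref{vanishing_points_4} topologically: since $\delta:\bwt{M}\to(\meas{\bwt{M}},w^*)$ is a homeomorphic embedding and $\Phi^*$ is $w^*$-$w^*$-continuous, compactness gives
\[
\Phi^*(\delta(\bwt{M}))\cap\lipfree{M}=\cl{\set{m_{xy}:(x,y)\in\wt{M}}}^{(\lipfree{M},w)},
\]
so $0$ lies in the right-hand side iff some $\zeta\in\bwt{M}$ is a vanishing point. Your approach instead reads \ref{vanishing_points_2}$\Leftrightarrow$\ref{vanishing_points_4} directly off the first formula of Proposition \ref{pr:biLip_embed}, by unfolding what a basic weak neighbourhood of $0$ looks like on molecules. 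This is entirely valid and arguably more elementary; the paper's route has the added benefit of yielding the explicit identification of the full weak closure of molecules (identity \eqref{eqn_w_closure}), which is used for the remark following the proposition.
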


\begin{proof}
 The equivalence of \ref{vanishing_points_1} -- \ref{vanishing_points_3} is immediate from the definitions and Proposition \ref{pr:biLip_embed}. As noted above, \ref{vanishing_points_3} $\Leftrightarrow$ \ref{vanishing_points_4} is \cite[Lemma 4.2]{GRZ}; alternatively, we can prove \ref{vanishing_points_1} $\Leftrightarrow$ \ref{vanishing_points_4} directly. Indeed, recall that $\delta:\bwt{M} \to (\meas{\bwt{M}},w^*)$ is a homeomorphic embedding and $\dual{\Phi}(\delta(\wt{M}))=\{m_{xy} \,:\, (x,y) \in \wt{M}\}$ is the set of molecules on $M$. By compactness and the $w^*$-$w^*$-continuity of $\dual{\Phi}$ we have
\begin{align}\label{eqn_w_closure}
\dual{\Phi}(\delta(\bwt{M}))\cap\lipfree{M} &= \dual{\Phi}\pare{\wscl{\delta(\wt{M})}}\cap\lipfree{M} \\
&= \cl{\dual{\Phi}(\delta(\wt{M}))}^{(\dual{\Lip_0(M)},w^*)}\cap\lipfree{M} = \cl{\dual{\Phi}(\delta(\wt{M}))}^{(\lipfree{M},w)}. \nonumber
\end{align}
Thus the existence of vanishing points is equivalent to the statement that $0$ belongs to the weak closure of the set of molecules in $\lipfree{M}$.
\end{proof}

Incidentally, the identity \eqref{eqn_w_closure} above, together with \cite[Theorem 3.43]{Weaver2}, tell us that $0$ is the only possible point in the weak closure of the set of molecules in $\lipfree{M}$ that doesn't belong to the set itself. This is a known fact, see \cite[Proposition 2.9]{GPPR}.

It is possible to find really extreme examples of vanishing points. Consider the ``reflection map'' $\rr:\bwt{M}\to\bwt{M}$ defined as the continuous extension of the self-map $(x,y)\mapsto (y,x)$ on $\wt{M}$. It is then easy to see that $\dual{\Phi}(\mu+\rr_\sharp\mu)=0$ for any $\mu\in\meas{\bwt{M}}$ \cite[Lemma 2]{Aliaga}. In particular, $\dual{\Phi}\delta_\zeta=0$ if $\zeta$ is a fixed point of $\rr$. In certain metric spaces these fixed points exist.

\begin{example}\label{ex_r_fixed_points}
Suppose that, given a non-empty finite set $F \subset C(\bwt{M})$, there exists $(x_F,y_F) \in \wt{M}$ such that $f(x_F,y_F)=f(y_F,x_F)$ for all $f \in F$. Then any accumulation point $\zeta \in \bwt{M}$ of the net $(x_F,y_F)$, indexed by finite subsets of $C(\bwt{M})$ and directed by inclusion, satisfies $f(\zeta)=f(\rr(\zeta))$ for all $f \in C(\bwt{M})$, and therefore $\zeta = \rr(\zeta)$.

This property is satisfied by any metric space $M$ that contains homeomorphic copies of the Euclidean sphere $S^n \subset \RR^{n+1}$ for all $n \in \NN$, e.g. any infinite-dimensional Banach space or the Hilbert cube (compare to Example \ref{ex:all_optimal}). Indeed, given $F=\{f_1,\dots,f_n\}$ as above, let $\pi:S^n \to M$ be a homeomorphic embedding and define the continuous map $h:S^n \to \RR^n$ by $h(x)_i=f_i(\pi(x),\pi(-x))$, $i=1,\dots,n$. By the Borsuk-Ulam theorem, there exists $x \in S^n$ such that $h(x)=h(-x)$, i.e. $f_i(\pi(x),\pi(-x))=f_i(\pi(-x),\pi(x))$ for $i=1,\dots,n$. Therefore the point $(\pi(x),\pi(-x)) \in \wt{M}$ satisfies the above requirement.
\end{example}

\subsection{Those that live in between}\label{subsec_in_between}

We close this section by showing that there can exist elements $\zeta \in \bwt{M}$ satisfying $0 < \|\Phi^*\delta_\zeta\| < 1$. Again we will make use of Proposition \ref{prop_de_Leeuw_relation}. Given $\xi,\eta \in \rcomp{M}$, Proposition \ref{prop_de_Leeuw_relation} \ref{d_bar_less_than_d} provides the lower bound $\bar{d}(\xi,\eta)$ for $d(\zeta)$, $\zeta \in \pp^{-1}(\xi,\eta)$. 
 There exists an upper bound as well, for if $(x_i,y_i) \subset \wt{M}$ is a net converging to $\zeta \in \pp^{-1}(\xi,\eta)$, then
\[
 d(x_i,y_i) \leq d(x_i,0) + d(y_i,0)
\]
whence $d(\zeta) \leq \bar{d}(0,\xi) + \bar{d}(0,\eta)$ after taking limits and applying Proposition \ref{pr:bar-d_cont} \ref{bar-d_cont_3}. Example \ref{ex_all_values} below will show that sometimes it is possible to find $(\xi,\eta) \in \rcomp{M} \times \rcomp{M}$ such that all values within these bounds are attained. Before constructing our example, we need two combinatorial lemmas.

\begin{lemma}
\label{lm:I_kn}
There exists a family $(I_{k,n})_{k,n\in\NN}$ of infinite subsets of $\NN$ satisfying
\begin{enumerate}[label={\upshape{(\roman*)}}]
 \item\label{prop_min} $k < \min I_{k,n}$;
 \item\label{prop_dis} given $k \in \NN$, the sequence $(I_{k,n})_{n \in \NN}$ is pairwise disjoint;
 \item\label{prop_ref} given $k,k',n,n' \in \NN$ with $k \leq k'$, we have 
 \[
  I_{k',n'} \subset I_{k,n} \text{ whenever } I_{k,n} \cap I_{k',n'} \neq \varnothing;
 \]
 \item\label{prop_sub} if $p \in I_{k,n}$ and $q \in \NN$ then $I_{p,q} \subset I_{k,n}$.
\end{enumerate}
\end{lemma}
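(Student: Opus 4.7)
The plan is to encode the family $(I_{k,n})$ via a bijection between $\NN$ and the set $\bigcup_{r\geq 1}\NN^r$ of finite non-empty sequences of natural numbers, chosen so that proper extensions of any sequence receive strictly larger codes. Once such a bijection $a:\NN\to\bigcup_{r\geq 1}\NN^r$ is fixed, I would set
$$
I_{k,n}=\set{p\in\NN : a(p)\text{ has }a(k)\cdot (n)\text{ as a prefix}},
$$
where $a(k)\cdot(n)$ denotes the concatenation of $a(k)$ with the singleton $(n)$; each $I_{k,n}$ is then infinite because $a(k)\cdot(n)$ admits infinitely many extensions.

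The bijection $a$ itself can be produced by enumerating $\bigcup_{r\geq 1}\NN^r$ in order of non-decreasing rank, where $\mathrm{rank}(s_1,\dots,s_r):=r+\max(s_1,\dots,s_r)$. Only finitely many sequences share any given rank, and rank strictly increases under proper extension; hence the resulting enumeration satisfies $a^{-1}(s)<a^{-1}(t)$ whenever $s$ is a proper prefix of $t$.

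With these definitions in hand, properties (i), (ii) and (iv) will fall out directly from the definition of $I_{k,n}$: for (i), every $p\in I_{k,n}$ has $a(p)$ strictly extending $a(k)$, so $p>k$; for (ii), $p\in I_{k,n}\cap I_{k,n'}$ would force $a(p)$ to begin with both $a(k)\cdot(n)$ and $a(k)\cdot(n')$, hence $n=n'$; and for (iv), if $p\in I_{k,n}$ then $a(p)$ already extends $a(k)\cdot(n)$, so $a(p)\cdot(q)$ and all its extensions do too, yielding $I_{p,q}\subset I_{k,n}$.

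The only step that genuinely uses the prefix-monotonicity of $a$ is (iii), and this is where I would focus most carefully. Given $k\leq k'$ and $m\in I_{k,n}\cap I_{k',n'}$, both $a(k)\cdot(n)$ and $a(k')\cdot(n')$ are prefixes of $a(m)$, so one is a prefix of the other. If $a(k)\cdot(n)$ is a prefix of $a(k')\cdot(n')$, then every extension of the latter extends the former and $I_{k',n'}\subset I_{k,n}$, as wanted. Otherwise $a(k')\cdot(n')$ is a proper prefix of $a(k)\cdot(n)$, in which case $a(k')$ is a proper prefix of $a(k)$, and the monotonicity of $a^{-1}$ forces $k'<k$, contradicting $k\leq k'$. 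The main obstacle is thus the careful choice of the bijection $a$; once that is set up, everything reduces to straightforward manipulation of prefixes.
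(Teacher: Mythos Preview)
Your proof is correct and takes a genuinely different route from the paper's. The paper builds the family $(I_{k,n})$ by recursion on $k$: starting from a partition of $\NN\setminus\{1\}$ into infinite pieces $(I_{1,n})$, at stage $m+1$ it locates the ``deepest'' already-constructed set $I_{k_1,n_1}$ associated with $m+1$ and carves $(I_{m+1,n})$ out of it, then verifies (iii) and (iv) inductively. Your construction is instead explicit and global: you identify $\NN$ with the tree $\bigcup_{r\geq 1}\NN^r$ via a rank-monotone bijection and let $I_{k,n}$ be the set of codes of extensions of $a(k)\cdot(n)$. This makes the nested structure transparent---properties (i), (ii), (iv) are immediate from prefix combinatorics, and (iii) reduces to the comparability of prefixes of a common sequence together with the monotonicity of $a^{-1}$. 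The paper's argument is self-contained and avoids introducing auxiliary objects, while yours is more conceptual and arguably cleaner: once the bijection is in place, there is essentially nothing to check. Both are short; yours has the advantage that the family is given by a single formula rather than a recursion.
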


\begin{proof}
The sets $I_{k,n}$ are constructed by recursion on $k$. To begin, let $(I_{1,n})$ be a sequence of pairwise disjoint infinite subsets having union $\NN\setminus\{1\}$. Now assume that $(I_{k,n})$, $k \leq m$, have been constructed and satisfy properties \ref{prop_min} -- \ref{prop_sub} up to and including $m$ (i.e in all cases where $k,k',p \leq m$). Since $m+1 \in I_{1,n}$ for some $n \in \NN$, there exists maximal $k_0 \leq m$ for which $m+1 \in I_{k_0,n_0}$ for some (unique) $n_0 \in \NN$. Then let $k_1 \leq m$ be maximal, subject to the condition that $ I_{k_1,n_1} \subset I_{k_0,n_0}$ for some (unique) $n_1 \in \NN$. Finally, let $(I_{m+1,n})$ be a sequence of pairwise disjoint infinite subsets of $I_{k_1,n_1}\setminus\{1,\dots,m+1\}$. Clearly \ref{prop_min} and \ref{prop_dis} above hold; it remains to verify \ref{prop_ref} and \ref{prop_sub}.

To verify \ref{prop_ref}, let $k,n,n' \in \NN$ such that $k < m+1$ and $I_{k,n} \cap I_{m+1,n'} \neq \varnothing$. Then $I_{k,n} \cap I_{k_1,n_1} \neq \varnothing$, so by maximality of $k_1$ we get $k \leq k_1$. Indeed, if $k_1 < k$ then by \ref{prop_ref} (applied to $k_1,k \leq m$), we would have $I_{k,n} \subset I_{k_1,n_1} \subset I_{k_0,n_0}$. Consequently, again by \ref{prop_ref} (applied to $k,k_1 \leq m$), we have $I_{m+1,n'} \subset I_{k_1,n_1} \subset I_{k,n}$, as required. To verify \ref{prop_sub}, let $m+1 \in I_{k,n}$ for some $k \leq m$. By maximality of $k_0$ and \ref{prop_ref} (applied to $k,k_0 \leq m$), we obtain $I_{k_0,n_0} \subset I_{k,n}$. Therefore $I_{m+1,q} \subset I_{k_1,n_1} \subset I_{k_0,n_0} \subset I_{k,n}$ for all $q \in \NN$, as required.
\end{proof}

\begin{lemma}
\label{lm:Gamma_n}
There exists a sequence $(\Gamma_n)$ of pairwise disjoint subsets of $\set{(x,y):x<y}\subset\NN\times\NN$ with the following property: if
\[
\mathscr{H} = \set{A \subset \NN \,:\, (A \times A) \cap \Gamma_n = \varnothing \text{ for some }n \in \NN},
\]
then $\NN$ is not the union of finitely many elements of $\mathscr{H}$.
\end{lemma}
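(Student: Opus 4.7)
The natural candidate is to build $\Gamma_n$ directly out of the tree-like family $(I_{k,n})$ supplied by Lemma \ref{lm:I_kn}. Specifically, I would define
\[
\Gamma_n = \set{(p,q) \in \NN\times\NN \,:\, p < q \text{ and } q \in I_{p,n}}.
\]
Property \ref{prop_min} gives $\Gamma_n \subset \set{(x,y):x<y}$, and pairwise disjointness is immediate from \ref{prop_dis}: if $(p,q)\in\Gamma_n\cap\Gamma_m$ then $q\in I_{p,n}\cap I_{p,m}$, forcing $n=m$.

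The condition $(A\times A)\cap\Gamma_n=\varnothing$ unpacks to: for every $p\in A$, $A\cap I_{p,n}=\varnothing$ (note $I_{p,n}\subset\set{x:x>p}$ by \ref{prop_min}, so we need not worry about the ordering $p<q$). So an element of $\mathscr{H}$ is a set $A$ for which some ``level'' $n$ witnesses that $A$ never jumps down into the sub-branch $I_{p,n}$ below any of its own elements $p$.

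Now suppose, for contradiction, that $\NN = A_1\cup\cdots\cup A_r$ with each $A_i$ avoiding $\Gamma_{n_i}$. I would construct by induction a sequence $p_1,p_2,\ldots$ in $\NN$ and a sequence of distinct indices $i_1,i_2,\ldots\in\set{1,\ldots,r}$ such that $p_k\in A_{i_k}$ and $p_{k+1}\in I_{p_k,n_{i_k}}$. Start with any $p_1\in\NN$, lying in some $A_{i_1}$. Given $p_k\in A_{i_k}$, pick any $p_{k+1}\in I_{p_k,n_{i_k}}$ (infinite, hence non-empty). Then $p_{k+1}\notin A_{i_k}$, so $p_{k+1}\in A_{i_{k+1}}$ for some $i_{k+1}$; the key point is that property \ref{prop_sub} guarantees $I_{p_{k+1},n_{i_{k+1}}}\subset I_{p_k,n_{i_k}}$, so iterating gives the nested chain $I_{p_1,n_{i_1}}\supset I_{p_2,n_{i_2}}\supset\cdots$. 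Consequently $p_{k+1}$ lies in every $I_{p_j,n_{i_j}}$ with $j\leq k$, which forces $i_{k+1}\notin\set{i_1,\ldots,i_k}$. After $r+1$ steps we obtain $r+1$ distinct indices in $\set{1,\ldots,r}$, a contradiction.

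The only genuine obstacle is arranging that the chain of indices is truly \emph{strictly} increasing in size, and this is where property \ref{prop_sub} does the essential work: without the downward closure of the sub-branches, $p_{k+1}$ might fall back into some earlier $A_{i_j}$ and the argument collapses. The choice of $\Gamma_n$ that ``records the level index $n$ under each node $p$'' is precisely what converts the tree structure of Lemma \ref{lm:I_kn} into the desired non-covering statement.
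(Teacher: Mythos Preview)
The proposal is correct and essentially the same as the paper's proof: the definition of $\Gamma_n$ is identical (the condition $p<q$ being automatic from property \ref{prop_min}), and the non-covering argument descends through the tree via property \ref{prop_sub} exactly as the paper does. The only cosmetic difference is that you phrase the descent as producing $r+1$ points with pairwise distinct colour indices $i_1,\ldots,i_{r+1}$, whereas the paper phrases it as finding an infinite $I_{p,q}$ disjoint from all the $A_i$; the two formulations are equivalent.
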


\begin{proof}
Let $(I_{k,n})$ be the family of sets from Lemma \ref{lm:I_kn} and set
\[
\Gamma_n = \set{(k,p) \,:\, k \in \NN,\, p \in I_{k,n}} . 
\]
We shall refer to Lemma \ref{lm:I_kn} \ref{prop_min} -- \ref{prop_sub} above. By \ref{prop_min} all $(x,y)\in\Gamma_n$ satisfy $x<y$. To see that the $\Gamma_n$ are pairwise disjoint, let $n,n' \in \NN$ be distinct and let $(x,y) \in \Gamma_n$. Then $y \in I_{x,n}$, giving $y \notin I_{x,n'}$ by \ref{prop_dis} and hence $(x,y) \notin \Gamma_{n'}$. To see that $\NN$ is not the union of finitely many elements of $\mathscr{H}$, let $A \in \mathscr{H}$ and $k,n \in \NN$. We find $p,q \in \NN$ such that $I_{p,q} \subset I_{k,n}$ and $A \cap I_{p,q} = \varnothing$. Indeed, if $A \cap I_{k,n} = \varnothing$ then set $p=k$ and $q=n$. Otherwise, let $p \in A \cap I_{k,n}$. As $A \in \mathscr{H}$, there exists $q \in \NN$ such that $(A \times A) \cap \Gamma_q = \varnothing$. It follows that $A \cap I_{p,q}=\varnothing$, for if $\ell \in A \cap I_{p,q}$ then $(p,\ell) \in (A \times A) \cap \Gamma_q$. By \ref{prop_sub}, we have $I_{p,q} \subset I_{k,n}$. Now, given finitely many elements $A_1,\dots,A_j \in \mathscr{H}$, we can iterate the above process to obtain some $I_{p,q}$ that is disjoint from all the $A_i$. As $I_{p,q}$ is an infinite subset of $\NN$, it follows that $\NN \neq \bigcup_{i=1}^j A_i$.
\end{proof}

\begin{example}\label{ex_all_values}
There exists a bounded and uniformly discrete metric $d$ on $\NN$ (with base point $1$) and $\xi,\eta \in \rcomp{\NN}$, such that $\bar{d}(\xi,\eta)=\bar{d}(1,\xi)=\bar{d}(1,\eta)=\frac{1}{2}$ and, given $r \in [\frac{1}{2},1]$, there exists $\zeta \in \pp^{-1}(\xi,\eta)$ such that $d(\zeta)=r$. Consequently, $\norm{\Phi^*\delta_\zeta} = \frac{1}{2r} \in [\frac{1}{2},1]$.
\end{example}

\begin{proof}
Let $(\Gamma_n)$ be the sequence from Lemma \ref{lm:Gamma_n}, let $(q_n)$ be an enumeration of $[\frac{1}{2},1] \cap \QQ$ and define a metric $d$ on $\NN$ by
\[
 d(x,y) = \begin{cases}
           0 &\text{if $x=y$}\\
           q_n &\text{if $(x,y)=(2p,2q+1)$ or $(2q+1,2p)$ for some $(p,q)\in\Gamma_n$}\\
          \frac{1}{2} &\text{otherwise}.
           \end{cases}
\]
The properties of the $\Gamma_n$ ensure that $d$ is well defined and symmetric, and the fact that $d(x,y) \in [\frac{1}{2},1]$ whenever $x\neq y$ guarantees that the triangle inequality is satisfied. Since $d$ is bounded and uniformly discrete, we have $\rcomp{\NN} = \ucomp{\NN} = \beta\NN$. Recall that $\beta\NN$ can be interpreted as the family of all ultrafilters on $\NN$, with the topology generated by the sets $\set{\xi\in\beta\NN:U\in\xi}$ for $U\subset\NN$. Following that interpretation, we have $\cl{U}^{\beta\NN}=\set{\xi\in\beta\NN:U\in\xi}$ for $U\subset\NN$.

Given $A\subset\NN$ and $s>0$, we set $sA=\set{n \in \NN \,:\, \frac{n}{s} \in A}$ and $A+1=\set{n+1:n \in A}$. Using the assumption on $(\Gamma_n)$ and $\mathscr{H}$, by compactness there exists
\[
\xi \in \bigcap_{A \in \mathscr{H}} \cl{2\NN}^{\beta \NN}\setminus\cl{2A}^{\beta \NN}.
\]
Then, for all $U\in\xi$, $U \cap 2\NN \in \xi$ and the set $\frac{1}{2}U \times \frac{1}{2}U$ intersects all $\Gamma_n$ (else $\frac{1}{2}U \in \mathscr H$, giving $U \cap 2\NN = 2(\frac{1}{2}U) \notin \xi$, which is false).
In addition, define 
$$
\eta = \set{V \subset \NN \,:\, U+1 \subset V \text{ for some }U \ni \xi} \in \beta \NN .
$$
We now prove our three claims:
\begin{enumerate}
 \item given $r \in [\frac{1}{2},1]$, there exists $\zeta \in \pp^{-1}(\xi,\eta)$ such that $d(\zeta)=r$;
 \item $\bar{d}(\xi,\eta)=\frac{1}{2}$;
 \item $\bar{d}(1,\xi)=\bar{d}(1,\eta)=\frac{1}{2}$.
\end{enumerate}

To prove claim (1), given $n \in \NN$, we find $\zeta_n \in \pp^{-1}(\xi,\eta)$ satisfying $d(\zeta_n)=q_n$. Indeed, we know that $(\frac{1}{2}U \times \frac{1}{2}U) \cap \Gamma_n \neq \varnothing$ whenever $U \in \xi$ and $U \subset 2\NN$, so by the definition of $d$ and compactness there exists
\[
\zeta_n \in \bigcap_{U \in \xi,\, U \subset 2\NN} \cl{(U \times (U+1)) \cap d^{-1}(q_n)}^{\bwt{\NN}}.
\]
Clearly $\pp(\zeta_n)=(\xi,\eta)$, and $d(\zeta_n)=q_n$ by continuity of $d$. The claim now follows by compactness of $\pp^{-1}(\xi,\eta)$ and the density of the $q_n$. 

To prove claim (2), define $f \in B_{\Lip_0(\NN)}$ by $f(n)=\frac{1}{2}$ for even $n$ and $f(n)=0$ for odd $n$. Evidently $\bar{d}(\xi,\eta) \geq \rcomp{f}(\xi)-\rcomp{f}(\eta)=\frac{1}{2}$. Since $\bar{d}(\xi,\eta)\leq d(\zeta_n) = q_n$ for all $n$, the assertion follows.

For claim (3), we simply observe that $\rho(x)=d(1,x)=\frac{1}{2}$ whenever $x>1$.
\end{proof}

\section{Optimal representations and optimal transport}\label{sect_optimal_rep_and_optimal_trans}

In this section we generalise the results obtained in \cite{APS23} relating optimal De Leeuw representations to optimal transport theory. These include the characterisation of certain optimal representations in terms of cyclical monotonicity (see Theorem \ref{th:cyclic_monotone_optimal} below), and of measure-induced functionals in terms of optimal representations with additional finiteness constraints (see \cite[Section 3]{APS23}). Throughout this section, if a measure $\mu$ is said to be concentrated on a set $C$, then the assumption is that $C$ is $\mu$-measurable at least; as $\mu$ is inner regular, if necessary we can replace $C$ by a $K_\sigma$ and thus Borel measurable subset.

\subsection{Optimal transport in \texorpdfstring{$\rcomp{M}$}{MR}}\label{sec:optimal_trans_MR}

A standard reference for optimal transport theory is \cite{Villani}, although it focusses on separable metric spaces. We will now introduce the necessary preliminaries and extend some of the basic results, in particular the Kantorovich-Rubinstein theorem, to $\rcomp{M}$. In fact, our arguments are valid in much more general topological spaces, so we will present the general setting.

The Monge-Kantorovich problem in optimal transport theory can be roughly stated as follows: \textit{find a way to transform a given distribution into another one at the least possible cost}. Our setting is a Hausdorff space $X$, and our ``distributions'' are Radon probability measures $\mu,\nu\in\meas{X}$. The possible ways of transforming or ``redistributing'' $\mu$ into $\nu$ are represented by Radon \emph{couplings}, that is, probability measures $\pi\in\meas{X\times X}$ whose marginals are $\mu$ and $\nu$, i.e. $(\pp_1)_\sharp\pi=\mu$ and $(\pp_2)_\sharp\pi=\nu$. Here $\pp_i$ stands for the projection from a product space into its $i$-th coordinate; this is essentially consistent with the notation in the preceding sections. The set of Radon couplings from $\mu$ to $\nu$ will be denoted by $\Pi(\mu,\nu)$. The cost associated with a coupling $\pi\in\Pi(\mu,\nu)$ is given as
$$
\int_{X\times X} d(x,y)\,d\pi(x,y)
$$
where $d$ is some predetermined cost function. We shall only be concerned with the case where $d$ is a metric on $X$, although not necessarily one that is compatible with the topology of $X$. In order to ensure that the optimal cost is finite, it is customary to assume that $\mu$ and $\nu$ have finite first $d$-moment, that is
$$
\int_X d(x,x_0)\,d\mu(x) < \infty
$$
for some (and hence all) $x_0\in X$, and likewise for $\nu$.

We recall here two classical notions in this setting. The first definition is given differently in \cite{APS23} but the two are equivalent because every finite permutation can be written as a product of disjoint cycles.

\begin{definition} Let $(X,d)$ be a metric space and $C$ be a subset of $X\times X$.
\begin{enumerate}[label={\upshape{(\alph*)}}]
\item We say that $C$ is \emph{cyclically monotonic} if, given $(x_1,y_1),\ldots,(x_n,y_n) \in C$ and a permutation $\sigma$ of $\set{1,\ldots,n}$, we have
$$
\sum_{k=1}^n d(x_k,y_k) \leq \sum_{k=1}^n d(x_k,y_{\sigma(k)}) .
$$
\item We say that a function $f:X\to\RR$ is \emph{extremal on $C$} if it is $1$-$d$-Lipschitz and satisfies
$$
f(x)-f(y) = d(x,y) \qquad\text{for all }(x,y) \in C.
$$
Accordingly, we say that $C$ admits an extremal function if there exists a function $f:X\to\RR$ that is extremal on $C$.
\end{enumerate}
\end{definition}

The next proposition shows that both notions are, in fact, almost equivalent, and we can moreover impose a measurability constraint on the extremal function under additional conditions. Its proof can be extracted from that of \cite[Theorem 5.10]{Villani}; given the different circumstances and the length of the foregoing proof we provide a direct argument for completeness.

\begin{proposition}\label{pr:cm_extremal}
Let $X$ be a Hausdorff space and $d$ be a lower semicontinuous metric on $X$. Given $\pi\in\meas{X\times X}$, the following are equivalent:
\begin{enumerate}[label={\upshape{(\roman*)}}]
\item\label{cm_extremal-cm} $\pi$ is concentrated on a $d$-cyclically monotonic set,
\item\label{cm_extremal-ext} $\pi$ is concentrated on a set that admits a $d$-extremal function,
\item\label{cm_extremal-borel} $\pi$ is concentrated on a set that admits a Borel $d$-extremal function.
\end{enumerate}
\end{proposition}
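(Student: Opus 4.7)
The implication \ref{cm_extremal-borel}$\Rightarrow$\ref{cm_extremal-ext} is immediate. For \ref{cm_extremal-ext}$\Rightarrow$\ref{cm_extremal-cm}, my plan is to observe directly that if $f$ is $d$-extremal on a set $C$ of full $|\pi|$-measure then, for any $(x_1,y_1),\dots,(x_n,y_n) \in C$ and any permutation $\sigma$ of $\{1,\dots,n\}$,
\[
\sum_{k=1}^n d(x_k,y_k) = \sum_{k=1}^n (f(x_k)-f(y_k)) = \sum_{k=1}^n (f(x_k)-f(y_{\sigma(k)})) \leq \sum_{k=1}^n d(x_k,y_{\sigma(k)}),
\]
applying extremality, rearrangement, and 1-$d$-Lipschitzness in turn.

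\textbf{The main implication.} For \ref{cm_extremal-cm}$\Rightarrow$\ref{cm_extremal-borel} I would adapt Rockafellar's classical construction. By inner regularity of the Radon measure $\pi$, first replace the $d$-cyclically monotonic set $C$ with a $\sigma$-compact subset $C' = \bigcup_n K_n \subseteq C$ (with $K_n$ nested and compact) of full $|\pi|$-measure; cyclical monotonicity is inherited by $C'$. Fix $(x_0,y_0)\in C'$ and define
\[
 f(x) := \inf\left\{ d(x,y_n) + \sum_{k=0}^{n-1} d(x_{k+1},y_k) - \sum_{k=0}^n d(x_k,y_k) : n\geq 0,\ (x_k,y_k)\in C' \text{ for } 1\leq k \leq n\right\},
\]
with the $n=0$ convention giving $d(x,y_0)-d(x_0,y_0)$. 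I would then verify the standard properties: applying $d$-cyclical monotonicity to the cycle $(x_0,y_0),(x_1,y_1),\dots,(x_n,y_n)$ under a cyclic shift forces $f(x_0) \geq 0$, while $n=0$ yields $f(x_0)\leq 0$, hence $f(x_0)=0$ and $f$ is everywhere real-valued; a triangle-inequality estimate on the $d(x,y_n)$ term shows $f$ is 1-$d$-Lipschitz; and extending any near-optimal tuple for $f(x)$ by a pair $(x,y)\in C'$ yields $f(y)\leq f(x)-d(x,y)$, which combined with the Lipschitz upper bound gives $d$-extremality on $C'$.

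\textbf{The Borel step and main obstacle.} The principal obstacle is arranging that $f$ be Borel measurable, because an infimum of lower semicontinuous functions need not be semicontinuous, and even 1-Lipschitz functions with respect to a merely l.s.c.\ metric can fail to be Borel---the discrete metric on $\{0,1\}^I$ with uncountable $I$ yields arbitrary such functions. To circumvent this I would exploit the $\sigma$-compact structure of $C'$: the family of all finite tuples from $C'$ is the \emph{countable} union $\bigcup_{k\geq 0,\,\vec n \in \NN^{2k}} K_{n_1}\times\cdots\times K_{n_{2k}}$, so $f = \inf_{k,\vec n} f_{k,\vec n}$ where each $f_{k,\vec n}(x) = \inf_{T \in K_{\vec n}} [d(x,y_k(T)) + c(T)]$ is the Rockafellar infimum restricted to a compact rectangle. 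Following the adaptation of Villani's argument in \cite[Theorem 5.10]{Villani}, each $f_{k,\vec n}$ rewrites as a $d$-transform of a bounded function on the compact projection of $K_{\vec n}$ onto its last ``$y$'' coordinate; using the compactness together with $d$-cyclical monotonicity, the infimum over that compact projection can be realised along a countable net, reducing $f_{k,\vec n}$ to a countable infimum of functions of the form $x \mapsto d(x,y)+\text{const}$ (each l.s.c., hence Borel) and thus to a Borel function. A final countable infimum yields the sought-after Borel $d$-extremal function.
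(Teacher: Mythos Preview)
Your treatment of the easy directions \ref{cm_extremal-borel}$\Rightarrow$\ref{cm_extremal-ext}$\Rightarrow$\ref{cm_extremal-cm} is fine, and the Rockafellar-type construction is indeed the right engine for \ref{cm_extremal-cm}$\Rightarrow$\ref{cm_extremal-borel}. However, the Borel step contains a genuine gap.

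The problem is that you only invoke inner regularity to obtain compact $K_n$, without arranging any additional continuity of $d$ on them. Your ``cost'' functions $c(T)$ involve both positive and negative $d$-terms, so under the bare hypothesis that $d$ is lower semicontinuous, $c$ is merely Borel on $K_{\vec n}$ --- neither upper nor lower semicontinuous. An infimum of Borel functions over an uncountable index set need not be Borel, and your claim that the infimum over the compact rectangle ``can be realised along a countable net'' via compactness and cyclical monotonicity is not justified: in a general Hausdorff space, compact sets need be neither metrizable nor separable, so there is no countable dense subset to fall back on, and even if there were, approximating an infimum of a merely Borel function by values on a dense set does not work. Cyclical monotonicity gives nothing here; it constrains the values of the Rockafellar function but says nothing about measurability.

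The missing ingredient is Lusin's theorem: choose the $K_n$ so that, in addition to being compact, $d\restrict_{K_n}$ is \emph{continuous}. Then the partial Rockafellar functions (your $f_{k,\vec n}$, or equivalently the paper's $f_n$ defined via a supremum over $K_n^n$) become semicontinuous in $x$: the $d$-terms with both arguments in $K_n$ are continuous, and the single remaining term $d(x,\,\cdot)$ contributes lower semicontinuity (or upper semicontinuity in the sup formulation). A standard compactness argument then shows that the infimum (resp.\ supremum) over the compact index set preserves semicontinuity. Finally, arranging the $K_n$ to be increasing makes these partial functions monotone, and a monotone pointwise limit of semicontinuous functions is Borel. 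This is exactly the route the paper takes.
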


\begin{proof}
Implications \ref{cm_extremal-borel}$\Rightarrow$\ref{cm_extremal-ext}$\Leftrightarrow$\ref{cm_extremal-cm} hold for general sets (i.e. without considering a measure) and even without semicontinuity of $d$. Indeed, \ref{cm_extremal-borel}$\Rightarrow$\ref{cm_extremal-ext} is trivial, 
and \ref{cm_extremal-cm}$\Leftrightarrow$\ref{cm_extremal-ext} is \cite[Proposition 2.10]{APS23}. Note that \cite[Proposition 2.10]{APS23} is stated for subsets of $\wt{X}$ rather than $X\times X$, but points in the diagonal $(X\times X)\setminus\wt{X}$ are inconsequential to either \ref{cm_extremal-cm} or \ref{cm_extremal-ext}.

Let us now prove that \ref{cm_extremal-cm}$\Rightarrow$\ref{cm_extremal-borel}. Suppose that $\pi\neq 0$ is concentrated on a $d$-cyclically monotonic set $C\subset X\times X$. By Lusin's theorem (see \cite[Theorem 7.1.13]{Bogachev}) and the regularity of $\pi$, we can take an increasing sequence $(K_n)_{n=0}^\infty$ of non-empty compact subsets of $C$ such that $d\restrict_{K_n}$ is continuous and $\pi$ is concentrated on $\bigcup_{n=0}^\infty K_n$, and thus assume that $C=\bigcup_{n=0}^\infty K_n$. Using these sets, we will construct a Borel function $f:X\to\RR$ that is $d$-extremal on $C$, based on the classical formula (cf. \cite[(5.17) and pp. 69--71]{Villani}).

Fix some $(x_0,y_0)\in K_0$ and, for $n\geq 1$, define $\psi_n:X\times C^n\to\RR$ by
$$
\psi_n(x,\omega) = \sum_{k=0}^n d(x_k,y_k) - \sum_{k=0}^{n-1} d(x_k,y_{k+1}) - d(x_n,x),
$$
where $\omega=((x_1,y_1),\dots,(x_n,y_n))\in C^n$. Since $d$ is lower semicontinuous and $d\restrict_{K_n}$ is continuous, $\psi_n\restrict_{X\times K_n^n}$ is upper semicontinuous.  By $d$-cyclical monotonicity of $C$ we have
$$
\psi_n(x,\omega) \leq d(x_n,y_0) - d(x_n,x) \leq d(y_0,x) .
$$
Hence we can define $f_n:X\to\RR$ by
$$
f_n(x) = \sup\set{\psi_n(x,\omega) \,:\, \omega \in K_n^n} = \max\set{\psi_n(x,\omega) \,:\, \omega \in K_n^n} .
$$
A standard argument (see e.g. \cite[Exercise IV.6.9(a) on p. 396]{Bourbaki}) shows that each $f_n$ is also upper semicontinuous. Indeed, let $x\in X$ and $\varepsilon>0$. By the upper semicontinuity of $\psi_n$, given $\omega \in K_n^n$, take sets $U_\omega \ni x$ and $V_\omega \ni \omega$, open in $X$ and $K_n^n$, respectively, such that
$$
\psi_n(x',\omega') < \psi_n(x,\omega) + \varepsilon
$$
whenever $(x',\omega') \in U_\omega \times V_\omega$. Using compactness of $K_n^n$, find $\omega_1,\ldots,\omega_m \in K_n^n$ such that $K_n^n = \bigcup_{i=1}^m V_{\omega_i}$, and set $U=\bigcap_{i=1}^m U_{\omega_i}$. Then $U$ is a neighbourhood of $x$ in $X$ and, given $x' \in U$, there exists $\omega' \in K_n^n$ satisfying $f_n(x')=\psi_n(x',\omega')$; if $i\leq m$ is such that $\omega' \in V_{\omega_i}$, then $(x',\omega') \in U_{\omega_i} \times V_{\omega_i}$ and
$$
f_n(x') = \psi_n(x',\omega') < \psi_n(x,\omega_i) + \varepsilon \leq f_n(x) + \varepsilon.
$$
It follows that $f_n$ is upper semicontinuous.

The $f_n$ form a pointwise increasing sequence of functions. Indeed, given $\omega \in K_n^n$, one can verify that $\psi_n(x,\omega)=\psi_{n+1}(x,\omega')$, where $\omega'=(\omega,(x_n,y_n)) \in K_{n+1}^{n+1}$. As $f_n(x) \leq d(y_0,x)$ for all $x \in X$ and $n \in \NN$, we can define a function $f:X\to\RR$ by
$$
f(x) = \lim_{n\to\infty} f_n(x) = \sup\set{\psi_n(x,\omega) \,:\, \omega \in C^n, n \in \NN} \leq d(y_0,x) .
$$
Above, we have exploited the fact that if $\omega\in C^n$ then $\psi_n(x,\omega) = \psi_{n'}(x,\omega') \leq f_{n'}(x)$ for some $\omega' \in K_{n'}^{n'}$ and $n' \geq n$. Being a pointwise limit of a sequence of upper semicontinuous functions, $f$ is Borel. Also, $f$ is the pointwise supremum of the functions $\psi_n(\cdot,\omega)\in B_{\Lip(X,d)}$, and thus $f\in B_{\Lip(X,d)}$. Finally, we verify that $f$ is $d$-extremal on $C$. Fix $(x,y)\in C$. Given $\varepsilon>0$, let $n\in\NN$ and $\omega\in C^n$ such that $f(y) < \psi_n(y,\omega)+\varepsilon$. If we let $\omega'=(\omega,(x,y))\in C^{n+1}$, then
$$
f(x) \geq \psi_{n+1}(x,\omega') = \psi_n(y,\omega) + d(x,y) > f(y) + d(x,y) - \varepsilon.
$$
Thus $f(x)-f(y)\geq d(x,y)$, and the converse inequality is clear because $f \in B_{\Lip(X,d)}$.
\end{proof}

The classical Kantorovich-Rubinstein theorem relates the optimal cost of the transportation problem to 1-$d$-Lipschitz functions, and also shows that optimal couplings are characterised by being concentrated on cyclically monotonic sets. This is well known for metric spaces when $d$ is the associated metric \cite[Theorem 4.1]{Edwards} but also when $d$ is merely lower semicontinuous \cite[Theorem 1.14]{Villani03}. We will now generalise the theorem to normal Hausdorff spaces. In exchange for that, lower semicontinuity of $d$ is replaced by condition \eqref{eq:krm0} below. This condition implies lower semicontinuity (see \cite[p. 214]{Matouskova}) but it is not much stronger than that; in fact, it is equivalent to lower semicontinuity when $X$ is compact \cite[Corollary 2.5]{Matouskova}.

\begin{theorem}\label{th:kr_lsc}
Let $X$ be a normal Hausdorff space and $d$ be a metric on $X$ with the property that
\begin{equation}\label{eq:krm0}
\text{$\set{x\in X \,:\, d(x,A)\leq r}$ is closed for every closed $A\subset X$ and $r>0$}
\end{equation}
(i.e. $d(\cdot,A)$ is lower semicontinuous whenever $A \subset X$ is closed).
Let $\mu$, $\nu$ be Radon probability measures on $X$ with finite first $d$-moment. Then
\begin{equation}\label{eq:krm1}
\inf_{\pi\in\Pi(\mu,\nu)}\int_{X\times X} d\,d\pi = \sup \set{ \int_X f\,d(\mu - \nu) \,:\, f\in C(X)\cap B_{\Lip(X,d)} }
\end{equation}
and the infimum is attained. Moreover, given $\pi\in\Pi(\mu,\nu)$, the infimum of \eqref{eq:krm1} is attained at $\pi$ if and only if $\pi$ is concentrated on a $d$-cyclically monotonic subset of $X\times X$.
\end{theorem}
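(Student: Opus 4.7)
My plan is to prove the duality and the characterisation of optimisers simultaneously via four steps: (i) the easy inequality $\sup\leq\inf$; (ii) existence of an optimiser; (iii) every optimal $\pi$ is concentrated on a $d$-cyclically monotonic set; and (iv) conversely, any $\pi$ concentrated on such a set satisfies $\int d\,d\pi = \sup$, which closes the duality loop and gives the ``if'' direction. Step (i) is routine: for $f \in C(X)\cap B_{\Lip(X,d)}$ and $\pi \in \Pi(\mu,\nu)$, the marginal property gives
\[
\int_X f\,d(\mu-\nu) = \int_{X\times X}(f(x)-f(y))\,d\pi(x,y) \leq \int_{X\times X}d\,d\pi,
\]
and both sides are finite since $\mu\otimes\nu$ has finite cost by the triangle inequality and the finite first $d$-moment hypothesis.

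For (ii), $\Pi(\mu,\nu)$ contains $\mu\otimes\nu$ and is uniformly tight: for $\varepsilon>0$, inner regularity furnishes compact $K_\mu, K_\nu \subset X$ such that $\pi((X\times X)\setminus(K_\mu\times K_\nu)) < 2\varepsilon$ for every $\pi \in \Pi(\mu,\nu)$. Since a normal Hausdorff space is Tychonoff, Prokhorov's theorem yields relative compactness of $\Pi(\mu,\nu)$ in the topology induced by $C_b(X\times X)$; $\Pi(\mu,\nu)$ is closed in this topology because the marginal constraints are defined through continuous test functions. Condition \eqref{eq:krm0} implies $d$ is lower semicontinuous and, being non-negative, the functional $\pi\mapsto \int d\,d\pi$ is weakly lower semicontinuous, so a minimiser $\pi^*$ exists.

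For (iii) I apply the classical ``swap'' argument: if some optimal $\pi$ were not concentrated on any $d$-cyclically monotonic set, then there would exist $n\geq 2$, $\sigma\in S_n$ and $\eta>0$ with
\[
C \;=\; \set{((x_i,y_i))_{i=1}^n \in (X\times X)^n \,:\, \sum_{i=1}^n d(x_i,y_i) > \sum_{i=1}^n d(x_i,y_{\sigma(i)}) + \eta}
\]
having positive $\pi^{\otimes n}$-mass, the set $C$ being Borel by lower semicontinuity of $d$. Inner regularity yields a compact $K\subset C$ of positive mass, on which we redistribute the $\pi$-mass along $\sigma$---by subtracting a small multiple of the coordinate-pair projections of $\pi^{\otimes n}\restrict_K$ and adding back the analogous projections after applying the swap map $((x_i,y_i))_i \mapsto ((x_i,y_{\sigma(i)}))_i$---to produce a coupling of strictly smaller cost than $\pi$, contradicting optimality. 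For step (iv), let $\pi$ be concentrated on a Borel $d$-cyclically monotonic set. By Proposition \ref{pr:cm_extremal}\ref{cm_extremal-borel} there is a Borel $f \in B_{\Lip(X,d)}$ that is $d$-extremal on a Borel set $B$ carrying $\pi$, so
\[
\int d\,d\pi = \int_B (f(x)-f(y))\,d\pi = \int_X f\,d(\mu-\nu),
\]
where $(\mu+\nu)$-integrability of $f$ follows from $\abs{f(x)} \leq \abs{f(x_0)} + d(x,x_0)$ and the first-moment hypothesis. To conclude $\int d\,d\pi \leq \sup$, I would approximate $f$ by continuous 1-$d$-Lipschitz functions: truncate $f$ at level $M$ to get a bounded Borel $f^{(M)}$; for each $k\in\NN$, Lusin's theorem for $\mu+\nu$ produces a compact $F_{M,k}\subset X$ with $(\mu+\nu)(X\setminus F_{M,k})<1/k$ and $f^{(M)}\restrict_{F_{M,k}}$ continuous; a Matou\v{s}kov\'a-type extension theorem (the same tool underlying Proposition \ref{pr:Mat_app}, applicable in the present setting thanks to the lower semicontinuity afforded by \eqref{eq:krm0}) then extends $f^{(M)}\restrict_{F_{M,k}}$ to $f^{(M)}_k \in C(X) \cap B_{\Lip(X,d)}$ with $\abs{f^{(M)}_k}\leq M$. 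A diagonal argument in $k,M\to\infty$, combined with dominated convergence controlled by the first $d$-moments, yields continuous Lipschitz $g_j$ with $\int g_j\,d(\mu-\nu) \to \int f\,d(\mu-\nu) = \int d\,d\pi$. Combining (i) and (iv) applied to $\pi^*$ from (ii)--(iii) gives $\sup = \inf = \int d\,d\pi^*$; (iv) also yields the ``if'' direction, while (iii) provides ``only if''.

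The principal obstacle is the approximation in (iv): passing from the merely Borel extremal function supplied by Proposition \ref{pr:cm_extremal} to a continuous 1-$d$-Lipschitz approximant in the integral against $\mu-\nu$. Because $d$ need not generate the topology of $X$, the argument must reconcile Lusin's theorem for $\mu+\nu$ (giving continuity on sets of near-full measure), a Matou\v{s}kov\'a-style Lipschitz extension valid in normal Hausdorff spaces satisfying \eqref{eq:krm0}, and the finite first $d$-moments (to handle truncation tails). The swap argument in (iii) also requires care, since the standard disintegration machinery may fail outside Polish spaces; working on the compact witness set $K$ and using inner regularity of Radon products are the substitutes.
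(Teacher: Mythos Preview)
Your plan largely parallels the paper's proof---in particular, step (iv) (truncation, Lusin, Matou\v{s}kov\'a extension) is exactly what the paper does---but step (iii) has a real gap. The swap construction you describe correctly shows that if $\pi^{\otimes n}(C_{n,\sigma,\eta})>0$ for some $n,\sigma,\eta$, then $\pi$ can be strictly improved; however, the implication you assert, ``$\pi$ is not concentrated on any $d$-cyclically monotonic set $\Rightarrow$ some $C_{n,\sigma,\eta}$ has positive $\pi^{\otimes n}$-mass'', is not justified when $d$ is merely lower semicontinuous. In the classical argument this step comes for free because a violation at points of $\supp(\pi)$ persists on an open neighbourhood (by \emph{continuity} of the cost), and neighbourhoods of support points have positive mass. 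With lsc $d$, the set $C_{n,\sigma,\eta}=\{\sum d(x_i,y_i)>\sum d(x_i,y_{\sigma(i)})+\eta\}$ is a superlevel set of a \emph{difference} of lsc functions and need not contain any neighbourhood of a violating tuple. Going the other way, deducing from $\pi^{\otimes n}(\bigcup_\sigma C_{n,\sigma,0})=0$ for every $n$ that a \emph{single} full-$\pi$-measure cyclically monotonic set exists is a product/rectangle extraction problem with no general solution. Your closing remark that disintegration is the obstacle misdiagnoses the difficulty: the swap itself works fine via $\pi^{\otimes n}$ and the marginal bound $(p_i)_\sharp(\pi^{\otimes n}\restrict_K)\leq\pi$; what fails is the passage from ``no positive-mass violation'' to ``concentrated on a cyclically monotonic set''.

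The paper avoids this entirely by first invoking a general Kantorovich duality of Edwards, $\min_{\pi}\int d\,d\pi=\sup_{(u,v)\in\Theta}\bigl(\int u\,d\mu+\int v\,d\nu\bigr)$, valid for lsc costs on Hausdorff spaces, where $\Theta$ is the set of Borel pairs with $u(x)+v(y)\leq d(x,y)$. This already delivers existence of an optimiser (replacing your Prokhorov argument, which is also fine) and, crucially, a maximising sequence $(u_k,v_k)\in\Theta$. For any optimal $\pi$ one then has $d(x,y)-u_k(x)-v_k(y)\geq 0$ converging to $0$ in $L_1(\pi)$; an a.e.-convergent subsequence yields a full-$\pi$-measure set $C$ on which $\sum_i d(x_i,y_i)=\lim_k\sum_i\bigl(u_k(x_i)+v_k(y_i)\bigr)=\lim_k\sum_i\bigl(u_k(x_i)+v_k(y_{\sigma(i)})\bigr)\leq\sum_i d(x_i,y_{\sigma(i)})$, so $C$ is cyclically monotonic. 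Thus the paper proves ``optimal $\Rightarrow$ CM'' via the dual side rather than by improvement, and then closes the loop with exactly your step (iv).
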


\begin{proof}
Let $\mu$ and $\nu$ be as above. First note that the infimum in \eqref{eq:krm1} is finite. Indeed, the product measure $\mu\times\nu$ extends uniquely to a Radon measure $\varpi$ on $X\times X$ by \cite[Theorem 7.6.2]{Bogachev}, which will belong to $\Pi(\mu,\nu)$. Fixing some $x_0\in X$, its cost can be estimated as
\begin{align*}
\int_{X\times X}d(x,y)\,d\varpi(x,y) &\leq \int_{X\times X}\pare{d(x,x_0)+d(x_0,y)}\,d\varpi(x,y) \\
&= \int_X d(x,x_0)\,d\mu(x) + \int_X d(y,x_0)\,d\nu(y) < \infty
\end{align*}
by the assumption of finite first $d$-moments.

Condition \eqref{eq:krm0} implies that $d$ is lower semicontinuous by the remarks in \cite[p. 214]{Matouskova}. We can thus appeal to \cite[Theorem 3.1]{Edwards} to conclude that the infimum in \eqref{eq:krm1} is attained and that it equals
\begin{equation}\label{eq:krm2}
\min_{\pi\in\Pi(\mu,\nu)}\int_{X\times X} d\,d\pi = \sup_{(u,v)\in\Theta} \set{ \int_X u\,d\mu + \int_X v\,d\nu }
\end{equation}
where $\Theta$ is the set of all pairs $(u,v)$ such that $u,v:X\to\RR$ are Borel, $u\in L_1(\mu)$, $v\in L_1(\nu)$, and moreover they satisfy
$$
u(x)+v(y)\leq d(x,y) \qquad\text{for all } x,y\in X .
$$
For every $f\in C(X)\cap B_{\Lip(X,d)}$ we have $(f,-f)\in\Theta$: indeed, $f(x)+(-f)(y)\leq d(x,y)$ because $f \in B_{\Lip(X,d)}$, and
$$
\int_X \abs{f(x)}\,d\mu(x) \leq \abs{f(x_0)} + \int_X d(x,x_0)\,d\mu(x) < \infty
$$
and similarly for $\nu$. Thus, the value in \eqref{eq:krm2} is greater than or equal to the right-hand side of \eqref{eq:krm1}. We will prove the reverse inequality.

Fix a coupling $\pi\in\Pi(\mu,\nu)$ where the value of \eqref{eq:krm2} is attained. We will prove that $\pi$ is concentrated on a $d$-cyclically monotonic set. Pick a sequence $(u_k,v_k)$ in $\Theta$ such that
$$
\lim_{k\to\infty}\pare{\int_X u_k\,d\mu + \int_X v_k\,d\nu} = \int_{X\times X} d\,d\pi ,
$$
that is
$$
\lim_{k\to\infty}\int_{X\times X} \pare{d(x,y)-u_k(x)-v_k(y)}\,d\pi(x,y) = 0 .
$$
The integrand is non-negative for every $k$, so it converges to $0$ in $L_1(\pi)$. Therefore we can choose a subsequence, still denoted $(u_k,v_k)$, such that the integrand converges pointwise to $0$ $\pi$-a.e., that is, $\pi$ is concentrated on a set $C$ such that $u_k(x)+v_k(y)\to d(x,y)$ for all $(x,y)\in C$. Now for each choice of $n\in\NN$, points $(x_i,y_i)\in C$, $i=1,\ldots,n$, and permutation $\sigma$ of $\set{1,\ldots,n}$, we have
$$
\sum_{i=1}^n d(x_i,y_{\sigma(i)}) \geq \sum_{i=1}^n \pare{u_k(x_i)+v_k(y_{\sigma(i)})} = \sum_{i=1}^n \pare{u_k(x_i)+v_k(y_i)}
$$
for all $k$, and taking the limit $k\to\infty$ we obtain
$$
\sum_{i=1}^n d(x_i,y_{\sigma(i)}) \geq \sum_{i=1}^n d(x_i,y_i) .
$$
Thus $C$ is $d$-cyclically monotonic as claimed.

Now suppose that $\pi\in\Pi(\mu,\nu)$ is concentrated on a $d$-cyclically monotonic set $C$. We will prove that $\int_{X\times X}d\,d\pi$ is bounded above by the right-hand side of \eqref{eq:krm1}, and this will complete the proof. By Proposition \ref{pr:cm_extremal}, we may assume that there exists a Borel function $g:X\to\RR$ that is $d$-extremal on $C$. Note that
$$
\int_X g\,d\mu - \int_X g\,d\nu = \int_{X\times X} g(x)\,d\pi(x,y) - \int_{X\times X} g(y)\,d\pi(x,y) = \int_{X\times X} d\,d\pi .
$$
Fix $\varepsilon>0$. Then we can find a bounded Borel 1-$d$-Lipschitz function $h:X\to\RR$ such that
$$
\int_X h\,d\mu - \int_X h\,d\nu \geq \int_X g\,d\mu - \int_X g\,d\nu - \varepsilon = \int_{X\times X} d\,d\pi - \varepsilon ;
$$
indeed, by the dominated convergence theorem it suffices to take $h=\mathrm{mid}\set{-k,g,k}$ for $k$ large enough.
By Lusin's theorem and regularity, we can choose a compact set $K\subset X$ such that $h\restrict_K$ is continuous and
$$
\pare{ \mu(X\setminus K)+\nu(X\setminus K) }\cdot \norm{h}_\infty \leq \varepsilon .
$$
Condition \eqref{eq:krm0} allows us to apply Matou\v{s}kov\'a's extension theorem \cite[Theorem 2.4]{Matouskova} to find $f\in C(X)\cap B_{\Lip(X,d)}$ such that $f\restrict_K=h\restrict_K$ and $\norm{f}_\infty\leq\norm{h}_\infty$. So we get
$$
\int_X f\,d\mu - \int_X f\,d\nu \geq \int_X h\,d\mu - \int_X h\,d\nu - 2\varepsilon \geq \int_{X\times X} d\,d\pi - 3\varepsilon .
$$
Since $\varepsilon>0$ was arbitrary, this shows that the right-hand side of \eqref{eq:krm1} is greater than or equal to $\int_{X\times X}d\,d\pi$, and this finishes the proof.
\end{proof}

Note that the space $(\rcomp{M},\tau)$ together with the metric $\bar{d}$ satisfies the hypotheses of Theorem \ref{th:kr_lsc}, as proved in Propositions \ref{prop_rcomp_observations} \ref{rcomp_normal} and \ref{pr:bar-d_cont} \ref{bar-d_cont_1}. Moreover, a real-valued function on $\rcomp{M}$ is $\bar{d}$-Lipschitz and $\tau$-continuous if and only if it is the continuous extension of a function in $\Lip(M)$. Thus we have the following particular case.

\begin{corollary}\label{cr:kr_rcomp}
Let $\mu$, $\nu$ be Radon probability measures on $\rcomp{M}$ with finite first $\bar{d}$-moment. Then
\begin{equation}\label{eq:kr1}
\inf_{\pi\in\Pi(\mu,\nu)}\int_{\rcomp{M}\times\rcomp{M}} \bar{d}\,d\pi = \sup_{f\in B_{\Lip(M)}} \int_{\rcomp{M}}\rcomp{f}\,d(\mu - \nu) .
\end{equation}
The infimum is attained, and $\pi\in\Pi(\mu,\nu)$ minimises \eqref{eq:kr1} if and only if it is concentrated on a $\bar{d}$-cyclically monotonic subset of $\rcomp{M}\times\rcomp{M}$.
\end{corollary}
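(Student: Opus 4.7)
\medskip

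The plan is to deduce this corollary directly from Theorem \ref{th:kr_lsc} applied to the Hausdorff space $X=(\rcomp{M},\tau)$ equipped with the metric $\bar{d}$. First I would verify the two structural hypotheses: $\rcomp{M}$ is normal by Proposition \ref{prop_rcomp_observations} \ref{rcomp_normal}, and condition \eqref{eq:krm0} is precisely Proposition \ref{pr:bar-d_cont} \ref{bar-d_cont_1} (applied, as allowed, only to non-empty $A$; the empty case is trivial). With these in hand, Theorem \ref{th:kr_lsc} yields the attainment of the infimum in \eqref{eq:kr1}, the cyclical monotonicity characterisation of minimisers, and the duality formula
\[
\min_{\pi\in\Pi(\mu,\nu)}\int_{\rcomp{M}\times\rcomp{M}}\bar{d}\,d\pi \;=\; \sup\set{\int_{\rcomp{M}} g\,d(\mu-\nu) \,:\, g\in C(\rcomp{M},\tau)\cap B_{\Lip(\rcomp{M},\bar{d})}}.
\]

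The remaining task is to identify the class of admissible test functions with $\set{\rcomp{f}\,:\,f\in B_{\Lip(M)}}$, so that the right-hand side matches \eqref{eq:kr1}. One inclusion: if $f\in B_{\Lip(M)}$, then $f-f(0)\in B_{\Lip_0(M)}$ admits the $\tau$-continuous extension $\rcomp{(f-f(0))}$ to $\rcomp{M}$, which is real-valued there and $1$-$\bar{d}$-Lipschitz by the very definition of $\bar{d}$ via $\delta$; adding the constant $f(0)$ back gives $\rcomp{f}\in C(\rcomp{M},\tau)\cap B_{\Lip(\rcomp{M},\bar{d})}$. Conversely, any $g$ in that class restricts to a $1$-Lipschitz function $g\restrict_M\in B_{\Lip(M)}$ on the $\tau$-dense subset $M\subset\rcomp{M}$, and by uniqueness of continuous extensions $g=\rcomp{(g\restrict_M)}$. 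Since $\mu$ and $\nu$ are probability measures, the integral $\int_{\rcomp{M}}\rcomp{f}\,d(\mu-\nu)$ is insensitive to the additive constant $f(0)$, so one can equivalently take the supremum over $f\in B_{\Lip(M)}$; also, the finite first $\bar{d}$-moment assumption ensures the integrability required to make sense of both sides, since $|\rcomp{f}(\xi)|\leq |\rcomp{f}(0)|+\bar{d}(\xi,0)$.

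Substituting this identification into the conclusion of Theorem \ref{th:kr_lsc} produces exactly \eqref{eq:kr1}, and the final sentence about the attainment of the infimum and the characterisation of minimisers by $\bar{d}$-cyclical monotonicity is inherited verbatim. I do not anticipate a genuine obstacle here: the only mildly delicate point is the bookkeeping for the dual class of test functions, and even that reduces to the density of $M$ in $\rcomp{M}$ together with the isometric definition of $\bar{d}$. Everything else is a matter of checking that the hypotheses of the general theorem are met in this concrete setting.
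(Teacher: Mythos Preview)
Your proposal is correct and follows essentially the same approach as the paper: apply Theorem \ref{th:kr_lsc} to $(\rcomp{M},\tau,\bar{d})$, verifying normality via Proposition \ref{prop_rcomp_observations} \ref{rcomp_normal} and condition \eqref{eq:krm0} via Proposition \ref{pr:bar-d_cont} \ref{bar-d_cont_1}, then identify $C(\rcomp{M},\tau)\cap B_{\Lip(\rcomp{M},\bar{d})}$ with $\set{\rcomp{f}:f\in B_{\Lip(M)}}$. The paper dispatches this identification in a single sentence, whereas you spell out both inclusions and the integrability check; your added detail is accurate and does not deviate from the intended argument.
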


In Corollary \ref{cr:kr_rcomp}, as well as the sequel, the topology considered for $\rcomp{M}$ will be $\tau$ unless specified otherwise; in particular, Borel functions and measures on $\rcomp{M}$ or $\rcomp{M}\times\rcomp{M}$ will be assumed to be $\tau$-Borel.

\subsection{Optimal representations and cyclical monotonicity}\label{sec:cyc_mon}

In \cite[Section 2]{APS23}, the optimality of De Leeuw representations is related to cyclical monotonicity for the particular case of representations concentrated on $\wt{M}$. The main result is as follows.

\begin{theorem}[{\cite[Theorem 2.9]{APS23}}]
\label{th:cyclic_monotone_optimal}
Suppose that $\mu\in\meas{\bwt{M}}$ is positive and concentrated on $\wt{M}$. Then $\mu$ is optimal if and only if it is concentrated on a cyclically monotonic set.
\end{theorem}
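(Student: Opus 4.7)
The plan is to deduce both implications from two tools: the relationship between norming functions and optimal representations (Lemma \ref{lm:norming_phi1}), and the equivalence between cyclical monotonicity and the existence of an extremal function (Proposition \ref{pr:cm_extremal}, or equivalently the cited \cite[Proposition 2.10]{APS23}). The crucial preliminary observation is that since $\mu$ is positive and concentrated on $\wt{M}$, Proposition \ref{pr:wt_bochner} yields $\phi := \Phi^*\mu \in \lipfree{M}$, hence $\norming{\phi}$ is non-empty by the $w^*$-compactness of $B_{\Lip_0(M)}$.

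For the ``if'' direction, assume $\mu$ is concentrated on a cyclically monotonic set $C \subseteq \wt{M}$. The case $\mu=0$ is trivial, so $C$ contains some pair $(x,y) \in \wt{M}$. By Proposition \ref{pr:cm_extremal}, there exists a $1$-Lipschitz function $g:M \to \RR$ that is extremal on $C$; subtracting $g(0)$, we may assume $g(0)=0$, and since $g(x)-g(y)=d(x,y)>0$ forces $\lipnorm{g}=1$, we have $g \in S_{\Lip_0(M)}$. Then $\Phi g \equiv 1$ on $C$, and by continuity of $\Phi g$ on $\bwt{M}$, also on $\cl{C}^{\bwt{M}} \supseteq \supp(\mu)$. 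The last sentence of Lemma \ref{lm:norming_phi1} immediately yields the optimality of $\mu$.

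For the ``only if'' direction, assume $\mu$ is optimal. Since $\norming{\phi}$ is non-empty, the equality case in Lemma \ref{lm:norming_phi1} furnishes some $f \in S_{\Lip_0(M)}$ with $\Phi f(\zeta) = 1$ for every $\zeta \in \supp(\mu)$. Set
\[
C \;=\; \set{(x,y) \in \wt{M} \,:\, f(x)-f(y) = d(x,y)}.
\]
Because $\mu$ is concentrated on $\wt{M}$ and on $\supp(\mu)$, it is concentrated on $\supp(\mu) \cap \wt{M} \subseteq C$. Moreover, $C$ is cyclically monotonic: for any $(x_1,y_1),\ldots,(x_n,y_n) \in C$ and any permutation $\sigma$,
\[
\sum_{k=1}^n d(x_k,y_k) \;=\; \sum_{k=1}^n \pare{f(x_k)-f(y_k)} \;=\; \sum_{k=1}^n \pare{f(x_k)-f(y_{\sigma(k)})} \;\leq\; \sum_{k=1}^n d(x_k,y_{\sigma(k)}),
\]
where the last step uses $\Lip(f)\leq 1$.

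There is no substantial obstacle here beyond setting up the two directions cleanly; the result is essentially a dictionary entry between the support condition on $\mu$ and the norming function provided by Lemma \ref{lm:norming_phi1}. The only point requiring minor care is the closure step in the ``if'' direction (passing from equality on $C\subseteq\wt{M}$ to equality on $\supp(\mu)\subseteq\bwt{M}$), which is handled by the fact that $\Phi f$ extends continuously to $\bwt{M}$.
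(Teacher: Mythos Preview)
Your proof is correct. Both directions are clean applications of Lemma \ref{lm:norming_phi1} together with the equivalence between cyclical monotonicity and the existence of an extremal function, and the closure argument in the ``if'' direction is handled properly.

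It is worth noting that the paper does not prove this statement directly; it is quoted from \cite{APS23} and then recovered as a special case of the more general Theorem \ref{thm_equiv_d_bar_cyclical_monotonicity} (for the ``if'' direction) and Corollary \ref{cor_minimal_3} (for the ``only if'' direction). Those results are designed to work for measures concentrated on $\bwtfp$ rather than $\wt{M}$, and in that generality one loses both shortcuts you exploit: $\Phi^*\mu$ need not belong to $\lipfree{M}$, so $\norming{\phi}$ may be empty and one must use an approximating sequence instead (Lemma \ref{lem_pointwise}); and the $\bar{d}$-extremal function on $\rcomp{M}$ is only Borel, so one needs Lusin's theorem and Proposition \ref{pr:Mat_app} to produce a genuine element of $B_{\Lip_0(M)}$. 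Your argument bypasses both complications by using the hypothesis that $\mu$ is concentrated on $\wt{M}$ to secure $\phi\in\lipfree{M}$ and to obtain an extremal function that is already in $\Lip(M)$. This is exactly the right way to prove the restricted statement directly, and is presumably close to the original argument in \cite{APS23}; the paper's route is longer only because it is establishing strictly more.
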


From this we obtain a necessary condition for the shape of the support of such representations, expressed in terms of metric alignment.

\begin{corollary}[{\cite[Corollary 2.11]{APS23}}]
\label{cr:metric_triples_pm}
Let $\mu\in\opr{\bwt{M}}$ be concentrated on $\wt{M}$ and suppose that $(x_0,x_1), (x_1,x_2), \ldots,$ $(x_{n-1},x_n)\in\supp(\mu)\cap\wt{M}$. Then $\sum_{k=1}^n d(x_{k-1},x_k) = d(x_0,x_n)$. In particular, $x_1,\ldots,x_{n-1} \in [x_0,x_n]$.
\end{corollary}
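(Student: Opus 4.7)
The proof should follow by combining Theorem \ref{th:cyclic_monotone_optimal} with a single application of cyclical monotonicity to a cleverly chosen permutation. First, since $\mu$ is optimal and concentrated on $\wt{M}$, Theorem \ref{th:cyclic_monotone_optimal} supplies a cyclically monotonic set $C\subset\wt{M}$ on which $\mu$ is concentrated. Cyclical monotonicity is preserved under closure within $\wt{M}$ because $d$ is continuous there, so I may assume $C$ is relatively closed in $\wt{M}$. Then $\wt{M}\setminus C$ is open in $\wt{M}$ and has $\mu$-measure zero, so by definition of the support one has $\supp(\mu)\cap\wt{M}\subset C$; in particular, each of the given pairs $(x_{k-1},x_k)$ lies in $C$.

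Next, I would apply the definition of cyclical monotonicity to the $n$ pairs $(a_k,b_k):=(x_{k-1},x_k)$, $k=1,\dots,n$, with the cyclic permutation $\sigma$ of $\{1,\dots,n\}$ given by $\sigma(1)=n$ and $\sigma(k)=k-1$ for $k\geq 2$. All terms on the right-hand side of the resulting inequality except the first collapse to distances of the form $d(x_{k-1},x_{k-1})=0$, leaving only $d(a_1,b_n)=d(x_0,x_n)$. This yields
\[
\sum_{k=1}^n d(x_{k-1},x_k) \;\leq\; d(x_0,x_n),
\]
and the iterated triangle inequality provides the reverse bound, so equality holds. The ``in particular'' conclusion is then immediate: for any fixed $1\leq k\leq n-1$, splitting the equal sums at position $k$ gives $d(x_0,x_k)+d(x_k,x_n)\leq d(x_0,x_n)$, which together with the triangle inequality forces $x_k\in[x_0,x_n]$.

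The only genuine point of care is the passage from ``$\mu$ is concentrated on a cyclically monotonic set'' to ``each $(x_{k-1},x_k)\in\supp(\mu)\cap\wt{M}$ belongs to a common cyclically monotonic set''; this is handled by exploiting the continuity of $d$ on $\wt{M}$ to pass to the relative closure of the set provided by Theorem \ref{th:cyclic_monotone_optimal}. The remainder of the argument is merely the combinatorial choice of permutation and a routine triangle-inequality manipulation.
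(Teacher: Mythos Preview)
Your proof is correct and follows essentially the same approach as the paper. The cyclic permutation you choose is exactly the one used in the paper's generalisation (Corollary \ref{cor_minimal_4}), and your closure argument to pass from ``concentrated on a cyclically monotonic set'' to ``$\supp(\mu)\cap\wt{M}$ lies in a cyclically monotonic set'' is valid (the key being that $\mu$ is concentrated on $\wt{M}$, so any open set in $\bwt{M}$ meeting $\wt{M}\setminus C$ but not $C$ has $\mu$-measure zero).
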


We wish to generalise both of these results to measures that are no longer concentrated on $\wt{M}$, but we need a suitable metric on the coordinates of $\bwt{M}$ to be able to do that. The metric $\bar{d}$ introduced in Section \ref{subsec_realcompact} will satisfy this requirement. Recall from \eqref{subsec_realcompact} the dense open subset $\bwtf = \pp^{-1}(\rcomp{M} \times \rcomp{M})$ of $\bwt{M}$. In this section, we consider sets $C \subset \bwtf$ such that $\pp(C) \subset \rcomp{M} \times \rcomp{M}$ is $\bar{d}$-cyclically monotonic and satisfies the additional condition (cf. Remark \ref{rem_bar_d_d_equality}) that $\bar{d} \circ \pp\restrict_C = d\restrict_C$. First, we present our main tool.

\begin{lemma}\label{lemma_minimal_1}
Let $\zeta_1,\dots,\zeta_n \in \bwtf$ and $\sigma$ be a permutation of $\{1,\dots,n\}$. Suppose that, given $\varepsilon>0$, there exists $f \in B_{\Lip_0(M)}$ such that $\Phi f(\zeta_k) \geq 1-\varepsilon$ for $k=1,\dots,n$. Then
\begin{equation}\label{eqn_ext_cyc}
 \sum_{k=1}^n d(\zeta_k) \leq \sum_{k=1}^n \bar{d}(\pp_1(\zeta_k),\pp_2(\zeta_{\sigma(k)})).
\end{equation}
\end{lemma}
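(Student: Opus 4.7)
The plan is to translate the hypothesis about $\Phi f(\zeta_k)$ into a statement about the values of the extension $\rcomp{f}$ at the coordinates $\pp(\zeta_k)$, then exploit the $\bar{d}$-Lipschitz property of $\rcomp{f}$. The key identity is Proposition \ref{prop_de_Leeuw_relation}\ref{delta_diff}, which gives $d(\zeta_k)\,\Phi^*\delta_{\zeta_k}=\delta(\pp_1(\zeta_k))-\delta(\pp_2(\zeta_k))$, so that pairing against any $f\in\Lip_0(M)$ yields
\[
 d(\zeta_k)\,\Phi f(\zeta_k) \;=\; \rcomp{f}(\pp_1(\zeta_k)) - \rcomp{f}(\pp_2(\zeta_k)).
\]
Since the $\zeta_k$ lie in $\bwtf$, the factor $d(\zeta_k)$ is finite, so the hypothesis $\Phi f(\zeta_k)\geq 1-\varepsilon$ becomes
\[
 \rcomp{f}(\pp_1(\zeta_k)) - \rcomp{f}(\pp_2(\zeta_k)) \;\geq\; (1-\varepsilon)\, d(\zeta_k).
\]

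Next, I would sum this inequality over $k=1,\dots,n$ and use that $\sigma$ is a permutation, which gives $\sum_k \rcomp{f}(\pp_2(\zeta_k)) = \sum_k \rcomp{f}(\pp_2(\zeta_{\sigma(k)}))$. Rearranging pairs termwise produces
\[
 \sum_{k=1}^n \bigl(\rcomp{f}(\pp_1(\zeta_k)) - \rcomp{f}(\pp_2(\zeta_{\sigma(k)}))\bigr) \;\geq\; (1-\varepsilon)\sum_{k=1}^n d(\zeta_k).
\]
Now, for any $f\in B_{\Lip_0(M)}$ the extension $\rcomp{f}$ is 1-$\bar{d}$-Lipschitz on $\rcomp{M}$ by the very definition of $\bar{d}$ as the dual norm $\|\delta(\xi)-\delta(\eta)\|_{\Lip_0(M)^*}$; indeed $|\rcomp{f}(\xi)-\rcomp{f}(\eta)|=|\langle f,\delta(\xi)-\delta(\eta)\rangle|\leq \bar{d}(\xi,\eta)$. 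Applying this pairwise bounds each summand on the left by $\bar{d}(\pp_1(\zeta_k),\pp_2(\zeta_{\sigma(k)}))$, giving
\[
 \sum_{k=1}^n \bar{d}(\pp_1(\zeta_k),\pp_2(\zeta_{\sigma(k)})) \;\geq\; (1-\varepsilon)\sum_{k=1}^n d(\zeta_k).
\]
Finally, I would let $\varepsilon\downarrow 0$ to obtain the desired inequality.

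There is no serious obstacle here: the argument is essentially an unpacking of the de Leeuw transform on points of $\bwtf$ via Proposition \ref{prop_de_Leeuw_relation} together with the definition of $\bar{d}$. The only subtlety worth mentioning is that one must invoke $\zeta_k\in\bwtf$ to be sure that $d(\zeta_k)<\infty$ and that $\pp_1(\zeta_k),\pp_2(\zeta_k)\in\rcomp{M}$, so that the extensions $\rcomp{f}(\pp_i(\zeta_k))$ are finite and the identity from Proposition \ref{prop_de_Leeuw_relation}\ref{delta_diff} may legitimately be paired with $f$.
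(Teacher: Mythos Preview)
Your proof is correct and follows essentially the same approach as the paper: both use Proposition \ref{prop_de_Leeuw_relation}\ref{delta_diff} to rewrite $d(\zeta_k)\Phi f(\zeta_k)$ as a difference of values of $\rcomp{f}$, sum and permute, bound by $\bar{d}$ via the 1-Lipschitz property of $\rcomp{f}$, and let $\varepsilon\to 0$.
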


\begin{proof}
Let $\varepsilon > 0$ and pick $f \in B_{\Lip_0(M)}$ as above. Then, using Proposition \ref{prop_de_Leeuw_relation} \ref{delta_diff}, we have
\begin{align*}
(1-\varepsilon)\sum_{k=1}^n d(\zeta_k) &\leq \sum_{k=1}^n d(\zeta_k)\Phi f(\zeta_k)\\
&= \sum_{k=1}^n \rcomp{f}(\pp_1(\zeta_k)) - \rcomp{f}(\pp_2(\zeta_k)) \\
&= \sum_{k=1}^n \rcomp{f}(\pp_1(\zeta_k)) - \rcomp{f}(\pp_2(\zeta_{\sigma(k)})) \\
&\leq \sum_{k=1}^n \bar{d}(\pp_1(\zeta_k),\pp_2(\zeta_{\sigma(k)})) .
\end{align*}
Since this holds for all $\varepsilon>0$, the result follows.
\end{proof}

Now we explore some circumstances under which the hypotheses of Lemma \ref{lemma_minimal_1} can hold.

\begin{lemma}\label{lem_pointwise}
 Let $\mu \in \opr{\bwt{M}}$. Then there exists a $\mu$-null set $N \subset \supp(\mu)\setminus \wt{M}$ such that, given $\varepsilon>0$ and $\zeta_1,\dots,\zeta_n \in \supp(\mu)\setminus N$, there is $f \in B_{\Lip_0(M)}$ satisfying $\Phi f(\zeta_k) \geq 1-\varepsilon$ for $k=1,\dots,n$.
\end{lemma}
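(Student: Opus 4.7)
The plan is to exploit the optimality of $\mu$ to produce an approximately norming sequence in $B_{\Lip_0(M)}$ and combine this with the cyclic-monotonicity structure on the $\wt{M}$-part of the support. Since $\mu\in\opr{\bwt{M}}$, we have $\norm{\mu}=\norm{\dual{\Phi}\mu}=\sup\{\int\Phi f\,d\mu:f\in B_{\Lip_0(M)}\}$, so we pick $(f_k)\subset B_{\Lip_0(M)}$ with $\int\Phi f_k\,d\mu\to\norm{\mu}$. Since $\Phi f_k\leq 1$ and $\mu\geq 0$, the non-negative integrands $1-\Phi f_k$ tend to $0$ in $L^1(\mu)$, and by passing to a subsequence we arrange $\Phi f_k(\zeta)\to 1$ for $\mu$-a.e.\ $\zeta$; let $E$ denote the $\mu$-null Borel set where this convergence fails.

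Define $N:=E\cap(\supp(\mu)\setminus\wt{M})$; then $N\subset\supp(\mu)\setminus\wt{M}$ and $\mu(N)=0$. Given $\zeta_1,\ldots,\zeta_n\in\supp(\mu)\setminus N$ and $\varepsilon>0$, split the indices into $I=\{i:\zeta_i\notin\wt{M}\}$ and $J=\{j:\zeta_j\in\wt{M}\}$. For $i\in I$, we have $\zeta_i\in\supp(\mu)\setminus\wt{M}$ with $\zeta_i\notin N$, hence $\zeta_i\notin E$, so $\Phi f_k(\zeta_i)\to 1$ and $\Phi f_k(\zeta_i)\geq 1-\varepsilon/2$ for all large $k$. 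For $j\in J$, Proposition \ref{pr:opr_facts}\,(d) gives $\mu\restrict_{\wt{M}}\in\opr{\bwt{M}}$, concentrated on $\wt{M}$, and Theorem \ref{th:cyclic_monotone_optimal} yields a $d$-cyclically monotonic concentration set. Closing this set inside $\wt{M}$ preserves cyclic monotonicity (since $d$ is continuous on $\wt{M}$), giving a closed cyclically monotonic set containing $\supp(\mu)\cap\wt{M}$, so Proposition \ref{pr:cm_extremal} provides an extremal $g\in B_{\Lip_0(M)}$ with $\Phi g(\zeta_j)=1$ for every $j\in J$.

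The remaining and most delicate step is to produce a single $f\in B_{\Lip_0(M)}$ that simultaneously approximately norms both groups. I intend to carry this out via Proposition \ref{pr:Mat_app}: first use Proposition \ref{prop_conc_on_O} to enlarge $N$ by the $\mu$-null set $\opt^c\cap(\supp(\mu)\setminus\wt{M})$, ensuring that each $\zeta_k$ is optimal and hence, by Proposition \ref{prop_de_Leeuw_relation}, lies in $\bwtf$ with $\bar{d}(\pp(\zeta_k))=d(\zeta_k)$. On the finite set $S\subset\rcomp{M}$ consisting of all coordinates $\pp_1(\zeta_k),\pp_2(\zeta_k)$ and $0$, define $\psi:S\to\RR$ by taking the values of $\rcomp{g}$ at coordinates of the $\wt{M}$-indexed $\zeta_j$'s and of $\rcomp{f_k}$ (for sufficiently large $k$) at the remaining coordinates; then extend via Matou\v{s}kov\'a to a $1$-Lipschitz $f:M\to\RR$ with $f(0)=0$, which will satisfy $\Phi f(\zeta_k)\geq 1-\varepsilon$ for all $k$.

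The hard part is verifying that $\psi$ is $1$-$\bar{d}$-Lipschitz on $S$. Cyclic monotonicity within $J$ handles the pure $J$-pairs, and the $\Lip_0(M)$-bound on $f_k$ handles the pure $I$-pairs; the delicate mixed pairs require combining the exact-norming property of $g$ on $\{\zeta_j\}_{j\in J}$ with the approximate-norming of $f_k$ on $\{\zeta_i\}_{i\in I}$ via cyclic-monotonicity-type estimates in $(\rcomp{M},\bar{d})$. This compatibility check, together with the care needed to keep all coordinates inside $\rcomp{M}$ and extract a subsequence that works uniformly for the given finite collection, is where I expect the main technical work of the proof to reside.
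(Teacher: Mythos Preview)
Your approach has a genuine gap at the $J$-step. The claim that closing the cyclically monotonic concentration set of $\mu\restrict_{\wt{M}}$ inside $\wt{M}$ yields a set containing $\supp(\mu)\cap\wt{M}$ is false: the inclusion $\supp(\mu)\cap\wt{M}\subset\supp(\mu\restrict_{\wt{M}})$ fails in general. The paper even records this explicitly in the remark following Corollary \ref{cor_minimal_3}, with a concrete optimal $\mu$ supported on points $\zeta_m\in\bwt{M}\setminus\wt{M}$ accumulating at some $(q,0)\in\wt{M}$, for which $\mu\restrict_{\wt{M}}=0$. In that example your set $C$ is empty, yet $(q,0)\in\supp(\mu)\cap\wt{M}$, so no extremal $g$ for the $J$-points can be produced this way. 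You cannot repair this by appealing to Corollary \ref{cor_minimal_3} itself, since that result is derived from the very lemma you are proving.

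Even setting this aside, the ``mixing'' step is only sketched and its feasibility is unclear: there is no a priori reason why the values $\rcomp{g}$ (exactly norming the $J$-points) and $\rcomp{f_k}$ (approximately norming the $I$-points) can be made jointly $1$-$\bar{d}$-Lipschitz on the finite coordinate set $S$. The paper avoids both problems with a much shorter argument. After obtaining the $\mu$-null set $E$ where $\Phi f_m\not\to 1$, it shows directly that $E\cap\supp(\mu)\cap\wt{M}=\varnothing$: if $(x,y)\in\supp(\mu)\cap\wt{M}$ lay in $E$, then since the functions $\Phi f_m$ are uniformly Lipschitz on a $\wt{d}$-neighbourhood of $(x,y)$ in $\wt{M}$, an entire open $\bwt{M}$-neighbourhood of $(x,y)$ would satisfy $\Phi f_m\leq 1-\varepsilon$ for all $m$, forcing $\mu(E)>0$. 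Thus with $N:=E\cap\supp(\mu)$ one has $N\subset\supp(\mu)\setminus\wt{M}$, and the single sequence $(f_m)$ already handles all $\zeta_k\in\supp(\mu)\setminus N$ simultaneously; no splitting into $I$ and $J$, no separate extremal $g$, and no Matou\v{s}kov\'a extension are needed.
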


\begin{proof}
Assume $\|\mu\|=1$. For $m \in \NN$ take $f_m \in B_{\Lip_0(M)}$ such that $\duality{f_m,\Phi^*\mu} \geq 1-2^{-m}$. Then
\[
 \int_{\bwt{M}} \abs{\mathbf{1}_{\bwt{M}} - \Phi f_m} \,d\mu = \int_{\bwt{M}} (\mathbf{1}_{\bwt{M}} - \Phi f_m) \,d\mu = \norm{\mu} - \duality{f_m,\Phi^*\mu} \leq 2^{-m},
\]
whence $\sum_{m=1}^\infty \norm{\mathbf{1}_{\bwt{M}} - \Phi f_m}_{L_1(\mu)} < \infty$. It follows that $(\Phi f_m)$ converges to $\mathbf{1}_{\bwt{M}}$ $\mu$-a.e.

Define the $\mu$-null set $N=\set{\zeta \in \supp(\mu) \,:\, \Phi f_m(\zeta) \not\to 1}$. To finish the proof, we need to show that $N \cap \wt{M} = \varnothing$. For a contradiction, suppose that there exists $(x,y) \in N \cap \wt{M}$. Let
$$
U = \set{(x',y')\in\wt{M} \,:\, \tfrac{1}{2}d(x,y)<d(x',y')<2d(x,y)} ,
$$
which is open in $\wt{M}$. Next, we observe that there exists $L>0$ such that $\Phi f_m$ is $L$-Lipschitz on $U$ for all $m \in \NN$ with respect to the metric $\wt{d}$ on $\wt{M}$, given by 
\[
\wt{d}((x_1,y_1),(x_2,y_2)) = d(x_1,x_2) + d(y_1,y_2).
\]
Indeed, on $U$ the maps $1/d$ and $(x',y') \mapsto f_m(x')-f_m(y')$, $m \in \NN$, are bounded by $2/d(x,y)$ and $2d(x,y)$, respectively, and are $4/d(x,y)^2$-Lipschitz and $1$-Lipschitz with respect to $\wt{d}$, respectively. Thus, given the ``Leibniz inequality'' for products of bounded Lipschitz functions \cite[{Proposition 1.30 (i)}]{Weaver2}, our claim holds with $L=10/d(x,y)$.

By taking a subsequence of the $f_m$ if necessary, let $\varepsilon>0$ such that $\Phi f_m(x,y) \leq 1-2\varepsilon$ for all $m$. Now define the set
\[
V=\set{(x',y') \in U \,:\, \wt{d}((x,y),(x',y')) < \textstyle\frac{\varepsilon}{L}},
\]
which is again open in $\wt{M}$. Then, given $(x',y') \in V$ and $m \in \NN$, we have
\[
%\textstyle
\Phi f_m(x',y') < \Phi f_m(x,y) + \varepsilon \leq 1 - \varepsilon.
\]
By continuity, $\Phi f_m(\zeta) \leq 1 - \varepsilon$ whenever $\zeta \in \cl{V}$. Thus $\supp(\mu) \cap \cl{V} \subset N$. However, since $(x,y) \in \supp(\mu) \cap \cl{V}$ and $\cl{V}$ contains a neighbourhood of $(x,y)$ open in $\bwt{M}$, we have $\mu(N) \geq \mu(\supp(\mu) \cap \cl{V}) = \mu(\cl{V})>0$, which is a contradiction.
\end{proof}

Using Lemma \ref{lem_pointwise}, we can make a general statement about the geometry of part of the support of any $\mu \in \opr{\bwt{M}}$. In particular, it considerably strengthens the assertion, made in Remark \ref{rem_bar_d_d_equality}, that optimal representations concentrated on $\bwtf$ are concentrated moreover on a set on which $\bar{d} \circ \pp$ agrees with $d$: there exists such a set that is also cyclically monotonic, in the following sense.

\begin{theorem}\label{thm_minimal_1}
Let $\mu \in \opr{\bwt{M}}$. Then there exists a $\mu$-null set $N \subset \supp(\mu)\setminus \wt{M}$, such that if we set $C = \bwtf \cap \supp(\mu)\setminus N$, then $\pp(C)$ is $\bar{d}$-cyclically monotonic and $\bar{d} \circ \pp\restrict_C = d\restrict_C$.
\end{theorem}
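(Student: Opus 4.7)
The plan is to combine Lemmas \ref{lem_pointwise} and \ref{lemma_minimal_1} essentially verbatim. I would let $N \subset \supp(\mu)\setminus\wt{M}$ be the $\mu$-null set produced by Lemma \ref{lem_pointwise} and set $C = \bwtf \cap (\supp(\mu)\setminus N)$ as stipulated. By construction, for any finite tuple $\zeta_1,\dots,\zeta_n \in C$ and any $\varepsilon > 0$ there exists $f \in B_{\Lip_0(M)}$ with $\Phi f(\zeta_k) \geq 1 - \varepsilon$ for all $k$, so the hypotheses of Lemma \ref{lemma_minimal_1} are satisfied for every such tuple and every permutation $\sigma$ of $\{1,\dots,n\}$.

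To verify the pointwise equality $\bar{d} \circ \pp\restrict_C = d\restrict_C$, I would apply Lemma \ref{lemma_minimal_1} at a single point $\zeta \in C$ (that is, with $n = 1$ and $\sigma$ the identity), obtaining $d(\zeta) \leq \bar{d}(\pp(\zeta))$. The reverse inequality is precisely Proposition \ref{prop_de_Leeuw_relation} \ref{d_bar_less_than_d}, which applies because $\zeta \in \bwtf$; combining both gives equality.

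For the $\bar{d}$-cyclical monotonicity of $\pp(C)$, I would take arbitrary $\zeta_1,\dots,\zeta_n \in C$ and any permutation $\sigma$, apply Lemma \ref{lemma_minimal_1}, and then substitute the pointwise equality just established into the left-hand side:
$$\sum_{k=1}^n \bar{d}(\pp(\zeta_k)) \;=\; \sum_{k=1}^n d(\zeta_k) \;\leq\; \sum_{k=1}^n \bar{d}(\pp_1(\zeta_k), \pp_2(\zeta_{\sigma(k)})).$$
This is exactly the defining inequality for $\bar{d}$-cyclical monotonicity of $\pp(C) \subset \rcomp{M}\times\rcomp{M}$.

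I do not anticipate any real obstacle at the level of this theorem: the substantive work has already been done. The heavy lifting sits in Lemma \ref{lem_pointwise}, where the exceptional set $N$ is produced via the $L_1(\mu)$-convergence of a sequence of near-norming functions together with a Leibniz/continuity argument excluding $\wt{M}$-points from $N$, and in Lemma \ref{lemma_minimal_1}, whose proof is a one-line chain of inequalities using Proposition \ref{prop_de_Leeuw_relation} \ref{delta_diff} and the fact that the test function $f$ is $1$-$\bar{d}$-Lipschitz on $\rcomp{M}$. The present theorem is a bookkeeping assembly of these two results.
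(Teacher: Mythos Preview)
Your proposal is correct and follows essentially the same approach as the paper: take $N$ from Lemma \ref{lem_pointwise}, apply Lemma \ref{lemma_minimal_1} to finite tuples from $C$, and combine with Proposition \ref{prop_de_Leeuw_relation} \ref{d_bar_less_than_d}. The only cosmetic difference is that the paper derives the $\bar{d}$-cyclical monotonicity first (using $\bar{d}\circ\pp \leq d$ on the left-hand side) and then reads off the pointwise equality from the identity permutation, whereas you establish the pointwise equality first and then substitute it back; the two orderings are trivially equivalent.
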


\begin{proof}
Let $N$ be the set from Lemma \ref{lem_pointwise}. Then, given $\zeta_1,\dots,\zeta_n \in C$ and $\varepsilon>0$, we can find $f \in B_{\Lip_0(M)}$ such that $\Phi f(\zeta_k) \geq 1-\varepsilon$ for $k=1,\dots,n$.
By Lemma \ref{lemma_minimal_1}, \eqref{eqn_ext_cyc} holds. Because $\bar{d}(\pp(\zeta)) \leq d(\zeta)$ for all $\zeta \in \bwtf$ by Proposition \ref{prop_de_Leeuw_relation} \ref{d_bar_less_than_d}, it follows that $\pp(C)$ is $\bar{d}$-cyclically monotonic. Finally, by applying the identity permutation to \eqref{eqn_ext_cyc}, we obtain $\bar{d} \circ \pp\restrict_C = d\restrict_C$.
\end{proof}

One implication of Theorem \ref{th:cyclic_monotone_optimal} is implied by the next result.

\begin{corollary}\label{cor_minimal_3}
Let $\mu \in \opr{\bwt{M}}$. Then $\supp(\mu) \cap \wt{M}$ is cyclically monotonic.
\end{corollary}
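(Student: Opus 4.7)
The plan is to deduce this corollary directly from Theorem \ref{thm_minimal_1} by exploiting the fact that, on $\wt{M}$, both the coordinate map $\pp$ and the extended metric $\bar{d}$ reduce to trivial objects.

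First I would invoke Theorem \ref{thm_minimal_1} to obtain a $\mu$-null set $N \subset \supp(\mu) \setminus \wt{M}$ such that the set $C = \bwtf \cap \supp(\mu) \setminus N$ has the property that $\pp(C)$ is $\bar{d}$-cyclically monotonic and $\bar{d} \circ \pp\restrict_C = d\restrict_C$. The crucial observation is that since $N \subset \bwt{M} \setminus \wt{M}$ and $\wt{M} \subset \bwtf$, we have the inclusion $\supp(\mu) \cap \wt{M} \subset C$.

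Next I would use the fact that $\pp$ restricted to $\wt{M}$ is the identity map into $M \times M \subset \rcomp{M} \times \rcomp{M}$, together with the fact that $\bar{d}$ agrees with $d$ on $M \times M$. Given points $(x_1,y_1),\dots,(x_n,y_n) \in \supp(\mu) \cap \wt{M}$ and a permutation $\sigma$ of $\set{1,\dots,n}$, the pairs $\pp(x_k,y_k) = (x_k,y_k)$ belong to $\pp(C)$, so $\bar{d}$-cyclical monotonicity of $\pp(C)$ yields
\[
\sum_{k=1}^n d(x_k,y_k) = \sum_{k=1}^n \bar{d}(x_k,y_k) \leq \sum_{k=1}^n \bar{d}(x_k,y_{\sigma(k)}) = \sum_{k=1}^n d(x_k,y_{\sigma(k)}),
\]
which is exactly cyclical monotonicity of $\supp(\mu) \cap \wt{M}$.

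There is really no obstacle here; the corollary is a bookkeeping consequence of Theorem \ref{thm_minimal_1}. The only point requiring a line of justification is the containment $\supp(\mu) \cap \wt{M} \subset C$, which rests on the carefully chosen location of the null set $N$ in Theorem \ref{thm_minimal_1} (namely $N \subset \supp(\mu)\setminus \wt{M}$).
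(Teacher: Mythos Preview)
Your proof is correct and follows essentially the same approach as the paper's own proof, which is a one-line remark that the corollary is a consequence of Theorem \ref{thm_minimal_1} since $\wt{M} \subset \bwtf \setminus N$ and $\bar{d}$ agrees with $d$ on $M$. You have simply spelled out these two observations in more detail.
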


\begin{proof}
A consequence of Theorem \ref{thm_minimal_1} as $\wt{M} \subset \bwtf \setminus N$ and $\bar{d}$ agrees with $d$ on $M$.
\end{proof}

\begin{remark}
Note that Corollary \ref{cor_minimal_3} cannot be obtained from Theorem \ref{th:cyclic_monotone_optimal} simply by considering the restriction $\mu\restrict_{\wt{M}}$ of $\mu \in \opr{\bwt{M}}$, because the inclusion $\supp(\mu)\cap\wt{M}\subset\supp(\mu\restrict_{\wt{M}})$ is not valid in general. To see this, let $M=B_{\ell_1}$ with the norm metric, let $(e_n)$ denote the standard basis of $\ell_1$ and set $q = \frac{1}{2}e_1$. For each $m \in \NN$, let $\zeta_m \in \bwt{M}\setminus\wt{M}$ be an accumulation point of the sequence $(q+2^{-m}e_n,2^{-m}e_n)_{n=1}^\infty$. Then $(\zeta_m)$ converges to $(q,0)$. Indeed, given $U \ni (q,0)$ open in $\bwt{M}$, take $V \ni (q,0)$ open in $\bwt{M}$, such that $\cl{V} \subset U$, and then $m_0 \in \NN$ such that $(q+2^{-m_0}B_{\ell_1}) \times 2^{-m_0}B_{\ell_1} \subset V \cap \wt{M}$. It follows that $\zeta_m \in \cl{V} \subset U$ for $m \geq m_0$. Now set $\mu=\sum_{m=1}^\infty 2^{-m}\delta_{\zeta_m}$. Then $(q,0)\in\supp(\mu) \cap \wt{M}$ and $\mu\restrict_{\wt{M}}=0$. Furthermore, for $\rho(x)=\norm{x}$, $x\in M$, we have $\Phi \rho(q+2^{-m}e_n,2^{-m}e_n)=1$ for all $m,n \in \NN$, so $\Phi \rho(\zeta_m)=1$ for all $m \in \NN$. It follows that $\mu \in \opr{\bwt{M}}$ by Lemma \ref{lm:norming_phi1}. 
\end{remark}

We can use Theorem \ref{thm_minimal_1} to get a natural extension of Corollary \ref{cr:metric_triples_pm}.

\begin{corollary}\label{cor_minimal_4}
Let $\mu \in \opr{\bwt{M}}$ and let $C$ be defined as in Theorem \ref{thm_minimal_1}. Then given $(\xi_0,\xi_1),$ $(\xi_1,\xi_2),\dots,(\xi_{n-1},\xi_n) \in \pp(C)$, we have $\sum_{k=1}^n \bar{d}(\xi_{k-1},\xi_k) = \bar{d}(\xi_0,\xi_n)$. In particular, $\xi_1,\ldots,\xi_{n-1} \in \rcomp{[\xi_0,\xi_n]}$.
\end{corollary}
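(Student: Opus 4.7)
The plan is to reduce everything to the $\bar{d}$-cyclical monotonicity of $\pp(C)$, which is already handed to us by Theorem \ref{thm_minimal_1}. The triangle inequality in $(\rcomp{M},\bar{d})$ immediately gives the bound
\[
 \bar{d}(\xi_0,\xi_n) \;\leq\; \sum_{k=1}^n \bar{d}(\xi_{k-1},\xi_k),
\]
so only the reverse inequality requires work. I would single out the cyclic permutation $\sigma$ on $\{1,\dots,n\}$ defined by $\sigma(1)=n$ and $\sigma(k)=k-1$ for $k\geq 2$, applied to the $n$ pairs $(x_k,y_k):=(\xi_{k-1},\xi_k)\in\pp(C)$.

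Since $\pp(C)$ is $\bar{d}$-cyclically monotonic by Theorem \ref{thm_minimal_1}, this choice of $\sigma$ yields
\[
 \sum_{k=1}^n \bar{d}(\xi_{k-1},\xi_k) \;=\; \sum_{k=1}^n \bar{d}(x_k,y_k) \;\leq\; \sum_{k=1}^n \bar{d}(x_k,y_{\sigma(k)}) \;=\; \bar{d}(\xi_0,\xi_n) + \sum_{k=2}^n \bar{d}(\xi_{k-1},\xi_{k-1}) \;=\; \bar{d}(\xi_0,\xi_n),
\]
because all the ``diagonal'' terms for $k\geq 2$ vanish. Combining with the triangle inequality gives the desired equality, and no further delicate input is needed; the proof parallels that of Corollary \ref{cr:metric_triples_pm} but uses the extension $\bar{d}$ in place of $d$.

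For the ``in particular'' clause, fix $k\in\{1,\dots,n-1\}$ and split the equality just established at index $k$. Two applications of the triangle inequality in $(\rcomp{M},\bar{d})$ give
\[
 \bar{d}(\xi_0,\xi_k) + \bar{d}(\xi_k,\xi_n) \;\leq\; \sum_{j=1}^{k}\bar{d}(\xi_{j-1},\xi_j) + \sum_{j=k+1}^{n}\bar{d}(\xi_{j-1},\xi_j) \;=\; \bar{d}(\xi_0,\xi_n),
\]
while the reverse inequality is the standard triangle inequality. Hence $\bar{d}(\xi_0,\xi_k) + \bar{d}(\xi_k,\xi_n) = \bar{d}(\xi_0,\xi_n)$, which is exactly the condition for $\xi_k\in\rcomp{[\xi_0,\xi_n]}$ recorded in Section \ref{subsec_realcompact}.

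There is no real obstacle here: the whole content is the cyclical-monotonicity inequality for one specific cyclic permutation, and the main work has already been invested in Theorem \ref{thm_minimal_1}. The only point to watch is that we genuinely need $\bar{d}$-cyclical monotonicity of $\pp(C)$ rather than $d$-cyclical monotonicity of $C$, since the points $\xi_i$ may lie in $\rcomp{M}\setminus M$; this is precisely why the statement is phrased with extended metric segments $\rcomp{[\xi_0,\xi_n]}$ instead of ordinary metric segments in $M$.
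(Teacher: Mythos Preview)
Your proof is correct and follows essentially the same approach as the paper: both apply the $\bar{d}$-cyclical monotonicity of $\pp(C)$ from Theorem \ref{thm_minimal_1} to the same cyclic permutation $\sigma(1)=n$, $\sigma(k)=k-1$ for $k\geq 2$, and obtain the reverse inequality from the triangle inequality. Your treatment of the ``in particular'' clause is slightly more explicit than the paper's, but the argument is identical in substance.
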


\begin{proof}
Consider the cyclic permutation $\sigma$ on $\set{1,\ldots,n}$ defined by $\sigma(1)=n$ and $\sigma(k)=k-1$ for $k=2,\ldots,n$. By Theorem \ref{thm_minimal_1}, we have
\[
\sum_{k=1}^n \bar{d}(\xi_{k-1},\xi_k) \leq \sum_{k=1}^n \bar{d}(\xi_{k-1},\xi_{\sigma(k)}) = \bar{d}(\xi_0,\xi_n) + \sum_{k=2}^n \bar{d}(\xi_{k-1},\xi_{k-1}) = \bar{d}(\xi_0,\xi_n).
\]
The other inequality follows obviously from the triangle inequality.
\end{proof}

If we impose an additional assumption on $\mu \in \opr{\bwt{M}}$, namely that $\Phi^*\mu$ avoids infinity, then we can draw a correspondingly stronger conclusion about the geometry of a set on which $\mu$ is concentrated. We know from Section \ref{functionals_avoid_infinity} that functionals that avoid infinity possess optimal De Leeuw representations concentrated on $\bwtf$. We see now that, furthermore, such representations are concentrated on $\bar{d}$-cyclically monotonic sets.

\begin{corollary}\label{cor_minimal_5}
Let $\mu \in \opr{\bwt{M}}$ such that $\Phi^*\mu$ avoids infinity. Then $\mu$ is concentrated on a set $C \subset \bwtf$ such that $\pp(C)$ is $\bar{d}$-cyclically monotonic and $\bar{d} \circ \pp\restrict_C = d\restrict_C$.
\end{corollary}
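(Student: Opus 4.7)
The plan is to derive this as an immediate combination of two already-established facts. First I would invoke Corollary \ref{cor_avoid_infinity}: since $\Phi^*\mu$ avoids infinity and $\mu$ is an optimal de Leeuw representation of it, $\mu$ is concentrated on $\bwtf$. In particular, $\mu$ is concentrated on $\bwtf\cap\supp(\mu)$ (the support $\supp(\mu)$ carries all of $\mu$ since $\mu$ is Radon, and intersecting with the $\mu$-full set $\bwtf$ keeps it $\mu$-full).

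Next I would apply Theorem \ref{thm_minimal_1} to the same $\mu$ to produce a $\mu$-null set $N\subset\supp(\mu)\setminus\wt{M}$ such that the set
\[
C \;=\; \bwtf\cap\supp(\mu)\setminus N
\]
satisfies both conclusions we want: $\pp(C)$ is $\bar{d}$-cyclically monotonic, and $\bar{d}\circ\pp\restrict_C = d\restrict_C$.

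Finally I would observe that $\mu$ is concentrated on this very $C$: indeed, $\mu$ is concentrated on $\bwtf\cap\supp(\mu)$ by the first step, and removing the $\mu$-null set $N$ does not destroy this, so $\mu$ is concentrated on $C$. This yields the corollary.

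There is essentially no obstacle here; the only mild point to watch is the measurability of $C$ (so that ``concentrated on $C$'' makes sense as stated). This is fine because $\supp(\mu)$ is closed, $\bwtf$ is open in $\bwt{M}$, and by the convention stated at the beginning of Section \ref{sect_optimal_rep_and_optimal_trans} we may, if necessary, replace $N$ by a Borel superset of the same $\mu$-measure zero, so that $C$ is Borel.
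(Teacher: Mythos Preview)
The proposal is correct and follows essentially the same approach as the paper: both combine Theorem \ref{thm_minimal_1} (which provides the set $C$ with the desired properties) with Corollary \ref{cor_avoid_infinity} (which ensures $\mu$ is concentrated on $\bwtf$, hence on $C$). Your added remark about the measurability of $C$ is a reasonable bit of care that the paper leaves implicit.
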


\begin{proof}
Let $C$ be the set from Theorem \ref{thm_minimal_1}. Because $\Phi^*\mu$ avoids infinity, Corollary \ref{cor_avoid_infinity} ensures that $\mu$ is concentrated on $C$ and the conclusion follows.
\end{proof}

\subsection{Representations avoiding the diagonal}

The converse of Corollary \ref{cor_minimal_5} fails in general, in the sense that if $\mu$ is positive and concentrated on a set $C \subset \bwtf$ such that $\pp(C)$ is $\bar{d}$-cyclically monotonic and $\bar{d} \circ \pp\restrict_C = d\restrict_C$, then it is not necessarily true that $\mu \in \opr{\bwt{M}}$. The gremlins in this case are vanishing points.

\begin{example}\label{ex_converse_fail}
Let $M$ be a compact metric space that cannot be bi-Lipschitz embedded into Euclidean space and let $\zeta \in \bwt{M}$ be a vanishing point, furnished by Proposition \ref{pr:vanishing_points}. By compactness, $\bwtf=\bwt{M}$ and $d(\zeta)=0$. Then $\delta_\zeta$ is concentrated on $C=\{\zeta\}$, and evidently $\pp(C)$ is $\bar{d}$-cyclically monotonic and $\bar{d} \circ \pp\restrict_C = d\restrict_C$. However, $\delta_\zeta \notin \opr{\bwt{M}}$ because $\Phi^*\delta_\zeta=0$.
\end{example}

Despite Example \ref{ex_converse_fail}, it turns out that we really have equivalence if we restrict proceedings to the set
\begin{equation}
\bwtfp := \pp^{-1}(\wt{\rcomp{M}}) = \set{\zeta\in\bwt{M} \,:\, \pp_1(\zeta)\neq\pp_2(\zeta)\in\rcomp{M} }
\end{equation}
where $\wt{\rcomp{M}}\subset\rcomp{M}\times\rcomp{M}$ is defined analogously to $\wt{M}\subset M\times M$. Recall that for $\zeta\in\bwtfp$, $\bar{d}(\pp(\zeta))=d(\zeta)$ if and only if $\zeta$ is an optimal point (Proposition \ref{prop_de_Leeuw_relation} \ref{d_bar_less_than_d}).

\begin{theorem}\label{thm_equiv_d_bar_cyclical_monotonicity}
Let $\mu \in \meas{\bwt{M}}$ be positive and concentrated on $\bwtfp$. Then $\mu \in \opr{\bwt{M}}$ if and only if it is concentrated on a set $C \subset \bwtfp\cap\opt$ such that $\pp(C)$ is $\bar{d}$-cyclically monotonic.
\end{theorem}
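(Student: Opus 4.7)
I will treat the two directions separately. The forward implication will be a quick corollary of Theorem~\ref{thm_minimal_1}, while the converse will be handled by lifting the problem to the complete metric space $(\rcomp{M},\bar{d})$ and invoking the already-established Theorem~\ref{th:cyclic_monotone_optimal} in that setting.

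\emph{Forward direction.} Assume $\mu\in\opr{\bwt{M}}$ is concentrated on $\bwtfp$. I would apply Theorem~\ref{thm_minimal_1} to obtain a $\mu$-null set $N\subset\supp(\mu)\setminus\wt{M}$ such that $C_0:=\bwtf\cap\supp(\mu)\setminus N$ has $\pp(C_0)$ $\bar{d}$-cyclically monotonic and $\bar{d}\circ\pp\restrict_{C_0}=d\restrict_{C_0}$. Setting $C:=C_0\cap\bwtfp$ and noting that both $\bwt{M}\setminus\bwtfp$ and $N$ are $\mu$-null, $\mu$ remains concentrated on $C$, while $\pp(C)\subset\pp(C_0)$ is still $\bar{d}$-cyclically monotonic. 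For $\zeta\in C\subset\bwtfp$ the coordinates $\pp(\zeta)\in\wt{\rcomp{M}}$ are distinct, so $\bar{d}(\pp\zeta)>0$; combined with the identity $d(\zeta)=\bar{d}(\pp\zeta)$, Proposition~\ref{prop_de_Leeuw_relation}~\ref{d_bar_less_than_d} forces $\norm{\Phi^*\delta_\zeta}=1$, giving $C\subset\bwtfp\cap\opt$.

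\emph{Converse.} Assume $\mu$ is positive and concentrated on $C\subset\bwtfp\cap\opt$ with $\pp(C)$ being $\bar{d}$-cyclically monotonic. I would set $\nu:=\pp_\sharp\mu$; because $\mu$ is concentrated on $\bwtfp=\pp^{-1}(\wt{\rcomp{M}})$, $\nu$ is a positive Radon measure on $\wt{\rcomp{M}}$ concentrated on $\pp(C)$. Viewing $\nu$ as a Radon measure on $\beta\wt{\rcomp{M}}$ (by extension by zero, since $\wt{\rcomp{M}}$ is a locally compact Hausdorff subspace of its Stone-\v{C}ech compactification) and applying Theorem~\ref{th:cyclic_monotone_optimal} in the context of the complete metric space $(\rcomp{M},\bar{d})$, the $\bar{d}$-cyclical monotonicity of $\pp(C)$ yields that $\nu$ is an optimal de Leeuw representation there. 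Hence the corresponding functional $\Phi^*_{\rcomp{M}}\nu\in\lipfree{\rcomp{M},\bar{d}}$ (in the sense of the $(\rcomp{M},\bar{d})$ de Leeuw transform) satisfies $\norm{\Phi^*_{\rcomp{M}}\nu}_{\lipfree{\rcomp{M},\bar{d}}}=\norm{\nu}=\norm{\mu}$.

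It remains to identify $\Phi^*_{\rcomp{M}}\nu$ with $\Phi^*\mu$ through the isometric inclusion $\lipfree{\rcomp{M},\bar{d}}\hookrightarrow\Lip_0(M)^*$ of Remark~\ref{rm:lipfree_rcomp}. By Proposition~\ref{pr:wt_bochner} applied to $(\rcomp{M},\bar{d})$,
$$
\Phi^*_{\rcomp{M}}\nu \;=\; \int_{\wt{\rcomp{M}}} \frac{\delta(\xi)-\delta(\eta)}{\bar{d}(\xi,\eta)}\,d\nu(\xi,\eta)
$$
as a Bochner integral in $\lipfree{\rcomp{M},\bar{d}}$, hence also in $\Lip_0(M)^*$. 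Pushing forward through $\pp$ converts this to $\int\frac{\delta(\pp_1\zeta)-\delta(\pp_2\zeta)}{\bar{d}(\pp\zeta)}\,d\mu(\zeta)$; since $d(\zeta)=\bar{d}(\pp\zeta)$ on $C\subset\opt$, Proposition~\ref{prop_de_Leeuw_relation}~\ref{delta_diff} identifies the integrand with $\Phi^*\delta_\zeta$, so the integral equals $\Phi^*\mu$. The isometry then gives $\norm{\Phi^*\mu}_{\Lip_0(M)^*}=\norm{\mu}$, proving $\mu\in\opr{\bwt{M}}$.

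The main point requiring care is the chain of measure-theoretic identifications between the two settings: verifying that $\pp_\sharp\mu$ is genuinely Radon on $\wt{\rcomp{M}}$ and extends appropriately to $\beta\wt{\rcomp{M}}$, and that the pushforward of the Bochner integral commutes with the embedding $\lipfree{\rcomp{M},\bar{d}}\hookrightarrow\Lip_0(M)^*$ from Remark~\ref{rm:lipfree_rcomp}. Once these are checked, the argument is a clean reduction to the already-known Theorem~\ref{th:cyclic_monotone_optimal}.
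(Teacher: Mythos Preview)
Your forward direction is correct and essentially coincides with the paper's argument (which packages the same reasoning via Corollary~\ref{cor_minimal_5}).

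The converse, however, has a genuine gap. The pushforward $\nu=\pp_\sharp\mu$ is Radon on $\wt{\rcomp{M}}$ only with respect to the topology $\tau$ inherited from $\ucomp{M}\times\ucomp{M}$, because that is the topology for which $\pp$ is continuous. To invoke Theorem~\ref{th:cyclic_monotone_optimal} in the complete metric space $(\rcomp{M},\bar{d})$ you would need $\nu$ to be Radon with respect to the strictly finer $\bar{d}$-topology, i.e.\ inner regular by $\bar{d}$-compact sets. This fails in general: for $M=\NN$ with the discrete metric one has $\rcomp{M}=\beta\NN$ and $\bar{d}$ is the $0$--$1$ metric, so the $\bar{d}$-topology on $\beta\NN$ is discrete, $\bar{d}$-Radon measures are purely atomic, yet $\tau$-Radon measures on $\beta\NN$ need not be. The paper makes exactly this point in the Remark following Corollary~\ref{cr:optimal_S0}: $\tau$-Radon measures on $\wt{\rcomp{M}}$ form a strictly larger class than $\bar{d}$-Radon ones, and $\Psi\nu$ may lie outside $\lipfree{\rcomp{M},\bar{d}}$. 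When that happens, the Bochner integral representation from Proposition~\ref{pr:wt_bochner} and the isometric identification of Remark~\ref{rm:lipfree_rcomp} both break down, so no norm comparison through $\lipfree{\rcomp{M},\bar{d}}$ is available. The ``care'' you flag at the end is therefore not a routine verification but the heart of the matter, and it cannot be carried out in general.

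The paper circumvents this by staying entirely in the $\tau$-setting: it uses Proposition~\ref{pr:cm_extremal} to obtain a $\tau$-Borel $\bar{d}$-extremal function $\psi$ on $\rcomp{M}$, applies Lusin's theorem to find a large compact $K\subset\rcomp{M}$ on which $\psi$ is $\tau$-continuous, and then invokes the Matou\v{s}kov\'a-type extension (Proposition~\ref{pr:Mat_app}) to produce $f\in B_{\Lip_0(M)}$ with $\rcomp{f}\restrict_K=\psi\restrict_K$. A direct computation then gives $\Phi f\equiv 1$ on $C\cap\pp^{-1}(K\times K)$ and hence $\duality{f,\Phi^*\mu}>\norm{\mu}-\varepsilon$, establishing optimality without any appeal to $\lipfree{\rcomp{M},\bar{d}}$.
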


Observe that, as $\wt{M} \subset \bwtfp$ and $\bar{d}$ agrees with $d$ on $M$, Theorem \ref{th:cyclic_monotone_optimal} is a corollary of Theorem \ref{thm_equiv_d_bar_cyclical_monotonicity}.

\begin{proof}
The forward implication follows immediately from Corollaries \ref{cor_avoid_infinity} and \ref{cor_minimal_5}. For the converse, suppose that $\mu$ is concentrated on a set $C \subset \bwtfp\cap\opt$ such that $\pp(C)$ is $\bar{d}$-cyclically monotonic. By regularity, we may assume that $C$ is $\sigma$-compact and therefore $\pp(C)$ is $\tau$-Borel. Then $\pp_\sharp\mu\in\meas{\rcomp{M}\times\rcomp{M}}$ is concentrated on $\pp(C)$. By Proposition \ref{pr:cm_extremal}, $\pp_\sharp\mu$ is also concentrated on a set that admits a $\tau$-Borel $\bar{d}$-extremal function $\psi:\rcomp{M}\to\RR$. Thus, by substituting $C$ with an appropriate subset, we may assume that $\psi$ is $\bar{d}$-extremal on $\pp(C)$. We may moreover assume that $\psi(0)=0$, as extremality is invariant to the addition of constants.

Assume $\norm{\mu}=1$. It suffices to show that, given $\varepsilon>0$, there exists $f \in B_{\Lip_0(M)}$ such that $\duality{f,\Phi^*\mu} > 1 - \varepsilon$. To this end, fix $\varepsilon>0$. Define $\nu = (\pp_1)_\sharp \mu + (\pp_2)_\sharp \mu \in \meas{\ucomp{M}}$. Since $C \subset \bwtf$, we have that $\nu$ is concentrated on $\rcomp{M}$. By Lusin's theorem, there exists a compact set $K \subset \rcomp{M}$ such that $\nu(\ucomp{M}\setminus K)=\nu(\rcomp{M}\setminus K)<\frac{1}{2}\varepsilon$ and $\psi\restrict_K$ is $\tau$-continuous. Without loss of generality we can and do assume that $0 \in K$ (otherwise $0$ is isolated in $K\cup\set{0}$ and thus $\psi\restrict_{K\cup\set{0}}$ is $\tau$-continuous as well). By Proposition \ref{pr:Mat_app}, there exists a bounded $1$-Lipschitz function $f:M \to \RR$ such that $\psi\restrict_K = \rcomp{f}\restrict_K$. As $0 \in K$, we have $f \in B_{\Lip_0(M)}$. Set $L = C \cap \pp^{-1}(K \times K)$. Because $C \subset \bwtfp\cap\opt$, given $\zeta \in L$ we have $d(\zeta) = \bar{d}(\pp(\zeta)) \in (0,\infty)$ and $\pp(\zeta) \in K \times K$, thus by Proposition \ref{prop_de_Leeuw_relation} \ref{delta_diff}
\[
\Phi f(\zeta) = \frac{\rcomp{f}(\pp_1(\zeta)) - \rcomp{f}(\pp_2(\zeta))}{d(\zeta)} = \frac{\psi(\pp_1(\zeta)) - \psi(\pp_2(\zeta))}{\bar{d}(\pp(\zeta))} = 1.
\]
Furthermore,
\[
\mu(C\setminus L) \leq \mu(\pp_1^{-1}(\ucomp{M}\setminus K) \cup \pp_2^{-1}(\ucomp{M}\setminus K)) \leq \nu(\ucomp{M}\setminus K) < \textstyle\frac{1}{2}\varepsilon,
\]
whence $\mu(L)>1-\frac{1}{2}\varepsilon$ and
\[
\duality{f,\Phi^*\mu}  > \int_L \Phi f\,d\mu - {\textstyle\frac{1}{2}\varepsilon} = \mu(L) - {\textstyle\frac{1}{2}\varepsilon} > 1 - \varepsilon, 
\]
as required.
\end{proof}

The preceding proof highlights a connection between measures on $\bwtfp$ and measures on $\wt{\rcomp{M}}$. To any positive $\mu\in\meas{\bwtfp}$ (not necessarily optimal) we can associate the measure $\nu=\pp_\sharp\mu\in\meas{\wt{\rcomp{M}}}$, which satisfies $\norm{\nu}=\norm{\mu}$. This measure has the same marginals as $\mu$, in the sense that
\begin{equation}\label{eq:same marginals}
(\pp_1)_\sharp\mu(E) = \mu(\pp_1^{-1}(E)) = \mu(\pp^{-1}(E\times\rcomp{M})) = \nu(E\times\rcomp{M}) = \nu(\pp_1^{-1}(E)) = (\pp_1)_\sharp\nu(E)
\end{equation}
for every Borel $E\subset\rcomp{M}$, and similarly for $\pp_2$. Note that here we are using the same notation for the coordinate mappings $\pp_i:\bwt{M}\to\ucomp{M}$ and the projections $\pp_i:\rcomp{M}\times\rcomp{M}\to\rcomp{M}$; context should dispel any confusion. Conversely, every $\nu\in\meas{\wt{\rcomp{M}}}$ can be lifted to $\mu\in\meas{\bwtfp}$ such that $\pp_\sharp\mu=\nu$ and $\norm{\mu}=\norm{\nu}$ as a consequence of the Hahn-Banach theorem. A less obvious fact is that we can choose such $\mu$ to be concentrated on optimal points.

\begin{proposition}\label{pr:lifting_to_S0}
If $\nu\in\meas{\rcomp{M}\times\rcomp{M}}$ is concentrated on $\wt{\rcomp{M}}$, then there exists $\mu\in\meas{\bwt{M}}$ concentrated on $\bwtfp\cap\opt$ such that $\pp_\sharp\mu=\nu$ and $\norm{\mu}=\norm{\nu}$. If $\nu$ is positive then $\mu$ can also be chosen to be positive.
\end{proposition}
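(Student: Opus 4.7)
The plan is to produce a $\nu$-measurable selection $s$ of $\pp$ landing in $\opt$, then set $\mu = s_\sharp \nu$, after reducing to a manageable case. This mirrors in spirit the lifting of positive measures along continuous surjections with compact fibers; the novelty here is that we force the lift to land on the non-closed $G_\delta$ set of optimal points.

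For the reduction, Jordan decomposition $\nu = \nu^+ - \nu^-$ handles the signed case, so I may assume $\nu \geq 0$. Inner regularity of the Radon measure $\nu$ then writes $\nu = \sum_m \nu\restrict_{K_m}$ for disjoint compacta $K_m \subset \wt{\rcomp{M}}$; since $\norm{\nu}<\infty$, the sum of the individual lifts will be a positive Radon measure on $\bwt{M}$ concentrated on a $\sigma$-compact subset of $\bwtfp \cap \opt$. It thus suffices to treat a positive $\nu$ supported on a single compact $K \subset \wt{\rcomp{M}}$. For such $\nu$, consider the multifunction $\Xi(\xi,\eta) = \opt \cap \pp^{-1}(\xi,\eta)$ on $K$; by Proposition \ref{pr:optimals_everywhere}, $\Xi$ has non-empty compact values, and its graph is the Borel set
\[
\Gamma \;=\; \set{((\xi,\eta),\zeta)\in K\times\bwt{M}\,:\,\pp(\zeta)=(\xi,\eta)} \,\cap\, (K\times\opt),
\]
being the intersection of a closed set with the $G_\delta$ set $K\times\opt$. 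A measurable selection theorem then furnishes a $\nu$-measurable $s:K\to\bwt{M}$ with $s(\xi,\eta)\in\Xi(\xi,\eta)$, and $\mu = s_\sharp\nu$ has all the required properties: $\pp_\sharp\mu = (\pp\circ s)_\sharp\nu = \nu$, $\supp(\mu)\subset s(K)\subset\bwtfp\cap\opt$, and $\mu$ is Radon (tightness from Lusin: on each compact of large $\nu$-measure $s$ may be taken continuous, so its image is compact).

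The main obstacle is securing the selection when $\bwt{M}$ is compact Hausdorff but not necessarily Polish or metrizable, so that Kuratowski--Ryll--Nardzewski does not apply as stated. I would instead invoke a Jankov--von Neumann-type analytic selection theorem (yielding a universally $\nu$-measurable section from the Borel graph $\Gamma$), or alternatively perform a disintegration-based modification: take any Hahn--Banach positive lift $\mu_0\in\meas{\bwt{M}}$ of $\nu$, disintegrate $\mu_0 = \int_K \mu_0^{\xi,\eta}\,d\nu(\xi,\eta)$ into fiber probabilities on the compact sets $\pp^{-1}(\xi,\eta)$, and replace each $\mu_0^{\xi,\eta}$ with a measurably chosen probability on the non-empty compact $\opt\cap\pp^{-1}(\xi,\eta)$. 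A naive weak-$*$ limit of discrete lifts concentrated on $\opt$ does \emph{not} work: since $\opt$ is $G_\delta$ and not closed and $\zeta\mapsto\norm{\Phi^*\delta_\zeta}$ is merely lower semicontinuous, mass on $\opt$ can leak to $\opt^c$ in the limit, which is precisely why a measurable-selection route is unavoidable.
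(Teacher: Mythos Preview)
Your strategy is sound in outline, and you correctly identify the obstacle: $\bwt{M}$ is in general a non-metrizable compact Hausdorff space, so Kuratowski--Ryll-Nardzewski is unavailable. But your proposed workarounds do not resolve this. Jankov--von Neumann selection and disintegration both require the target to be Polish (or at least standard Borel / Souslin), which $\bwt{M}$ emphatically is not; the ``disintegrate a Hahn--Banach lift $\mu_0$'' step already fails at the disintegration itself. So as written there is a genuine gap precisely at the step you flag.

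The paper sidesteps measurable selection altogether by arranging the relevant preimage to be \emph{compact} rather than merely Borel-graphed. Recall (Proposition~\ref{prop_de_Leeuw_relation}~\ref{d_bar_less_than_d}) that $\zeta\in\bwtfp$ is optimal iff $d(\zeta)=\bar d(\pp(\zeta))$; since $d$ is continuous on $\bwt{M}$ but $\bar d$ is only $\tau$-lower semicontinuous, this is why $\opt$ is merely $G_\delta$. The trick is to use Lusin's theorem to choose the compacta $K_n\subset\wt{\rcomp{M}}$ so that, in addition, $\bar d\restrict_{K_n}$ is \emph{continuous}. Then for any net $(\zeta_i)\subset\pp^{-1}(K_n)\cap\opt$ with $\zeta_i\to\zeta\in\pp^{-1}(K_n)$ one gets
\[
d(\zeta)=\lim_i d(\zeta_i)=\lim_i\bar d(\pp(\zeta_i))=\bar d(\pp(\zeta)),
\]
so $\zeta\in\opt$. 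Hence $\pp^{-1}(K_n)\cap\opt$ is compact and, by Proposition~\ref{pr:optimals_everywhere}, $\pp$ maps it onto $K_n$. A plain Hahn--Banach lift through this continuous surjection of compacta (the paper cites \cite[Theorem 9.1.9]{Bogachev}) now produces the preimage measure directly, with no selection theorem needed; summing over $n$ finishes. The Lusin step is the missing idea: it upgrades ``compact optimal fibers with Borel graph'' to ``compact total space over $K_n$'', which is exactly what defuses the non-Polish obstruction you ran into.
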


\begin{proof}
By Lusin's theorem and regularity, $\nu$ is concentrated on a set $C=\bigcup_{n=1}^\infty K_n$, where $K_n$ is an increasing sequence of compact subsets of $\wt{\rcomp{M}}$ such that $\bar{d}\restrict_{K_n}$ is continuous. Let us see that $\pp^{-1}(K_n)\cap\opt$ is compact for each $n$. Suppose that $(\zeta_i)$ is a net in $\pp^{-1}(K_n)\cap\opt$. Note that $\pp^{-1}(K_n)$ is compact, so there is a subnet (still denoted $(\zeta_i)$) that converges to $\zeta\in\pp^{-1}(K_n)$. Since $\bar{d}\restrict_{K_n}$ is continuous and $\zeta_i\in\opt$ for all $i$, we get
$$
d(\zeta) = \lim_i d(\zeta_i) = \lim_i \bar{d}(\pp(\zeta_i)) = \bar{d}(\pp(\zeta)) .
$$
Thus $\zeta\in\opt$, and this proves our claim. Moreover, by Proposition \ref{pr:optimals_everywhere} we have $\pp(\pp^{-1}(K_n)\cap\opt)=K_n$. Therefore we can apply \cite[Theorem 9.1.9]{Bogachev} to get the conclusion.
\end{proof}

Define the map $\Psi:\meas{\wt{\rcomp{M}}}\to\Lip_0(M)^*$ by
\begin{equation}\label{eq:Psi}
\duality{f,\Psi\nu} = \int_{\wt{\rcomp{M}}} \frac{\rcomp{f}(\xi)-\rcomp{f}(\eta)}{\bar{d}(\xi,\eta)} \,d\nu(\xi,\eta)
\end{equation}
for $f\in\Lip_0(M)$. It is straightforward to check that $\Psi$ is a linear operator with norm $1$, although it is not $w^*$-$w^*$-continuous (precisely because $\bar{d}$ is not continuous on $\wt{\rcomp{M}}$ in general; this is witnessed e.g. by points $\zeta\in\bwtfp$ such that $0<\norm{\Phi^*\delta_\zeta}<1$, as in Example \ref{ex_all_values}). If $\mu\in\meas{\bwt{M}}$ is concentrated on $\bwtfp\cap\opt$, then $\Phi^*\mu=\Psi(\pp_\sharp\mu)$, as
\begin{align*}
\duality{f,\Psi(\pp_\sharp\mu)} &= \int_{\wt{\rcomp{M}}} \frac{\rcomp{f}(\xi)-\rcomp{f}(\eta)}{\bar{d}(\xi,\eta)} \,d(\pp_\sharp\mu)(\xi,\eta) = \int_{\bwtfp} \frac{\rcomp{f}(\pp_1(\zeta))-\rcomp{f}(\pp_2(\zeta))}{\bar{d}(\pp(\zeta))} \,d\mu(\zeta) \\
&= \int_{\bwtfp} \frac{\rcomp{f}(\pp_1(\zeta))-\rcomp{f}(\pp_2(\zeta))}{d(\zeta)} \,d\mu(\zeta) = \int_{\bwtfp} \Phi f(\zeta)\,d\mu(\zeta) = \duality{f,\Phi^*\mu}
\end{align*}
for $f\in\Lip_0(M)$, using Proposition \ref{prop_de_Leeuw_relation} \ref{delta_diff}. Thus, for measures concentrated on $\bwtfp\cap\opt$, in particular for optimal measures concentrated on $\bwtfp$, the De Leeuw transform $\Phi^*$ factors through the pushforward operator $\pp_\sharp$. All relevant information about such measures is therefore already codified in $\wt{\rcomp{M}}$, with no need to consider the (bigger) space $\bwtfp\subset\bwt{M}$. In particular, the existence of optimal measures concentrated on $\bwtfp$ can be determined directly on $\wt{\rcomp{M}}$. Indeed, the following is an immediate consequence of Theorem \ref{thm_equiv_d_bar_cyclical_monotonicity} and Proposition \ref{pr:lifting_to_S0}.

\begin{corollary}\label{cr:optimal_S0}
Let $\phi\in\Lip_0(M)^*$. Then the following are equivalent:
\begin{enumerate}[label={\upshape{(\roman*)}}]
\item $\phi$ admits an optimal representation $\mu\in\opr{\bwt{M}}$ concentrated on $\bwtfp$,
\item $\phi=\Psi\nu$ for some positive $\nu\in\meas{\wt{\rcomp{M}}}$ concentrated on a $\bar{d}$-cyclically monotonic set.
\end{enumerate}
If the above hold, then $\norm{\phi}=\norm{\nu}$ and we can choose $\mu,\nu$ to satisfy $\pp_\sharp\mu=\nu$.
\end{corollary}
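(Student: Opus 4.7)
The plan is to derive the corollary by combining Theorem \ref{thm_equiv_d_bar_cyclical_monotonicity}, Proposition \ref{pr:lifting_to_S0}, and the factorization $\Phi^*\mu=\Psi(\pp_\sharp\mu)$ (valid for positive measures concentrated on $\bwtfp\cap\opt$) that is established in the paragraph immediately preceding the statement. All the substantive work is already done; the corollary is essentially a dictionary between the two pictures.

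For the forward direction, I would start with an optimal representation $\mu\in\opr{\bwt{M}}$ of $\phi$ that is concentrated on $\bwtfp$. Theorem \ref{thm_equiv_d_bar_cyclical_monotonicity} then supplies a set $C\subset\bwtfp\cap\opt$ of full $\mu$-measure such that $\pp(C)\subset\wt{\rcomp{M}}$ is $\bar{d}$-cyclically monotonic. Setting $\nu=\pp_\sharp\mu\in\meas{\wt{\rcomp{M}}}$ yields a positive measure concentrated on $\pp(C)$, hence on a $\bar{d}$-cyclically monotonic set, and the factorization gives $\Psi\nu=\Phi^*\mu=\phi$. For the norm equality, positivity of $\mu$ together with $\mu(\bwt{M}\setminus\bwtfp)=0$ forces $\norm{\nu}=\nu(\wt{\rcomp{M}})=\mu(\bwtfp)=\norm{\mu}$, and optimality gives $\norm{\mu}=\norm{\phi}$.

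For the reverse direction, assume $\phi=\Psi\nu$ with $\nu$ positive and concentrated on a $\bar{d}$-cyclically monotonic set $D\subset\wt{\rcomp{M}}$. Proposition \ref{pr:lifting_to_S0} produces a positive lift $\mu\in\meas{\bwt{M}}$ concentrated on $\bwtfp\cap\opt$ with $\pp_\sharp\mu=\nu$. Because $\mu$ is also concentrated on $\pp^{-1}(D)$, it is concentrated on the set $C:=\pp^{-1}(D)\cap\bwtfp\cap\opt$, whose image $\pp(C)\subset D$ is $\bar{d}$-cyclically monotonic. Applying Theorem \ref{thm_equiv_d_bar_cyclical_monotonicity} gives $\mu\in\opr{\bwt{M}}$, and the factorization once more yields $\Phi^*\mu=\Psi\nu=\phi$, so $\mu$ is the desired optimal representation concentrated on $\bwtfp$. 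The norm equality $\norm{\phi}=\norm{\mu}=\norm{\nu}$ follows exactly as in the forward direction. There is no real obstacle to surmount here; the only point requiring care is to keep track of which set carries which measure so that Theorem \ref{thm_equiv_d_bar_cyclical_monotonicity} can be applied in both directions.
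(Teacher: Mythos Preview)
Your proposal is correct and matches the paper's approach exactly: the paper states that the corollary is ``an immediate consequence of Theorem \ref{thm_equiv_d_bar_cyclical_monotonicity} and Proposition \ref{pr:lifting_to_S0}'', and you have spelled out precisely how those two results (together with the factorization $\Phi^*\mu=\Psi(\pp_\sharp\mu)$ on $\bwtfp\cap\opt$) combine to give both implications and the norm equality.
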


\begin{remark}
We finish this subsection by drawing attention to the similitude between images under $\Psi$ of Radon measures on $\wt{\rcomp{M}}$ and convex integrals of molecules on the metric space $(\rcomp{M},\bar{d})$. The first setting is strictly more general than the second because we are considering ($\tau$-)Radon measures versus $\bar{d}$-Radon measures on $\wt{\rcomp{M}}$, respectively. For this reason the results in this paper having counterparts in \cite{APS23} are strictly more general.
Let us justify these remarks. Any $\bar{d}$-Radon measure $\nu$ on $\wt{\rcomp{M}}$ yields a convex integral of molecules $m \in \lipfree{\rcomp{M},\bar{d}}$ via Proposition \ref{pr:wt_bochner}, which can be regarded as an element of $\Lip_0(M)^*$ by Remark \ref{rm:lipfree_rcomp}. As $\tau$-Borel sets are $\bar{d}$-Borel and $\bar{d}$-compact sets are $\tau$-compact, $\nu$ is $\tau$-Radon as well, and one can verify that $\Psi\nu = m$. The converse fails because there can exist Radon measures $\nu$ on $\wt{\rcomp{M}}$ such that $\Psi\nu \notin \lipfree{\rcomp{M},\bar{d}}$; one can construct an example by considering $M=\NN$ with the discrete metric.
%To see this, let $M=\NN$ with the discrete metric $d$ (and base point $1$). Then $\rcomp{M}=\beta\NN$ and $\bar{d}$ is the discrete metric on $\rcomp{M}$. Fix a non-zero positive dispersed measure $\lambda \in \meas{\rcomp{M}}$, define the continuous map $\pi:\rcomp{M}\to\rcomp{M}\times\rcomp{M}$ by $\pi(\xi)=(\xi,1)$, and consider the pushforward $\nu = \pi_\sharp \lambda$. As $\lambda$ is dispersed, $\nu$ is concentrated on $(\rcomp{M}\setminus M) \times \set{1} \subset \wt{\rcomp{M}}$. We calculate
%\[
 %\duality{f,\Psi\nu} = \int_{\wt{\rcomp{M}}} \frac{\rcomp{f}(\xi)-\rcomp{f}(\eta)}{\bar{d}(\xi,\eta)} \,d\nu(\xi,\eta) = \int_{\rcomp{M}} \rcomp{f}(\xi)\,d\lambda(\xi), \quad f \in \Lip_0(M).
%\]
%To see that $\Psi\nu \notin \lipfree{\rcomp{M},\bar{d}}$, take points $\xi_1,\ldots,\xi_n \in \rcomp{M}$ and scalars $a_1,\ldots,a_n$, and set $m=\sum_{k=1}^n a_k \delta(\xi_k) \in \lipfree{\rcomp{M},\bar{d}}$. As $\lambda$ is dispersed, given $\varepsilon>0$, there exists a clopen set $U \ni 1,\xi_1,\ldots,\xi_n$ such that $\lambda(U) < \varepsilon$. It is easy to see that $f:=\mathbf{1}_{M\setminus U} \in B_{\Lip_0(M)}$, and thus
%\[
 %\norm{\Psi\nu-m} \geq \duality{f,\Psi\nu} - \duality{\rcomp{f},m} = \lambda(\rcomp{M}\setminus U) > \norm{\lambda} - \varepsilon.
%\]
%Because this holds for all such $m$ and $\varepsilon$, we obtain $\norm{\Psi\nu-m} \geq \norm{\lambda}>0$ for all $m \in \lipfree{\rcomp{M},\bar{d}}$.
\end{remark}

\subsection{Representation of measure-induced functionals}

In this section we establish connections between measure-induced functionals in $\Lip_0(M)^\ast$ and functionals admitting optimal representations on $\bwtfp$, and in so doing provide plenty of examples of the latter. Given the close relation to convex integrals of molecules, we will do so by generalising the results in \cite[Section 3]{APS23}, which show that measure-induced functionals in $\lipfree{M}$ are convex integrals of molecules.

Given a Borel measure $\lambda$ on $\ucomp{M}$, we can formally define a functional $\widehat{\lambda}$ on $\Lip_0(M)$ by
\begin{equation}\label{eq:induced_functional}
\duality{f,\widehat{\lambda}} = \int_{\ucomp{M}}\ucomp{f}\,d\lambda .
\end{equation}
This yields an element of $\Lip_0(M)^*$ precisely when $\lambda$ is concentrated on $\rcomp{M}$ and has finite first $\bar{d}$-moment \cite[Proposition 4.3]{AP_measures}. We then call $\widehat{\lambda}$ the functional \emph{induced by $\lambda$}. Observe that $\delta_0$ induces the null functional, so the content of $\lambda$ at $0$ is inconsequential. Another relevant observation is that under appropriate regularity assumptions (e.g. $\lambda$ being Radon) we have $\widehat{\lambda}\in\lipfree{M}$ if and only if $\lambda$ is concentrated on $M$ \cite[Theorem 4.11]{AP_measures}; this will imply that the results in \cite[Section 3]{APS23} are particular cases of those in this subsection, for functionals belonging to $\lipfree{M}$.

The link between measure-induced functionals in $\Lip_0(M)^*$ and optimal De Leeuw representations on $\bwtfp$ is provided by the Kantorovich-Rubinstein theorem for $\rcomp{M}$ (Corollary \ref{cr:kr_rcomp}). In this context, two finiteness conditions become relevant:
\begin{equation}\label{eq:bwt weighted finite}
\int_{\bwtfp}\frac{1}{d(\zeta)}\,d\mu(\zeta) < \infty
\end{equation}
and
\begin{equation}\label{eq:bwt_marginals_finite_first_moment}
\int_{\bwtfp}\frac{\bar{d}(\pp_1(\zeta),0)}{d(\zeta)}\,d\mu(\zeta) < \infty
\end{equation}
(compare to \cite[(3.1) and (3.2)]{APS23}). We will characterise functionals that admit optimal representations $\mu\in\meas{\bwtfp}$ satisfying one or both of these conditions. Of course, $d(\zeta)$ may be replaced by $\bar{d}(\pp(\zeta))$ in any of those integrals, as $\bar{d}\circ\pp=d$ $\mu$-a.e. if $\mu$ is optimal.

\begin{theorem}[cf. {\cite[Theorem 3.1]{APS23}}]\label{th:bidual_induced}
Let $\phi\in\Lip_0(M)^*$. Then the following are equivalent:
\begin{enumerate}[label={\upshape{(\roman*)}}]
\item\label{bidual_induced-1} $\phi$ has an optimal representation $\mu$ concentrated on $\bwtfp$ satisfying \eqref{eq:bwt weighted finite} and \eqref{eq:bwt_marginals_finite_first_moment},
\item\label{bidual_induced-2} $\phi=\widehat{\lambda}$ for some Radon measure $\lambda\in\meas{\rcomp{M}}$ with finite first $\bar{d}$-moment.
\end{enumerate}
If the above hold, then $\mu$ can be chosen to satisfy $(\pp_1)_\sharp\mu\ll\lambda^+$ and $(\pp_2)_\sharp\mu\ll\lambda^-$, and in particular $(\pp_1)_\sharp\mu \perp (\pp_2)_\sharp\mu$.
\end{theorem}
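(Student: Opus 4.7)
For the implication (i)$\Rightarrow$(ii), I would start from the optimal representation $\mu$ concentrated on $\bwtfp$ and define a finite (by \eqref{eq:bwt weighted finite}) Radon measure $\mu' \in \meas{\bwt{M}}$ by $d\mu'(\zeta) = d(\zeta)^{-1}\,d\mu(\zeta)$. The candidate is then
\[
\lambda \;=\; (\pp_1)_\sharp\mu' - (\pp_2)_\sharp\mu' \;\in\; \meas{\rcomp{M}}.
\]
The identity $\widehat{\lambda} = \Phi^*\mu = \phi$ will be a direct Fubini-type calculation: for $f \in \Lip_0(M)$, Proposition \ref{prop_de_Leeuw_relation}\ref{delta_diff} gives $d(\zeta)\Phi f(\zeta) = \rcomp{f}(\pp_1\zeta) - \rcomp{f}(\pp_2\zeta)$, so integrating against $\mu$ produces both sides. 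For finite first $\bar{d}$-moment, one marginal is handled directly by \eqref{eq:bwt_marginals_finite_first_moment}, and the second follows from the triangle inequality $\bar{d}(\pp_2\zeta,0) \leq \bar{d}(\pp_1\zeta,0) + d(\zeta)$ combined with Proposition \ref{prop_de_Leeuw_relation}\ref{d_bar_less_than_d} and $\int d(\zeta)^{-1}\,d\mu < \infty$.

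For the converse (ii)$\Rightarrow$(i), I would reduce to applying Kantorovich--Rubinstein and then lifting. The preparatory observation is that $\widehat{\delta_0} = 0$, so $\lambda$ may be replaced by $\lambda - \lambda(\rcomp{M})\delta_0$; after this adjustment $\lambda(\rcomp{M})=0$, so in the Jordan decomposition $\lambda = \lambda^+ - \lambda^-$ the parts have equal total mass $t$ and are mutually singular, while $\widehat{\lambda}$ and finite first moment are unchanged. If $t=0$ take $\mu=0$; otherwise, apply Corollary \ref{cr:kr_rcomp} to $\lambda^+/t$, $\lambda^-/t$ to obtain an optimal coupling $\pi\in\Pi(\lambda^+,\lambda^-)$ whose cost equals
\[
\sup_{f\in B_{\Lip(M)}}\int_{\rcomp{M}} \rcomp{f}\,d\lambda \;=\; \sup_{f\in B_{\Lip_0(M)}}\duality{f,\phi} \;=\; \norm{\phi},
\]
where the first equality uses $\lambda(\rcomp{M})=0$ to pass from $B_{\Lip(M)}$ to $B_{\Lip_0(M)}$ by subtracting constants. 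By KR, $\pi$ is concentrated on a $\bar{d}$-cyclically monotonic set, and by mutual singularity of $\lambda^\pm$ it is concentrated on $\wt{\rcomp{M}}$. Applying Proposition \ref{pr:lifting_to_S0} lifts $\pi$ to a positive $\mu_0\in\meas{\bwt{M}}$ concentrated on $\bwtfp\cap\opt$ with $\pp_\sharp\mu_0 = \pi$. I then set $\mu = d\cdot\mu_0$, i.e.\ $d\mu(\zeta) = d(\zeta)\,d\mu_0(\zeta)$, which is a Radon measure since $d$ is continuous and finite on $\bwtf$.

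The verification is routine. Using Proposition \ref{prop_de_Leeuw_relation}\ref{delta_diff} together with the change-of-variables formula for $\pp_\sharp$, one checks $\Phi^*\mu = \widehat{\lambda^+}-\widehat{\lambda^-} = \widehat{\lambda} = \phi$; then $\norm{\mu} = \int d\,d\mu_0 = \int\bar{d}\,d\pi = \norm{\phi}$ gives optimality. Conditions \eqref{eq:bwt weighted finite} and \eqref{eq:bwt_marginals_finite_first_moment} collapse under the substitution $d\mu = d\,d\mu_0$ to $\norm{\mu_0}=t<\infty$ and $\int\bar{d}(\xi,0)\,d\lambda^+(\xi)<\infty$, both clear. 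For the absolute continuity statements, note that $(\pp_i)_\sharp\mu_0 = \lambda^{\pm}$ by construction, and $\mu$ is absolutely continuous with respect to $\mu_0$; hence if $\lambda^+(E)=0$ then $\mu_0(\pp_1^{-1}(E))=0$, whence $(\pp_1)_\sharp\mu(E) = \int_{\pp_1^{-1}(E)} d(\zeta)\,d\mu_0 = 0$, and similarly for $\pp_2$.

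The main obstacle I anticipate is the $\delta_0$ bookkeeping: the Jordan decomposition $\lambda^\pm$ used to drive the KR machinery is that of the \emph{adjusted} $\lambda$, which may differ from the original at $\{0\}$, so the absolute continuity conclusion must be understood as pertaining to a compatible pair $(\mu,\lambda)$ (equivalently, after replacing the original $\lambda$ by the adjusted one, which still satisfies (ii)). A secondary technical point is to ensure the KR supremum over $B_{\Lip(M)}$ matches $\norm{\phi}$ exactly, which depends precisely on the zero-total-mass reduction and on the identification of $\Lip(M)\cap\Lip\text{-extendable}$ functions on $\rcomp{M}$ with 1-$\bar{d}$-Lipschitz $\tau$-continuous functions already noted in the paper.
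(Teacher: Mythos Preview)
Your proposal is correct and follows essentially the same approach as the paper. The only cosmetic difference in (ii)$\Rightarrow$(i) is the order of operations: you lift the optimal coupling $\pi$ to $\mu_0$ via Proposition \ref{pr:lifting_to_S0} and then weight by $d$, whereas the paper first weights $\pi$ by $\bar{d}$ to obtain $\nu\in\meas{\wt{\rcomp{M}}}$ and then invokes Corollary \ref{cr:optimal_S0} to lift to an optimal $\mu$ with $\pp_\sharp\mu=\nu$; since any lift lands in $\opt$ (where $d=\bar{d}\circ\pp$), these constructions are interchangeable, and your absolute-continuity argument via $\mu\ll\mu_0$ and $(\pp_i)_\sharp\mu_0=\lambda^\pm$ is the same as the paper's via $(\pp_i)_\sharp\mu=(\pp_i)_\sharp\nu\ll(\pp_i)_\sharp\pi=\lambda^\pm$.
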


\begin{proof}
\ref{bidual_induced-1}$\Rightarrow$\ref{bidual_induced-2}: Define a measure $\nu$ on $\bwt{M}$ by $d\nu=d\mu/d$. This measure is finite by \eqref{eq:bwt weighted finite}, and since $\nu\ll\mu$, it is a Radon measure concentrated on $\bwtfp$ (see e.g. \cite[Lemma 7.1.11]{Bogachev}). Let $\lambda_i=(\pp_i)_\sharp \nu$ for $i=1,2$, then $\lambda_i\in\meas{\rcomp{M}}$ have finite first $\bar{d}$-moment. Indeed,
$$
\int_{\rcomp{M}} \bar{d}(\xi,0)\,d\lambda_1(\xi) = \int_{\bwtfp} \bar{d}(\pp_1(\zeta),0)\,d\nu(\zeta) = \int_{\bwtfp} \frac{\bar{d}(\pp_1(\zeta),0)}{d(\zeta)}\,d\mu(\zeta) < \infty
$$
by \eqref{eq:bwt_marginals_finite_first_moment}, and similarly for $\lambda_2$, given that
$$
\int_{\bwtfp}\frac{\bar{d}(\pp_2(\zeta),0)}{d(\zeta)}\,d\mu(\zeta) \leq \int_{\bwtfp}\frac{\bar{d}(\pp_1(\zeta),0)+\bar{d}(\pp(\zeta))}{d(\zeta)}\,d\mu(\zeta) \leq \int_{\bwtfp}\frac{\bar{d}(\pp_1(\zeta),0)}{d(\zeta)}\,d\mu(\zeta) + \norm{\mu} < \infty
$$
as well by Proposition \ref{prop_de_Leeuw_relation} \ref{d_bar_less_than_d}. For every $f\in\Lip_0(M)$ we have
\begin{align*}
\duality{f,\phi} = \int_{\bwt{M}}\Phi f\,d\mu = \int_{\bwtfp}\frac{\rcomp{f}\circ\pp_1-\rcomp{f}\circ\pp_2}{d}\,d\mu = \int_{\ucomp{M}}\ucomp{f}\,d\lambda_1 - \int_{\ucomp{M}}\ucomp{f}\,d\lambda_2 ,
\end{align*}
where both integrals on the right-hand side are finite by finite first $\bar{d}$-moment. Therefore $\phi$ is induced by $\lambda=\lambda_1-\lambda_2\in\meas{\rcomp{M}}$.

\medskip
\ref{bidual_induced-2}$\Rightarrow$\ref{bidual_induced-1}: We assume that $\norm{\lambda^+}=\norm{\lambda^-}=1$ after adding a multiple of $\delta_0$ and rescaling. By \cite[Proposition 4.3]{AP_measures}, $\lambda^+$ and $\lambda^-$ have finite first $\bar{d}$-moment and are concentrated on $\rcomp{M}$. We may then apply Corollary \ref{cr:kr_rcomp} to find an optimal coupling $\pi\in\Pi(\lambda^+,\lambda^-)$ such that
$$
\int_{\rcomp{M}\times\rcomp{M}} \bar{d}\,d\pi = \sup_{f\in B_{\Lip_0(M)}}\int_{\rcomp{M}}\rcomp{f}\,d\lambda = \big\|\widehat{\lambda}\big\|_{\Lip_0(M)^*} = \norm{\phi} ,
$$
and $\pi$ is concentrated on a $\bar{d}$-cyclically monotonic subset of $\rcomp{M}\times\rcomp{M}$.
The restriction of $\pi$ to the diagonal $(\rcomp{M}\times\rcomp{M})\setminus\wt{\rcomp{M}}$ has equal marginals, and these are majorised by the full marginals $(\pp_1)_\sharp\pi=\lambda^+$ and $(\pp_2)_\sharp\pi=\lambda^-$, which are mutually singular. Thus $\pi$ is concentrated on $\wt{\rcomp{M}}$.
Define $\nu\in\meas{\wt{\rcomp{M}}}$ by $d\nu=\bar{d}\,d\pi$, then $\nu$ is concentrated on a $\bar{d}$-cyclically monotonic set. If $\Psi\nu$ is defined as in \eqref{eq:Psi}, then we have for $f\in\Lip_0(M)$
\begin{align*}
\duality{f,\Psi\nu} &= \int_{\rcomp{M}\times\rcomp{M}}\frac{\rcomp{f}(\xi)-\rcomp{f}(\eta)}{\bar{d}(\xi,\eta)}\,d\nu(\xi,\eta) \\
&= \int_{\rcomp{M}\times\rcomp{M}}(\rcomp{f}(\xi)-\rcomp{f}(\eta))\,d\pi(\xi,\eta) \\
&= \int_{\rcomp{M}\times\rcomp{M}}(\rcomp{f}\circ\pp_1)\,d\pi - \int_{\rcomp{M}\times\rcomp{M}}(\rcomp{f}\circ\pp_2)\,d\pi \\
&= \int_{\rcomp{M}}\rcomp{f}\,d\lambda^+ - \int_{\rcomp{M}}\rcomp{f}\,d\lambda^- = \int_{\rcomp{M}}\rcomp{f}\,d\lambda ,
\end{align*}
therefore $\Psi\nu=\widehat{\lambda}=\phi$ (both terms $\int_{\rcomp{M}}\rcomp{f}\,d\lambda^{\pm}$ are finite, which makes the computation valid). By Corollary \ref{cr:optimal_S0}, $\phi$ admits an optimal representation $\mu\in\opr{\bwt{M}}$ concentrated on $\bwtfp$ such that $\pp_\sharp\mu=\nu$. Since $\bar{d}\circ\pp=d$ $\mu$-almost everywhere, we have
$$
\int_{\bwtfp}\frac{d\mu(\zeta)}{d(\zeta)} = \int_{\bwtfp}\frac{d\mu(\zeta)}{\bar{d}(\pp(\zeta))} = \int_{\wt{\rcomp{M}}}\frac{d\nu(\xi,\eta)}{\bar{d}(\xi,\eta)} = \norm{\pi} < \infty , 
$$
and
\begin{align*}
\int_{\bwtfp}\frac{\bar{d}(\pp_1(\zeta),0)}{d(\zeta)}\,d\mu(\zeta) &= \int_{\bwtfp}\frac{\bar{d}(\pp_1(\zeta),0)}{\bar{d}(\pp(\zeta))}\,d\mu(\zeta) = \int_{\wt{\rcomp{M}}}\frac{\bar{d}(\xi,0)}{\bar{d}(\xi,\eta)}\,d\nu(\xi,\eta) \\
&= \int_{\wt{\rcomp{M}}}\bar{d}(\xi,0)\,d\pi(\xi,\eta) = \int_{\rcomp{M}}\bar{d}(\xi,0)\,d\lambda^+(\xi) < \infty 
\end{align*}
is the first $\bar{d}$-moment of $\lambda^+$. That is, the values of \eqref{eq:bwt weighted finite} and \eqref{eq:bwt_marginals_finite_first_moment} are both finite.

For the last statement, observe that \eqref{eq:same marginals} implies $(\pp_1)_\sharp\mu=(\pp_1)_\sharp\nu\ll(\pp_1)_\sharp\pi=\lambda^+$, and similarly $(\pp_2)_\sharp\mu=(\pp_2)_\sharp\nu\ll\lambda^-$. Since $\lambda^+\perp\lambda^-$, we also have $(\pp_1)_\sharp\mu\perp(\pp_2)_\sharp\mu$.
\end{proof}

If we drop condition \eqref{eq:bwt_marginals_finite_first_moment}, we get functionals that are induced by Radon measures in a weak sense. This corresponds to measures on $\rcomp{M}$ with (possibly) infinite first $\bar{d}$-moment.

\begin{theorem}\label{th:bidual_meas_non_ffm}
Let $\phi\in\Lip_0(M)^*$. Then the following are equivalent:
\begin{enumerate}[label={\upshape{(\roman*)}}]
\item\label{bidual_meas_non_ffm-1} $\phi$ has an optimal representation $\mu$ concentrated on $\bwtfp$ satisfying \eqref{eq:bwt weighted finite},
\item\label{bidual_meas_non_ffm-2} $\phi$ avoids infinity, and there exists a Radon measure $\lambda\in\meas{\rcomp{M}}$ such that
\begin{equation}\label{eq:bidual_meas_non_ffm}
\duality{f,\phi} = \int_{\rcomp{M}} \rcomp{f}\,d\lambda
\end{equation}
for every $f\in\Lip_0(M)$ with bounded support.
\end{enumerate}
If the above hold, then \eqref{eq:bidual_meas_non_ffm} holds for every $f\in\Lip_0(M)$ such that $\rcomp{f}$ is $\abs{\lambda}$-integrable, in particular for all bounded $f\in\Lip_0(M)$; furthermore, $\mu$ can be chosen to satisfy
$$
(\pp_1)_\sharp\mu\restrict_{\rcomp{M}\setminus\set{0}}\ll\lambda^+
\qquad\text{and}\qquad
(\pp_2)_\sharp\mu\restrict_{\rcomp{M}\setminus\set{0}}\ll\lambda^-
$$
and $(\pp_1)_\sharp\mu\perp(\pp_2)_\sharp\mu$.
\end{theorem}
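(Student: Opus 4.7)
Before handling either direction, I would establish the stated extension of \eqref{eq:bidual_meas_non_ffm} to $f \in \Lip_0(M)$ with $\rcomp{f} \in L_1(|\lambda|)$: the truncation $f\daleth_m$ has bounded support, so \eqref{eq:bidual_meas_non_ffm} gives $\duality{f\daleth_m,\phi} = \int \rcomp{f}\rcomp{\daleth_m}\,d\lambda$, and as $m\to\infty$ the left-hand side converges to $\duality{f,\phi}$ (because $\phi$ avoids infinity, so $W^*_{\daleth_m}\phi\to\phi$ in norm) while the right-hand side converges to $\int\rcomp{f}\,d\lambda$ by dominated convergence with majorant $|\rcomp{f}|$. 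Direction (i)$\Rightarrow$(ii) then mirrors the corresponding half of Theorem \ref{th:bidual_induced}: form the finite Radon measure $\nu$ concentrated on $\bwtfp$ with $d\nu = d\mu/d$ (finite by \eqref{eq:bwt weighted finite}), set $\lambda = (\pp_1)_\sharp\nu - (\pp_2)_\sharp\nu \in \meas{\rcomp{M}}$, and for $f$ of bounded support split
\[
\duality{f,\phi}=\int_\bwtfp \frac{\rcomp{f}\circ\pp_1-\rcomp{f}\circ\pp_2}{d}\,d\mu = \int\rcomp{f}\,d\lambda,
\]
which is legitimate because $\rcomp{f}$ is bounded and $\nu$ is finite. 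That $\phi$ avoids infinity is immediate from Corollary \ref{cor_avoid_infinity} since $\mu$ is concentrated on $\bwtfp\subset\bwtf$.

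The substantive direction is (ii)$\Rightarrow$(i). Since $\lambda^\pm$ need not have finite first $\bar{d}$-moment, Corollary \ref{cr:kr_rcomp} cannot be applied to $(\lambda^+,\lambda^-)$ directly. My strategy is to truncate, apply KR to the truncations, and pass to a weak$^*$ limit in the space of couplings. WLOG $\lambda(\{0\})=0$; for $n\geq 0$ set $\lambda^\pm_n = \rcomp{\daleth_n}\cdot\lambda^\pm$, which are finite Radon measures with bounded support, hence finite first $\bar{d}$-moment. By adding an atom at $0$ to whichever of $\lambda^\pm_n$ has smaller mass (invisible to the induced functional because $\rcomp{f}(0)=0$), produce equal-mass pairs $\tilde\lambda^\pm_n$ whose difference still induces $W^*_{\daleth_n}\phi = \widehat{\rcomp{\daleth_n}\cdot\lambda}$. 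Corollary \ref{cr:kr_rcomp} then yields optimal couplings $\pi_n\in\Pi(\tilde\lambda^+_n,\tilde\lambda^-_n)$, concentrated on $\bar{d}$-cyclically monotonic sets, with $\int\bar{d}\,d\pi_n = \|W^*_{\daleth_n}\phi\|\to\|\phi\|$ (using avoidance of infinity).

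The sequence $(\pi_n)$ is bounded in total variation by $\max(\|\lambda^+\|,\|\lambda^-\|)$, so I extract a weak$^*$-convergent subsequence $\pi_n\to\pi$ in $\meas{\rcomp{M}\times\rcomp{M}}$. Its marginals are $\lambda^\pm+c^\pm\delta_0$ for some $c^\pm\geq 0$; the $\tau$-lower semicontinuity of $\bar{d}$ (Proposition \ref{prop_rcomp_observations}) gives $\int\bar{d}\,d\pi\leq\|\phi\|$, and KR duality gives the reverse inequality, so $\int\bar{d}\,d\pi=\|\phi\|$. The \emph{hard part} is showing that $\pi$ itself is concentrated on a $\bar{d}$-cyclically monotonic set, because Theorem \ref{th:kr_lsc} presupposes finite first $\bar{d}$-moment, which $\tilde\lambda^\pm$ may lack; I would handle this by extracting a weak$^*$ subsequential limit $g\in B_{\Lip_0(M)}$ of the dual extremal witnesses $g_n$ provided by Proposition \ref{pr:cm_extremal}, then verifying that $\rcomp{g}$ is $\bar{d}$-extremal on $\supp(\pi)$ using pointwise convergence of $\rcomp{g_n}$ on the $\tau$-compact $\bar{d}$-balls of $\rcomp{M}$ (Proposition \ref{prop_rcomp_observations}). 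Because $\lambda^+\perp\lambda^-$ and the added atoms coincide only at $(0,0)$ where $\bar{d}$ vanishes, $\nu=\bar{d}\cdot\pi$ is concentrated on $\wt{\rcomp{M}}$ and on a $\bar{d}$-cyclically monotonic set; Proposition \ref{pr:lifting_to_S0} lifts $\nu$ to $\mu$ concentrated on $\bwtfp\cap\opt$. A dominated-convergence argument (with Leibniz bounds on $f\daleth_m$) then verifies $\Phi^*\mu=\Psi\nu=\phi$, and the remaining claims follow: optimality from $\|\mu\|=\|\nu\|=\|\phi\|$, condition \eqref{eq:bwt weighted finite} from $\int 1/d\,d\mu = \|\pi\|<\infty$, and the marginal absolute continuity and singularity statements from $(\pp_i)_\sharp\mu = (\pp_i)_\sharp\nu \ll (\pp_i)_\sharp\pi$ together with $\lambda^+\perp\lambda^-$.
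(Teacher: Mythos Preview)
Your overall strategy for (ii)$\Rightarrow$(i) is viable and takes a genuinely different route from the paper. The paper does \emph{not} take a weak$^*$ limit of the couplings $\pi_n$ on $\rcomp{M}\times\rcomp{M}$; instead it lifts each truncated functional $\phi_n=\widehat{\lambda_n}$ immediately to an optimal $\mu_n\in\opr{\bwt{M}}$ via Theorem~\ref{th:bidual_induced}, takes a weak$^*$ cluster point $\mu$ of $(\mu_n)$ in $\meas{\bwt{M}}$ (a compact space, so no tightness issues), and then establishes concentration on $\bwtfp$ and the marginal absolute-continuity statements via a technical Claim bounding $\mu(\pp^{-1}(U\times V))$ in terms of $\lambda^+(U)$. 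Your route --- limit at the coupling level, then lift once via Proposition~\ref{pr:lifting_to_S0} --- makes the absolute-continuity and singularity statements almost immediate (they follow from $(\pp_i)_\sharp\mu=(\pp_i)_\sharp\nu\ll(\pp_i)_\sharp\pi$ together with $\lambda^+\perp\lambda^-$), at the cost of some care with compactness: you should take the cluster point in $\meas{\ucomp{M}\times\ucomp{M}}$ and use the total-variation convergence $\tilde\lambda^\pm_n\to\lambda^\pm+c^\pm\delta_0$ to ensure tightness and preservation of marginals.

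There is, however, a genuine gap in what you call the ``hard part''. The extremal witnesses $g_n$ furnished by Proposition~\ref{pr:cm_extremal} are only \emph{Borel} $1$-$\bar d$-Lipschitz on $\rcomp{M}$; they need not be $\tau$-continuous, hence need not equal $\rcomp{h}$ for any $h\in\Lip_0(M)$, and so you cannot place them in $B_{\Lip_0(M)}$ to extract a weak$^*$ limit. More importantly, this step is \emph{unnecessary}. Once you have $\Psi\nu=\phi$ (which you correctly verify on bounded $f$ and then extend via avoidance of infinity) and $\|\nu\|=\int\bar d\,d\pi=\|\phi\|$, the lift $\mu$ from Proposition~\ref{pr:lifting_to_S0} satisfies $\|\mu\|=\|\nu\|=\|\phi\|=\|\Phi^*\mu\|$ and is therefore optimal \emph{without} any prior knowledge that $\pi$ is cyclically monotonic; cyclical monotonicity then follows a posteriori from Theorem~\ref{thm_equiv_d_bar_cyclical_monotonicity}. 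If you still want a direct proof that $\pi$ is $\bar d$-cyclically monotonic, the correct argument is the one inside the proof of Theorem~\ref{th:kr_lsc}: take bounded $f_k\in B_{\Lip_0(M)}$ with $\duality{f_k,\phi}\to\|\phi\|$, note that the nonnegative integrand $\bar d-\rcomp{f_k}\circ\pp_1+\rcomp{f_k}\circ\pp_2$ tends to $0$ in $L_1(\pi)$, and pass to an a.e.\ convergent subsequence.
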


Condition \eqref{eq:bidual_meas_non_ffm} implies that $\phi$ avoids 0 but not necessarily infinity, as witnessed e.g. by \cite[Example 4.14]{AP_measures}. Thus, the condition that $\phi$ avoids infinity cannot be removed from \ref{bidual_meas_non_ffm-2}.

\begin{proof}
\ref{bidual_meas_non_ffm-1}$\Rightarrow$\ref{bidual_meas_non_ffm-2}: Corollary \ref{cor_avoid_infinity} implies that $\phi$ avoids infinity. Define a measure $\nu$ on $\bwt{M}$ by $d\nu=d\mu/d$. This measure is finite by \eqref{eq:bwt weighted finite} and, since $\nu\ll\mu$, it is a Radon measure concentrated on $\bwtfp$. Let $\lambda_i=(\pp_i)_\sharp \nu$ for $i=1,2$ and $\lambda=\lambda_1-\lambda_2\in\meas{\rcomp{M}}$. For every $f\in\Lip_0(M)$ with $\rcomp{f}\in L_1(\abs{\lambda})=L_1(\lambda_1)\cap L_1(\lambda_2)$ we have
$$
\duality{f,\phi} = \int_{\bwt{M}}\Phi f\,d\mu = \int_{\bwtfp}\frac{\rcomp{f}\circ\pp_1-\rcomp{f}\circ\pp_2}{d}\,d\mu = \int_{\rcomp{M}}\rcomp{f}\,d\lambda_1 - \int_{\rcomp{M}}\rcomp{f}\,d\lambda_2 .
$$
Since both integrals in the final expression are finite, \eqref{eq:bidual_meas_non_ffm} holds whenever $\rcomp{f}$ is $\abs{\lambda}$-integrable.

\medskip
\ref{bidual_meas_non_ffm-2}$\Rightarrow$\ref{bidual_meas_non_ffm-1}: We assume without loss of generality that $\lambda$ is concentrated on $\rcomp{M}\setminus\set{0}$. Since $\phi$ avoids infinity, it is the norm limit of the functionals $\phi_n=W^*_{\daleth_n}(\phi)\in\Lip_0(M)^*$ defined by
$$
\duality{f,\phi_n} = \duality{f\cdot\daleth_n,\phi} = \int_{\rcomp{M}} \rcomp{f}\cdot\rcomp{\daleth_n}\,d\lambda, 
$$
where $\daleth_n$ is as in \eqref{eq:daleth}. Then $\phi_n=\widehat{\lambda}_n$ where $d\lambda_n=\rcomp{\daleth_n}\,d\lambda$ has $\bar{d}$-bounded support, and therefore finite first $\bar{d}$-moment. We obviously have $\norm{\lambda_n}\leq\norm{\lambda}$. Let
$$
\nu_n=\lambda_n-\lambda_n(\rcomp{M})\cdot\delta_0 ,
$$
then we also have $\phi_n=\widehat{\nu}_n$ and moreover $\nu_n(\rcomp{M})=0$, that is $\norm{\nu_n^+}=\norm{\nu_n^-}$. By Theorem \ref{th:bidual_induced}, $\phi_n$ has an optimal representation $\mu_n\in\opr{\bwt{M}}$ that is concentrated on $\bwtfp$. In fact, the details of the proof show that $d(\pp_\sharp\mu_n)=\bar{d}\cdot d\pi_n$ where $\pi_n\in\meas{\wt{\rcomp{M}}}$ is positive and satisfies $(\pp_1)_\sharp\pi_n=\nu_n^+$ and $(\pp_2)_\sharp\pi_n=\nu_n^-$; in particular, $\norm{\pi_n}=\norm{\nu_n^+}=\norm{\nu_n^-}$.

Since $\norm{\mu_n}=\norm{\phi_n}$ converges to $\norm{\phi}$, there is a $w^*$-cluster point $\mu \in \meas{\bwt{M}}$ of $(\mu_n)$ satisfying $\norm{\mu} \leq \norm{\phi}$. We will show that $\mu$ witnesses \ref{bidual_meas_non_ffm-1}. For any $f \in \Lip_0(M)$, the numbers $\duality{f,\phi_n}=\duality{f,\Phi^*\mu_n}$ both converge to $\duality{f,\phi}$ and, by continuity of $\Phi^*$, cluster at $\duality{f,\Phi^*\mu}$, meaning that $\duality{f,\phi}=\duality{f,\Phi^*\mu}$. Hence $\Phi^*\mu=\phi$ (which implies $\norm{\mu} \geq \norm{\phi}$). It is also clear that $\mu$ is positive. Therefore $\mu$ is an optimal representation of $\phi$ and, by Corollary \ref{cor_avoid_infinity}, $\mu$ is concentrated on $\bwtf$.

To establish the remaining properties of $\mu$, we interrupt proceedings to prove the following claim.

\begin{claim}
Let $U,V \subset \rcomp{M}$ be open and let $r>0$ such that $\bar{d}(0,\xi) \leqslant r$ whenever $\xi \in U \cup V$. Then
\begin{equation}\label{eq:claim}
 \mu(\pp^{-1}(U \times V)) \leq 2r\pare{\lambda^+(U) + \max\set{0,-\lambda(\rcomp{M})}\delta_0(U)}.
\end{equation}
\end{claim}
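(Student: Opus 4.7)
My plan is to bound $\mu(\pp^{-1}(U\times V))$ by $w^*$-lower semicontinuity along a subnet, then reduce the problem to an integral against the transport plans $\pi_n$. Since $U,V$ are $\tau$-open in $\rcomp{M}$ and $\rcomp{M}$ is open in $\ucomp{M}$, the sets $U,V$ are open in $\ucomp{M}$, hence $\pp^{-1}(U\times V)$ is open in $\bwt{M}$ by continuity of $\pp$. Passing to a subnet $(\mu_{n_\alpha})$ that $w^*$-converges to $\mu$, the standard Portmanteau inequality for positive Radon measures gives
$$\mu(\pp^{-1}(U\times V)) \leq \liminf_\alpha \mu_{n_\alpha}(\pp^{-1}(U\times V)).$$

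Next, I would exploit the structural identity $d(\pp_\sharp\mu_n) = \bar{d}\,d\pi_n$ recalled from the construction. For each $n$,
$$\mu_n(\pp^{-1}(U\times V)) = \pp_\sharp\mu_n(U\times V) = \int_{U\times V}\bar{d}(\xi,\eta)\,d\pi_n(\xi,\eta).$$
The triangle inequality $\bar{d}(\xi,\eta)\leq\bar{d}(\xi,0)+\bar{d}(0,\eta)\leq 2r$ holds on $U\times V$ by hypothesis, so
$$\mu_n(\pp^{-1}(U\times V)) \leq 2r\,\pi_n(U\times V) \leq 2r\,\pi_n(U\times\rcomp{M}) = 2r\,\nu_n^+(U),$$
using that $(\pp_1)_\sharp\pi_n = \nu_n^+$.

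It remains to bound $\nu_n^+(U)$ suitably. Since $d\lambda_n = \rcomp{\daleth_n}\,d\lambda$ with $0\leq\rcomp{\daleth_n}\leq 1$, and $\lambda^+ \perp \lambda^-$ are concentrated on disjoint Borel sets, the measures $\rcomp{\daleth_n}\lambda^+$ and $\rcomp{\daleth_n}\lambda^-$ are mutually singular, so $\lambda_n^+ = \rcomp{\daleth_n}\lambda^+$; in particular $\lambda_n^+(U) \leq \lambda^+(U)$ and $\lambda_n^+(\set{0})=0$, the latter by the reduction that $\lambda$ is concentrated on $\rcomp{M}\setminus\set{0}$. A direct Jordan-decomposition computation applied to $\nu_n = \lambda_n - \lambda_n(\rcomp{M})\delta_0$, using mutual singularity with $\delta_0$, then yields
$$\nu_n^+(U) = \lambda_n^+(U) + \max\set{0,-\lambda_n(\rcomp{M})}\delta_0(U).$$
Finally, by dominated convergence $\lambda_n(\rcomp{M}) = \int\rcomp{\daleth_n}\,d\lambda \to \lambda(\rcomp{M})$, so by continuity of $t\mapsto\max\set{0,-t}$ the second summand converges to $\max\set{0,-\lambda(\rcomp{M})}\delta_0(U)$ along every subnet. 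Combining this with $\lambda_{n_\alpha}^+(U) \leq \lambda^+(U)$ and taking $\liminf_\alpha$ produces the desired bound
$$\mu(\pp^{-1}(U\times V)) \leq 2r\bigl(\lambda^+(U) + \max\set{0,-\lambda(\rcomp{M})}\delta_0(U)\bigr).$$

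The only delicate point is the Jordan-decomposition bookkeeping for $\nu_n^+$: one must track when the atomic perturbation $-\lambda_n(\rcomp{M})\delta_0$ actually contributes to the positive part (namely when $\lambda_n(\rcomp{M})<0$) and verify that this atomic correction passes correctly to the limit; the rest is standard measure theory.
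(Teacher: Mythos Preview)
Your proof is correct and follows essentially the same approach as the paper: both bound $\mu_n(\pp^{-1}(U\times V))$ by $2r\,\nu_n^+(U)$ via the identity $d(\pp_\sharp\mu_n)=\bar{d}\,d\pi_n$, compute $\nu_n^+=\lambda_n^+ + \max\set{0,-\lambda_n(\rcomp{M})}\delta_0$, and then pass to the limit using $w^*$-lower semicontinuity on the open set $\pp^{-1}(U\times V)$ together with dominated convergence. The only cosmetic difference is that the paper notes $\lambda_n^+(U)\to\lambda^+(U)$ by dominated convergence, whereas you use the simpler bound $\lambda_n^+(U)\leq\lambda^+(U)$; either suffices.
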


Indeed, we estimate
\[
 \mu_n(\pp^{-1}(U \times V)) = \int_{U \times V} \bar{d}(\xi,\eta)\,d\pi_n(\xi,\eta) \leq 2r\pi_n(U \times V) \leq 2r\pi_n(\pp_1^{-1}(U)) = 2r\nu_n^+(U).
\]
Because $\lambda_n$ is concentrated on $\rcomp{M}\setminus\set{0}$, we calculate $\nu_n^+ = \lambda_n^+ + \max\set{0,-\lambda_n(\rcomp{M})}\delta_0$. Hence
\[
 \mu_n(\pp^{-1}(U \times V)) \leq 2r\pare{\lambda_n^+(U) + \max\set{0,-\lambda_n(\rcomp{M})}\delta_0(U)}.
\]
By the dominated convergence theorem, the right-hand side of the line above converges to that of \eqref{eq:claim}. On the other hand, as $\pp^{-1}(U \times V)$ is open, the map $\omega \in \meas{\bwt{M}} \mapsto \omega(\pp^{-1}(U \times V))$ is $w^*$-lower semicontinuous on $\meas{\bwt{M}}$ by \cite[Corollary 8.2.4 (b)]{Bogachev}. Therefore, as $\mu$ is a $w^*$-cluster point of the $\mu_n$, \eqref{eq:claim} must hold. This completes the proof of the Claim.

Now we return to the main proof and use the Claim to show that $(\pp_1)_\sharp\mu\restrict_{\rcomp{M}\setminus\set{0}}\ll\lambda^+$. To this end, let $K \subset \rcomp{M}$ be compact and satisfy $\lambda^+(K)=0$, and fix $\varepsilon>0$. Using inner regularity, Proposition \ref{prop_rcomp_observations} \ref{compact_bounded} and Proposition \ref{pr:bar-d_cont} \ref{bar-d_cont_3}, find $r>0$ such that $V:=\set{\xi \in \rcomp{M} \,:\, \bar{d}(0,\xi) < r}$ contains $K$ and
\[
\mu(\pp^{-1}(K\times V)) \geq \mu(\pp_1^{-1}(K))-\varepsilon.
\]
As $\lambda^+(K)=0$ and $V$ is open, we may choose an open set $U\subset V$ containing $K$, such that $\lambda^+(U)\leq \varepsilon/2r$. Hence by \eqref{eq:claim} in the Claim, we obtain
\begin{align*}
 \mu(\pp^{-1}(K\times V)) \leq \mu(\pp^{-1}(U\times V)) &\leq 2r\pare{\lambda^+(U) + \max\set{0,-\lambda(\rcomp{M})}\delta_0(U)}\\
 &\leq \varepsilon + \max\set{0,-2r\lambda(\rcomp{M})}\delta_0(U),
\end{align*}
whence
\begin{equation}\label{eqn:bidual_meas_non_ffm}
\mu(\pp_1^{-1}(K)) \leq 2\varepsilon + \max\set{0,-2r\lambda(\rcomp{M})}\delta_0(U). 
\end{equation}
Now assume $0 \not\in K$. By considering $U\setminus\set{0}$ if necessary, \eqref{eqn:bidual_meas_non_ffm} implies $\mu(\pp_1^{-1}(K)) \leq 2\varepsilon$. As this holds for all $\varepsilon>0$ we conclude $(\pp_1)_\sharp\mu(K)=\mu(\pp_1^{-1}(K))=0$. That $(\pp_1)_\sharp\mu\restrict_{\rcomp{M}\setminus\set{0}}\ll\lambda^+$ now follows by appealing to inner regularity. A similar argument shows that $(\pp_2)_\sharp\mu\restrict_{\rcomp{M}\setminus\set{0}}\ll\lambda^-$.

To see that $(\pp_1)_\sharp\mu$ and $(\pp_2)_\sharp\mu$ are mutually singular, it is now enough to check that only one of them can have a non-zero mass at $0$. Suppose first that $\lambda(\rcomp{M})\geq 0$. Let $\varepsilon>0$. As $\lambda^+(\set{0})=0$, we can apply the argument above to $K=\set{0}$ and, by considering suitable $r,V$ and $U$, by \eqref{eqn:bidual_meas_non_ffm} we have $\mu(\pp_1^{-1}\set{0}) \leq 2\varepsilon$, so we conclude that $(\pp_1)_\sharp\mu(\set{0})=0$. If $\lambda(\rcomp{M})\leq 0$ instead, a similar argument yields $(\pp_2)_\sharp\mu(\set{0})=0$.

We can now show that $\mu$ is concentrated on $\bwtfp$. Because $(\pp_1)_\sharp\mu\perp(\pp_2)_\sharp\mu$, there exists a Borel set $C\subset\rcomp{M}$ such that $(\pp_1)_\sharp\mu(C)=0$ and $(\pp_2)_\sharp\mu(\rcomp{M}\setminus C)=0$. Using the notation
$$
\Delta(E) := \set{\zeta\in\bwt{M} \,:\, \pp_1(\zeta)=\pp_2(\zeta)\in E}
$$
for $E\subset\rcomp{M}$, we have
$$
\bwtf\setminus\bwtfp = \Delta(\rcomp{M}) = \Delta(C) \cup \Delta(\rcomp{M}\setminus C).
$$
But $\mu(\Delta(C)) \leq \mu(\pp_1^{-1}(C)) = (\pp_1)_\sharp \mu(C) =0$, and similarly $\mu(\Delta(\rcomp{M}\setminus C))\leq \mu(\pp_2^{-1}(\rcomp{M}\setminus C)) = (\pp_2)_\sharp \mu(\rcomp{M}\setminus C) =0$, which implies that $\mu(\bwtf\setminus\bwtfp)=0$.

Finally, we check \eqref{eq:bwt weighted finite}. Put $\psi(\zeta) = 1/d(\zeta)$ for $\zeta\in\bwtfp$. For all $k,n\in\NN$ we have
$$
\int_{\bwtfp}\min\set{\psi,k}\,d\mu_n \leq \int_{\bwtfp}\psi(\zeta)\,d\mu_n(\zeta) = \int_{\wt{\rcomp{M}}}\,d\pi_{n} = \norm{\pi_n} = \norm{\nu_n^+} \leq \norm{\lambda_n} \leq \norm{\lambda} .
$$
Above, again we have used the fact that, as $\mu_n$ is optimal, $\bar{d} \circ \pp = d$ $\mu_n$-a.e. on $\bwtfp$.
Because $\min\set{\psi,k}$ is a continuous function on $\bwt{M}$, the numbers $\int_{\bwt{M}}\min\set{\psi,k}\,d\mu_n \leq \norm{\lambda}$ cluster at $\int_{\bwt{M}}\min\set{\psi,k}\,d\mu$ and, by Lebesgue's monotone convergence theorem,
$$
\int_{\bwtfp}\psi\,d\mu = \lim_k\int_{\bwtfp}\min\set{\psi,k}\,d\mu \leq \lim_k\int_{\bwt{M}}\min\set{\psi,k}\,d\mu \leq \norm{\lambda} < \infty
$$
as required.
\end{proof}

The counterpart to Theorem \ref{th:bidual_meas_non_ffm} for functionals in $\lipfree{M}$ is not included in \cite{APS23}, so we state it explicitly here for completeness. It follows straightforwardly from Theorem \ref{th:bidual_meas_non_ffm} and its proof.

\begin{corollary}
Let $m\in\lipfree{M}$. Then the following are equivalent:
\begin{enumerate}[label={\upshape{(\roman*)}}]
\item $m$ is a convex integral of molecules with a representation $\mu\in\opr{\bwt{M}}$ concentrated on $\wt{M}$ such that
$$
\int_{\wt{M}}\frac{1}{d(x,y)}\,d\mu(x,y)<\infty .
$$
\item There exists a Radon measure $\lambda\in\meas{M}$ such that
\begin{equation}\label{eq:meas_non_ffm}
\duality{f,m} = \int_M f\,d\lambda
\end{equation}
for every $f\in\Lip_0(M)$ with bounded support.
\end{enumerate}
If the above hold, then \eqref{eq:meas_non_ffm} holds for every $f\in\Lip_0(M)$ that is $\abs{\lambda}$-integrable, in particular for all bounded $f\in\Lip_0(M)$; furthermore, $\mu$ can be chosen to satisfy
$$
(\pp_1)_\sharp\mu\restrict_{M\setminus\set{0}}\ll\lambda^+
\qquad\text{and}\qquad
(\pp_2)_\sharp\mu\restrict_{M\setminus\set{0}}\ll\lambda^-
$$
and $(\pp_1)_\sharp\mu\perp(\pp_2)_\sharp\mu$.
\end{corollary}

Finally, if we keep condition \eqref{eq:bwt_marginals_finite_first_moment} and drop \eqref{eq:bwt weighted finite} instead, we get \emph{majorisable} functionals, that is, functionals that can be expressed as the difference between two positive functionals on $\Lip_0(M)$. Positive functionals are also essentially measure-induced, in this case by a measure that may be $\sigma$-finite, with a singularity at $0$. Thus, majorisable functionals are induced by the difference of two such measures (which is not formally a measure); see \cite[Theorem 5.9]{AP_measures} for details.

\begin{theorem}[cf. {\cite[Theorem 3.3]{APS23}}]\label{th:bidual_majorisable}
Let $\phi\in\Lip_0(M)^*$. Then the following are equivalent:
\begin{enumerate}[label={\upshape{(\roman*)}}]
\item\label{th:bidual_majorisable-1} $\phi$ has an optimal representation $\mu$ concentrated on $\bwtfp$ satisfying \eqref{eq:bwt_marginals_finite_first_moment},
\item\label{th:bidual_majorisable-2} $\phi$ is majorisable and avoids 0 and infinity.
\end{enumerate}
If the above hold, then $\mu$ can be chosen to satisfy $(\pp_1)_\sharp\mu\restrict_{\rcomp{M}\setminus\set{0}}\perp(\pp_2)_\sharp\mu\restrict_{\rcomp{M}\setminus\set{0}}$.
\end{theorem}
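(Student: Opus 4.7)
\emph{(i)$\Rightarrow$(ii).} Given $\mu$ as in (i), define functionals
\[
\duality{f,\psi_i} := \int_{\bwtfp}\frac{\rcomp{f}(\pp_i(\zeta))}{d(\zeta)}\,d\mu(\zeta), \qquad i=1,2,\; f\in\Lip_0(M).
\]
Boundedness of $\psi_1$ is immediate from \eqref{eq:bwt_marginals_finite_first_moment} via $|\rcomp{f}\circ\pp_1|\leq\lipnorm{f}\cdot\bar{d}(\pp_1,0)$. Since $\mu$ is optimal and concentrated on $\bwtfp$, Remark \ref{rem_bar_d_d_equality} gives $\bar{d}\circ\pp=d$ $\mu$-a.e., whence the triangle inequality yields $\bar{d}(\pp_2,0)/d\leq 1+\bar{d}(\pp_1,0)/d$ $\mu$-a.e., and $\psi_2$ is bounded as well. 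Both $\psi_1,\psi_2$ are positive because $f\geq 0$ in $\Lip_0(M)$ forces $\rcomp{f}\geq 0$ on $\rcomp{M}$ by continuity and density of $M$. By Proposition \ref{prop_de_Leeuw_relation} \ref{delta_diff}, $\phi=\Phi^*\mu=\psi_1-\psi_2$ is majorisable. Avoidance of infinity is Corollary \ref{cor_avoid_infinity} and avoidance of $0$ is Proposition \ref{pr:avoid_0}, since $\pp^{-1}(0,0)\cap\bwtfp=\varnothing$.

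\emph{(ii)$\Rightarrow$(i).} Using \cite[Theorem 5.9]{AP_measures} together with a Jordan-type trick (write both representing $\sigma$-finite positive measures against a common dominating one, then take the positive and negative parts of the difference of densities), we may write $\phi=\widehat{\kappa^+}-\widehat{\kappa^-}$ where $\kappa^+\perp\kappa^-$ are positive Radon measures on $\rcomp{M}$, locally finite on $\rcomp{M}\setminus\set{0}$, with $\int\rcomp{\rho}\,d\kappa^\pm=\norm{\widehat{\kappa^\pm}}<\infty$. For each $n\in\NN$, set $\lambda_n^\pm:=\rcomp{\Pi_n}\cdot\kappa^\pm$; since $\supp(\rcomp{\Pi_n})\subset\set{2^{-n}\leq\rcomp{\rho}\leq 2^{n+1}}$ is $\tau$-compact and disjoint from $0$, each $\lambda_n^\pm$ is a \emph{finite} Radon measure with bounded $\bar{d}$-support, $\lambda_n^+\perp\lambda_n^-$, and $\phi_n:=W^*_{\Pi_n}\phi=\widehat{\lambda_n^+-\lambda_n^-}$. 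Theorem \ref{th:bidual_induced} now supplies optimal representations $\mu_n\in\opr{\bwt{M}}$ of $\phi_n$ concentrated on $\bwtfp$, satisfying both \eqref{eq:bwt weighted finite} and \eqref{eq:bwt_marginals_finite_first_moment}, with $(\pp_1)_\sharp\mu_n\ll\lambda_n^+\leq\kappa^+$ and $(\pp_2)_\sharp\mu_n\ll\lambda_n^-\leq\kappa^-$. Retracing the proof of Theorem \ref{th:bidual_induced} and using the norm identity for positive functionals, one obtains the uniform bound
\[
\int_{\bwtfp}\frac{\bar{d}(\pp_1(\zeta),0)}{d(\zeta)}\,d\mu_n(\zeta)\leq \int\rcomp{\rho}\cdot\rcomp{\Pi_n}\,d\kappa^+\leq\norm{\widehat{\kappa^+}},
\]
and its symmetric counterpart against $\kappa^-$.

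Since $\phi$ avoids 0 and infinity, $\phi_n\to\phi$ in norm. Let $\mu$ be a $w^*$-cluster point of $(\mu_n)$; exactly as in the proof of Theorem \ref{th:bidual_meas_non_ffm}, $\mu\geq 0$, $\Phi^*\mu=\phi$ and $\norm{\mu}=\norm{\phi}$, so $\mu\in\opr{\bwt{M}}$, and Corollary \ref{cor_avoid_infinity} places $\mu$ on $\bwtf$. The heart of the proof is to promote this to $\bwtfp$ and to secure the mutual singularity in the final clause by establishing $(\pp_1)_\sharp\mu\restrict_{\rcomp{M}\setminus\set{0}}\ll\kappa^+$ and its symmetric counterpart. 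I would do this by mimicking the Claim in the proof of Theorem \ref{th:bidual_meas_non_ffm}: for open $U,V\subset\rcomp{M}$ with $\bar{d}(0,\cdot)\leq r$ on $U\cup V$, the estimate $\mu_n(\pp^{-1}(U\times V))\leq 2r\,\kappa^+(U)$ combined with $w^*$-lower semicontinuity of $\nu\mapsto\nu(\pp^{-1}(U\times V))$ transfers the bound to $\mu$; outer regularity of $\kappa^+$ on $\tau$-compact annuli (where it is finite) then delivers absolute continuity of the marginal away from $0$. The relation $\kappa^+\perp\kappa^-$ then yields the asserted mutual singularity, and combined with $\mu(\pp^{-1}(0,0))=0$ (Proposition \ref{pr:avoid_0}) it forces $\mu$ to be concentrated on $\bwtfp$. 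Finally, \eqref{eq:bwt_marginals_finite_first_moment} for $\mu$ is extracted from the uniform bound by truncating the integrand at level $k$ (continuous on $\bwtf$, where $\mu$ concentrates), multiplying by continuous bump functions approximating $\mathbf{1}_{\bwtf}$, passing through $w^*$-lower semicontinuity, and letting $k\to\infty$ via monotone convergence. The principal technical obstacle is adapting the absolute-continuity Claim to the $\sigma$-finite setting of $\kappa^\pm$, which requires careful handling of outer regularity on bounded annuli where $\kappa^\pm$ is merely locally finite rather than Radon on all of $\rcomp{M}$.
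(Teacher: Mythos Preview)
Your proposal is correct and follows essentially the same route as the paper: in (i)$\Rightarrow$(ii) you exhibit $\phi=\psi_1-\psi_2$ as an explicit difference of positive functionals (the paper equivalently builds one positive $\widehat\lambda\geq\phi$ via a pushforward measure), and in (ii)$\Rightarrow$(i) you truncate with $\Pi_n$, invoke Theorem~\ref{th:bidual_induced}, pass to a $w^*$-cluster point, and adapt the Claim from Theorem~\ref{th:bidual_meas_non_ffm} to the $\sigma$-finite setting on annuli, exactly as the paper does. Your insertion of bump functions approximating $\mathbf 1_{\bwtf}$ in the final monotone-convergence step is a prudent refinement, since $\min\{\bar d(\pp_1,0)/d,\,k\}$ need not extend continuously to points of $\bwt{M}$ where both $\ucomp\rho\circ\pp_1$ and $d$ are infinite.
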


\begin{proof}
Throughout the proof, we denote $A_n=\set{\xi\in\rcomp{M} \,:\, 2^{-n}\leq\rcomp{\rho}(\xi)\leq 2^n}$. Observe that $\bigcup_{n=1}^\infty A_n=\rcomp{M}\setminus\set{0}$.

\medskip
\ref{th:bidual_majorisable-1}$\Rightarrow$\ref{th:bidual_majorisable-2}: Define a positive, possibly infinite, Borel measure $\nu$ on $\bwt{M}$ by
$$
d\nu(\zeta) = \frac{1}{d(\zeta)}\,d\mu\restrict_{\pp_1^{-1}(\rcomp{M}\setminus\set{0})}(\zeta) \,.
$$
Then, for any $n\in\NN$
$$
\nu(\pp_1^{-1}(A_n)) \leq 2^n\int_{\pp_1^{-1}(A_n)}\frac{\bar{d}(\pp_1(\zeta),0)}{d(\zeta)}\,d\mu(\zeta) < \infty
$$
by \eqref{eq:bwt_marginals_finite_first_moment}, so $\nu$ is $\sigma$-finite. Let $\lambda=(\pp_1)_\sharp\nu$, that is $\lambda(E)=\nu(\pp_1^{-1}(E))$ for Borel $E\subset\rcomp{M}$; while the pushforward is usually defined for finite measures, it is clear that it defines a positive, countably additive set function even in this case. Clearly $\lambda$ is concentrated on $\rcomp{M}\setminus\set{0}$ and $\lambda(A_n)<\infty$ for any $n$, thus $\lambda$ is $\sigma$-finite, and so it follows (e.g. by Lebesgue's monotone convergence theorem) that the formula
$$
\int_{\rcomp{M}}\varphi\,d\lambda = \int_{\pp_1^{-1}(\rcomp{M}\setminus\set{0})}(\varphi\circ\pp_1)\,d\nu = \int_{\pp_1^{-1}(\rcomp{M}\setminus\set{0})}\frac{\varphi(\pp_1(\zeta))}{d(\zeta)}\,d\mu(\zeta)
$$
also holds in this case for any positive Borel function $\varphi:\rcomp{M}\to\RR$. In particular, \eqref{eq:bwt_marginals_finite_first_moment} is equal to
$$
\int_{\bwtfp}\frac{\bar{d}(\pp_1(\zeta),0)}{d(\zeta)}\,d\mu(\zeta) = \int_{\bwtfp} \bar{d}(\pp_1(\zeta),0) \,d\nu(\zeta) = \int_{\rcomp{M}}\bar{d}(\xi,0)\,d\lambda(\xi)
$$
so $\lambda$ has finite first $\bar{d}$-moment. By \cite[Proposition 4.3]{AP_measures}, $\lambda$ induces a positive functional $\widehat{\lambda}\in\Lip_0(M)^*$. If $f\in\Lip_0(M)$ is such that $f\geq 0$ pointwise, then $\rcomp{f}\geq 0$ as well and
$$
\duality{f,\phi} = \int_{\bwtfp}\Phi f\,d\mu \leq \int_{\bwtfp}\frac{\rcomp{f}(\pp_1(\zeta))}{d(\zeta)}\,d\mu(\zeta) = \int_{\rcomp{M}}\rcomp{f}\,d\lambda = \duality{f,\widehat{\lambda}} .
$$
Therefore $\widehat{\lambda}-\phi$ is a positive functional as well, and $\phi = \widehat{\lambda} - (\widehat{\lambda}-\phi)$ is majorisable. The fact that $\phi$ avoids 0 and infinity follows from Corollary \ref{cor_avoid_infinity} and Proposition \ref{pr:avoid_0}.

\medskip
\ref{th:bidual_majorisable-2}$\Rightarrow$\ref{th:bidual_majorisable-1}: By Theorem 5.14 and Proposition 4.3 in \cite{AP_measures}, there exist positive $\sigma$-finite Borel measures $\lambda^+,\lambda^-$ on $\rcomp{M}$ that have finite first $\bar{d}$-moments, are Radon when restricted to any closed $K\subset\rcomp{M}$ not containing $0$, are concentrated on disjoint Borel subsets $C^+,C^-$ of $\rcomp{M}$, and such that
$$
\duality{f,\phi} = \int_{\rcomp{M}}\rcomp{f}\,d\lambda^+ - \int_{\rcomp{M}}\rcomp{f}\,d\lambda^-
$$
for every $f\in\Lip_0(M)$. Since $\phi$ avoids 0 and infinity, it is the norm limit of the functionals $\phi_n=W^*_{\Pi_n}(\phi)\in\Lip_0(M)^*$ defined by
$$
\duality{f,\phi_n} = \duality{f\cdot\Pi_n,\phi} = \int_{\rcomp{M}} \rcomp{f}\rcomp{\Pi_n}\,d\lambda^+ - \int_{\rcomp{M}} \rcomp{f}\rcomp{\Pi_n}\,d\lambda^- ,
$$
where $\Pi_n$ is as in \eqref{eq:Pi}. This means that $\phi_n=\widehat{\lambda}_n$ where $\lambda_n$ is given by
$$
d\lambda_n=\rcomp{\Pi_n}\,d\lambda^+-\rcomp{\Pi_n}\,d\lambda^- .
$$
Note that $\lambda_n$ is well defined and belongs to $\meas{\rcomp{M}}$ because $0\notin\supp(\rcomp{\Pi_n})$. Clearly $\lambda_n$ has finite first $\bar{d}$-moment, and $d\lambda_n^\pm = \rcomp{\Pi_n}\,d\lambda^\pm$.

The rest of the proof is very similar to that of Theorem \ref{th:bidual_meas_non_ffm}, so we only sketch the argument. Taking $\nu_n=\lambda_n-\lambda_n(\rcomp{M})\cdot\delta_0$, we obtain optimal representations $\mu_n\in\opr{\bwt{M}}$ of $\phi_n$ that are concentrated on $\bwtfp$ and satisfy $d(\pp_\sharp\mu_n)=\bar{d}\cdot d\pi_n$, where $\pi_n\in\meas{\wt{\rcomp{M}}}$ is positive and satisfies $(\pp_1)_\sharp\pi_n=\nu_n^+$ and $(\pp_2)_\sharp\pi_n=\nu_n^-$.
The sequence $(\mu_n)$ has a weak$^*$ cluster point $\mu$ that is an optimal representation of $\phi$ concentrated on $\bwtf$. Following a similar argument as in Theorem \ref{th:bidual_meas_non_ffm} but with an open set $U\subset\supp(\rcomp{\Pi}_k)\subset A_{k+1}$ in this case, we can show that $(\pp_1)_\sharp\mu(K)=0$ whenever $K \subset \rcomp{M}\setminus\set{0}$ is compact and satisfies $\lambda^+(K)=0$. This implies $(\pp_1)_\sharp\mu\restrict_{\rcomp{M}\setminus\set{0}}\ll\lambda^+$ by inner regularity. Likewise $(\pp_2)_\sharp\mu\restrict_{\rcomp{M}\setminus\set{0}}\ll\lambda^-$. It follows that $(\pp_1)_\sharp\mu$ and $(\pp_2)_\sharp\mu$ are concentrated on $C^+\cup\set{0}$ and $C^-\cup\set{0}$, respectively, and that the restrictions of $(\pp_1)_\sharp\mu$ and $(\pp_2)_\sharp\mu$ to $\rcomp{M}\setminus\set{0}$ are mutually singular. Moreover, when combined with the fact that $\phi$ avoids $0$ and Proposition \ref{pr:avoid_0}, this also implies that $\mu$ is concentrated on $\bwtfp$.
Finally, to prove that $\mu$ satisfies \eqref{eq:bwt_marginals_finite_first_moment}, we put
$$
\psi(\zeta) = \frac{\bar{d}(\pp_1(\zeta),0)}{d(\zeta)}
$$
for $\zeta\in\bwtfp$. For all $k,n\in\NN$ we have
\begin{align*}
\int_{\bwtfp}\min\set{\psi,k}\,d\mu_n &\leq \int_{\bwtfp}\psi(\zeta)\,d\mu_n(\zeta) = \int_{\wt{\rcomp{M}}}\bar{d}(\xi,0)\,d\pi_{n}(\xi,\eta) \\
&= \int_{\rcomp{M}}\bar{d}(\xi,0)\,d\nu^+_n(\xi) 
= \int_{\rcomp{M}}\bar{d}(\xi,0)\,d\lambda^+_n(\xi) \\
&= \int_{\rcomp{M}}\bar{d}(\xi,0)\,\rcomp{\Pi}_n(\xi)\,d\lambda^+(\xi) \leq \int_{\rcomp{M}}\bar{d}(\xi,0)\,d\lambda^+(\xi) .
\end{align*}
Since $\min\set{\psi,k}$ is a continuous function on $\bwt{M}$ by Proposition \ref{pr:bar-d_cont} \ref{bar-d_cont_3}, we get
$$
\int_{\bwtfp}\psi\,d\mu \leq \int_{\rcomp{M}}\bar{d}(\xi,0)\,d\lambda^+(\xi) < \infty
$$
using the same argument as in Theorem \ref{th:bidual_meas_non_ffm}.
\end{proof}

To summarise, conditions \eqref{eq:bwt weighted finite} and \eqref{eq:bwt_marginals_finite_first_moment} are dual to each other. Both imply that the functional $\Phi^*\mu$ essentially acts on $\Lip_0(M)$ by integration against a measure $\lambda$ on $\rcomp{M}$. The former condition is equivalent to $\lambda$ being finite, i.e. ``no singularity at $0$'', and the latter is equivalent to $\lambda$ having finite first moment, i.e. ``no singularity at infinity''.

\section{Supports of functionals and De Leeuw representations}\label{sect_relations_between_supports}

In this section we extend the results in \cite{Aliaga} on minimal-support representations (Lemma 8 and Proposition 9), that applied to elements of $\lipfree{M}$ and subsets of $M$, to elements of $\Lip_0(M)^*$ and subsets of $\ucomp{M}$. We begin by recalling the definition of support of an element of a Lipschitz-free space.

Recall that the Lipschitz-free space over a subset $N\subset M$ containing $0$ can be identified with a subspace of $\lipfree{M}$, namely $\lipfree{N}=\cl{\lspan}\,\delta(N)$. The \emph{support} $\supp(m)$ of a functional $m\in\lipfree{M}$ is defined to be the smallest closed subset $N$ of $M$ such that $m\in\lipfree{N\cup\set{0}}$  \cite{AP_rmi,APPP_2020}. This set is always separable, and a point $x\in M$ belongs to $\supp(m)$ if and only if every neighbourhood of $x$ contains the support of some $f\in\Lip_0(M)$ such that $\duality{f,m}\neq 0$ \cite[Proposition 2.7]{APPP_2020}. According to \cite[Proposition 2.6]{APPP_2020}, the value of $\duality{f,m}$ for $f\in\Lip_0(M)$ depends only on the restriction of $f$ to $\supp(m)$. This means that we may define supports equivalently as
\[
\supp(m) = \bigcap\set{A: \text{$A\subset M$ is closed and $\duality{f,m}=0$ whenever $f\in\Lip_0(M)$ and $f\restrict_A=0$}} .
\]
It is intuitively clear that, in some sense, the support of any De Leeuw representation of a Lipschitz-free space element needs to cover the support of the functional it represents. Given $E \subset \bwt{M}$, we define the \emph{shadow} $\pp_s(E)$ of $E$ to be the set $\pp_1(E) \cup \pp_2(E)$.

\begin{proposition}[{\cite[Lemma 8]{Aliaga}}]
\label{pr:aliaga_lemma_8}
If $m\in\lipfree{M}$, then $\supp(m)\subset\pp_s(\supp(\mu))$ for any De Leeuw representation $\mu$ of $m$.
\end{proposition}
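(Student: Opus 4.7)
The plan is to prove the contrapositive: if $x\in M\setminus\pp_s(\supp(\mu))$, then $x\notin\supp(m)$. I would invoke the characterisation recalled in the paragraph preceding the proposition: $x\notin\supp(m)$ if and only if there exists an open neighbourhood $V$ of $x$ in $M$ such that $\duality{f,m}=0$ for every $f\in\Lip_0(M)$ with $\supp(f)\subset V$. So the task reduces to producing such a $V$.

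First I would use the topological setup. Since $\supp(\mu)$ is compact in $\bwt{M}$ and the coordinate maps $\pp_1,\pp_2:\bwt{M}\to\ucomp{M}$ are continuous, the shadow $\pp_s(\supp(\mu))=\pp_1(\supp(\mu))\cup\pp_2(\supp(\mu))$ is closed in the compact Hausdorff (hence normal) space $\ucomp{M}$. Since $x\notin\pp_s(\supp(\mu))$, normality yields an open neighbourhood $U$ of $x$ in $\ucomp{M}$ with $\cl{U}^{\ucomp{M}}\cap\pp_s(\supp(\mu))=\varnothing$. Setting $V=U\cap M$ gives an open neighbourhood of $x$ in $M$.

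Next I would fix any $f\in\Lip_0(M)$ with $\supp(f)\subset V$, and show that $\Phi f$ vanishes on $\supp(\mu)$. Let $\zeta\in\supp(\mu)$ and pick a net $(x_i,y_i)\subset\wt{M}$ converging to $\zeta$ in $\bwt{M}$. Then $x_i\to\pp_1(\zeta)$ and $y_i\to\pp_2(\zeta)$ in $\ucomp{M}$; since $\pp_1(\zeta),\pp_2(\zeta)\in\pp_s(\supp(\mu))\subset\ucomp{M}\setminus\cl{U}^{\ucomp{M}}$ and the latter is open, eventually $x_i,y_i\in M\setminus\cl{U}^{\ucomp{M}}\subset M\setminus V$, so $f(x_i)=f(y_i)=0$ and hence $\Phi f(x_i,y_i)=0$. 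By continuity of $\Phi f$ we conclude $\Phi f(\zeta)=0$. Consequently
\[
\duality{f,m}=\int_{\bwt{M}}\Phi f\,d\mu=\int_{\supp(\mu)}\Phi f\,d\mu=0,
\]
which gives $x\notin\supp(m)$ and completes the proof.

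The only mildly delicate point is the separation step: one needs to realise that normality of $\ucomp{M}$ (rather than a Lipschitz-function separation result in $M$) is precisely what is required to produce a single neighbourhood of $x$ whose $\ucomp{M}$-closure avoids the entire shadow, including those points of $\pp_s(\supp(\mu))$ that may lie in $\ucomp{M}\setminus M$. Once that is in hand the rest is a routine continuity argument on nets in $\bwt{M}$.
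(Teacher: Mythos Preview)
Your proof is correct and follows essentially the same approach as the paper. The paper itself only cites this proposition from \cite{Aliaga} without reproving it, but its proof of the generalisation (Proposition \ref{pr:support_inclusion}) proceeds identically: separate the point from the compact shadow $\pp_s(\supp(\mu))$ by an open $U$ whose closure avoids it, then show $\Phi f$ vanishes on $\supp(\mu)$ for any $f$ supported in $U\cap M$ via the same net/continuity argument, and conclude that the integral is zero.
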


By Proposition \ref{pr:aliaga_lemma_8} we always have $\supp(\Phi^*\mu)\subset\pp_s(\supp(\mu))$ when $\Phi^*\mu\in\lipfree{M}$, which provides a ``lower bound'' on $\supp(\mu)$. This lower bound extends in full generality to elements of $\Lip_0(M)^*$; see Proposition \ref{pr:support_inclusion}. To see this, we need an extension of the notion of support for elements of $\lipfree{M}$ to elements of $\Lip_0(M)^*$.

\begin{definition}[{\cite[Definition 3.5]{AP_measures}}]\label{defn_extended_support}
For any $\phi\in\Lip_0(M)^*$ we define the \emph{extended support} $\esupp{\phi}$ as the following compact subset of $\ucomp{M}$:
\[
\esupp{\phi} = \bigcap\set{\ucl{A}: \text{$A\subset M$ and $\duality{f,\phi}=0$ whenever $f\in\Lip_0(M)$ and $f\restrict_A=0$}}.
\]
\end{definition}

Equivalently, $\zeta\in\esupp{\phi}$ if and only if, for every neighbourhood $U$ of $\zeta$, there exists $f\in\Lip_0(M)$ such that $\duality{f,\phi}\neq 0$ and $\supp(f)\subset U\cap M$ \cite[Proposition 3.6]{AP_measures}. The definition is consistent with that of the support in $\lipfree{M}$ in the sense that, if $\phi\in\lipfree{M}$, then $\supp(\phi)=\esupp{\phi}\cap M$ and $\esupp{\phi}=\ucl{\supp(\phi)}$ \cite[Corollary 3.7]{AP_measures}. Extended supports behave well with elements $\phi\in\Lip_0(M)^*$ that avoid infinity (and only with them): if $\phi$ avoids infinity, $U\subset\ucomp{M}$ is a neighbourhood of $\esupp{\phi}$ and $f\in\Lip_0(M)$ vanishes on $U\cap M$, then $\duality{f,\phi}=0$ \cite[Theorem 3.13]{AP_measures}.

This first proposition extends Proposition \ref{pr:aliaga_lemma_8}.

\begin{proposition}
\label{pr:support_inclusion}
If $\mu\in\meas{\bwt{M}}$ then $\esupp{\dual{\Phi}\mu}\subset\pp_s(\supp(\mu))$.
\end{proposition}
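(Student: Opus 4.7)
The plan is to argue by contrapositive: I would fix $\zeta\in\ucomp{M}\setminus\pp_s(\supp(\mu))$ and exhibit a set $A\subset M$ with $\zeta\notin\ucl{A}$ such that $\duality{f,\dual{\Phi}\mu}=0$ whenever $f\in\Lip_0(M)$ vanishes on $A$; by the definition of extended support this places $\zeta$ outside $\esupp{\dual{\Phi}\mu}$.

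The separation step comes first. Because $\supp(\mu)$ is compact in $\bwt{M}$ and $\pp_1,\pp_2$ are continuous into $\ucomp{M}$, the shadow $\pp_s(\supp(\mu))=\pp_1(\supp(\mu))\cup\pp_2(\supp(\mu))$ is a closed subset of the compact Hausdorff (hence normal) space $\ucomp{M}$. Normality yields an open neighbourhood $U\subset\ucomp{M}$ of $\zeta$ whose closure $\cl{U}$ is disjoint from $\pp_s(\supp(\mu))$. The natural candidate is then $A=M\setminus U$; since $\ucl{A}\subset\ucomp{M}\setminus U$ and $\zeta\in U$, we indeed have $\zeta\notin\ucl{A}$.

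The main step is to verify that $\Phi f$ vanishes on $\supp(\mu)$ for any $f\in\Lip_0(M)$ with $f\restrict_A=0$. Given $\zeta'\in\supp(\mu)$, continuity of $\pp_1$ and $\pp_2$ provides an open neighbourhood $V$ of $\zeta'$ in $\bwt{M}$ with $\pp_i(V)\subset\ucomp{M}\setminus\cl{U}$ for $i=1,2$. For any $(x,y)\in V\cap\wt{M}$ we have $\pp_i(x,y)=x,y$ respectively, so both coordinates lie in $M\setminus U=A$, forcing $f(x)=f(y)=0$ and thus $\Phi f(x,y)=0$. Since $\Phi f\in C(\bwt{M})$ and $\wt{M}\cap V$ is dense in $V$, we conclude $\Phi f\equiv 0$ on $V$, in particular at $\zeta'$. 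As $\mu$ is Radon it is concentrated on $\supp(\mu)$, so
\[
\duality{f,\dual{\Phi}\mu}=\int_{\bwt{M}}\Phi f\,d\mu=\int_{\supp(\mu)}\Phi f\,d\mu=0,
\]
completing the argument.

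The only delicate point is the need to separate $\zeta$ from $\pp_s(\supp(\mu))$ by a neighbourhood with \emph{closed} exterior separation (the gap $\cl{U}$ versus $\pp_s(\supp(\mu))$), which is essential for pulling back under the continuous maps $\pp_i$ at an arbitrary point $\zeta'\in\supp(\mu)$; everything else is a routine application of continuity of $\Phi f$ and density of $\wt{M}$ in $\bwt{M}$.
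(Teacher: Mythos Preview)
Your proof is correct and follows essentially the same approach as the paper's: both separate the point from the compact shadow $\pp_s(\supp(\mu))$ by a neighbourhood $U$ with disjoint closure, then show via density of $\wt{M}$ and continuity of $\Phi f$ that $\Phi f$ vanishes on $\supp(\mu)$ whenever $f$ is supported in $U\cap M$. The only cosmetic difference is that you invoke the definition of $\esupp{\cdot}$ directly via the set $A=M\setminus U$, whereas the paper appeals to the equivalent neighbourhood characterisation from \cite[Proposition 3.6]{AP_measures}.
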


\begin{proof}
Suppose $\xi\notin\pp_s(\supp(\mu))$. Since $\supp(\mu)$ is compact, so is $\pp_s(\supp(\mu))$, hence there is an open neighbourhood $U$ of $\xi$ such that $\ucl{U}$ does not intersect $\pp_s(\supp(\mu))$. Let $f\in\Lip_0(M)$ be such that $\supp(f)\subset U\cap M$; we will show that $\duality{f,\Phi^*\mu}=0$ which will complete the proof by \cite[Proposition 3.6]{AP_measures}.
Indeed, the set
$$
V = \pp_1^{-1}(\ucl{U}) \cup \pp_2^{-1}(\ucl{U})
$$
is a compact subset of $\bwt{M}$ and clearly does not intersect $\supp(\mu)$. Thus, if $\zeta\in\supp(\mu)$ then there is a neighbourhood $W$ of $\zeta$ such that $W\cap V=\varnothing$, so if $(x,y)\in\widetilde{M}\cap W$ then $x,y\notin U$, hence $f(x)=f(y)=0$ and by continuity $\Phi f(\zeta)=0$. This shows that $\Phi f=0$ on $\supp(\mu)$ and therefore $\duality{f,\Phi^*\mu}=0$ as claimed.
\end{proof}

If $\phi$ avoids infinity, then there is at least one representing measure $\mu$ with ``minimal support'' in the sense that equality is attained in Proposition \ref{pr:support_inclusion}, up to possibly the base point.

\begin{proposition}
\label{pr:minimal_support_representation}
If $\phi\in\Lip_0(M)^*$ avoids infinity then there is $\mu\in\opr{\bwt{M}}$ such that $\dual{\Phi}\mu=\phi$ and $\esupp{\phi} \subset \pp_s(\supp(\mu)) \subset \esupp{\phi}\cup\set{0}$.
\end{proposition}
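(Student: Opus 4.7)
The inclusion $\esupp{\phi}\subset\pp_s(\supp(\mu))$ holds for every representation of $\phi$ by Proposition \ref{pr:support_inclusion}, so the content of the result is the reverse containment. Set $E_0=\esupp{\phi}\cup\set{0}$ and $K=\pp^{-1}(E_0\times E_0)$. Since $\phi$ avoids infinity, $E_0$ is $\tau$-compact in $\rcomp{M}$ and $K$ is compact in $\bwt{M}$. My plan is to produce a positive $\mu\in\meas{K}$ with $\norm{\mu}=\norm{\phi}$ and $\Phi^*(\iota_\sharp\mu)=\phi$, where $\iota:K\hookrightarrow\bwt{M}$ is the inclusion; such $\iota_\sharp\mu$ is automatically in $\opr{\bwt{M}}$ and satisfies $\pp_s(\supp(\iota_\sharp\mu))\subset E_0$.

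By Hahn--Banach and Riesz representation, such $\mu$ exists if and only if
\begin{equation*}
 \duality{f,\phi}\leq\norm{\phi}\cdot\sup_K\Phi f \qquad\text{for every }f\in\Lip_0(M). \tag{$\ast$}
\end{equation*}
Indeed, $(\ast)$ applied to both $\pm f$ ensures well-definedness of the linear map $\Phi f+c\mathbf{1}_K\mapsto\duality{f,\phi}+c\norm{\phi}$ on the subspace $\Phi(\Lip_0(M))+\RR\mathbf{1}_K$ of $C(K)$, while $(\ast)$ itself is precisely the domination by the sublinear functional $g\mapsto\norm{\phi}\sup_K g$. The Hahn--Banach extension is then forced to be positive, because $\mathbf{1}_K$ is mapped to $\norm{\phi}$, and so corresponds via Riesz to the desired measure $\mu$.

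To prove $(\ast)$, I would proceed as follows. Given $f\in\Lip_0(M)$, set $Rf:=\rcomp{f}\restrict_{E_0}$, a bounded $\tau$-continuous $\bar{d}$-Lipschitz function on $E_0$. Proposition \ref{pr:Mat_app} (Matou\v{s}kov\'a extension) furnishes $f'\in\Lip_0(M)$ with $\rcomp{f'}\restrict_{E_0}=Rf$ and $\lipnorm{f'}\leq\lipnorm{Rf}_{\bar{d}}:=\sup\set{(Rf(\xi)-Rf(\eta))/\bar{d}(\xi,\eta):\xi\neq\eta\in E_0}$. Because $\phi$ avoids infinity, the extended-support machinery of \cite[Section 3]{AP_measures} (strengthening Theorem 3.13 from the neighbourhood form to the pointwise form) yields $\duality{f',\phi}=\duality{f,\phi}$, whence $\duality{f,\phi}\leq\norm{\phi}\lipnorm{Rf}_{\bar{d}}$. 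On the other hand, for distinct $\xi,\eta\in E_0$, Proposition \ref{pr:optimals_everywhere} provides $\zeta\in\opt\cap\pp^{-1}(\xi,\eta)\subset K$ with $d(\zeta)=\bar{d}(\xi,\eta)$, and by Proposition \ref{prop_de_Leeuw_relation}\ref{delta_diff} we have $\Phi f(\zeta)=(Rf(\xi)-Rf(\eta))/\bar{d}(\xi,\eta)$. Taking the supremum over such pairs gives $\sup_K\Phi f\geq\lipnorm{Rf}_{\bar{d}}$, which combined with the previous estimate yields $(\ast)$.

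The principal obstacle is the equality $\duality{f',\phi}=\duality{f,\phi}$: the quoted version of \cite[Theorem 3.13]{AP_measures} only allows one to discard $f\in\Lip_0(M)$ that vanishes on an open $\ucomp{M}$-neighbourhood of $\esupp{\phi}$, whereas here one needs the pointwise statement that $\rcomp{(f-f')}\restrict_{E_0}\equiv 0$ alone forces the two pairings to agree. The upgrade is an approximation step by Lipschitz bumps on shrinking $\bar{d}$-neighbourhoods of $E_0$ with uniformly controlled Lipschitz constants, for which Proposition \ref{pr:bar-d_cont} is the essential input.
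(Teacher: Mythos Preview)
Your argument has a genuine gap at the step you yourself flagged, but the issue is more severe than you suggest: the ``pointwise'' identity you are aiming for is simply false. Consider any nonzero derivation $\phi$ at a non-isolated point $x\in M$ (e.g.\ an ultrafilter limit of molecules $m_{x_n,x}$ with $x_n\to x$). Such $\phi$ avoids infinity and has $\esupp{\phi}=\set{x}$, yet $\duality{f,\phi}$ depends on the local behaviour of $f$ near $x$, not just on $f(x)$. In particular one can take $f\in B_{\Lip_0(M)}$ with $f(x)=0$ and $\duality{f,\phi}=1$, so that $Rf\equiv 0$ and your chain of inequalities reads $1=\duality{f,\phi}\leq\norm{\phi}\lipnorm{Rf}_{\bar d}=0$, which is absurd. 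The approximation by Lipschitz bumps cannot repair this, because the result being approximated is false. The quantity $\lipnorm{Rf}_{\bar d}$ sees only difference quotients between \emph{distinct} points of $E_0$, whereas $K$ also contains the fibres $\pp^{-1}(\xi,\xi)$ carrying the derivation information; your route through $\lipnorm{Rf}_{\bar d}$ discards exactly that.

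A secondary issue is that $E_0$ need not lie in $\rcomp{M}$ when $M$ is unbounded (for $\phi\in\lipfree{M}$ with unbounded support, $\esupp{\phi}=\ucl{\supp(\phi)}$ meets $\ucomp{M}\setminus\rcomp{M}$), so neither $Rf$ nor the Matou\v{s}kov\'a extension of Proposition~\ref{pr:Mat_app} are available as stated. The paper's proof sidesteps both problems by never collapsing to $E_0$: for each open $U\supset E_0$ it uses the \emph{neighbourhood} version of the support property (Theorem~3.13 in \cite{AP_measures}) to restrict $\phi$ to $\Lip_0(\cl{U\cap M})^*$ with the same norm, picks an optimal representation there, embeds it into $\meas{\bwt{M}}$, and then takes a $w^*$-limit as $U$ shrinks. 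This preserves the local-slope information throughout, because the limiting measure is allowed to live on the diagonal fibres $\pp^{-1}(\xi,\xi)$.
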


\begin{proof}
The first inclusion is given by Proposition \ref{pr:support_inclusion}. For the second one, set $A=\esupp{\phi}\cup\set{0}$. Let $U$ be an open subset of $\ucomp{M}$ such that $A\subset U$, and denote $W=\cl{U\cap M}$ (where the closure is taken in $M$). Define a functional $\phi'\in\Lip_0(W)^*$ by setting
\[
\text{$\duality{f,\phi'} := \duality{F,\phi}$ where $F\in\Lip_0(M)$ is an extension of $f\in\Lip_0(W)$.}
\]
By \cite[Theorem 3.13]{AP_measures}, this is a well-defined value, i.e. it is independent of the choice of $F$. Linearity of $\phi'$ is obvious, and we easily get $\norm{\phi'}=\norm{\phi}$ by forcing $\lipnorm{F}=\lipnorm{f}$. Choose an optimal representation $\mu'\in\opr{\bwt{W}}$ of $\phi'$, that is, such that $(\Phi\restrict_{\Lip_0(W)})^*\mu'=\phi'$. It is a standard fact that $\bwt{W}$ can be identified with the closure $\cl{\wt{W}}^{\bwt{M}}$ of $\wt{W}$ in $\bwt{M}$, and so we may identify $\mu'$ with a measure $\mu_U\in\meas{\bwt{M}}$ that is supported on
\[
\cl{\wt{W}}^{\bwt{M}} \subset \pp^{-1}(\ucl{W}\times\ucl{W}) = \pp^{-1}(\ucl{U}\times\ucl{U})
\]
such that $\norm{\mu_U}=\norm{\mu'}=\norm{\phi'}=\norm{\phi}$. Given $f\in\Lip_0(M)$, we have
\[
\duality{f,\Phi^*\mu_U} = \int_{\bwt{M}}(\Phi f)\,d\mu_U = \int_{\cl{\wt{W}}^{\bwt{M}}}(\Phi f)\,d\mu_U = \int_{\bwt{W}}\Phi(f\restrict_W)\,d\mu' = \duality{f\restrict_W,\phi'} = \duality{f,\phi}
\]
whence $\mu_U$ is an optimal representation of $\phi$.

Now consider the net $(\mu_U)$ where $U$ runs over the open subsets of $\ucomp{M}$ containing $A$, directed by reverse inclusion. Since $(\mu_U)$ is bounded, it has a weak$^\ast$ convergent subnet with limit $\mu$. Clearly, $\mu$ is also an optimal representation of $\phi$, and it is straightforward to conclude that
$$
\supp(\mu) \subset \bigcap_U \pp^{-1}(\ucl{U}\times\ucl{U}) = \pp^{-1}(A\times A)
$$
by the normality of $\ucomp{M}$. That is, $\pp_s(\supp(\mu))\subset A$.
\end{proof}

The appearance of the base point in Proposition \ref{pr:minimal_support_representation} might look like an artifact of the method of proof, but it is in fact necessary: there are functionals $\phi$ for which the equality $\pp_s(\supp(\mu))=\esupp{\phi}$ is unattainable, e.g. $\phi=\delta(p)$ for $p\in M\setminus\set{0}$. This follows from the next result (the implication (i)$\Rightarrow$(ii) more specifically), which shows that, under a mild condition, the shadow of the support is the same for all ``minimal'' $\mu$ (in the sense of Proposition \ref{pr:minimal_support_representation}) and depends only on $\phi$: either all such $\mu$ cover the base point or none of them do.

\begin{proposition}
\label{pr:min_shadow_base_point}
Let $\phi\in\Lip_0(M)^*$ avoid infinity, and let $\mu$ be an optimal representation of $\phi$ such that $\pp_s(\supp(\mu))\subset\esupp{\phi}\cup\set{0}$. Suppose that there are no $\xi,\eta\in\esupp{\phi}\cap\rcomp{M}$ such that $0\in \rcomp{[\xi,\eta]}$. Then the following are equivalent:
\begin{enumerate}[label={\upshape{(\roman*)}}]
\item\label{min_shadow_base_point-shadow} $0\notin\pp_s(\supp(\mu))$;
\item\label{min_shadow_base_point-funct} $\duality{f,\phi}=0$ for some (and thus all) $f\in\Lip_0(M)$ such that $f\restrict_{U\cap M}=1$ for some open $U\subset\ucomp{M}$ containing $\esupp{\phi}$.
\end{enumerate}
\end{proposition}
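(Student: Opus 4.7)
My plan begins with the observation that the metric-segment hypothesis forces $0 \notin \esupp{\phi}$, since otherwise $\xi = \eta = 0 \in \esupp{\phi} \cap \rcomp{M}$ would give $0 \in \rcomp{[0,0]} = \set{0}$. Combined with the standing assumption $\pp_s(\supp(\mu)) \subset \esupp{\phi} \cup \set{0}$, this partitions $\supp(\mu)$ into the disjoint closed pieces
\[
A = \supp(\mu) \cap \pp^{-1}(\esupp{\phi} \times \esupp{\phi}),\quad B = \supp(\mu) \cap \pp^{-1}(\set{0}\times\esupp{\phi}),
\]
its symmetric counterpart $B'$, and $A_2 = \supp(\mu) \cap \pp^{-1}(0,0)$. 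Since $\set{0}$ and $\esupp{\phi}$ are disjoint compacta in $\ucomp{M}$, a short local-structure argument shows that $0 \in \pp_s(\supp(\mu))$ is equivalent to $\mu(B)+\mu(B')+\mu(A_2)>0$; thus (i) reduces to $\mu(B)=\mu(B')=\mu(A_2)=0$.

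The forward direction (i) $\Rightarrow$ (ii) is essentially immediate: under (i), $\supp(\mu) = A$, and for any $f$ from (ii) with $f\restrict_{U \cap M}=1$, every approximating net $(x_i,y_i) \to \zeta \in A$ in $\wt{M}$ eventually has $x_i, y_i \in U \cap M$, so $\Phi f \equiv 0$ on $\supp(\mu)$ and $\duality{f,\phi}=0$. This also delivers the ``and thus all'' clause.

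For (ii) $\Rightarrow$ (i), I plan to rule out $B$, $B'$, and $A_2$ in sequence. First I would specialise (ii) to a witness $f$ that additionally vanishes on a neighbourhood of $0$; such an $f$ exists because $0 \notin \esupp{\phi}$ permits separating $0$ from $\esupp{\phi}$ by disjoint $\ucomp{M}$-open sets, with $f$ produced by a McShane extension from the boundary values. By Proposition \ref{prop_de_Leeuw_relation}\ref{delta_diff}, $\Phi f$ vanishes on $A \cup A_2$ and equals $\mp 1/d$ on $B, B'$, so (ii) reads $\int_B d\mu/d = \int_{B'} d\mu/d$. Next I would appeal to Corollary \ref{cor_minimal_5} to concentrate $\mu$ on $C$ with $\pp(C)$ being $\bar{d}$-cyclically monotonic: if both $\mu(B)$ and $\mu(B')$ were positive, picking $\zeta_1 \in B \cap C, \zeta_2 \in B' \cap C$ with $\pp(\zeta_1)=(0,\xi), \pp(\zeta_2)=(\eta,0)$ and running cyclical monotonicity against the swap permutation would yield $\bar{d}(0,\xi)+\bar{d}(\eta,0) \leq \bar{d}(\eta,\xi)$, hence $0 \in \rcomp{[\xi,\eta]}$ against the hypothesis. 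So at most one of $\mu(B),\mu(B')$ is positive, and the integral equality (with $1/d > 0$) forces both to vanish.

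The hard step will be $\mu(A_2) = 0$, because on $A_2$ every $f$ of the above type has $\Phi f = 0$ and (ii) provides no direct leverage. My strategy is to set $\phi_2 := \Phi^*(\mu\restrict_{A_2})$ and prove $\phi_2 = 0$; then $\mu(A_2) = \norm{\mu\restrict_{A_2}} = \norm{\phi_2} = 0$ by Proposition \ref{pr:opr_facts}(d). Observe that $\phi_2$ avoids infinity by Corollary \ref{cor_avoid_infinity} and $\esupp{\phi_2} \subset \set{0}$ by Proposition \ref{pr:support_inclusion}. For arbitrary $g \in \Lip_0(M)$, I would build a witness $f$ for (ii) agreeing with $g$ near $0$: pick disjoint $\ucomp{M}$-open sets $V_0 \ni 0$ (with $V_0 \cap M$ bounded, e.g.\ a sublevel set of $\ucomp{\rho}$) and $V_1 \supset \esupp{\phi}$, set $f=g$ on $V_0 \cap M$ and $f=1$ on $V_1 \cap M$, and McShane-extend to $M$. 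Since $\mu(B)=\mu(B')=0$ and $\Phi f = 0$ on $A$, (ii) gives $\duality{f,\phi_2}=0$; since $f-g$ vanishes on $V_0 \cap M$, a $\ucomp{M}$-neighbourhood of $\esupp{\phi_2}$ intersected with $M$, \cite[Theorem 3.13]{AP_measures} gives $\duality{f-g,\phi_2}=0$. Hence $\duality{g,\phi_2}=0$ for every $g$, yielding $\phi_2=0$.
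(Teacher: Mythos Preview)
Your overall architecture is sound and close to the paper's, but there is a genuine logical gap in how you justify the ``some (and thus all)'' clause. Your proof of \ref{min_shadow_base_point-funct}$\Rightarrow$\ref{min_shadow_base_point-shadow} repeatedly invokes (ii) for \emph{several different} functions $f$: first the witness that vanishes near $0$, and then one bespoke $f$ for each $g\in\Lip_0(M)$ in the $A_2$ step. This requires the ``all'' version of (ii). However, you claim that ``some $\Rightarrow$ all'' is delivered by the forward implication (i)$\Rightarrow$(ii); that route is circular, since you would need (ii-some)$\Rightarrow$(i)$\Rightarrow$(ii-all), but your argument for (ii)$\Rightarrow$(i) already consumes (ii-all). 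The paper avoids this by establishing ``some $\Rightarrow$ all'' at the outset, independently of $\mu$: since $\phi$ avoids infinity, \cite[Theorem 3.13]{AP_measures} gives that $\duality{f,\phi}$ depends only on $f\restrict_{U\cap M}$ for any open $U\supset\esupp{\phi}$, so any two admissible $f$'s (agreeing, equal to $1$, on the intersection of their neighbourhoods) yield the same value. You cite exactly this theorem later for $\phi_2$, so simply invoke it up front for $\phi$ and the gap closes.

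On the comparison of approaches: once the above is fixed, your treatment of $B$ and $B'$ matches the paper's essentially verbatim (integral identity plus cyclical monotonicity via Corollary \ref{cor_minimal_5}). Where you genuinely diverge is in handling $A_2=\supp(\mu)\cap\pp^{-1}(0,0)$. The paper disposes of $A_2$ in one line \emph{before} touching $B,B'$: since $0\notin\esupp{\phi}$, \cite[Proposition 3.18]{AP_measures} gives that $\phi$ avoids $0$, and then Proposition \ref{pr:avoid_0} yields $\mu(\pp^{-1}(0,0))=0$ for the optimal $\mu$. Your route---showing $\phi_2=\Phi^*(\mu\restrict_{A_2})=0$ by matching each $g$ near $0$ with a test function $f$ satisfying (ii), and then using optimality of $\mu\restrict_{A_2}$---is correct and self-contained, but it is effectively a local re-proof of the ``avoids $0$'' machinery for this special situation. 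The paper's shortcut is cleaner; your argument has the (minor) virtue of not importing \cite[Proposition 3.18]{AP_measures}.
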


\begin{proof}
Notice first that the hypothesis implies $0\notin\esupp{\phi}$, and that $\duality{f,\phi}$ only depends on the values of $f$ on $U\cap M$ for any neighbourhood $U$ of $\esupp{\phi}$ by \cite[Theorem 3.13]{AP_measures}. Consequently, if \ref{min_shadow_base_point-funct} holds for some $U$ and $f$ then it holds for all $U$ and $f$ (which exists if and only if $0\notin\ucl{U}$).

\ref{min_shadow_base_point-shadow}$\Rightarrow$\ref{min_shadow_base_point-funct}: Assume \ref{min_shadow_base_point-shadow} and let $f$ be as in \ref{min_shadow_base_point-funct} for some $U$. Then $\Phi f$ vanishes on $\pp^{-1}(\esupp{\phi}\times\esupp{\phi})\supset\supp(\mu)$ and so $\duality{f,\phi}=\int_{\bwt{M}}(\Phi f)\,d\mu=0$.

\ref{min_shadow_base_point-funct}$\Rightarrow$\ref{min_shadow_base_point-shadow}: Suppose that \ref{min_shadow_base_point-funct} holds but \ref{min_shadow_base_point-shadow} fails, that is $\pp_s(\supp(\mu))=\esupp{\phi}\cup\set{0}$. Then $\mu$ is not concentrated on $\pp^{-1}(\esupp{\phi}\times\esupp{\phi})$. Since $0\notin\esupp{\phi}$, $\phi$ avoids 0 by \cite[Proposition 3.18]{AP_measures} and Proposition \ref{pr:avoid_0} yields $\mu(\pp^{-1}(0,0))=0$. As a consequence, at least one of the sets $\pp^{-1}(\esupp{\phi}\times\set{0})$, $\pp^{-1}(\set{0}\times\esupp{\phi})$ must have positive $\mu$-measure. Now recall that, by Corollary \ref{cor_avoid_infinity} and Theorem \ref{thm_minimal_1}, $\mu$ is concentrated on a set $C\subset\bwtf$ such that $\pp(C)$ is $\bar{d}$-cyclically monotonic. Thus, for $f$ as in \ref{min_shadow_base_point-funct}
\begin{align*}
0 = \duality{f,\phi} &= \int_{\supp({\mu})} (\Phi f)\,d\mu\\
&=\int_{\pp^{-1}(\esupp{\phi}\times\set{0})} (\Phi f)\,d\mu + \int_{\pp^{-1}(\set{0}\times\esupp{\phi})} (\Phi f)\,d\mu + \int_{\pp^{-1}(\esupp{\phi}\times\esupp{\phi})} (\Phi f)\,d\mu\\
&=\int_{C\cap\pp^{-1}(\esupp{\phi}\times\set{0})}\frac{1}{d(\zeta)}\,d\mu(\zeta) - \int_{C\cap\pp^{-1}(\set{0}\times\esupp{\phi})} \frac{1}{d(\zeta)}\,d\mu(\zeta),
\end{align*}
where the last equality follows from the choice of $f$ and Proposition \ref{prop_de_Leeuw_relation} \ref{delta_diff}. Moreover, in this equation we may replace $\esupp{\phi}$ with $\esupp{\phi}\cap\rcomp{M}$ because $\mu$ is concentrated on $\bwtf$. Since $d<\infty$ on $\bwtf$, we conclude that at least one of the integrals in the last line must be non-zero, thus both are. But then $C$ intersects both $\pp^{-1}((\esupp{\phi}\cap\rcomp{M})\times\set{0})$ and $\pp^{-1}(\set{0}\times(\esupp{\phi}\cap\rcomp{M}))$, that is, there exist $\xi,\eta\in\esupp{\phi}\cap\rcomp{M}$ with $(\xi,0),(0,\eta)\in\pp(C)$. By cyclical monotonicity we conclude $0\in\rcomp{[\xi,\eta]}$ (see Corollary \ref{cor_minimal_4}), which contradicts our assumption on $\esupp{\phi}$. This ends the proof.
\end{proof}

Using the results in this section, we can easily extend Proposition \ref{pr:optimals_everywhere} about existence of optimal points to the case where both coordinates are equal. In general, sets $\pp^{-1}(\xi,\xi)$ for $\xi\in\rcomp{M}$ can be empty (see Proposition \ref{prop_p_range}), or they can consist entirely of vanishing points (see the example at the beginning of Section \ref{subsec_vanishing}). But in all other cases, they must contain optimal points.

\begin{proposition}
\label{pr:optimals_everywhere_2}
Let $\xi\in\rcomp{M}$ and suppose that $\pp^{-1}(\xi,\xi)$ contains a non-vanishing point. Then it contains an optimal point.
\end{proposition}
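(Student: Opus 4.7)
The plan is to analyse the functional $\phi_0 = \Phi^*\delta_{\zeta_0}$ attached to the given non-vanishing $\zeta_0 \in \pp^{-1}(\xi,\xi)$ and to extract an optimal point in $\pp^{-1}(\xi,\xi)$ from a carefully chosen optimal representation of $\phi_0$. Being non-vanishing, $\phi_0 \neq 0$, and it avoids infinity by Corollary \ref{cor_avoid_infinity} since $\delta_{\zeta_0}$ is supported on $\bwtf$ (because $\xi \in \rcomp{M}$). I would then verify $\esupp{\phi_0} \subset \set{\xi}$: for any $\eta \in \ucomp{M} \setminus \set{\xi}$, choose an open $U \ni \eta$ with $\xi \notin \ucl{U}$; for any $f \in \Lip_0(M)$ supported in $U \cap M$, every net $(x_i,y_i) \to \zeta_0$ eventually has $x_i,y_i \notin U$, so $(\Phi f)(\zeta_0)=0$, whence $\eta \notin \esupp{\phi_0}$ by \cite[Proposition 3.6]{AP_measures}. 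Now Proposition \ref{pr:minimal_support_representation} yields an optimal representation $\nu \in \opr{\bwt{M}}$ of $\phi_0$ with $\pp_s(\supp\nu) \subset \set{\xi,0}$, and in particular $\nu$ is concentrated on $\pp^{-1}(\set{\xi,0}^2)$.

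If $\xi=0$, then $\supp\nu \subset \pp^{-1}(0,0) = \pp^{-1}(\xi,\xi)$, and since $\nu \neq 0$ is optimal, Proposition \ref{prop_conc_on_O} gives $\nu(\pp^{-1}(\xi,\xi) \cap \opt) > 0$, producing the desired optimal point. Otherwise $\xi \neq 0$, and Proposition \ref{pr:avoid_0} ensures $\phi_0$ avoids $0$ (as $\zeta_0 \notin \pp^{-1}(0,0)$), so $\nu(\pp^{-1}(0,0))=0$. Corollary \ref{cor_minimal_5} places $\nu$ on a set $C$ such that $\pp(C)$ is $\bar{d}$-cyclically monotonic; intersecting with $\pp^{-1}(\set{\xi,0}^2) \setminus \pp^{-1}(0,0)$ reduces us to the situation $\pp(C) \subset \set{(\xi,\xi),(\xi,0),(0,\xi)}$. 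Applying cyclical monotonicity to the $2$-cycle $((\xi,0),(0,\xi))$ yields $2\bar{d}(\xi,0) \leq 0$, which is impossible, so at least one of $(\xi,0),(0,\xi)$ does not lie in $\pp(C)$; assume without loss of generality $\pp(C) \subset \set{(\xi,\xi),(\xi,0)}$. Split $\nu = \nu_d + \nu_s$ along $\pp^{-1}(\xi,\xi)$ and $\pp^{-1}(\xi,0)$; both summands are optimal by Proposition \ref{pr:opr_facts}(d). If $\nu_d \neq 0$, then $\nu_d$ is concentrated on $\opt$ by Proposition \ref{prop_conc_on_O}, so $\opt \cap \pp^{-1}(\xi,\xi)$ must be non-empty, as required.

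The main obstacle is to rule out $\nu_d = 0$. In that case $\nu = \nu_s$ is concentrated on $\opt \cap \pp^{-1}(\xi,0)$, where each $\zeta$ satisfies $d(\zeta) = \bar{d}(\xi,0)$, and hence $\Phi^*\delta_\zeta = \delta(\xi)/\bar{d}(\xi,0)$ by Proposition \ref{prop_de_Leeuw_relation} \ref{delta_diff}; integrating yields $\phi_0 = \lambda\,\delta(\xi)$ with $\lambda = \norm{\nu}/\bar{d}(\xi,0) > 0$ (the symmetric case $\pp(C) \subset \set{(\xi,\xi),(0,\xi)}$ gives $\phi_0 = -\lambda\,\delta(\xi)$ and is handled identically). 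To eliminate this scenario I would test against $f(x) = \rho(x) \wedge c \in B_{\Lip_0(M)}$ for any $0 < c < \bar{d}(\xi,0)$, which satisfies $\rcomp{f}(\xi) = c$ and hence $\duality{f,\lambda\,\delta(\xi)} = \lambda c \neq 0$. On the other hand, any net $(x_i,y_i) \to \zeta_0$ satisfies $\rho(x_i),\rho(y_i) \to \ucomp{\rho}(\xi) = \bar{d}(\xi,0) > c$, so $f(x_i) = f(y_i) = c$ eventually and $\duality{f,\phi_0} = (\Phi f)(\zeta_0) = 0$, contradicting $\phi_0 = \pm\lambda\,\delta(\xi)$. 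This forces $\nu_d \neq 0$ and completes the proof.
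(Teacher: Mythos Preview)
Your argument is correct and follows the same overall strategy as the paper: pass to a minimal-support optimal representation $\nu$ of $\phi_0$ via Proposition \ref{pr:minimal_support_representation}, then show that $\nu$ must place mass on $\pp^{-1}(\xi,\xi)$, whence Proposition \ref{prop_conc_on_O} yields an optimal point there. The difference lies in how the base point is excluded when $\xi\neq 0$. The paper simply invokes Proposition \ref{pr:min_shadow_base_point}: since $\esupp{\phi_0}\cap\rcomp{M}\subset\set{\xi}$ and $\rcomp{[\xi,\xi]}=\set{\xi}\not\ni 0$, condition (ii) of that proposition is verified by any $f$ constant near $\xi$ (as $\Phi f(\zeta_0)=0$), yielding $0\notin\pp_s(\supp\nu)$ directly. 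You instead unpack a version of that argument by hand: use cyclical monotonicity (Corollary \ref{cor_minimal_5}) to rule out both $(\xi,0)$ and $(0,\xi)$ appearing simultaneously in $\pp(C)$, and then eliminate the remaining possibility $\nu_d=0$ via the test function $\rho\wedge c$. Both the cyclical-monotonicity step and the test-function contradiction are correct, so your route works; it is just longer because it bypasses the packaged statement. Note also that your verification of $\esupp{\phi_0}\subset\set{\xi}$ is done directly from the definition, whereas the paper cites Proposition \ref{pr:support_inclusion} for the same conclusion.
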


\begin{proof}
By assumption, there exists $\zeta\in\pp^{-1}(\xi,\xi)$ such that $\phi=\Phi^*\delta_\zeta$ is non-zero. Then $\esupp{\phi}\subset\set{\xi}$ by Proposition \ref{pr:support_inclusion}. Since $\phi$ avoids infinity, e.g. by Corollary \ref{cor_avoid_infinity}, Proposition \ref{pr:minimal_support_representation} yields an optimal representation $\mu$ of $\phi$ such that $\pp_s(\supp(\mu))\subset\esupp{\phi}\cup\set{0}\subset\set{0,\xi}$. If $\xi\neq 0$, note that the hypotheses of Proposition \ref{pr:min_shadow_base_point} are satisfied and thus $0\notin\pp_s(\supp(\mu))$. Thus, in any case we get $\pp_s(\supp(\mu))\subset\set{\xi}$, i.e. $\mu$ is concentrated on $\pp^{-1}(\xi,\xi)$. By Proposition \ref{prop_conc_on_O}, $\mu$ is also concentrated on a set of optimal points, thus there are optimal points in $\pp^{-1}(\xi,\xi)$ as $\mu\neq 0$.
\end{proof}

As another application of Proposition \ref{pr:minimal_support_representation}, together with a recent paper on non-separable growths of $\NN$ \cite{B_NZ20}, we can provide an almost complete answer to \cite[Question 1]{AP_measures}, which asks whether extended supports are always separable subsets of $\ucomp{M}$ or, failing that, satisfy the countable chain condition (ccc). The answer is negative for the former, but positive for the latter if we are restricted to functionals avoiding infinity.

\begin{corollary}\label{co:ccc}
If $\phi\in\Lip_0(M)^*$ avoids infinity then $\esupp{\phi}$ satisfies the ccc.
\end{corollary}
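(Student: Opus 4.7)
The plan is to transfer the ccc property from the support of a cleverly chosen de Leeuw representation of $\phi$ down to $\esupp{\phi}$. The key is that Proposition \ref{pr:minimal_support_representation} provides tight control on the shadow of that support.

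Since $\phi$ avoids infinity, Proposition \ref{pr:minimal_support_representation} furnishes an optimal $\mu\in\opr{\bwt{M}}$ with $\Phi^*\mu=\phi$ and
\[
\esupp{\phi} \;\subset\; \pp_s(\supp(\mu)) \;\subset\; \esupp{\phi}\cup\set{0}.
\]
With this in hand I would proceed in three short steps.

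First, the support $\supp(\mu)$ satisfies the ccc. This is a standard property of supports of finite Radon measures: given a pairwise disjoint family $(U_\alpha)$ of non-empty relatively open subsets of $\supp(\mu)$, each $U_\alpha$ satisfies $\abs{\mu}(U_\alpha)>0$ by the definition of support, and the finiteness of $\abs{\mu}$ forces the family to be countable.

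Second, the shadow $\pp_s(\supp(\mu))$ satisfies the ccc. Indeed, it is the continuous image of the disjoint union $\supp(\mu)\sqcup\supp(\mu)$ under the surjection $(\zeta,i)\mapsto\pp_i(\zeta)$; a finite disjoint union of ccc spaces is again ccc, and ccc is preserved by continuous surjective images. Finally, since $\pp_s(\supp(\mu))\setminus\esupp{\phi}\subset\set{0}$ is either empty or a closed singleton in the Hausdorff space $\pp_s(\supp(\mu))$, $\esupp{\phi}$ is an \emph{open} subset of $\pp_s(\supp(\mu))$; and open subspaces of a ccc space inherit the ccc, since any pairwise disjoint family of non-empty open sets in the subspace is such a family in the ambient space.

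The main obstacle, which is already overcome by Proposition \ref{pr:minimal_support_representation}, is securing a representation whose shadow is essentially $\esupp{\phi}$ itself; without that sharpness, Steps 1 and 2 alone would yield ccc of the (potentially much larger) shadow of an arbitrary representation, not of $\esupp{\phi}$. The hypothesis that $\phi$ avoid infinity is thus used solely to invoke Proposition \ref{pr:minimal_support_representation}; everything else is soft topology and finiteness of $\mu$.
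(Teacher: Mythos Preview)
Your proof is correct and follows essentially the same route as the paper: both invoke Proposition \ref{pr:minimal_support_representation} to obtain a representation whose shadow is $\esupp{\phi}$ up to the single point $0$, use that supports of finite Radon measures are ccc and that ccc passes to continuous images, and finally strip off the possible extra point. The only cosmetic difference is that the paper observes $0$ is isolated in the shadow (since $\esupp{\phi}$ is closed and does not contain $0$), whereas you use the equally valid observation that $\esupp{\phi}$ is open in the shadow; these amount to the same thing.
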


\begin{proof}
The ccc is stable under countable unions and taking continuous images. Since the support of any $\mu\in\meas{\bwt{M}}$ always satisfies the ccc, so does the set $\pp_s(\supp(\mu)) = \pp_1(\supp(\mu)) \cup \pp_2(\supp(\mu))$. By Proposition \ref{pr:minimal_support_representation}, we may choose $\mu$ such that $\pp_s(\supp(\mu))$ is either $\esupp{\phi}$ or $\esupp{\phi}\cup\set{0}$. If it equals $\esupp{\phi}$ then we are done. Otherwise, we have $\esupp{\phi}\cup\set{0}\neq\esupp{\phi}$, hence $0$ is isolated in $\pp_s(\supp(\mu))$ and so $\esupp{\phi}=\pp_s(\supp(\mu))\setminus\set{0}$ satisfies the ccc as well.
\end{proof}

\begin{example}\label{ex:non-sep}
Let $M = \NN$ with the discrete metric and base point $1$. Then there exists $\phi\in\Lip_0(M)^*$ such that $\esupp{\phi}$ is non-separable. Indeed, according to \cite{B_NZ20}, there exists (in ZFC) a compactification $K$ of $\NN$ such that $K\setminus \NN$ is non-separable, together with a strictly positive measure $\nu \in \meas{K\setminus \NN}$, i.e. $\nu$ is positive and $\supp(\nu) = K\setminus \NN$. Let $q:\beta\NN = \ucomp{M} \to K$ denote the natural surjective quotient map obtained by continuously extending the identity embedding from $\NN$ into $K$. This induces an isometric embedding $Q:C(K)\to C(\beta\NN)$ given by $Qf=f \circ q$. By applying the Hahn-Banach theorem to the functional $f \mapsto \duality{Q^{-1}f,\nu}$, $f \in Q(C(K)) \subseteq C(\beta\NN)$, we obtain positive $\lambda \in \meas{\beta\NN}$ satisfying $Q^*\lambda = q_\sharp\lambda = \nu$. Then $q(\supp(\lambda))=\supp(\nu)=K\setminus\NN$ is non-separable, which implies $\supp(\lambda)$ is also non-separable. Finally, let $\phi=\widehat{\lambda}$ be the functional induced by $\lambda$ as in \eqref{eq:induced_functional}. By \cite[Proposition 4.7]{AP_measures}, $\esupp{\phi}=\supp(\lambda)$ is non-separable.
\end{example}

\section{De Leeuw representations of elements of \texorpdfstring{$\lipfree{M}$}{F(M)}}\label{sect_representations_of_F(M)}

In this final section we will focus on the representation of elements of $\lipfree{M}$. No simple way is known to completely distinguish the (not necessarily optimal) De Leeuw measures that represent weak$^\ast$ continuous functionals from those that do not. The simplest and most natural criterion, being concentrated on $\wt{M}$, is a sufficient condition by Proposition \ref{pr:wt_bochner} but it is not necessary unless $M$ is finite and $\bwt{M}=\wt{M}$. To see this, consider the mapping $\rr:\bwt{M}\to\bwt{M}$ from Section \ref{subsec_vanishing}. For any $\zeta\in\bwt{M}\setminus\wt{M}$, $\delta_\zeta+\delta_{\rr(\zeta)}$ is a non-zero measure on $\bwt{M}$, concentrated outside of $\wt{M}$, that represents the $0$ functional by \cite[Lemma 2]{Aliaga}.

With that example in mind, the next simplest criterion one could hope for would be the one described by the following property:

\begin{description}
\item[($*$)]\labeltext{($*$)}{P1} Any $\mu\in\meas{\bwt{M}}$ such that $\dual{\Phi}\mu\in\lipfree{M}$ satisfies $\dual{\Phi}\mu=\dual{\Phi}(\mu\restrict_{\wt{M}})$.
\end{description}

Using Proposition \ref{pr:wt_bochner} and the fact that any element of $\lipfree{M}$ can be written as a (non-convex) series of molecules, we obtain some equivalent formulations of the property:
\begin{itemize}
\item If $\mu\in\meas{\bwt{M}}$ and $\dual{\Phi}\mu=0$ then $\dual{\Phi}(\mu\restrict_{\wt{M}})=0$.
\item If $\mu\in\meas{\bwt{M}}$ is concentrated outside of $\wt{M}$ and $\dual{\Phi}\mu\in\lipfree{M}$ then $\dual{\Phi}\mu=0$.
\end{itemize}

The intuitive idea behind property \ref{P1} is that if a measure on $\bwt{M}\setminus\wt{M}$ represents an element of $\lipfree{M}$, it should be because ``it is composed of parts that cancel each other out somehow'' as in the example above. Before going further, we observe that this property is still not true for every $M$.

\begin{theorem}[{\cite[Theorem 4.1]{APS23}}]
\label{th:isometric_fat_cantor_example}
If $M$ contains an isometric copy of a subset of $\RR$ with positive Lebesgue measure, then there is a non-zero $m\in\lipfree{M}$ with the property that $\supp(\mu)\cap\wt{M}=\varnothing$ for any optimal De Leeuw representation $\mu$ of $m$.
\end{theorem}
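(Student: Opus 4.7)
The plan is to construct $m \in \lipfree{M}$ using the measure-theoretic structure of a fat Cantor set. Invoking the hypothesis, identify $E \subset M$ isometrically with a compact totally disconnected subset of $\RR$ with positive Lebesgue measure $\mathscr{L}(E) > 0$ (which any subset of $\RR$ of positive measure contains), arranged so that $E$ is bounded away from the base point $0 \in M$. The element $m$ will be a carefully built \emph{non-measure-induced} element of $\lipfree{M}$, exploiting the self-similar hole structure of $E$; the natural first attempt $m = \widehat{\lambda}$ for $\lambda = \lambda^+ - \lambda^-$ with $\lambda^{\pm} \ll \mathscr{L}\restrict_E$ fails because the Kantorovich monotone rearrangement in $\RR$ produces an optimal transport plan supported on $\wt{M}$, so the actual construction must produce an $m$ whose norm-preserving decompositions all avoid convex integrals of molecules.

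The overall strategy is by contradiction. Suppose $\mu \in \opr{\bwt{M}}$ represents $m$ and $(a,b) \in \supp(\mu) \cap \wt{M}$. By the openness of $\wt{M}$ in $\bwt{M}$, pick an open $V \subset \wt{M}$ with $(a,b) \in V$ and $\mu(V) > 0$; by Proposition \ref{pr:opr_facts}(d), $\mu\restrict_V \in \opr{\bwt{M}}$, and by Proposition \ref{pr:wt_bochner}, $n := \Phi^*(\mu\restrict_V) \in \lipfree{M}$ is a non-zero convex integral of molecules. The disjointness of $V$ and $\bwt{M} \setminus V$ yields $\|m\| = \|n\| + \|m - n\|$ by the optimality of $\mu$ and the triangle inequality, so $n$ is a norm-preserving ``L-summand'' of $m$ in $\lipfree{M}$. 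Proposition \ref{pr:aliaga_lemma_8} localizes $\supp(n)$ near $\{a,b\}$, and Corollary \ref{cor_minimal_3}, together with Corollary \ref{cr:metric_triples_pm}, forces $\supp(\mu\restrict_V)$ to be cyclically monotonic in $\wt{M}$; inside $\RR$ this means the support is contained in the graph of a monotone partial map between two small pieces of $E$.

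The main obstacle — and the true heart of the theorem — is constructing $m$ to rule out the existence of any such norm-preserving sub-piece $n$. The idea is to build $m$ as a weak$^*$-limit (or hierarchical sum) of finite-rank elements indexed by the Cantor construction of $E$: at level $k$, $E$ decomposes into $2^k$ pieces $E_{k,i}$ separated by $2^k - 1$ gap intervals, and one adds to $m$ contributions that, at level $k$, ``shuffle'' mass between these pieces in an inherently non-monotone pattern (for instance, pairing $E_{k,i}$ with $E_{k,2^k-1-i}$ so that consecutive levels create incompatible monotone constraints). The positive Lebesgue measure of $E$ is essential: it ensures that the hierarchical contributions are genuinely nontrivial at every scale. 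Any putative $n$ produced by the argument above would have to respect a cyclically-monotonic plan whose marginals are absorbed by the marginals of $m$ (via the $\lambda$-absolute continuity statements of Theorems \ref{th:bidual_induced} and \ref{th:bidual_meas_non_ffm}); but this forces a monotone pattern on a piece of $m$ at every sufficiently fine scale, contradicting the built-in non-monotonicity of the construction.

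Once such an $m$ is produced, the contradiction between the cyclical monotonicity of $\supp(\mu\restrict_V)$ inside $\RR$ and the non-monotone cancellations engineered into $m$ at the matching scale of $V$ completes the argument, showing that no optimal $\mu$ can intersect $\wt{M}$. The delicate measure-theoretic implementation of the ``non-monotone at every scale'' construction is carried out in the cited paper \cite[Theorem 4.1]{APS23}, where it appears in its original form.
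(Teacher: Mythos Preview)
The paper does not prove this statement --- it is cited from \cite{APS23} --- so there is no local proof to compare against. Your proposal is not a proof either: you describe a contradiction framework but explicitly defer the construction of $m$, which carries all of the content, to \cite{APS23}.

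Your speculation about that construction is misdirected. You claim $m$ must be built by ``non-monotone shuffling'' between Cantor pieces at every scale so as to conflict with cyclical monotonicity of any putative $\wt{M}$-piece of an optimal $\mu$. But the natural candidate --- the norm limit $m=\lim_k\sum_j(\delta(b_{k,j})-\delta(a_{k,j}))$ over the stage-$k$ intervals $[a_{k,j},b_{k,j}]$ of a fat Cantor set $E$, equivalently $\langle f,m\rangle=\int_E\tilde f'\,d\mathscr{L}$ for the affine-interpolation extension $\tilde f$ of $f$ to $\conv E$ --- involves no shuffling at all: endpoints of the \emph{same} interval are paired, in monotone order. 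What makes this $m$ work is not a monotonicity conflict but the structure of its norming set: every norming $g\in B_{\Lip_0(E)}$ has $\tilde g'=1$ a.e.\ on $E$, and since $E$ is nowhere dense one can always find such a $g$ (for instance $g(t)=\mathscr{L}(E\cap[0,t])$) with $\Phi g(x,y)<1$ at any prescribed $(x,y)\in\wt{E}$. Lemma~\ref{lm:norming_phi1} then rules out $(x,y)\in\supp(\mu)$ directly. Your L-decomposition route and your appeal to the absolute-continuity clauses of Theorems~\ref{th:bidual_induced} and \ref{th:bidual_meas_non_ffm} are unnecessary detours; and the passage from $\wt{E}$ to all of $\wt{M}$, which your sketch does not address, is where genuine additional work is required.
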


\noindent However, we will see in Theorem \ref{th:p1} that it holds for a nontrivial class of metric spaces.

One of the reasons for our interest in property \ref{P1} is its relation with the open problem of determining which Lipschitz-free spaces are complemented in their biduals (see \cite{CKK19} for the current state of the art on that problem). Recall that a Banach space $X$ is \emph{L-embedded} when it is the range of an L-projection from $\ddual{X}$, i.e. there exists a subspace $Z$ of $\ddual{X}$ such that $\ddual{X}=X\oplus_1 Z$.

\begin{proposition}
\label{pr:p1_l_embedded}
If $M$ satisfies condition \ref{P1} then $\lipfree{M}$ is L-embedded. The L-projection $P:\Lip_0(M)^*\to\lipfree{M}$ is given by
$$
P(\phi)=\dual{\Phi}(\mu\restrict_{\wt{M}}) \quad\text{where}\quad \phi=\dual{\Phi}\mu\in\Lip_0(M)^*.
$$
\end{proposition}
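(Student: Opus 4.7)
The plan is to verify, in turn, that the formula $P(\phi)=\Phi^*(\mu\restrict_{\wt{M}})$ defines a bounded linear projection onto $\lipfree{M}$, and then that it is an L-projection. The key tool throughout is property \ref{P1}; the optimality of representations from Proposition \ref{pr:opr_facts} will be what delivers the L-property.

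First I would check that $P$ is well defined on $\Lip_0(M)^*$. Given $\phi\in\Lip_0(M)^*$ and any two de Leeuw representations $\mu_1,\mu_2\in\meas{\bwt{M}}$ of $\phi$, the difference $\mu_1-\mu_2$ is a de Leeuw representation of $0\in\lipfree{M}$, so by \ref{P1} we have $\Phi^*((\mu_1-\mu_2)\restrict_{\wt{M}})=\Phi^*(\mu_1-\mu_2)=0$, hence $\Phi^*(\mu_1\restrict_{\wt{M}})=\Phi^*(\mu_2\restrict_{\wt{M}})$. By Proposition \ref{pr:wt_bochner}, $P(\phi)=\Phi^*(\mu\restrict_{\wt{M}})$ lies in $\lipfree{M}$ as a Bochner integral of molecules, so $P$ maps $\Lip_0(M)^*$ into $\lipfree{M}$. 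Linearity is immediate from the linearity of restriction and of $\Phi^*$: a representation of $\phi_1+c\phi_2$ is $\mu_1+c\mu_2$ whenever $\mu_i$ represents $\phi_i$.

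Next I would show that $P$ restricts to the identity on $\lipfree{M}$. If $\phi\in\lipfree{M}$ and $\mu$ is any representation, then by \ref{P1} we have $\phi=\Phi^*\mu=\Phi^*(\mu\restrict_{\wt{M}})=P(\phi)$. Thus $P$ is a projection onto $\lipfree{M}$.

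Finally, for the L-projection property, fix $\phi\in\Lip_0(M)^*$ and choose an optimal representation $\mu\in\opr{\bwt{M}}$ of $\phi$, furnished by Proposition \ref{pr:opr_facts}(a). Then $\mu\geq 0$ and $\norm{\mu}=\norm{\phi}$, so
\[
\norm{\phi}=\norm{\mu}=\mu(\wt{M})+\mu(\bwt{M}\setminus\wt{M})=\norm{\mu\restrict_{\wt{M}}}+\norm{\mu\restrict_{\bwt{M}\setminus\wt{M}}}.
\]
Since $\Phi^*$ is non-expansive and $\mu\restrict_{\wt{M}}$, $\mu\restrict_{\bwt{M}\setminus\wt{M}}$ are representations of $P(\phi)$ and $\phi-P(\phi)$ respectively, we obtain
\[
\norm{\phi}\geq\norm{P(\phi)}+\norm{\phi-P(\phi)},
\]
and the reverse inequality holds by the triangle inequality. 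Hence $P$ is an L-projection and $\lipfree{M}=P(\Lip_0(M)^*)$ is L-embedded.

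There is no real obstacle here; the only subtle point is the interpretation of $\mu\restrict_{\wt{M}}$ as a Radon measure on $\bwt{M}$, which is justified because $\wt{M}$ is universally measurable in $\bwt{M}$ as a Polish (hence analytic) subspace, so that Proposition \ref{pr:opr_facts}(d) applied to the Borel $K_\sigma$ hull of $\wt{M}\cap\supp(\mu)$ suffices to split $\mu$ additively across $\wt{M}$ and its complement.
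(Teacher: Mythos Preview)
Your proof is correct and follows essentially the same approach as the paper's: well-definedness via \ref{P1}, projection onto $\lipfree{M}$ via Proposition \ref{pr:wt_bochner}, and the L-property via an optimal representation split across $\wt{M}$ and its complement. One small correction to your final remark: $\wt{M}$ need not be Polish (it is not separable unless $M$ is), but it is a $G_\delta$ in $\bwt{M}$ simply because it is completely metrizable, so $\mu\restrict_{\wt{M}}$ is unproblematic for that reason.
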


\begin{proof}
Let us first check that $P$ is well defined, that is, $P(\phi)$ does not depend on the choice of the De Leeuw representation $\mu$ of $\phi\in\dual{\Lip_0(M)}$. Let $\mu,\mu'\in\meas{\bwt{M}}$ be such that $\dual{\Phi}\mu=\dual{\Phi}\mu'=\phi$, then $\dual{\Phi}(\mu-\mu')=0\in\lipfree{M}$ and so property \ref{P1} implies that $0=\dual{\Phi}((\mu-\mu')\restrict_{\wt{M}})=\dual{\Phi}(\mu\restrict_{\wt{M}})-\dual{\Phi}(\mu'\restrict_{\wt{M}})$ as well.

The linearity of $P$ is obvious, and the image of $P$ is contained in $\lipfree{M}$ by Proposition \ref{pr:wt_bochner}. Since every element of $\lipfree{M}$ can be written as a (non-convex) series of molecules and thus admits a (non-optimal) De Leeuw representation concentrated on $\wt{M}$, we also have $P(\phi)=\phi$ whenever $\phi\in\lipfree{M}$. Therefore $P$ is a projection onto $\lipfree{M}$.

Finally, let $\phi\in\dual{\Lip_0(M)}$ and let $\mu\in\opr{\bwt{M}}$ be such that $\dual{\Phi}\mu=\phi$. Then $\mu\restrict_{\wt{M}}$ and $\mu\restrict_{\bwt{M}\setminus\wt{M}}$ also belong to $\opr{\bwt{M}}$ by Proposition \ref{pr:opr_facts} (d), and we conclude
\begin{align*}
\norm{\phi} = \norm{\mu} &= \norm{\mu\restrict_{\wt{M}}} + \norm{\mu\restrict_{\bwt{M}\setminus\wt{M}}} \\
&= \norm{\dual{\Phi}(\mu\restrict_{\wt{M}})} + \norm{\dual{\Phi}(\mu\restrict_{\bwt{M}\setminus\wt{M}})} \\
&= \norm{\dual{\Phi}(\mu\restrict_{\wt{M}})} + \norm{\dual{\Phi}\mu-\dual{\Phi}(\mu\restrict_{\wt{M}})} = \norm{P(\phi)} + \norm{\phi-P(\phi)}
\end{align*}
i.e. $P$ is an L-projection onto $\lipfree{M}$.
\end{proof}

Proposition \ref{pr:p1_l_embedded} should be compared to Remark \ref{remark:decomp_bidual}. There, we showed that restricting De Leeuw representations to $\bwtf$ induces an L-projection from $\Lip_0(M)^*$ onto the space of functionals that avoid infinity. Here, restricting to $\wt{M}$ induces an L-projection onto $\lipfree{M}$, but the operation is only well-defined under assumption \ref{P1}.

Let us now identify the metric spaces that satisfy condition \ref{P1}. It is obvious that this property is hereditary, that is, it passes to subsets. A key fact is that it is also invariant under bi-Lipschitz mappings.

\begin{lemma}
\label{lm:p1_invariant_bilip}
Let $M$ and $N$ be bi-Lipschitz equivalent metric spaces. Then $M$ satisfies \ref{P1} if and only if $N$ satisfies \ref{P1}.
\end{lemma}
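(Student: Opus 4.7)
The plan is to transfer de Leeuw representations between $M$ and $N$ using a bi-Lipschitz homeomorphism $\psi\colon M\to N$. Since property~\ref{P1} is insensitive to the choice of base point (the space $\wt{M}$, the transform, and membership in $\lipfree{M}$ all behave naturally under base-point changes), we may assume $\psi(0_M)=0_N$. Write $\Phi_M$ and $\Phi_N$ for the de Leeuw transforms on $M$ and $N$, respectively. The first step is to lift $\psi$ to the compactifications: the product map $\wt\psi\colon\wt{M}\to\wt{N}$, $(x,y)\mapsto(\psi(x),\psi(y))$, is a homeomorphism, so by the Stone--\v Cech universal property it extends uniquely to a homeomorphism $\bwt\psi\colon\bwt{M}\to\bwt{N}$. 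Since $\wt{M}$ is open in $\bwt{M}$ and $\bwt\psi$ carries $\wt{M}$ onto $\wt{N}$, it also sends $\bwt{M}\setminus\wt{M}$ onto $\bwt{N}\setminus\wt{N}$.

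Next, one must compensate for the factor $1/d(x,y)$ built into the de Leeuw transform, which is not preserved by bi-Lipschitz maps. The ratio $r(x,y):=d_N(\psi(x),\psi(y))/d_M(x,y)$ is continuous on $\wt{M}$ and satisfies $1/\Lip(\psi^{-1})\leq r\leq\Lip(\psi)$, so it extends continuously to $\bar r\colon\bwt{M}\to[1/\Lip(\psi^{-1}),\Lip(\psi)]$; consequently $\bar s:=1/\bar r$ is continuous, bounded, and strictly positive on $\bwt{M}$. A direct computation on $\wt{M}$, extended by continuity, yields the key identity
\[
\Phi_M(g\circ\psi) \;=\; (\Phi_N g\circ\bwt\psi)\cdot \bar r \qquad\text{for every } g\in\Lip_0(N).
\]

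Using this, define the linear map $T\colon\meas{\bwt{N}}\to\meas{\bwt{M}}$ by $T\nu:=\bar s\cdot(\bwt\psi^{-1})_\sharp\nu$. Because $\bar s$ is bounded above and below by positive constants, $T\nu$ and $(\bwt\psi^{-1})_\sharp\nu$ have the same null sets, so $T$ is a Banach space isomorphism satisfying $T(\nu\restrict_{\wt{N}})=(T\nu)\restrict_{\wt{M}}$ (since $\bwt\psi(\wt{M})=\wt{N}$). The identity above and the pushforward change of variables then give
\[
\duality{g\circ\psi,\Phi_M^*(T\nu)} \;=\; \int_{\bwt{M}}\Phi_M(g\circ\psi)\,dT\nu \;=\; \int_{\bwt{N}}\Phi_N g\,d\nu \;=\; \duality{g,\Phi_N^*\nu},
\]
so $\Phi_M^*\circ T=(\widehat\psi^{**})^{-1}\circ\Phi_N^*$, where $\widehat\psi^{**}\colon\Lip_0(M)^*\to\Lip_0(N)^*$ is the Banach space isomorphism obtained as the adjoint of the composition operator $g\mapsto g\circ\psi$ and restricts to the canonical bi-Lipschitz isomorphism $\lipfree{M}\to\lipfree{N}$.

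The invariance now follows by symmetry. Assuming $M$ satisfies \ref{P1}, let $\nu\in\meas{\bwt{N}}$ with $\Phi_N^*\nu\in\lipfree{N}$; then $\Phi_M^*(T\nu)=(\widehat\psi^{**})^{-1}(\Phi_N^*\nu)\in\lipfree{M}$, so \ref{P1} for $M$ yields $\Phi_M^*(T\nu)=\Phi_M^*\bigl((T\nu)\restrict_{\wt{M}}\bigr)=\Phi_M^*\bigl(T(\nu\restrict_{\wt{N}})\bigr)$, and applying $\widehat\psi^{**}$ gives $\Phi_N^*\nu=\Phi_N^*(\nu\restrict_{\wt{N}})$. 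The main obstacle is precisely the construction of $T$: a direct pushforward along $\bwt\psi$ does not intertwine $\Phi_M^*$ and $\Phi_N^*$, so the weight $\bar r$ must be introduced and shown to extend continuously to the full Stone--\v Cech compactification while remaining bounded away from $0$ and $\infty$. This is the precise place where the bi-Lipschitz hypothesis (rather than merely Lipschitz or uniform continuity) is used.
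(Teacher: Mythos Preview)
Your proof is correct and follows essentially the same strategy as the paper's: both extend the bi-Lipschitz map $\psi$ to a homeomorphism between the Stone--\v Cech compactifications, introduce the same continuous weight $r=d_N(\psi\cdot,\psi\cdot)/d_M(\cdot,\cdot)$ (bounded away from $0$ and $\infty$) to correct for the non-isometric distortion of the de Leeuw transform, and verify the identity $\Phi_M(g\circ\psi)=(\Phi_N g\circ\bwt\psi)\cdot\bar r$. The only cosmetic difference is that the paper argues by contrapositive (transporting a single witness measure from $M$ to $N$), whereas you package the construction as a global isomorphism $T\colon\meas{\bwt{N}}\to\meas{\bwt{M}}$ intertwining $\Phi_N^*$ and $\Phi_M^*$ via $\widehat\psi^{**}$ and then argue directly; the underlying computations are identical.
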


\begin{proof}
Suppose that $M$ does not satisfy \ref{P1}, so that there is $m\in\lipfree{M}$, $m\neq 0$ that is represented by a measure $\mu\in\meas{\bwt{M}}$ concentrated outside of $\wt{M}$. We may assume that $\mu$ is positive by replacing it with $\mu^++\rr_\sharp\mu^-$ where $\rr$ is the map defined after Proposition \ref{pr:vanishing_points} (see \cite[Lemma 2 and Proposition 3]{Aliaga}). We will show that a pushforward of $\mu$ to $\bwt{N}$ derived from the bi-Lipschitz equivalence of the metric spaces witnesses the failure of \ref{P1} on $N$ as well.

Let $\psi:M\to N$ be a bi-Lipschitz map, and assume without loss of generality that $0_N=\psi(0_M)$. Define a map $\wt{\psi}:\wt{M}\to\wt{N}$ by $\wt{\psi}(x,y)=(\psi(x),\psi(y))$ for every $(x,y)\in\wt{M}$. Then $\wt{\psi}$ is continuous and hence can be continuously extended to a map $\bwt{M}\to\bwt{N}$; we call the extended map $\wt{\psi}$ again. Similarly we can define a continuous map $\wt{\psi^{-1}}:\bwt{N}\to\bwt{M}$. Notice that $\wt{\psi^{-1}}\circ\wt{\psi}$ is the identity on $\wt{M}$, thus also on $\bwt{M}$ by continuity. It follows that $\wt{\psi}$ and $\wt{\psi^{-1}}$ are inverse bijections, and in particular $\wt{\psi}^{-1}(\wt{N})=\wt{M}$.

If we define
$$w(x,y)=\frac{d_N(\psi(x),\psi(y))}{d_M(x,y)},\quad (x,y)\in\wt{M},$$
where $d_M, d_N$ denote the metrics in $M$ and $N$, respectively,
then $w$ is a continuous map from $\wt{M}$ to $[\Lip(\psi^{-1})^{-1},\Lip(\psi)]\subset\RR$. Hence, it can also be continuously extended to a map from $\bwt{M}$ to $[\Lip(\psi^{-1})^{-1},\Lip(\psi)]$; we again keep the notation $w$. Now, consider the weighted measure $\mu_w\in\meas{\bwt{M}}$ given by $d\mu_w=w\,d\mu$ and its pushforward $$\nu=\wt{\psi}_\sharp\mu_w\in\meas{\bwt{N}}.$$
Then $\nu$ is concentrated outside of $\wt{N}$, as
$$\nu(\wt{N})=\mu_w(\wt{\psi}^{-1}(\wt{N}))=\mu_w(\wt{M})=0$$
because $\mu_w$ is absolutely continuous with respect to $\mu$.

Finally, let us verify that $\Phi^\ast\nu=\widehat{\psi}(m)\in\lipfree{N}$, where $\widehat{\psi}:\lipfree{M}\to\lipfree{N}$ is the isomorphism induced by $\psi$, i.e. such that $\widehat{\psi}\circ\delta_M=\delta_N\circ\psi$. Let $f\in\Lip_0(N)$. Then
$$
\duality{f,\Phi^\ast\nu}=\int_{\bwt{N}}(\Phi f) \,d\nu=\int_{\bwt{N}}(\Phi f)\, d(\wt{\psi}_{\sharp}\,\mu_w)
=\int_{\bwt{M}}(\Phi f\circ\wt{\psi})w\, d\mu.
$$
But for any $(x,y)\in\wt{M}$, we have
\begin{align*}
   (\Phi f\circ\wt{\psi})(x,y)\,w(x,y)&=(\Phi f)(\psi(x),\psi(y))\,\frac{d_N(\psi(x),\psi(y))}{d_M(x,y)}\\
   &=\frac{f(\psi(x))-f(\psi(y))}{d_N(\psi(x),\psi(y))}\,\frac{d_N(\psi(x),\psi(y))}{d_M(x,y)}\\
   &=\frac{(f\circ\psi)(x)-(f\circ\psi)(y)}{d_M(x,y)}=\Phi(f\circ\psi)(x,y),
\end{align*}
and hence $(\Phi f\circ\wt{\psi})\,w=\Phi(f\circ \psi)$ on all of $\bwt{M}$ by continuity. Therefore, 
$$
\duality{f,\Phi^\ast\nu}=\int_{\bwt{M}}\Phi(f\circ\psi)\,d\mu=\duality{f\circ\psi,m}=\duality{f,\widehat{\psi}(m)}
$$
and $\Phi^\ast\nu=\widehat{\psi}(m)$ as claimed. In particular, $\dual{\Phi}\nu\in\lipfree{N}\setminus\set{0}$. Thus property \ref{P1} fails on $N$ as well, and this ends the proof.
\end{proof}

Lemma \ref{lm:p1_invariant_bilip} allows us to obtain some strong necessary conditions for property \ref{P1} by considering the available counterexamples. Recall that $M$ is \emph{purely $1$-unrectifiable} if the image of every Lipschitz map from a subset of $\RR$ into $M$ has null $1$-Hausdorff measure. By a result of Kirchheim, $M$ is purely $1$-unrectifiable if and only if it contains no \emph{curve fragment}, i.e. no bi-Lipschitz copy of a compact subset of $\RR$ of positive Lebesgue measure (see \cite{Kirchheim} for the original proof and \cite[Section 1.3]{AGPP} for an expository treatment).

\begin{proposition}
\label{pr:p1_implies_proper_p1u}
If $M$ has property \ref{P1} then it is proper and purely 1-unrectifiable.
\end{proposition}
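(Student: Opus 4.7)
The plan is to prove the contrapositive of each conclusion independently: (a) if $M$ is not proper, then \ref{P1} fails, and (b) if $M$ is not purely $1$-unrectifiable, then \ref{P1} fails. In each case the goal is to exhibit a signed measure $\nu \in \meas{\bwt{M}}$ concentrated on $\bwt{M}\setminus\wt{M}$ whose image $\Phi^\ast\nu$ is a non-zero element of $\lipfree{M}$, directly contradicting \ref{P1}.

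For (a), failure of properness of the complete space $M$ yields a bounded sequence with no convergent subsequence, hence a cluster point $\xi \in \ucomp{M}\setminus M$ which, being bounded, lies in $\rcomp{M}\setminus M$. Assuming $|M|\geq 2$ (otherwise $M=\set{0}$ is trivially proper), I would fix distinct $x,y \in M$ and, using Proposition \ref{prop_p_range}, choose $\zeta_1 \in \pp^{-1}(x,\xi)$ and $\zeta_2 \in \pp^{-1}(y,\xi)$; both sit in $\bwtf\setminus\wt{M}$, and $d(\zeta_i) \in (0,\infty)$ because $x,y \in M$ while $\xi \notin M$. Setting
\[
\nu \;=\; d(\zeta_1)\delta_{\zeta_1} \;-\; d(\zeta_2)\delta_{\zeta_2},
\]
Proposition \ref{prop_de_Leeuw_relation} \ref{delta_diff} gives
\[
\Phi^\ast\nu \;=\; \pare{\delta(x)-\delta(\xi)} - \pare{\delta(y)-\delta(\xi)} \;=\; \delta(x)-\delta(y) \;\in\; \lipfree{M}\setminus\set{0},
\]
while $\nu\restrict_{\wt{M}}=0$ forces $\Phi^\ast(\nu\restrict_{\wt{M}})=0$; hence \ref{P1} fails.

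For (b), Kirchheim's characterisation shows that failure of pure $1$-unrectifiability of $M$ produces a curve fragment $N \subset M$, that is, a compact subset of $M$ bi-Lipschitz equivalent to some $E \subset \RR$ with positive Lebesgue measure. I would apply Theorem \ref{th:isometric_fat_cantor_example} to $E$ (viewed as its own ambient metric space) to obtain a non-zero $m_E \in \lipfree{E}$ together with an optimal representation $\mu_E \in \meas{\bwt{E}}$ supported on $\bwt{E}\setminus\wt{E}$; this witnesses the failure of \ref{P1} on $E$. The weighted-pushforward construction in the proof of Lemma \ref{lm:p1_invariant_bilip} transfers this failure to $N$: one obtains $\nu_N \in \meas{\bwt{N}}$ concentrated on $\bwt{N}\setminus\wt{N}$ with $\Phi^\ast_N\nu_N$ a non-zero element of $\lipfree{N}$. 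Finally, push $\nu_N$ along the natural continuous extension $\iota:\bwt{N}\to\bwt{M}$ of the inclusion $\wt{N}\hookrightarrow\wt{M}$. Since $N$ is compact we have $\ucomp{N}=N$, and because $\wt{N}$ is locally compact and dense in $\bwt{N}$, every $\zeta \in \bwt{N}\setminus\wt{N}$ has both coordinates equal to a single point $p \in N \subset M$, so $\iota(\zeta) \in \pp^{-1}(p,p) \subset \bwt{M}\setminus\wt{M}$. Thus $\iota_\sharp\nu_N$ is concentrated off $\wt{M}$; and the identity $\Phi_M f\circ\iota = \Phi_N(f\restrict_N)$, valid on the dense subset $\wt{N}$ and hence on $\bwt{N}$ by continuity, yields that $\Phi^\ast_M(\iota_\sharp\nu_N)$ is the image of $\Phi^\ast_N\nu_N$ under the canonical isometric embedding $\lipfree{N}\hookrightarrow\lipfree{M}$, which is non-zero. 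So \ref{P1} fails on $M$.

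The delicate step is part (b), where a witness measure must be tracked through two distinct pushforwards ($E \leadsto N$ via bi-Lipschitz equivalence, $N \hookrightarrow M$ via inclusion) while verifying at each stage that it stays concentrated off the corresponding copy of $\wt{\cdot}$. The bi-Lipschitz transfer is supplied by the construction in Lemma \ref{lm:p1_invariant_bilip}; the inclusion step is precisely where compactness of the curve fragment $N$ is used, as it forces every point of $\bwt{N}\setminus\wt{N}$ to project to the diagonal of $N\times N$ and therefore maps outside $\wt{M}$. Part (a), by contrast, reduces to essentially a one-line computation once the existence of a point of $\rcomp{M}\setminus M$ is secured.
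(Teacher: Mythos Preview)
Your proof is correct and follows the same approach as the paper: contrapositive in both cases, with part (a) producing $\delta(x)-\delta(y)$ via Proposition~\ref{prop_de_Leeuw_relation}\ref{delta_diff} from a measure supported over the pair $\{x,y\}\times\{\xi\}$, and part (b) combining Theorem~\ref{th:isometric_fat_cantor_example} with Lemma~\ref{lm:p1_invariant_bilip} and hereditariness. The only cosmetic differences are that the paper uses the positive measure $d(\zeta)\delta_\zeta + d(\omega)\delta_\omega$ with $\omega \in \pp^{-1}(\xi,y)$ rather than your signed one, and it cites hereditariness of \ref{P1} as ``obvious'' (stated just before Lemma~\ref{lm:p1_invariant_bilip}) rather than spelling out the pushforward along $\iota:\bwt{N}\to\bwt{M}$ as you do.
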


\begin{proof}
Combining Theorem \ref{th:isometric_fat_cantor_example} with Lemma \ref{lm:p1_invariant_bilip}, we see that curve fragments fail property \ref{P1}. Since property \ref{P1} is hereditary, it can therefore be satisfied only by purely $1$-unrectifiable $M$.

On the other hand, if $M$ is not proper then there exists $\xi\in\rcomp{M}\setminus M$. Fix $x\neq y\in M$ and, using Proposition \ref{prop_p_range}, let $\zeta \in \pp^{-1}(x,\xi)$ and $\omega \in \pp^{-1}(\xi,y)$, and set $\mu=d(\zeta)\delta_\zeta + d(\omega)\delta_\omega$. As $\zeta,\omega \in \bwtf$, by Proposition \ref{prop_de_Leeuw_relation} \ref{delta_diff},
\[
 \Phi^*\mu = d(\zeta)\Phi^*\delta_\zeta + d(\omega)\Phi^*\delta_\omega = \delta(\pp_1(\zeta)) - \delta(\pp_2(\zeta)) + \delta(\pp_1(\omega)) - \delta(\pp_2(\omega)) = \delta(x) - \delta(y)
\]
belongs to $\lipfree{M}$. However $\mu\restrict_{\wt{M}}=0$, so $M$ does not have property \ref{P1}.
\end{proof}

We will now complete the characterisation of property \ref{P1} by proving that the converse of Proposition \ref{pr:p1_implies_proper_p1u} is also true. To that end, we need to recall several facts. It is established in \cite{AGPP} that pure 1-unrectifiability characterises some important Banach space properties of Lipschitz-free spaces. In particular, when $M$ is proper, \cite[Theorem B]{AGPP} states that $\lipfree{M}$ is a dual space precisely when $M$ is purely 1-unrectifiable, and in that case one of its preduals is $\lip_0(M)$, the space of all functions in $\Lip_0(M)$ that are \emph{locally flat} and \emph{flat at infinity}. Rather than defining these two notions, we will use a characterisation of these functions given by \cite[Proposition 4.20]{Weaver2}: if $M$ is proper, then a function $f\in\Lip_0(M)$ belongs to $\lip_0(M)$ if and only if $\Phi f(\zeta)=0$ for all $\zeta\in\bwt{M}\setminus\wt{M}$.

\begin{theorem}
\label{th:p1}
$M$ has property \ref{P1} if and only if it is proper and purely 1-unrectifiable.
\end{theorem}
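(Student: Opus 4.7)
The forward direction is already Proposition \ref{pr:p1_implies_proper_p1u}, so the task is to prove the converse: if $M$ is proper and purely $1$-unrectifiable, then $M$ satisfies \ref{P1}. My plan is to reduce the problem to the observation that, under these hypotheses, the space $\lip_0(M)$ is a predual of $\lipfree{M}$, and then use the characterisation of $\lip_0(M)$ via the de Leeuw transform to show that any ``boundary'' contribution to a representing measure must annihilate $\lip_0(M)$.

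More precisely, fix $\mu \in \meas{\bwt{M}}$ with $\Phi^*\mu \in \lipfree{M}$, and set $\nu = \mu\restrict_{\bwt{M}\setminus\wt{M}}$. By Proposition \ref{pr:wt_bochner}, $\Phi^*(\mu\restrict_{\wt{M}})$ is a Bochner integral of molecules, hence lies in $\lipfree{M}$. Consequently $\Phi^*\nu = \Phi^*\mu - \Phi^*(\mu\restrict_{\wt{M}}) \in \lipfree{M}$ as well. To establish \ref{P1} it suffices to show $\Phi^*\nu = 0$. Since $M$ is proper and purely $1$-unrectifiable, \cite[Theorem B]{AGPP} gives $\lipfree{M} = \lip_0(M)^*$ isometrically, so the canonical pairing with $\lip_0(M)$ separates points of $\lipfree{M}$. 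Thus it is enough to check that $\duality{f,\Phi^*\nu}=0$ for every $f \in \lip_0(M)$.

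For this, I invoke the characterisation of $\lip_0(M)$ on proper spaces quoted from \cite[Proposition 4.20]{Weaver2}: a function $f \in \Lip_0(M)$ belongs to $\lip_0(M)$ precisely when $\Phi f$ vanishes on $\bwt{M}\setminus\wt{M}$. Given such an $f$, the measure $\nu$ is concentrated on $\bwt{M}\setminus\wt{M}$, where $\Phi f \equiv 0$, and hence
\[
\duality{f,\Phi^*\nu} \;=\; \int_{\bwt{M}} \Phi f \,d\nu \;=\; \int_{\bwt{M}\setminus\wt{M}} \Phi f \,d\nu \;=\; 0.
\]
Therefore $\Phi^*\nu = 0$, which gives $\Phi^*\mu = \Phi^*(\mu\restrict_{\wt{M}})$, as required.

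The proof is short and really only has one delicate point, which is precisely to ensure both ingredients are available simultaneously: one needs pure $1$-unrectifiability to have $\lipfree{M} = \lip_0(M)^*$ (so that $\lip_0(M)$ is norming), and one needs properness of $M$ to invoke the de Leeuw characterisation $\lip_0(M) = \set{f \in \Lip_0(M) : \Phi f \equiv 0 \text{ on }\bwt{M}\setminus\wt{M}}$. Both hold here by hypothesis, so there is no genuine obstacle once those external results are cited.
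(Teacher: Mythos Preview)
Your proof is correct and uses essentially the same approach as the paper: both rely on \cite[Theorem B]{AGPP} to identify $\lip_0(M)$ as a predual of $\lipfree{M}$, and on \cite[Proposition 4.20]{Weaver2} to know that $\Phi f$ vanishes on $\bwt{M}\setminus\wt{M}$ for $f\in\lip_0(M)$. The only cosmetic difference is that the paper approximates an arbitrary $f\in B_{\Lip_0(M)}$ by a weak$^*$-convergent net in $B_{\lip_0(M)}$ and passes to the limit, whereas you argue directly that $\Phi^*\nu$ is annihilated by $\lip_0(M)$ and invoke the fact that a predual separates points; these are two packagings of the same duality principle.
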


\begin{proof}
One implication is given by Proposition \ref{pr:p1_implies_proper_p1u}. For the other one, suppose that $M$ is purely 1-unrectifiable and proper. Fix $\mu\in\meas{\bwt{M}}$ such that $\dual{\Phi}\mu\in\lipfree{M}$, and let $f\in B_{\Lip_0(M)}$. By \cite[Theorem B]{AGPP}, $\lip_0(M)$ is an isometric predual of $\lipfree{M}$ and hence $B_{\lip_0(M)}$ is weak$^*$ dense in $B_{\Lip_0(M)}$, so we may choose a net $(f_i)$ in $B_{\lip_0(M)}$ that converges weak$^\ast$ to $f$. It follows that
\begin{align*}
\duality{f,\dual{\Phi}\mu} = \lim_i \duality{f_i,\dual{\Phi}\mu} &= \lim_i \int_{\bwt{M}}(\Phi f_i)\,d\mu \\
&= \lim_i \int_{\wt{M}}(\Phi f_i)\,d\mu = \lim_i \duality{f_i,\dual{\Phi}(\mu\restrict_{\wt{M}})} = \duality{f,\dual{\Phi}(\mu\restrict_{\wt{M}})}
\end{align*}
where we have used the facts that $\Phi f_i$ vanishes outside of $\wt{M}$ by \cite[Proposition 4.20]{Weaver2} and $\dual{\Phi}(\mu\restrict_{\wt{M}})\in\lipfree{M}$ by Proposition \ref{pr:wt_bochner}. Since $f$ was arbitrary, we conclude that $\dual{\Phi}\mu=\dual{\Phi}(\mu\restrict_{\wt{M}})$. This shows that $M$ satisfies \ref{P1}.
\end{proof}

From this we deduce the following result.

\begin{corollary}
\label{cr:p1_convex}
If $M$ is proper and purely 1-unrectifiable then all elements of $\lipfree{M}$ are convex integrals of molecules. If $M$ is proper and countable then all elements of $\lipfree{M}$ are convex series of molecules.
\end{corollary}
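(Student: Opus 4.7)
The plan is to reduce both claims almost immediately to Theorem \ref{th:p1} together with Proposition \ref{pr:opr_facts}. Take any $m\in\lipfree{M}$. By Proposition \ref{pr:opr_facts}(a) there exists $\mu\in\opr{\bwt{M}}$ with $\Phi^*\mu=m$. Since $M$ is proper and purely 1-unrectifiable, Theorem \ref{th:p1} gives property \ref{P1}, so
\[
\Phi^*(\mu\restrict_{\wt{M}}) \;=\; \Phi^*\mu \;=\; m.
\]
Because $\wt{M}$ is Borel in $\bwt{M}$ (it is open, as $\wt{M}$ is locally compact when $M$ is proper), Proposition \ref{pr:opr_facts}(d) ensures $\mu\restrict_{\wt{M}}\in\opr{\bwt{M}}$. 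Thus $m=\Phi^*\nu$ for the optimal measure $\nu:=\mu\restrict_{\wt{M}}$ concentrated on $\wt{M}$, which is exactly the definition of a convex integral of molecules.

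For the second statement, first observe that countable metric spaces are automatically purely 1-unrectifiable, since they cannot contain a bi-Lipschitz copy of any compact subset of $\RR$ of positive Lebesgue measure (such a copy is uncountable). Hence the hypothesis that $M$ is proper and countable falls under the first part, producing $\nu\in\opr{\bwt{M}}$ concentrated on $\wt{M}$ with $\Phi^*\nu=m$.

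The remaining step is to argue that this $\nu$ must be discrete. Since $M$ is countable, so is $\wt{M}\subset M\times M$; thus $\nu$ is a positive Radon measure concentrated on a countable set, and therefore equals $\sum_{(x,y)\in\wt{M}}\nu(\{(x,y)\})\,\delta_{(x,y)}$. This exhibits $m$ as a convex series of molecules in the sense of \cite[Definition 2.7]{APS23}.

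I do not anticipate a serious obstacle: the content has already been concentrated in Theorem \ref{th:p1} and the basic structural facts about optimal representations in Proposition \ref{pr:opr_facts}. The only points requiring a word of care are the measurability/openness of $\wt{M}$ in $\bwt{M}$ (to legitimately invoke Proposition \ref{pr:opr_facts}(d)) and the elementary observation that countability forces pure 1-unrectifiability, so that the second claim genuinely sits inside the first.
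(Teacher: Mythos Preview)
Your argument is correct and matches the intended deduction: the paper states this corollary immediately after Theorem \ref{th:p1} with no separate proof, and your route via Proposition \ref{pr:opr_facts}(a),(d) together with property \ref{P1} is exactly what is implicit there. The observations that countability forces pure 1-unrectifiability and that a Radon measure concentrated on the countable set $\wt{M}$ is necessarily discrete are the only additional ingredients, and both are handled correctly.
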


Taking Theorem \ref{th:p1} into account, Proposition \ref{pr:p1_l_embedded} only tells us that $\lipfree{M}$ is L-embedded when $M$ is proper and purely 1-unrectifiable. This is a known result, essentially contained in a side remark in Godefroy's paper \cite{Godefroy_survey} (see the comment before Lemma 5.2 therein) and expanded to its current form in \cite{AGPP}. However, Godefroy's argument is abstract and relies on the theory of M-ideals. Our proof of Proposition \ref{pr:p1_l_embedded}, on the other hand, is constructive and provides an explicit and natural description of the L-projection of $\ddual{\lipfree{M}}$ onto $\lipfree{M}$: in the De Leeuw domain, it is simply the restriction operator to $\wt{M}$. We also remark that condition \ref{P1} is not necessary for L-embeddability: for instance, $\lipfree{[0,1]}=L_1$ is L-embedded although $[0,1]$ is clearly not purely $1$-unrectifiable.

As a byproduct of the ideas behind Proposition \ref{pr:p1_l_embedded}, we also obtain an explicit description of the bidual $\ddual{\lipfree{M}}$ in some particularly simple cases by determining the L-complement of $\lipfree{M}$. We recall that $\phi\in\Lip_0(M)^*$ is a \emph{derivation at $x \in M$} if $\duality{f,\phi}=0$ for any $f\in\Lip_0(M)$ that is constant in a neighbourhood of $x$ (see \cite[Section 2.5]{AP_measures} and \cite[Section 7.5]{Weaver2}).

\begin{proposition}
\label{pr:countable_bidual}
Let $M$ be a countable proper metric space. Then
$$
\Lip_0(M)^* = \lipfree{M} \oplus_1 \pare{\bigoplus_{x\in M'} \mathcal{D}(x)}_1 \oplus_1 \Lambda_\infty(M)
$$
where $\mathcal{D}(x)$ is the space of all derivations at $x$, $M'$ is the derived set of $M$, $\Lambda_\infty(M)$ is the space of functionals concentrated at infinity, and all direct sums denote $\ell_1$-sums.
\end{proposition}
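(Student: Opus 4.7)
My plan is to combine two L-decompositions already in hand and then identify the middle summand in terms of derivations. First, since $M$ is countable it contains no bi-Lipschitz copy of any uncountable subset of $\RR$, in particular no curve fragment, so $M$ is purely 1-unrectifiable. Combined with properness, Theorem \ref{th:p1} ensures that $M$ satisfies \ref{P1}, and Proposition \ref{pr:p1_l_embedded} then produces an L-projection $P:\Lip_0(M)^*\to\lipfree{M}$ acting on de Leeuw representations by restriction to $\wt{M}$. Similarly, Remark \ref{remark:decomp_bidual} provides an L-projection $Q$ onto $\Lambda_0(M)$ defined by restriction to $\bwtf$. Since $\wt{M}\subset\bwtf$, the two restrictions commute, so $PQ=QP=P$; in particular $P$ vanishes on $\Lambda_\infty(M)$. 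Combining the two L-decompositions yields
\[
\Lip_0(M)^* \;=\; \lipfree{M}\oplus_1 Z_0 \oplus_1 \Lambda_\infty(M), \qquad Z_0 := \Lambda_0(M)\cap\ker(P),
\]
and it remains to identify $Z_0$ isometrically with the $\ell_1$-sum of the $\mathcal{D}(x)$, $x\in M'$.

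The next step is to decompose $Z_0$ via the geometry of $\bwtf$. Since $M$ is proper, $\rcomp{M}=M$, so $\bwtf=\pp^{-1}(M\times M)$, and Proposition \ref{prop_p_range} together with the fact that $\pp(\zeta)\in\wt{M}$ forces $\zeta=\pp(\zeta)$ gives the countable disjoint decomposition
\[
\bwtf\setminus\wt{M} \;=\; \bigsqcup_{x\in M'}\pp^{-1}(x,x).
\]
Given $\phi\in Z_0$ with an optimal representation $\mu$, Corollary \ref{cor_avoid_infinity} concentrates $\mu$ on $\bwtf$, and the identity $P\phi=\Phi^*(\mu\restrict_{\wt{M}})=0$ combined with optimality of $\mu\restrict_{\wt{M}}$ (Proposition \ref{pr:opr_facts}(d)) forces $\mu\restrict_{\wt{M}}=0$. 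Writing $\mu_x:=\mu\restrict_{\pp^{-1}(x,x)}$ and $\phi_x:=\Phi^*\mu_x$, a continuity argument shows each $\phi_x\in\mathcal{D}(x)$: if $f\in\Lip_0(M)$ is constant near $x$ then $\Phi f$ vanishes on $\pp^{-1}(x,x)$. By Proposition \ref{pr:opr_facts}(d) each $\mu_x$ is optimal, whence $\sum_{x\in M'}\norm{\phi_x}=\sum_x\norm{\mu_x}=\norm{\mu}=\norm{\phi}$. Conversely, every $\phi\in\mathcal{D}(x)$ automatically avoids infinity (for $n$ large, $f(1-\daleth_n)$ vanishes near $x$, hence is constant there, hence killed by $\phi$) and lies in $\ker(P)$ because any representation on $\pp^{-1}(x,x)$ misses $\wt{M}$. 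Consequently the summation map $T:\bigoplus_{x\in M'}\mathcal{D}(x)\to Z_0$ is a well-defined contractive surjection.

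The only nontrivial step left is that $T$ is an isometry, i.e.\ $\sum_x\norm{\phi_x}\leq\norm{\sum_x\phi_x}$. By a standard approximation this reduces to the finite case: for distinct $x_1,\dots,x_n\in M'$, functionals $\phi_i\in\mathcal{D}(x_i)$ and $\varepsilon>0$, I must produce a single $g\in B_{\Lip_0(M)}$ with $\phi_i(g)>\norm{\phi_i}-\varepsilon$ for every $i$. The key observation is that $\phi_i(g)=\phi_i(g_i)$ whenever $g-g_i$ is constant on a neighborhood of $x_i$. Pick near-norming $g_i\in B_{\Lip_0(M)}$ and a sufficiently small $r>0$ so that the closed balls $A_i=\cl{B}(x_i,r)$ are pairwise disjoint and also disjoint from $\set{0}$ whenever $0\notin\set{x_1,\dots,x_n}$. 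Define $\psi$ on $\set{0}\cup\bigcup_i A_i$ by $\psi(y)=g_i(y)-g_i(x_i)$ on each $A_i$ and $\psi(0)=0$; routine estimates using $\Lip(g_i)\leq 1$ and $\abs{g_i(x_i)}\leq\rho(x_i)$ show $\psi$ is $1$-Lipschitz on its closed domain, and McShane's theorem extends it to $g\in B_{\Lip_0(M)}$. Since $g-g_i\equiv -g_i(x_i)$ is constant on the neighborhood $A_i$ of $x_i$, we get $\phi_i(g)=\phi_i(g_i)>\norm{\phi_i}-\varepsilon$; summing and letting $\varepsilon\to 0$ handles the finite case, and absolute convergence does the infinite one. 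I expect the main delicate point to be choosing the radius $r$ uniformly so that $\psi$ is globally $1$-Lipschitz across all pairs of balls and the base point simultaneously; this is elementary but requires some care.
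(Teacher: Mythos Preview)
Your approach is sound and works, but there is one genuine gap: the assertion that every $\phi\in\mathcal{D}(x)$ lies in $\ker(P)$. You justify this by saying ``any representation on $\pp^{-1}(x,x)$ misses $\wt{M}$'', but that is the converse direction: you have only shown that measures concentrated on $\pp^{-1}(x,x)$ yield derivations, not that every derivation at $x$ admits such a representation. Without this, you cannot conclude that $T$ lands in $Z_0$ (rather than merely in $\Lambda_0(M)$), and the identification $Z_0=(\bigoplus_x\mathcal{D}(x))_1$ is incomplete. The gap can be filled using the paper's minimal-support machinery: since $\phi\in\mathcal{D}(x)$ vanishes on all $f$ supported away from $x$, one has $\esupp{\phi}\subset\{x\}$; Proposition \ref{pr:minimal_support_representation} then yields an optimal $\mu$ with $\pp_s(\supp(\mu))\subset\{0,x\}$, and Proposition \ref{pr:min_shadow_base_point} (condition \ref{min_shadow_base_point-funct} holds because $\phi$ kills any $f\equiv 1$ near $x$) rules out the base point when $x\neq 0$, so $\supp(\mu)\subset\pp^{-1}(x,x)$ and $P\phi=0$. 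The remaining Lipschitz verification for your McShane patch is fine once $r\leq\tfrac14\min_{i\neq j}d(x_i,x_j)$ and $r\leq\tfrac12\min_i d(x_i,0)$ (the bound $\abs{g_i(x_i)}\leq\rho(x_i)$ is not actually needed).

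Modulo this fix, your route is a genuine alternative to the paper's. Both proofs use the same de Leeuw decomposition $\bwtf=\wt{M}\sqcup\bigsqcup_{x\in M'}\pp^{-1}(x,x)$, but the paper outsources the full $\ell_1$-sum property (including the $\lipfree{M}$ summand) to \cite[Proposition 2.11]{AP_measures}, whereas you first split off $\lipfree{M}$ via the L-projection of Proposition \ref{pr:p1_l_embedded} (available by Theorem \ref{th:p1}) and then prove the $\ell_1$-property among the derivation spaces directly with a hands-on McShane construction. Your argument is more self-contained; the paper's is shorter but relies on an external black box.
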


\begin{proof}
Recall from \eqref{eq:decomp_bidual} that $\Lip_0(M)^*$ is the $\ell_1$-sum of $\Lambda_\infty(M)$ and $\Lambda_0(M)$, the space of functionals that avoid infinity, so we have to prove that
\begin{equation}
\label{eq:countable_avoid_infty}
\Lambda_0(M) = \lipfree{M} \oplus_1 \pare{\bigoplus_{x\in M'} \mathcal{D}(x)}_1 .
\end{equation}
Let $\phi\in\Lambda_0(M)$ and fix an optimal representation $\mu\in\opr{\bwt{M}}$ of $\phi$. By Corollary \ref{cor_avoid_infinity}, $\mu$ is concentrated on $\bwtf$ and, since $M$ is proper, Proposition \ref{prop_p_range} ensures that
$$
\bwtf = \wt{M}\cup\bigcup_{x\in M'}\pp^{-1}(x,x) .
$$
Hence
$$
\phi = \dual{\Phi}\mu = \dual{\Phi}(\mu\restrict_{\widetilde{M}}) + \sum_{x\in M'}\dual{\Phi}(\mu\restrict_{\pp^{-1}(x,x)}) \,.
$$
Note that the sum makes sense because it is absolutely convergent and $M'$ is countable. The term $\dual{\Phi}(\mu\restrict_{\widetilde{M}})$ belongs to $\lipfree{M}$ by Proposition \ref{pr:wt_bochner}, and one can verify that each term in the rightmost sum is a derivation at $x\in M'$ (see e.g. the proof of \cite[Lemma 2.4]{APS23}). It is also clear that each $\mathcal{D}(x)$ is a closed subspace of $\Lambda_0(M)$, that $\mathcal{D}(x)\cap\mathcal{D}(x')=\set{0}$ for $x\neq x'$, and that $\lipfree{M}\cap\mathcal{D}(x)=\set{0}$ (see e.g. \cite[Corollary 3.24]{AP_measures}). All of this proves that the formula \eqref{eq:countable_avoid_infty} holds as a direct sum. Finally, the fact that it is an $\ell_1$-sum is precisely the statement of \cite[Proposition 2.11]{AP_measures}.
\end{proof}

\begin{remark}
In particular, if $M$ is compact and countable then all functionals in $\Lip_0(M)^*$ avoid infinity and we have simply
$$
\Lip_0(M)^* = \lipfree{M} \oplus_1 \pare{\bigoplus_{x\in M'} \mathcal{D}(x)}_1 .
$$
On the other hand, if $M$ is proper and (topologically) discrete, e.g. $M=\ZZ^n$, then there are no nontrivial derivations on points of $M$ and thus all functionals on $\Lip_0(M)$ that avoid infinity are weak$^*$ continuous, so $\Lip_0(M)^* = \lipfree{M} \oplus_1 \Lambda_\infty(M)$.
\end{remark}

We finish this section by formulating an open problem (or rather, several of them) related to the characterisation of De Leeuw representations of elements of $\lipfree{M}$. Property \ref{P1} is rather strong because it applies to \textit{all} De Leeuw representations. It is perhaps more natural to restrict our attention to optimal representations, and in that case we may ask the following questions.

\begin{problem}\label{problem}
Characterise the complete metric spaces $M$ such that
\begin{enumerate}
\item[(a)] every optimal representation $\mu\in\opr{\bwt{M}}$ of an element of $\lipfree{M}$ is concentrated on $\wt{M}$;
\item[(b)] every element of $\lipfree{M}$ is a convex integral of molecules;
\item[(c)] every element of $\lipfree{M}$ is a convex series of molecules.
\end{enumerate}
\end{problem}

Property (a) is the restriction of property \ref{P1} to just optimal representations. Also, property (b) requires that there is at least one representation as in (a), instead of requiring that all optimal representations have that form. Thus we have the implications \ref{P1}$\Rightarrow$(a)$\Rightarrow$(b). Also clearly (c)$\Rightarrow$(b), and in fact (b)$\Leftrightarrow$(c) if $M$ is scattered (\cite[Proposition 2.8]{APS23}). It is also obvious that property (a) is hereditary, but we do not know whether (b) or (c) are. Results in this paper and \cite{APS23} yield only partial information about those $M$ satisfying each property.
For instance, (b) holds when $M$ is proper and purely 1-unrectifiable (Theorem \ref{th:p1}), or uniformly discrete (\cite[Corollary 3.7]{APS23}).

A source of interest in this problem is the conjecture that all extreme points of $B_{\lipfree{M}}$ are elementary molecules. In {\cite[Theorem 6.1]{APS23}}, it is shown that an extreme point of $B_{\lipfree{M}}$ that admits an optimal De Leeuw representation with a positive mass on $\wt{M}$ (in particular, if it is a convex integral of molecules) must be an elementary molecule. The conjecture is therefore true for $M$ with any of the properties above.

\subsection*{Note added after revision}
After initial submission of this manuscript, the preprints \cite{Smith} and \cite{APS3} have been published, in which the theory developed in this paper is expanded upon and some of our results are strengthened:
\begin{itemize}
\item Condition (b) in Problem \ref{problem} is indeed hereditary (see \cite[Corollary 4.13]{Smith}).
\item All purely $1$-unrectifiable spaces satisfy condition (b) in Problem \ref{problem}. Thus, the assumption that $M$ is proper can be dropped from Corollary \ref{cr:p1_convex} (see \cite[Theorem 4.1 and Corollary 4.3]{APS3}).
\item In Theorem \ref{th:bidual_majorisable}, the conclusion $(\pp_1)_\sharp\mu\restrict_{\rcomp{M}\setminus\set{0}}\perp(\pp_2)_\sharp\mu\restrict_{\rcomp{M}\setminus\set{0}}$ can be replaced with $(\pp_1)_\sharp\mu\perp(\pp_2)_\sharp\mu$ (see \cite[Theorem 4.15]{Smith}).
\end{itemize}
In addition, the conjecture that all extreme points of $B_{\lipfree{M}}$ are molecules, mentioned in the paragraph above, is solved in the affirmative (see \cite[Theorem 3.1]{APS3}).

\section*{Acknowledgements}

The authors would like to thank Damian Sobota for bringing \cite{B_NZ20} to their attention, which enabled them to complete Example \ref{ex:non-sep}, and Marek C\'uth for discussions that inspired Proposition \ref{pr:countable_bidual}. Some of this research was carried out when the first and third authors visited the Faculty of Information Technology at the Czech Technical University in Prague in October 2023, and during the second author's visit to the School of Mathematics and Statistics at University College Dublin in November 2023. The authors would like to thank the respective hosts for their hospitality.

R. J. Aliaga was partially supported by Grant PID2021-122126NB-C33 funded by MICIU/AEI/ 10.13039/501100011033 and by ERDF/EU. E. Perneck\'a was supported by the grant GA\v CR 22-32829S of the Czech Science Foundation.

%On behalf of all authors, the corresponding author states that there is no conflict of interest.

% BIBLIOGRAPHY

\end{document}